\numberwithin{equation}{section}
\def\e{\varepsilon}
\def\epsilon{\varepsilon}
\def\eps{\varepsilon}
\newcommand{\ol}{\overline}
\def\alb#1\ale{\begin{align*}#1\end{align*}}
\newcommand{\eqb}{\begin{equation}}
\newcommand{\eqe}{\end{equation}}
\DeclareMathOperator{\supp}{supp}
\DeclareMathOperator{\dist}{dist}
\let\Im\undefined
\DeclareMathOperator{\Im}{Im}
\newcommand{\bbC}{\mathbb{C}}
\newcommand{\bbD}{\mathbb{D}}
\newcommand{\bbE}{\mathbb{E}}
\newcommand{\bbH}{\mathbb{H}}
\newcommand{\bbR}{\mathbb{R}}
\newcommand{\bbP}{\mathbb{P}}
\newcommand{\cA}{\mathcal{A}}
\newcommand{\cB}{\mathcal{B}}
\newcommand{\cC}{\mathcal{C}}
\newcommand{\cD}{\mathcal{D}}
\newcommand{\cF}{\mathcal{F}}
\newcommand{\cL}{\mathcal{L}}
\newcommand{\cM}{\mathcal{M}}
\newcommand{\cS}{\mathcal{S}}
\newcommand{\cV}{\mathcal{V}}
\newcommand{\cW}{\mathcal{W}}
\newcommand{\QT}{\mathrm{QT}}
\newcommand{\LF}{\mathrm{LF}}
\newcommand{\SLE}{\mathrm{SLE}}
\newcommand{\Wd}{\mathrm{Weld}}
\newcommand{\Md}{{\mathcal{M}}^\mathrm{disk}}
\newtheorem{theorem}{Theorem}[section]
\newtheorem{lemma}[theorem]{Lemma}
\newtheorem{proposition}[theorem]{Proposition}
\newtheorem*{proposition*}{Proposition}
\newtheorem*{corollary*}{Corollary}
\newtheorem{definition}[theorem]{Definition}
\newtheorem*{definitions*}{Definitions}
\newtheorem*{example*}{\bf Example}
\theoremstyle{remark}
\numberwithin{equation}{section}
\title{Radial conformal welding in Liouville quantum gravity}
\author{Morris Ang   \qquad\quad   Pu Yu}
\date{}
\begin{document}

\maketitle

\begin{abstract}
The seminal work of Sheffield showed that when random surfaces called Liouville quantum gravity (LQG) are conformally welded, the resulting interface is Schramm-Loewner evolution (SLE). This has been proved for a variety of configurations, and has applications to the scaling limits of random planar maps and the solvability of SLE and Liouville conformal field theory. We extend the theory to the setting where two sides of a canonical three-pointed LQG surface are conformally welded together, resulting in a radial SLE curve which can be described by imaginary geometry.
\end{abstract}

\tableofcontents

\section{Introduction}
	Over the past two decades, Schramm-Loewner evolution (SLE) and Liouville quantum gravity (LQG)  have become central subjects in random conformal geometry as canonical models of random curves and surfaces. As discovered in the landmark work of~\cite{She16a}, SLE curves arise as the interfaces in the  conformal weldings of LQG surfaces. This is a fundamental ingredient for the mating-of-trees theory~\cite{DMS14}, a powerful framework relating LQG and the scaling limits of random planar maps \cite{shef-burger, gms-tutte,HS19, bgs-smith}; see also~\cite{GHS19, BP21}. {Conformal welding has also found numerous  applications in the derivations of exact formulas in SLE, Liouville conformal field theory (LCFT), and critical lattice models, see e.g.\ \cite{AHS21,ARS21, NQSZ-backbone, AHSY23, ASYZ24, AY24, ACSW24, LSYZ24}.}

	Though the mathematical literature on LCFT (e.g., \cite{DKRV16, HRV-disk, KRV20, GRV19, GKRV-bootstrap}) focuses on different aspects of Liouville theory compared to the conformal welding approach, it was shown that certain finite-volume LQG surfaces in \cite{DMS14} are described by LCFT   \cite{AHS17,cercle2021unit, AHS21}. This motivated the definition via LCFT of
	 quantum triangles~\cite{ASY22}, which are finite-volume LQG surfaces with three marked points on the boundary. As shown in~\cite{ASY22, SY23}, the chordal $\SLE_\kappa(\underline\rho)$, an important variant of the SLE, arises as the interface for {certain conformal weldings involving  quantum triangles.}
	
	In this paper, we consider the conformal welding of LQG surfaces in the radial setting. We show that welding a quantum triangle to itself gives a version of quantum disk with one bulk marked point and one boundary marked point decorated with an independent radial SLE curve (Figures~\ref{fig:radial-thick} and~\ref{fig:radial-thin}); see Theorem~\ref{thm:main} for more details. {Similarly as in the chordal case, this curve arises naturally in the imaginary geometry framework~\cite{MS16a}.} The radial imaginary geometry was {constructed} in~\cite{ig4} for radial SLE with a single force point; we extend the framework to  radial SLE with multiple force points. In Theorem~\ref{thm:main-nonsimple}, we prove a variant of Theorem~\ref{thm:main} for forested quantum surfaces which gives radial $\SLE_\kappa$ in the nonsimple regime $\kappa \in (4,8)$.
	
	\subsection{Quantum triangles}\label{subsec:intro-qt}
	In this section we give a brief introduction to the quantum triangles appearing in our main results. See Section~\ref{sec:pre-LQG}
	 for more details. 
	
	Fix $\gamma\in (0,2)$. A quantum surface in $\gamma$-LQG is a surface with an area measure~\cite{DS11} and a   metric structure~\cite{DDDF19,GM19metric}  induced by a variant of the Gaussian free field (GFF). A quantum surface with the disk topology can be represented as a pair $(D,h)$ where $D \subset \bbC$ is a simply connected domain and $h$ is a variant of the GFF on $D$. For such surfaces there is also a notion of $\gamma$-LQG length measure on the disk boundary~\cite{DS11}.   Two pairs $(D,h)$  and $(D',h')$ represent the same quantum surface if there is a conformal map between $D$ and $D'$ preserving the $\gamma$-LQG geometry, and a particular pair $(D,h)$ is called a (conformal) embedding of the quantum surface.

	 For $W>0$, the two-pointed quantum disk of weight $W$, {whose law is denoted by $\Md_{2}(W)$}, is a quantum surface having two marked points on the boundary~\cite{DMS14,AHS20}. When $W \ge \frac{\gamma^2}2$ the quantum disk is simply connected and is called \emph{thick}, whereas when $W\in(0,\frac{\gamma^2}{2})$ the quantum disk is a chain of countably many thick quantum disks and is called \emph{thin}. 
	
	We now use the Liouville field coming from LCFT~\cite{HRV-disk} to define quantum triangles; see Section~\ref{subsec:pre-lf} for the definition of Liouville field. 
	The quantum triangle has three weight parameters $W_1,W_2,W_3>0$.   For $W_1,W_2,W_3>\frac{\gamma^2}{2}$, set $\beta_i = \gamma+\frac{2-W_i}{\gamma}<Q$ and let $\textup{LF}_{\bbH}^{(\beta_1, \infty), (\beta_2, 0), (\beta_3, 1)}$ be the law of the Liouville field on the upper half plane $\bbH$ 
	with insertions $\beta_1,\beta_2,\beta_3$ at points  $\infty,0$ and $1$, respectively. 
	Sample $\phi$ from 
	  \[
	  \frac{1}{(Q-\beta_1)(Q-\beta_2)(Q-\beta_3)}\textup{LF}_{\bbH}^{(\beta_1, \infty), (\beta_2, 0), (\beta_3, 1)}.
	  \]
	  We define  $\textup{QT}(W_1, W_2, W_3)$ to be the law of the quantum surface {$(\bbH, \phi)$ having three marked points $(\infty, 0, 1)$,} and call a sample from {$\QT(W_1,W_2,W_3)$} a quantum triangle of weight $(W_1,W_2,W_3)$. This definition can be extended to the parameter range $W_1, W_2, W_3 \geq \frac{\gamma^2}2$ by a limiting procedure; see~\cite[Section 2.5]{ASY22} for more details.
	  Finally, if $W_1, W_2, W_3 > 0$ and $I = \{ i : W_i < \frac{\gamma^2}2\}$, we can define $\QT(W_1, W_2, W_3)$ by first sampling a quantum triangle $\mathcal T$ from $\QT( \tilde W_1, \tilde W_2, \tilde W_3)$ where $\tilde W_i := \max ( W_i, \gamma^2 - W_i)$, then independently sampling for each $i \in I$ a weight $W_i$ quantum disk $\cD_i$, and finally attaching each $\cD_i$ to the corresponding vertex of $\mathcal T$. 
	We call the three marked points of a quantum triangle its vertices. Given a sample of  $\QT(W_1,W_2,W_3)$, the geometry near the vertex of weight $W_i$ looks like a neighborhood of a marked point on a weight-$W_i$ quantum disk; we call a vertex thick if $W_i \geq \frac{\gamma^2}2$ and thin otherwise.  See Figure~\ref{fig-qt} for an illustration. 
	
	{
	The quantum triangles with weights $W_1, W_2, W_3 > \frac{\gamma^2}2$ are random surfaces whose geometric observables (i.e., area, lengths), by definition, encode the \emph{three-point structure constants} of boundary LCFT. 
	An analogous statement holds when some weights satisfy $W_i \in (0, \frac{\gamma^2}2)$, due to the reflection principle for the three-point structure constants \cite[Lemma 3.11]{ARSZ23} (in fact, this is one motivation for the definition via adding independent quantum disks).
	}
	
	\begin{figure}[ht]
		\centering
		\includegraphics[scale=0.43]{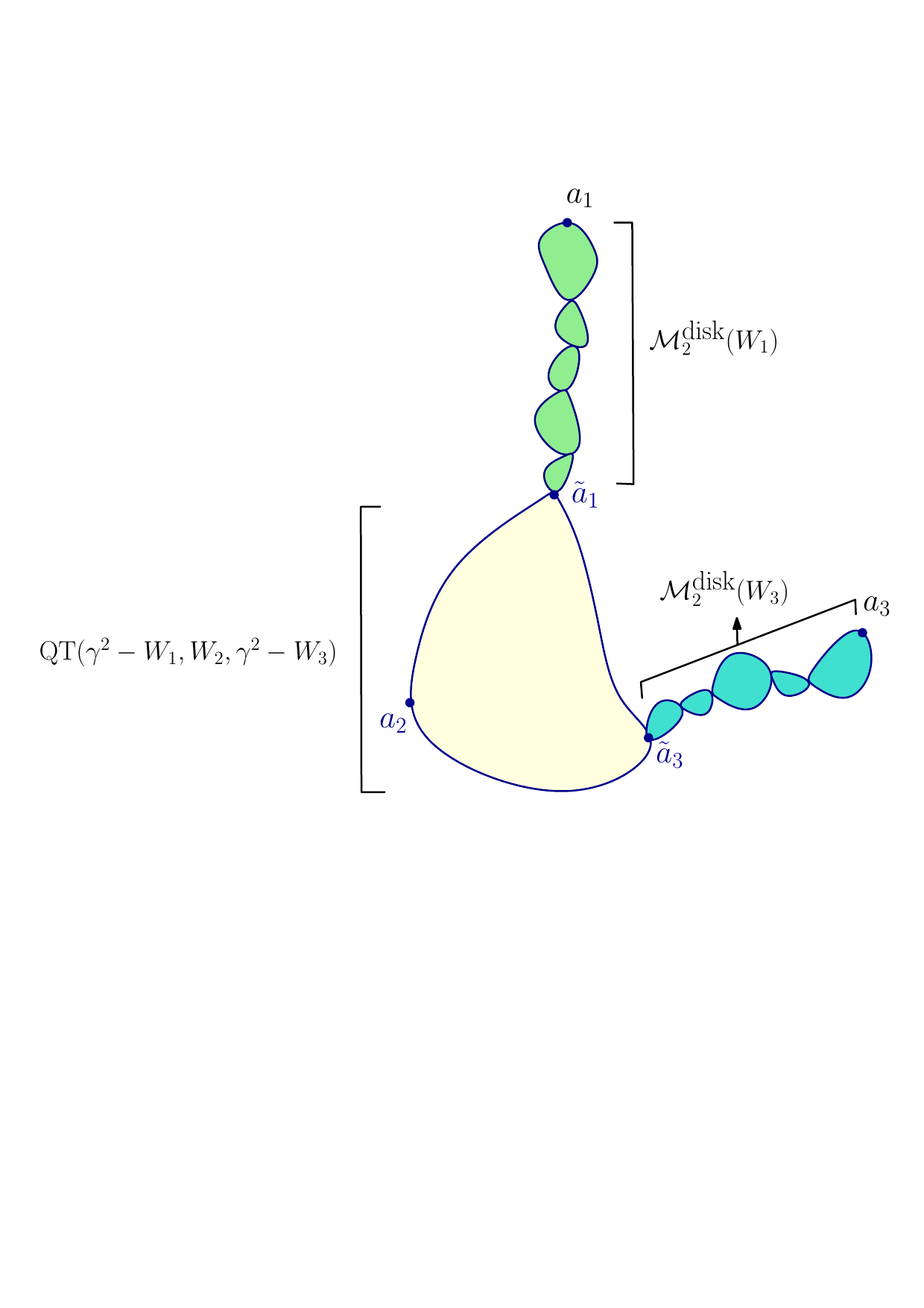}
		\caption{A sample of $\QT(W_1,W_2,W_3)$ with a thick vertex $a_2$   and  two thin vertices $a_1,a_3$, i.e.\ $W_2\geq\frac{\gamma^2}{2}$ and $W_1,W_3<\frac{\gamma^2}{2}$. The yellow surface is a quantum triangle with thick vertices $\tilde{a}_1,a_2,\tilde{a}_3$.  The two thin two-pointed quantum disks (colored green) are concatenated with the  yellow triangle at  $\tilde{a}_1$ and $\tilde{a}_3$.} \label{fig-qt}
	\end{figure}
	
	\subsection{Radial conformal welding for quantum triangles}\label{subsec:intro-main-thm}
	
	Before stating our main result Theorem~\ref{thm:main}, we discuss related results for the conformal welding for quantum disks and quantum triangles.  
	
	For $W>0$, consider the disintegration  $\mathcal{M}_2^{\textup{disk}}(W)=\iint_0^\infty \mathcal{M}_2^{\textup{disk}}(W;\ell,r) d\ell d r$, where each measure $\mathcal{M}_2^{\textup{disk}}(W;\ell,r)$ is supported on the space of quantum surfaces with left boundary length $\ell$ and right boundary length $r$. 
    Given a pair of quantum surfaces sampled from $\mathcal{M}_2^{\textup{disk}}(W_1;\ell_1,\ell)\times \mathcal{M}_2^{\textup{disk}}(W_2;\ell,\ell_2)$, we can conformally weld them together along the boundary arcs having length $\ell$ to obtain a quantum surface decorated with a curve. We denote its law by $\mathrm{Weld}(\mathcal{M}_2^{\textup{disk}}(W_1;\ell_1,\ell),\mathcal{M}_2^{\textup{disk}}(W_2;\ell,\ell_2))$. 
	For $\kappa>0$, $\rho_->-2$ and $\rho_+>-2$, chordal  $\SLE_\kappa(\rho_-;\rho_+)$ is a classical variant of SLE$_\kappa$; see Section~\ref{subsec:pre-sle-ig} for a precise definition. Let $(D,h,a,b)$ be an embedding of a two-pointed quantum disk sampled from $\mathcal{M}_2^{\textup{disk}}(W_1+W_2)$ with $a,b$ being the two boundary marked points. We  draw a chordal $\SLE_\kappa(W_1-2;W_2-2)$ curve $\eta$ in $D$ from $a$ to $b$ independent of $h$ where $\kappa=\gamma^2\in(0,4)$, and write $\mathcal{M}_2^{\textup{disk}}(W_1+W_2)\otimes  {\SLE}_\kappa(W_1-2;W_2-2)$ for  the law of the curve-decorated surface described by $(D,h,\eta,a,b)$.    As shown in~\cite[Theorem 2.2]{AHS20}, there is a constant $c>0$ such that
	\begin{equation}\label{eq:2pt-weld}
	    \mathcal{M}_2^{\textup{disk}}(W_1+W_2)\otimes  {\SLE}_\kappa(W_1-2;W_2-2)=c\textup{Weld} (\mathcal{M}_2^{\textup{disk}}(W_1),\mathcal{M}_2^{\textup{disk}}(W_2)),
	\end{equation}
	 where $\textup{Weld} (\mathcal{M}_2^{\textup{disk}}(W_1),\mathcal{M}_2^{\textup{disk}}(W_2)) := \iiint_0^\infty \textup{Weld}( \mathcal{M}_2^{\textup{disk}}(W_1;\ell,\ell_1),\mathcal{M}_2^{\textup{disk}}(W_2;\ell_1,\ell_2))d\ell d\ell_1d\ell_{2}$ is called the conformal welding of $\mathcal{M}_2^{\textup{disk}}(W_1)$ and $\mathcal{M}_2^{\textup{disk}}(W_2)$. 
	 {A similar result was obtained in \cite[Theorem 1.1]{ASY22} for the conformal welding of a quantum disk and quantum triangle; see Theorem~\ref{thm:disk+QT} for details.}
	
	Our main theorem is a radial analog of these results, where we consider the welding of a quantum triangle to itself.  For $W>0$ and $W'>\frac{\gamma^2}{2}$, let $\beta = \gamma+\frac{2-W'}{\gamma}$ and $\alpha = Q-\frac{W}{2\gamma}$. Let $\Md_{1,1}(W,W')$ be the law of the quantum surface $(\bbH,\phi,i,0)$ where $\phi$ is sampled from $\LF_{\bbH}^{(\alpha,i),(\beta,0)}$. For $\rho_-,\rho_+>-2$ with $\rho_-+\rho_+>-2$, write $\mathrm{raSLE}_\kappa(\rho_-;\rho_+)$ for the law of radial SLE with two force points of weights $\rho_-,\rho_+$ lying infinitesimally close to the root; see Section~\ref{subsec:radial-ig} for a definition. {Finally, let $\QT(W_1,W_2,W_3;\ell,\ell)$ denote the law of the quantum triangle with weights $W_1, W_2, W_3$ whose boundary arcs adjacent to the weight $W_1$ vertex both have quantum length $\ell$, as defined in Section~\ref{subsec:pre-qs}.} We are now ready to state our main result; see Figures~\ref{fig:radial-thick} and~\ref{fig:radial-thin}.
	\begin{theorem}\label{thm:main}
	Let $\gamma\in (0,2)$, $\kappa=\gamma^2$ and suppose $W_1, W_2, W_3>0$ satisfy $W_2+W_3=2+W_1$. Suppose that $W_1 \neq \frac{\gamma^2}2$. Then there exists some constant $c=c_{W_1,W_2,W_3}\in (0,\infty)$ such that, 
	\begin{equation}\label{eqn:thm-main-0}
	    \Md_{1,1}(W_1,W_1+2)\otimes \mathrm{raSLE}_\kappa(W_3-2;W_2-2) = c\int_0^\infty \Wd(\QT(W_1,W_2,W_3;\ell,\ell))\,d\ell.
	\end{equation}
	In other words, for a sample from  $\Md_{1,1}(W_1,W_1+2)$ embedded as $(\bbH,\phi,0,i)$, if one draws an independent radial $\SLE_\kappa(W_3-2;W_2-2)$ targeted at $i$ with force points at $0^-,0^+$,  the law of $(\bbH\backslash\eta,\phi,i,0^-,0^+)$ viewed as a marked quantum surface is given by  
	\begin{equation}\label{eqn:thm-main}
	    c\int_0^\infty \QT(W_1,W_2,W_3;\ell,\ell)\, d\ell.
	\end{equation}
	\end{theorem}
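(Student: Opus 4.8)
The plan is to follow the by-now-standard strategy for conformal welding theorems in LQG: first show that the conformal welding on the right of \eqref{eqn:thm-main-0} is a well-defined measure on curve-decorated quantum surfaces, via conformal removability of the interface, and then identify it with the left side using the coupling of radial $\SLE_\kappa(W_3-2;W_2-2)$ with the GFF --- the radial analogue of Sheffield's quantum zipper. Equivalently one takes a sample of $\Md_{1,1}(W_1,W_1+2)\otimes\mathrm{raSLE}_\kappa(W_3-2;W_2-2)$, cuts along $\eta$ to obtain the surface $(\bbH\setminus\eta,\phi,i,0^-,0^+)$ --- a topological triangle --- and checks it has the claimed law $c\int_0^\infty\QT(W_1,W_2,W_3;\ell,\ell)\,d\ell$; that cutting and welding are mutually inverse here is precisely the content of the removability step. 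The genuinely new input relative to the chordal results of \cite{AHS20,ASY22} is the behaviour at the bulk target $i$, which carries the insertion $\alpha=Q-\frac{W_1}{2\gamma}$.

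\emph{Step 1: well-definedness of the welding.} For $d\ell$-a.e.\ $\ell$ and a.e.\ splitting of a weight $(W_1,W_2,W_3)$ triangle into the two length-$\ell$ arcs adjacent to the weight-$W_1$ vertex, I would show the welding along these arcs exists and is unique, which reduces to conformal removability of the welding interface. Away from its two endpoints the interface is locally absolutely continuous with respect to an $\SLE_\kappa$ curve with $\kappa=\gamma^2\in(0,4)$ and hence locally removable; near the root $0$ it locally resembles the chordal welding interface at the meeting point of two quantum disks, so \eqref{eq:2pt-weld} governs it; near $i$ it is a boundary arc of an $\Md_{1,1}(W_1,W_1+2)$-type surface approached from the bulk point. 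Patching these together as in \cite{AHS20,ASY22} yields removability of the whole curve, hence that $\int_0^\infty\Wd(\QT(W_1,W_2,W_3;\ell,\ell))\,d\ell$ is a well-defined measure on curve-decorated quantum surfaces.

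\emph{Step 2: reduction to thick triangles, and identification.} Using \eqref{eq:2pt-weld} together with the definition of $\QT$ with thin vertices (sample $\QT(\wt W_1,\wt W_2,\wt W_3)$, $\wt W_i=\max(W_i,\gamma^2-W_i)$, and attach independent weight $W_i$ quantum disks), I would reduce to the all-thick case $W_1,W_2,W_3>\frac{\gamma^2}2$: a disk attached at a thin vertex is absorbed by \eqref{eq:2pt-weld} into the boundary point $0$ (if $i=1$) or the bulk point $i$ (if $i\in\{2,3\}$) after a bookkeeping rerouting of the welding. For the thick case I would take $(\bbH,\phi,0,i)$ from $\Md_{1,1}(W_1,W_1+2)$ with an independent radial $\SLE_\kappa(W_3-2;W_2-2)$ curve $\eta$, run the radial Loewner evolution driving $\eta$, and push $\phi$ forward by the centred uniformizing maps with the $Q\log|(g_t^{-1})'|$ correction. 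The radial $\SLE_\kappa(\rho_-;\rho_+)$--GFF coupling from the radial imaginary geometry section (extending \cite{ig4} to several force points) should show that for each $t$ the unzipped field is again a Liouville field, with boundary insertions near $0$ of parameters fixed by $W_3-2,W_2-2$ and a bulk insertion $\alpha$ at $i$; letting $t$ reach the time $\eta$ hits $i$ then exhibits the cut surface as a quantum triangle of weights $(W_1,W_2,W_3)$ whose two arcs at the weight-$W_1$ vertex share the quantum length $\ell$ of $\eta$, and integrating over $\ell$ gives \eqref{eqn:thm-main} up to a constant $c$, shown to lie in $(0,\infty)$ by comparing the masses of the two sides on a well-chosen event of finite positive mass. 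It may be convenient to handle first the symmetric case $W_2=W_3$, where the zipper computation is cleanest, and then obtain general $W_2,W_3$ by an associativity/iterated-welding argument.

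\emph{Main obstacle.} The real work is concentrated at the bulk target $i$. In the chordal setting the curve terminates at $\infty$; here it terminates at the interior $\alpha$-insertion, which becomes the weight-$W_1$ vertex after cutting, so one must (a) prove the radial quantum zipper with this bulk insertion, i.e.\ that the coupling of radial $\SLE_\kappa(\rho_-;\rho_+)$ with the appropriate field keeps unzipping the correct Liouville field for all time up to the hitting time of $i$ --- which is a.s.\ the duration of $\eta$ --- with the two sides of $\eta$ carrying the same quantum length there, and (b) establish removability and control the quantum surface near the cut point $i$, in order to pass from the stopped statement to the full identity. Constructing this coupling (the multiple-force-point radial imaginary geometry is itself new) and analysing the cut point are where the difficulty lies; the excluded value $W_1=\frac{\gamma^2}2$ is precisely the thick/thin threshold at which these steps would require separate treatment.
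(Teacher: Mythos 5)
Your outline is not the paper's route, and its central step has a genuine gap. In Step 2 you assert that the forward radial $\SLE$--GFF coupling (the radial imaginary geometry of Theorem~\ref{thm:radial-ig}) ``should show that for each $t$ the unzipped field is again a Liouville field'' with the right insertions. The forward coupling does not give this: it only describes the conditional law of the field given the curve via flow-line boundary data; the statement that cutting along $\eta$ produces an independent Liouville field/quantum triangle is exactly the welding theorem you are trying to prove, and obtaining it dynamically would require a \emph{reverse} radial quantum zipper with a bulk insertion at the target, which does not exist in the literature and which you do not construct. Moreover, even granting some coupling-based identification of the interface, you never address the spiraling ambiguity: radial $\SLE_\kappa^\beta(\underline\rho)$ for every $\beta$ couples with a field carrying an extra $\beta\log|\cdot|$ singularity, so any resampling or coupling argument only pins the interface law down to the one-parameter family $\mathrm{raSLE}_\kappa^\beta$; ruling out $\beta\neq 0$ is a separate, nontrivial step. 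The paper avoids the radial zipper entirely: it proves the field law of the welded surface using the \emph{chordal} LCFT zipper of Ang (Proposition~\ref{prop-quantum-zipper}, yielding Proposition~\ref{prop:radialzipup}), identifies the conditional law of the interface via a radial resampling-uniqueness theorem for flow lines (Propositions~\ref{prop:radial-flowline} and~\ref{prop:radial-resampling}), and then kills $\beta$ by zooming in at the bulk point and comparing with the quantum-cone/wedge welding of DMS14 together with reversibility of whole-plane $\SLE_\kappa(W-2)$ (Lemma~\ref{lem:beta-law-2}).

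Two further steps in your plan would fail as stated. First, the reduction of thin weights to the thick case by claiming that a quantum disk attached at a thin vertex is ``absorbed'' into the marked point $0$ or $i$ is not correct: when $W_i<\frac{\gamma^2}2$ the attached thin pieces are welded along the curve and remain genuine beads of the surface (the interface is then self- and boundary-hitting, as in Figure~\ref{fig:radial-thin}); in the paper the thin regime $W<\frac{\gamma^2}2$ of Theorem~\ref{thm:w2w} requires a completely separate argument based on the Poissonian structure of thin disks, resampling properties of the Liouville field, and uniqueness of invariant measures for a Markov kernel (Section~\ref{sec:pf-thin}). Second, the passage from the special case to general $(W_1,W_2,W_3)$ is not an ``associativity of weldings starting from $W_2=W_3$''; the paper starts from $W_1=W_2=W$, $W_3=2$ and bootstraps by drawing auxiliary chordal interfaces, invoking the chordal disk--triangle welding (Theorem~\ref{thm:disk+QT}) and reading off the marginal law of the new interface from the flow-line interaction rules (Proposition~\ref{prop:radial-flowline}), with an induction in $W_3$. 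Your Step 1 (removability/well-definedness of the welding) is fine and standard, but the identification steps as proposed rest on an unproved radial zipper and an invalid thin-case reduction.
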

	\begin{figure}[ht]
		\centering
		\includegraphics[scale=0.69]{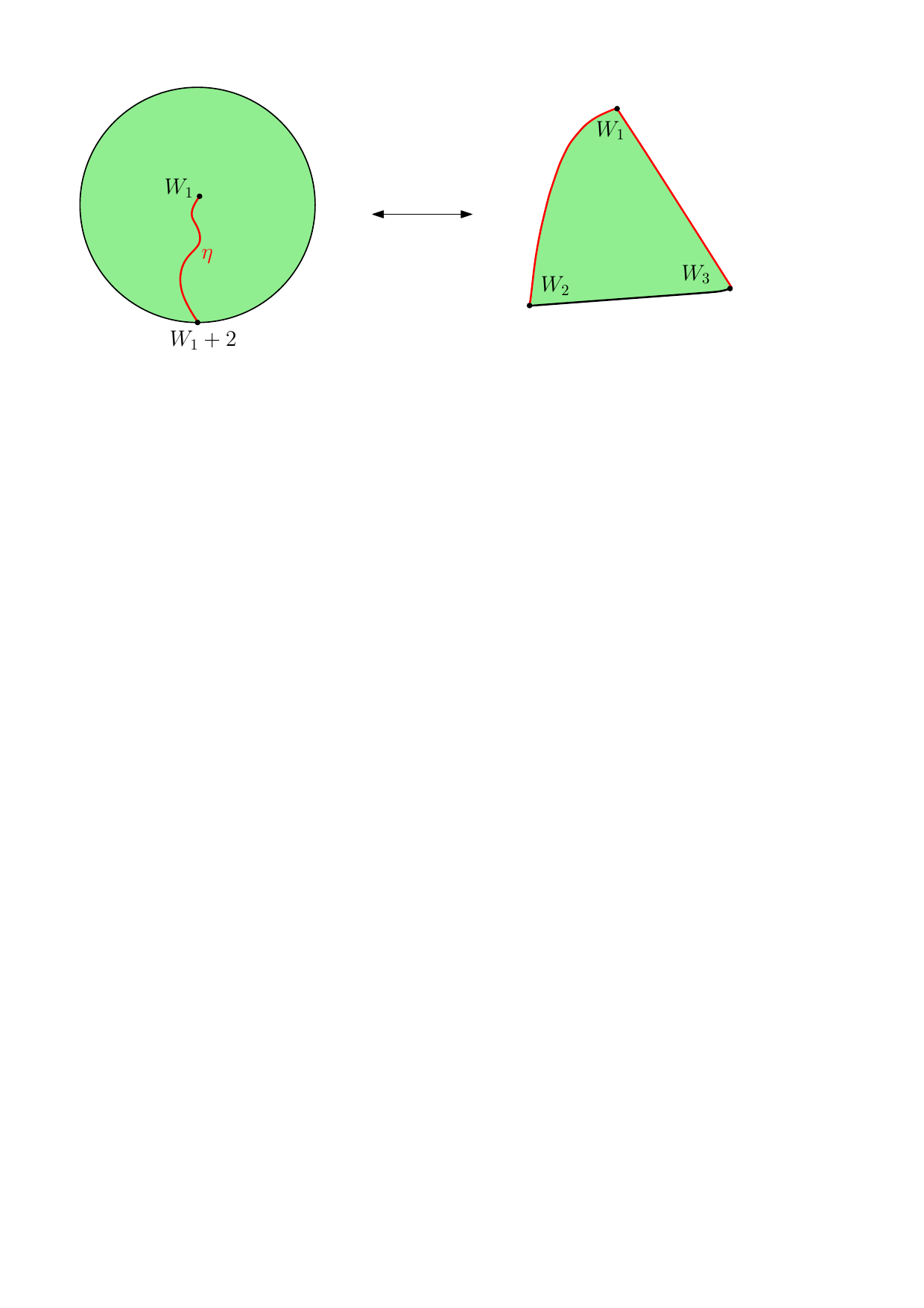}
		\caption{An illustration of Theorem \ref{thm:main} in the case $W_1,W_2,W_3>\frac{\gamma^2}{2}$. Conformally welding two edges of the weight $(W_1,W_2,W_3)$ quantum triangle gives a quantum disk with one bulk insertion of weight $W_1$ and one boundary insertion of weight $W_1+2$, and the interface is an independent radial $\SLE_\kappa(W_3-2;W_2-2)$ curve independent of the field. }\label{fig:radial-thick}
	\end{figure}
	\begin{figure}[ht]
		\centering
		\includegraphics[scale=0.69]{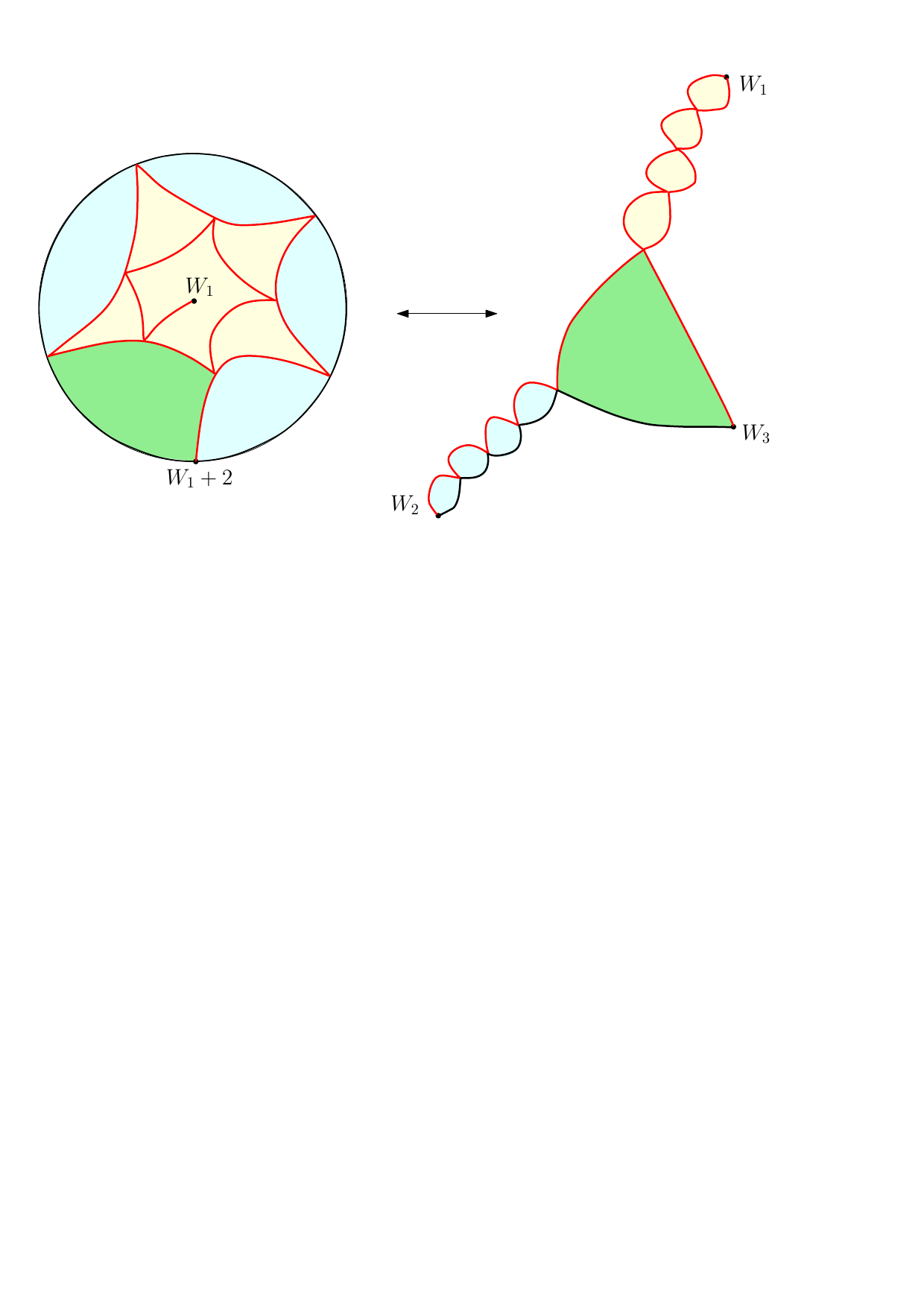}
		\caption{An illustration of Theorem \ref{thm:main} in the case $W_1,W_2<\frac{\gamma^2}{2}$ and $W_3>\frac{\gamma^2}{2}$. The radial $\SLE_\kappa(W_3-2;W_2-2)$ curve is now both self-hitting and boundary-hitting, producing the welding of a thin quantum triangle. }\label{fig:radial-thin}
	\end{figure}
	
	The full statement of  Theorem~\ref{thm:main} is obtained from bootstrapping the following special case:
	\begin{theorem}\label{thm:w2w}
	For $W \in (0, \frac{\gamma^2}2) \cup (\frac{\gamma^2}2,\infty)$, Theorem~\ref{thm:main} holds for $W_1=W_2=W$ and $W_3= 2$. 
	\end{theorem}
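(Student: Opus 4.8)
The plan is to deduce Theorem~\ref{thm:w2w} from the chordal disk welding \eqref{eq:2pt-weld} by a ``localize, then zip up along the curve'' argument, using the radial imaginary geometry of Section~\ref{subsec:radial-ig} to compare the radial and chordal SLEs and Girsanov's theorem to track the Liouville field. The weight bookkeeping to keep in mind is as follows: the weight-$W_1=W$ vertex $a_1$ of $\QT(W,W,2)$, whose two adjacent arcs get welded to each other, becomes the bulk point $i$ of weight $W$ (insertion $\alpha=Q-\frac{W}{2\gamma}$); the far endpoints of these two arcs, namely the weight-$W$ vertex $a_2$ and the weight-$2$ vertex $a_3$, merge at the root $0$ to form the boundary point of weight $W+2$ (insertion $\beta=\gamma-\frac{W}{\gamma}$); and the force-point weights $W_3-2=0$ at $0^-$ and $W_2-2=W-2$ at $0^+$ are inherited from $a_3$ and $a_2$, consistently with the additive rule for SLE endpoints in \eqref{eq:2pt-weld}. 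I first treat the thick case $W>\frac{\gamma^2}2$.

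\emph{Localization.} Fix an embedding $(\bbH,\phi,0,i)$ of a sample from $\Md_{1,1}(W,W+2)$ and an independent radial $\SLE_\kappa(0;W-2)$ curve $\eta$ from $0$ targeted at $i$ with force points at $0^\pm$. Let $\tau$ be a stopping time before $\eta$ separates $i$ from $\infty$. On $[0,\tau]$ the law of $\eta$ is mutually absolutely continuous with respect to that of a chordal $\SLE_\kappa(W_3-2;W_2-2)$ from $0$ to $\infty$ with force points at $0^\pm$, the Radon--Nikodym derivative coming from the difference between the radial and chordal Loewner drifts; and away from $i$ the bulk insertion $\alpha$ at $i$ modifies the Liouville field only through an explicit smooth Radon--Nikodym factor (Girsanov). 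Inserting these two comparisons into \eqref{eq:2pt-weld} with parameters $(2,W)$, namely $\Md_2(W+2)\otimes\SLE_\kappa(0;W-2)=c\,\Wd(\Md_2(2),\Md_2(W))$, one identifies the curve-decorated surface obtained from $(\bbH\setminus\eta[0,\tau],\phi)$ by recording the two sides of $\eta[0,\tau]$ as a conformal welding, along sub-arcs, of a weight-$2$ quantum disk and a weight-$W$ quantum disk, together with an unexplored region containing $i$; in particular the two welded sub-arcs have equal quantum length, that of $\eta[0,\tau]$, and the remaining boundary-length data matches \eqref{eq:2pt-weld}.

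\emph{Zipping up.} Using the domain Markov property of radial $\SLE_\kappa(\underline\rho)$ together with the conformal Markov property of the Liouville field and the compatibility of conformal welding under further welding, the local identification is iterated across the successive times at which $\eta$ separates $i$ from the boundary, re-applying the previous step inside the component of $i$. In the limit the whole region $\bbH\setminus\eta$ is reorganized into one marked quantum surface. The self-welding of the two arcs emanating from the image of $a_1$ at the target point $i$ is a purely local operation on the Liouville field, and by the GFF ``doubling''/insertion dictionary it produces exactly a bulk insertion of weight $W$ there, while the merge of $a_2$ with $a_3$ at the root gives the weight-$(W+2)$ boundary point; here the hypothesis $W\neq\frac{\gamma^2}2$ is what separates the thick and thin regimes of this local picture. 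Hence $(\bbH\setminus\eta,\phi,i,0^-,0^+)$ is a quantum triangle of weights $(W,W,2)$ whose two arcs adjacent to $a_1$ both have quantum length $\ell$, where $\ell$ is the quantum length of $\eta$, which is precisely \eqref{eqn:thm-main}; matching the boundary-length disintegrations pins down $c=c_{W,W,2}$, and comparing total masses at fixed $\ell$ (computed from the Liouville field definitions) shows $c\in(0,\infty)$.

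\emph{Thin case and main difficulty.} The thin case $W<\frac{\gamma^2}2$ is handled along the same lines; the new feature is that $\SLE_\kappa(W_3-2;W_2-2)=\mathrm{raSLE}_\kappa(0;W-2)$ with $W-2\in(-2,\frac{\gamma^2}2-2)$ is both self-hitting and boundary-hitting, so the complementary surface is a chain of thick quantum surfaces, which one matches with the definition of the thin quantum triangle $\QT(W,W,2)$ via attached quantum disks (Figure~\ref{fig:radial-thin})---identifying the SLE-generated chain near the target point $i$ with the attached thin quantum disk is the analogue of the thin-disk analysis of \cite{AHS20}. The main obstacle is the global step: making the radial conformal welding rigorous in a neighbourhood of the target point $i$, where the two welded arcs meet at an \emph{interior} point with no chordal analogue in \cite{AHS20,ASY22}, and showing that the zipping-up along $\eta$ closes up with no macroscopic unexplored region, so that the limiting surface is genuinely a quantum triangle with the asserted marked points and boundary lengths.
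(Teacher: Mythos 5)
Your proposal is not the paper's route, and as it stands it has two genuine gaps that the paper's actual proof is specifically built to close. First, the step you describe as ``the self-welding of the two arcs at the target point $i$ is a purely local operation on the Liouville field, and by the GFF doubling/insertion dictionary it produces exactly a bulk insertion of weight $W$'' is not a known local statement -- it is essentially the theorem. The conversion of the boundary insertion $\beta_1=\gamma+\frac{2-W}\gamma$ at the vertex into the bulk insertion $\alpha=\frac{\beta_1}2+\frac1\gamma=Q-\frac W{2\gamma}$ under welding is the content of the LCFT quantum zipper of \cite{Ang23} (Proposition~\ref{prop-quantum-zipper} here), which requires running a reverse $\SLE_\kappa(\underline{\tilde\rho})$ flow and tracking the field through it; in the thick case the paper obtains the field law this way (Section~\ref{subsec:thick-field}), and in the thin case it does not weld near $i$ directly at all but instead characterizes the field by resampling properties of Markov kernels (Propositions~\ref{prop-resample-inside}, \ref{prop-resample-1} and Lemma~\ref{lem-invariant}). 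Your plan offers no substitute for this input, and you yourself flag the welding near the interior point as ``the main obstacle'' without a strategy.

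Second, and more structurally, the localize-and-zip scheme cannot identify the law of the interface. Away from $i$, radial $\SLE_\kappa^\beta(0;W-2)$ is mutually absolutely continuous with the $\beta=0$ process for every spiraling parameter $\beta$ (and with the chordal process you compare to), so any argument built from absolute continuity on segments stopped before reaching $i$, iterated over separation times, can at best show the interface is $\mathrm{raSLE}_\kappa^\beta(W-2)$ for some unknown, possibly random, $\beta$ -- exactly the situation of Lemma~\ref{lem:beta-law-1}. Pinning down $\beta=0$ is where the paper invests its machinery: the radial imaginary geometry and resampling-uniqueness theorem (Theorem~\ref{thm:radial-ig}, Proposition~\ref{prop:radial-resampling}), followed by the zoom-in at the tip comparing with the welding of a quantum wedge into a quantum cone, using \cite[Theorem 1.5]{DMS14} and reversibility of whole-plane $\SLE_\kappa(\rho)$ \cite[Theorem 1.20]{ig4} (Lemma~\ref{lem:beta-law-2}). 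Your proposal contains no mechanism sensitive to the asymptotic winding near $i$, so this gap cannot be repaired within the stated strategy. (A smaller point: the thin case in the paper is not ``along the same lines'' as the thick case -- it proceeds by a different resampling characterization of the field, using the Poissonian structure of thin disks -- so the last paragraph of your sketch also understates what is needed there.)
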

	In Section~\ref{subsec:weld-qt}, we will explain how Theorem~\ref{thm:main} follows from Theorem~\ref{thm:w2w}. We expect that the $W = \frac{\gamma^2}2$ case of Theorem~\ref{thm:w2w} can be obtained by a limiting argument (and so Theorem~\ref{thm:main} holds without the condition $W_1 \neq \frac{\gamma^2}2$), but do not pursue this in the present work.
	
	We now discuss the proof of Theorem~\ref{thm:w2w}, beginning with the identification of the random field obtained from conformal welding. For $W>\frac{\gamma^2}{2}$, \cite{AHS21} identifies the uniform embedding of a two-pointed quantum disk via the Liouville field, and \cite{Ang23} gives a description of the field obtained from conformally welding a Liouville field to itself; combining these gives the law of the field. For $W\in(0,\frac{\gamma^2}{2})$, the Poissonian structure of thin quantum disks allows us to identify conformal weldings of various parts via Liouville fields, yielding resampling properties of the field. These resampling properties characterize the Liouville field as proposed in~\cite{ARS22,ASY22}. 
	
	Next, in the proof of Theorem~\ref{thm:w2w}, we need to determine the conditional law of the interface given the field. We explain a radial version of imaginary geometry where the flow lines are radial $\SLE_\kappa(\underline\rho)$ curves; see Theorem~\ref{thm:radial-ig}. Although not explicitly stated there, the arguments in~\cite{ig4}  readily imply Theorem~\ref{thm:radial-ig}, and we include the statement and a (slightly different) proof for completeness.  Then we extend the resampling property for flow lines in~\cite[Theorem 4.1]{MS16b} and \cite[Theorem 5.1]{ig4} to the radial setting, from which we prove that the interface indeed agrees in law with the radial flow lines, completing the proof of Theorem~\ref{thm:w2w}.
	
	Finally, using a similar approach, we will prove an analog of Theorem~\ref{thm:main} with $\kappa \in (4,8)$. It involves \emph{forested} quantum surfaces, which we introduce in Section~\ref{sec-forested}. See Figure~\ref{fig:thm:main-non-simple} for an illustration for the case $W_1,W_2,W_3<\frac{\gamma^2}{2}$.
	
	\begin{theorem}\label{thm:main-nonsimple}
		Let $\gamma\in (\sqrt2,2)$, $\kappa'=16/\gamma^2$ and suppose $W_1, W_2, W_3>0$ satisfy $W_2+W_3=\gamma^2-2+W_1$. Suppose that $W_1 \neq \frac{\gamma^2}2$. Then there exists some constant $c=c_{W_1,W_2,W_3}\in (0,\infty)$ such that, 
		\begin{equation}\label{eqn:thm-main-nonsimple}
			\mathcal{M}^\mathrm{f.d.}_{1,1}(W_1 + 2 - \frac{\gamma^2}2, W_1+\frac{\gamma^2}2)\otimes \mathrm{raSLE}_{\kappa'}(W_3-(\gamma^2-2);W_2-(\gamma^2-2)) = c\int_0^\infty \Wd(\QT^f(W_1,W_2,W_3;\ell,\ell))\,d\ell.
		\end{equation}
	\end{theorem}

    \begin{figure}
        \centering
        \includegraphics[scale=0.6]{figures/radialnonsimpleAA.pdf}
        \caption{An illustration of Theorem~\ref{thm:main-nonsimple} when $W_1,W_2,W_3<\frac{\gamma^2}{2}$. Conformally welding two boundary arcs of a weight $(W_1, W_2, W_3)$ forested quantum triangle gives a forested quantum disk with a bulk insertion of weight $W_1 + 2 - \frac{\gamma^2}2$ and a boundary insertion of weight $W_1 + \frac{\gamma^2}2$, and the interface is an independent radial $\SLE_{\kappa'} (W_3 - (\gamma^2 - 2); W_2 - (\gamma^2-2))$ curve.
      }
        \label{fig:thm:main-non-simple}
    \end{figure}
	
	We note that during the process of writing the present work, a number of variants and special cases of Theorem~\ref{thm:main} and Theorem~\ref{thm:main-nonsimple} were proved and used in other works. These  were motivated from our work, and have applications that we now describe. In~\cite{ASYZ24} with Sun and Zhuang, 
	we proved the $(W_1, W_2, W_3) = (2-\frac{\gamma^2}{2},2-\frac{\gamma^2}{2},\gamma^2-2)$ case of Theorem~\ref{thm:main-nonsimple}
	 using a mating-of-trees description for quantum surfaces~\cite{DMS14,AG19}, from which we determined the {clockwise/counterclockwise} winding probability for radial $\SLE_{\kappa'}(\kappa'-6)$ and further the boundary touching probability for a non-simple $\mathrm{CLE}_{\kappa'}$ loop; this special case of Theorem~\ref{thm:main-nonsimple} was later also applied in \cite{sxz-annulus-perc} to derive annulus crossing formulae for critical planar percolation, and in \cite{cfsx-annulus-bm} to solve for the probability Brownian motion on an annulus disconnects the two boundary components of the annulus. In the recent work~\cite{LSYZ24}, with Liu, Sun and Zhuang the second named author proved a variant of Theorem~\ref{thm:main} with $W_2+W_3=\gamma^2-2$ and the interface being radial $\SLE_\kappa(\rho;\kappa-6-\rho)$ (this falls outside the range of Theorem~\ref{thm:main} as the interface a.s.\ hits the continuation threshold before reaching the target). Based on this, they determined the {clockwise/counterclockwise} winding probability for the boundary CLE~\cite{MSW2017} and hence the one arm exponent for the fuzzy Potts model. Finally, in forthcoming work of the first named author and Mithal, Theorems~\ref{thm:main} and~\ref{thm:main-nonsimple} are applied to construct a quantum natural time version of the \emph{quantum Loewner evolution (QLE)} introduced in \cite{qle}, and establish that this variant of QLE has three different phases of behavior analogous to the simple, swallowing, and space-filling regimes of SLE. The analogous result on phases for the original QLE remains open \cite[Section 8.2]{qle}.
	
	\medskip \noindent \textbf{Outline.}
	In Section~\ref{sec:radial-ig}, we give a background for SLE and the imaginary geometry, and establish the theory in the radial setting. In Section~\ref{sec:pre-LQG}, we provide background on LCFT and LQG, and prove Theorem~\ref{thm:main} based on Theorem~\ref{thm:w2w} in Section~\ref{subsec:weld-qt}. We prove Theorem~\ref{thm:w2w} in Section~\ref{sec:pf-thick} for $W>\frac{\gamma^2}{2}$ and Section~\ref{sec:pf-thin} for $W\in(0,\frac{\gamma^2}{2})$.
	Finally, we prove Theorem~\ref{thm:main-nonsimple} in Section~\ref{sec-nonsimple} by adapting the arguments of Sections~\ref{sec:pf-thick} and~\ref{sec:pf-thin}.

    \medskip 
    \noindent \textbf{Acknowledgments.} We thank Xin Sun for helpful discussions.  We also thank the anonymous referees for their careful reading and many valuable comments. M.A. was partially supported by the Simons Foundation as a Junior Fellow at the Simons Society of Fellows, NSF grants DMS-1712862 and DMS-2348201, and a start-up grant from UC San Diego.  P.Y. was partially supported by NSF grant DMS-1712862.

	\section{Radial SLE and imaginary geometry}\label{sec:radial-ig}

	In this section, we review the imaginary geometry theory~\cite{MS16a}, and develop the coupling between radial $\SLE_\kappa(\underline\rho)$ and the Gaussian free field. The single force point case is treated in~\cite{ig4}; we extend the results to the multiple force point regime.

	Throughout this section, we set 
	\begin{equation}
	    \kappa\in(0,4), \ \ \lambda = \frac{\pi}{\sqrt{\kappa}}, \ \ \kappa' = \frac{16}{\kappa}, \ \ \lambda' = \frac{\pi}{\sqrt{\kappa'}}, \ \ \chi = \frac{2}{\sqrt{\kappa}}-\frac{\sqrt{\kappa}}{2}.
	\end{equation}
	In this paper we work with non-probability measures and extend the terminology of ordinary probability to this setting. For a finite or $\sigma$-finite  measure space $(\Omega, \mathcal{F}, M)$, we say $X$ is a random variable if $X$ is an $\mathcal{F}$-measurable function with its \textit{law} defined via the push-forward measure $M_X=X_*M$. In this case, we say $X$ is \textit{sampled} from $M_X$ and write $M_X[f]$ for $\int f(x)M_X(dx)$. \textit{Weighting} the law of $X$ by $f(X)$ corresponds to working with the measure $d\tilde{M}_X$ with Radon-Nikodym derivative $\frac{d\tilde{M}_X}{dM_X} = f$.  \textit{Conditioning} on some event $E\in\mathcal{F}$ (with $0<M[E]<\infty$) refers to the probability measure $\frac{M[E\cap \cdot]}{M[E]} $  over the space $(E, \mathcal{F}_E)$ with $\mathcal{F}_E = \{A\cap E: A\in\mathcal{F}\}$,   {while \emph{restricting} to $E$ refers to the measure $M[E\cap\cdot]$. }
	
	\subsection{The Gaussian free field}\label{subsec:GFF}
	
	Let $D\subset \mathbb{C}$ be a domain with $\partial D =  \partial^D\cup\partial^F$, $\partial^D\cap\partial^F=\emptyset$. We construct the GFF on $D$ with \textit{Dirichlet} \textit{boundary conditions} on $\partial^D$ and \textit{free boundary conditions} on $\partial^F$ as follows. Suppose first that $\partial ^D$ is harmonically nontrivial. Consider the space of smooth functions on $D$ with finite Dirichlet energy and zero value near $\partial^D$, and let $H(D)$ be its closure with respect to the inner product $(f,g)_\nabla=(2\pi)^{-1}\int_D(\nabla f\cdot\nabla g)\ dx\ dy$. Then our GFF is defined by
	\begin{equation}\label{eqn-def-gff}
	h = \sum_{n=1}^\infty \xi_nf_n
	\end{equation}
	where $(\xi_n)_{n\ge 1}$ is a collection of i.i.d. standard Gaussians and $(f_n)_{n\ge 1}$ is an orthonormal basis of $H(D)$. One can show that the sum \eqref{eqn-def-gff} a.s.\ converges to a random distribution independent of the choice of the basis $(f_n)_{n\ge 1}$. If $\partial^F$ is harmonically trivial, then we call $h$ a Dirichlet GFF on $D$ with zero boundary value; if $h = h^0+f$ where $h^0$ is a  Dirichlet GFF on $D$ with zero boundary value and $f$ is harmonic, then we say $h$ is a Dirichlet GFF on $D$ with boundary condition $f|_{\partial D}$. 
    
    Now suppose $\partial^D$ is harmonically trivial. We consider the space of smooth functions modulo additive constant, let $H(D)$ be its closure with respect to $(\cdot, \cdot)_\nabla$, and define the GFF $h$ via~\eqref{eqn-def-gff}. Thus $H(D)$ is a space of functions modulo additive constant, and $h$ is a distribution modulo additive constant. We now specify a way to fix the additive constant for two cases, so $h$ is a distribution. 
    If $D = \mathcal{S}$ is the horizontal strip $\mathbb{R}\times (0, \pi)$ and $\partial^D = \emptyset$, we fix the constant by requiring that every function in $H(\mathcal{S})$ has average zero on $\{0\}\times [0, i\pi]$. If $D=\mathbb{H}$ is the upper half plane $\{z:\text{Im} z>0\}$  and $\partial^D = \emptyset$, every function in $H(\mathbb{H})$ should have average zero on the semicircle $\{e^{i\theta}:\theta\in(0, \pi)\}$. 
	We denote the corresponding laws of $h$ by $P_{\mathcal{S}}$ and $P_{\mathbb{H}}$, and the samples from $P_{\mathcal{S}}$ and $P_\bbH$ are referred as $h_{\mathcal{S}}$ and $h_{\bbH}$. We call these the free boundary GFFs on $\cS$ and $\bbH$.
	
	For $D\in\{\bbH,\cS\}$, the covariance of the GFF with free boundary conditions and the above normalization is 
	$$\bbE[ h_D(z)h_D(w)] = G_{D}(z,w),$$ where $G_{D}$ is the Green's function 
	\eqb
	\begin{aligned}
	    &G_{\bbH}(z,w) = -\log|z-w|-\log|z-\bar{w}|+2\log|z|_++2\log|w|_+; \ \ \quad G_{\cS}(z,w) = G_{\bbH}(e^z,e^w); 
	\end{aligned}
	\eqe
    here we write $\log r_+ := \max (\log r, 0)$.

	We now state the Markov property of the GFF.
	\begin{proposition}
	\label{prop:Markov}
		Let $D\subset \mathbb{C}$ be a domain with $\partial D =  \partial^D\cup\partial^F$, $\partial^D\cap\partial^F=\emptyset$, and $U\subset D$ open. Let $h$ be the GFF on $D$ with  \textit{Dirichlet} (resp. free) \textit{boundary conditions} on $\partial^D$ (resp. $\partial^F$). Then we can write $h=h_1+h_2$ where: 
		\begin{enumerate}
			\item $h_1$ and $h_2$ are independent;
			\item $h_1$ is a GFF on $U$ with Dirichlet boundary condition on $\partial U\backslash \partial^F$ and free on $\partial U\cap \partial^F$;
			\item $h_2$ is the same as $h$ outside $U$ and harmonic inside $U$.
		\end{enumerate}
	\end{proposition}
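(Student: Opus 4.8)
The natural approach is to produce the splitting at the level of the Hilbert space $H(D)$ and then read it off from the series \eqref{eqn-def-gff}. First I would introduce the closed subspace $H_1 \subset H(D)$ defined as the closure, in the Dirichlet inner product, of the smooth functions on $D$ that are supported in $\overline U$ and vanish in a neighbourhood of $\partial U \setminus \partial^F$. Extending by zero identifies $H_1$ isometrically with $H(U)$, where $U$ carries Dirichlet boundary conditions on $\partial U \setminus \partial^F$ and free boundary conditions on $\partial U \cap \partial^F$: any such function vanishes near every point of $\partial^D$ (since $\partial^D \cap \overline U \subset \partial U \setminus \partial^F$), hence lies in $H(D)$, and extension by zero leaves the Dirichlet energy unchanged. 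Set $H_2 := H_1^{\perp}$, the orthogonal complement inside $H(D)$.

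Next I would identify $H_2$ with the space of elements of $H(D)$ that are harmonic in $U$. Indeed, $f \in H(D)$ lies in $H_2$ iff $(f,g)_\nabla = (2\pi)^{-1}\int_D \nabla f\cdot\nabla g\, dx\, dy = 0$ for all $g \in C_c^\infty(U)$; integrating by parts this says $\Delta f = 0$ in $U$ as a distribution, so by Weyl's lemma $f$ agrees a.e.\ in $U$ with a function harmonic there. Hence $H(D) = H_1 \oplus H_2$ with $H_1 \cong H(U)$ and $H_2 = \{\, f \in H(D) : f \text{ harmonic in } U \,\}$.

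Now choose an orthonormal basis $(f_n^{1})_{n}$ of $H_1$ and an orthonormal basis $(f_n^{2})_{n}$ of $H_2$; their union is an orthonormal basis of $H(D)$. Expanding the GFF in this basis and grouping the two families yields $h = h_1 + h_2$ with $h_1 = \sum_n \xi_n^{1} f_n^{1}$ and $h_2 = \sum_n \xi_n^{2} f_n^{2}$, where $(\xi_n^{1})_n$ and $(\xi_n^{2})_n$ are the associated families of i.i.d.\ standard Gaussians; independence of $h_1$ and $h_2$ follows, using that \eqref{eqn-def-gff} does not depend on the choice of basis. By construction $h_1$ has the law of the GFF on $U$ with Dirichlet conditions on $\partial U \setminus \partial^F$ and free conditions on $\partial U \cap \partial^F$, which gives (1) and (2). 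Each $f_n^{1}$ vanishes on $D \setminus \overline U$, so $h_1$ does too (as a distribution, when tested against functions supported off $\overline U$), hence $h_2 = h$ there; together with a null-set boundary argument this is (3). Finally, every partial sum of $h_2$ is harmonic in $U$ and the series converges a.s.\ as a random distribution, and a distributional limit of harmonic functions is harmonic, so $\Delta h_2 = 0$ in $U$; thus $h_2$ is represented by a function harmonic in $U$, completing (3).

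The step I expect to be the main obstacle is the first one: setting up $H_1$ as an isometric copy of the mixed--boundary--condition space $H(U)$ inside $H(D)$, in particular handling the free portion $\partial U \cap \partial^F$, along which the approximating functions need not be compactly supported in $U$, and --- when $\partial^D$ is harmonically trivial --- running the whole argument modulo additive constants while keeping the normalizations (zero average on the prescribed arc of $\mathcal{S}$ or $\bbH$) compatible between $D$ and $U$. Once the orthogonal decomposition $H(D) = H(U) \oplus \{\, f : f \text{ harmonic in } U \,\}$ is in place, the probabilistic claims (1)--(3) follow routinely.
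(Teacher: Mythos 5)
The paper does not prove this proposition itself (it defers to \cite[Section 4.1.5]{DMS14}), and your Hilbert-space argument --- the orthogonal decomposition $H(D)=H_1\oplus H_1^\perp$ with $H_1\cong H(U)$ via extension by zero, followed by expanding \eqref{eqn-def-gff} in a basis adapted to the splitting --- is exactly that standard proof and is essentially correct, including the observation that $\partial^D\cap\ol U\subset\partial U\setminus\partial^F$ which makes the zero-extension land in $H(D)$; the worry you raise about the free portion $\partial U\cap\partial^F$ is harmless, since that portion lies in $\partial D$ and so the zero-extension of any admissible function on $U$ is automatically smooth on $D$. One small inaccuracy: when $\partial U\cap\partial^F\neq\emptyset$, the complement $H_2=H_1^\perp$ is \emph{not} all of $\{f\in H(D): f \text{ harmonic in } U\}$ but only those harmonic functions with vanishing weak normal derivative along $\partial U\cap\partial^F$ (orthogonality against elements of $H_1$ not supported away from the free boundary forces this); however, only the inclusion $H_2\subset\{f \text{ harmonic in } U\}$, which your Weyl's-lemma argument via testing against $C_c^\infty(U)$ does establish, is needed for conclusion (3), so the proof stands.
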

	Note that if $\partial^D=\emptyset$ (i.e., $h$ is free) then $h$ and $h_2$ are both defined modulo additive constants. See \cite[Section 4.1.5]{DMS14} for more details.  {The above property can also be extended to random sets. Following \cite{SS13}, we say that a (random) closed set $A\subset D$ coupled with $h$ and satisfying $\partial D \subset A$ is \textit{local}, if one can find a law on pairs $(A, h_2)$ such that $h_2|_{D\backslash A}$ is harmonic, and given $(A, h_2)$, the conditional law of $h -h_2$ is that of the zero boundary GFF on $D\backslash A$.

	\subsection{Overview of chordal $\SLE_\kappa(\underline\rho)$ and imaginary geometry}\label{subsec:pre-sle-ig}
	The SLE$_\kappa$ curves are introduced in \cite{Sch00}. For a curve $\eta$ on the upper half plane starting from 0,  let $H_t$ be the unbounded connected component of $\bbH\backslash \eta([0,t])$, and we call $K_t:=\bbH\backslash H_t$ the \emph{hull} of $\eta$ at time $t$. For $\kappa>0$, $\SLE_\kappa$ is a conformally invariant measure on continuously growing   curves $\eta$   with the Loewner driving function $W_t=\sqrt{\kappa}B_t$ (where $B_t$ is the standard Brownian motion).   Then  the $\SLE_\kappa$ curve from 0 to $\infty$ on the upper half plane  can be described by 
	\begin{equation}\label{eqn-def-sle}
	g_t(z) = z+\int_0^t \frac{2}{g_s(z)-W_s}ds, \ z\in\mathbb{H},
	\end{equation} 
	and $g_t$ is the unique conformal transformation from $H_t$ to $\mathbb{H}$ such that $\lim_{|z|\to\infty}|g_t(z)-z|=0$. $\SLE_\kappa$ is scale invariant, and hence its definition can be extended to other simply connected domains via conformal maps. 
	
	SLE$_\kappa$ curves also has a natural variant called SLE$_\kappa(\underline{\rho})$, which first appeared in \cite{LSW03} and studied in \cite{Dub05,MS16a}. 
	Fix $\rho_1,...,\rho_n\in\bbR$ and $x_1,...,x_n\in\ol\bbH$. The  {chordal} SLE$_\kappa(\underline{\rho})$ process is a measure on continuously growing curves $\eta$ with the  Loewner driving function $(W_t)_{t\ge 0}$  characterized by 
	\begin{equation}\label{eqn-def-sle-rho}
	\begin{split}
	&W_t = \sqrt{\kappa}B_t+\sum_{j=1}^n \int_0^t \mathrm{Re}\big(\frac{\rho_j}{W_s-V_s^j}\big)ds; \\
	& g_t(z) = z+\int_0^t \frac{2}{g_s(z)-W_s}ds, \ \text{for}\ z\in\ol{\bbH},\\
    & V_t^j = x_j+\int_0^t \frac{2}{V_s^j-W_s}ds, \ \ \text{for } j=1,...,n.
	\end{split}
	\end{equation}
	where $g_t$ is the unique conformal transformation from $H_t$ to $\mathbb{H}$ such that $\lim_{|z|\to\infty}|g_t(z)-z|=0$. The points $x_j$ with $x_j\in\bbH$ are referred as interior force points, while the points $x_j$ with $x_j\in\bbR$ are referred as boundary force points. The number $\rho_j$ is referred to as the weight of the force point $x_j$. The \emph{continuation threshold} for~\eqref{eqn-def-sle-rho} is the first time $t$ when  the sum of the weights of the force points which are immediately to the left of $W_t$ is not more than $-2$ or the sum of the weights of the force points which are immediately to the right of $W_t$ is not more than $-2$.  As explained in~\cite[Section 2]{MS16a} and~\cite[Section 3.3.1]{DMS14}, ~\eqref{eqn-def-sle-rho} admits a unique solution until the first time that either the continuation threshold is hit or the imaginary part of one of the interior force points is equal to 0. When all of the force points  lie on $\bbR$, we write 
	$x^{k,L}<...<x^{1,L}\le 0^-  \le 0^+ \le  x^{1,R}<...<x^{\ell, R}$ for the rearrangement of $x_1,...,x_n$ and write $\rho^{i,q}$ (where $q\in\{L,R\}$) for the  weight associated to $x^{i,q}$.
	
	 Now we recall the notion of \emph{the  GFF flow lines}. Heuristically, given a GFF $h$, $\eta(t)$ is a flow line of angle $\theta$ if
	\begin{equation}
	{\partial_t\eta(t)} = e^{i(\frac{h(\eta(t))}{\chi}+\theta)}\ \text{for}\ t>0.
	\end{equation} 
	Rigorously, the GFF flow lines are defined via the following result from~\cite[Theorem 1.1, Theorem 1.2]{MS16a}. Also recall the notion of GFF local sets after Proposition~\ref{prop:Markov}.
	
	\begin{theorem}\label{thm-ig1}
	 	Fix $\kappa>0$, a vector $\underline{\rho}$ of weights and a vector $\underline{x}$ of force points. Let $(K_t)_{t\ge 0}$ be the hull at time $t$ of the SLE$_\kappa(\underline{\rho})$ process $\eta$ described by the Loewner flow \eqref{eqn-def-sle} with $(W_t, V_t^{i,q})$ solving \eqref{eqn-def-sle-rho}. Let $\mathfrak{h}_t^0$ be the harmonic function on $\mathbb{H}$ with boundary values 
	 	$$
	 	-\lambda(1+\sum_{i=0}^j \rho^{i,L})\ \ \text{on} \ [V_t^{j+1, L}-W_t, V_t^{j,L}-W_t)\ \  \text{and}\ \ \lambda(1+\sum_{i=0}^j \rho^{i,R})\ \text{on}\ \ [V_t^{j, R}-W_t, V_t^{j+1,R}-W_t)
	 	$$
	 	where $\rho^{0,R} = \rho^{0,L}=0$, $x^{0,L} = 0^-, x^{0,R} = 0^+, x^{k+1, L} = -\infty, x^{\ell+1, R} = +\infty$, $V_t^{0,L} = g_t(0^-)$ and $V_t^{0,R} = g_t(0^+)$. Set $\mathfrak{h}_t(z) = \mathfrak{h}_t^0(g_t(z))-\chi\arg g_t'(z)$. Let $\mathcal{F}_t$ be the filtration generated by $(W, V^{i,q})$. Then there exists a coupling $(K,h)$ where $h = \tilde{h}+ \mathfrak{h}_0$ with $\tilde{h}$ being a zero boundary GFF on $\mathbb{H}$ such that the following is true. For any $\mathcal{F}_t$-stopping time $\tau$ before the continuation threshold, $K_\tau$ is a local set for $h$ and the conditional law of $h|_{\mathbb{H}\backslash K_\tau}$ given $\mathcal{F}_\tau$ is the same as the law of $\mathfrak{h}_\tau+\tilde{h}\circ g_\tau$. Moreover, the curve $\eta$ is measurable with respect to $h$.
	 \end{theorem}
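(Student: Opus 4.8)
The plan is to split the statement into two parts: \textbf{(i)} \emph{existence} of a coupling of $\eta$ with a field $h$ of law $\tilde{h}+\mathfrak{h}_0$ such that, for every $\mathcal{F}_t$-stopping time $\tau$ before the continuation threshold, $K_\tau$ is local for $h$ with the stated conditional law; and \textbf{(ii)} \emph{measurability} of $\eta$ with respect to $h$. These correspond respectively to \cite[Theorems 1.1 and 1.2]{MS16a}, whose arguments I would follow.

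For (i), by the characterization of local sets recalled after Proposition~\ref{prop:Markov} it suffices to produce a coupling in which the conditional law of $h|_{\bbH\setminus K_\tau}$ given $\mathcal{F}_\tau$ is that of $\mathfrak{h}_\tau+\tilde{h}\circ f_\tau$; locality of $K_\tau$ is then automatic. I would establish this by an It\^o calculation in the spirit of Schramm--Sheffield and Dub\'edat. The crux is that for each fixed $z\in\bbH$ the process $t\mapsto\mathfrak{h}_t(z)$, run while $z\in H_t$, is a continuous local martingale in the filtration $\mathcal{F}_t$ with covariation $d\langle\mathfrak{h}_\cdot(z),\mathfrak{h}_\cdot(w)\rangle_t=-\,dG_{H_t}(z,w)$. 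To verify this one writes $\mathfrak{h}_t^0$ explicitly as a linear combination of the harmonic functions $\arg(\,\cdot-(V_t^{i,q}-W_t))$ with the prescribed jumps $\lambda\rho^{i,q}$ and baseline $\pm\lambda$, writes $-\chi\arg f_t'(z)$ as a corresponding sum over the derivative factors, and applies It\^o's formula using the Loewner SDEs \eqref{eqn-def-sle-rho} for $g_t(z)$, $W_t$ and $V_t^{i,q}=g_t(x^{i,q})$. The drift terms cancel exactly because $\lambda=\pi/\sqrt{\kappa}$, $\chi=\tfrac{2}{\sqrt{\kappa}}-\tfrac{\sqrt{\kappa}}{2}$ and $d\langle W\rangle_t=\kappa\,dt$ --- the algebraic identity at the heart of imaginary geometry --- and the force-point drifts in \eqref{eqn-def-sle-rho} are precisely what is needed to absorb the motion of the jump locations $V_t^{i,q}$. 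Integrating against a test function $\phi$, this shows $(\mathfrak{h}_t,\phi)$ is a martingale while the variance $\iint G_{H_t}(z,w)\phi(z)\phi(w)\,dz\,dw$ of the putative conditional field decays at the matching rate, so the family of conditional laws is consistent and, using the Markov property of the GFF, a coupling with the required properties exists.

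For (ii), I would show that the coupling in (i) is essentially unique, which forces $\eta$ to be a measurable function of $h$. Away from the force points, $\SLE_\kappa(\underline\rho)$ is locally mutually absolutely continuous with ordinary $\SLE_\kappa$, for which the curve is classically determined by the field; one bootstraps to the general case by a covering/stopping argument together with the conditional independence of $h$ across $K_t$: given two curves $\eta,\eta'$ coupled with the same $h$, looking at the first time their hulls differ and reading off the boundary data on the complementary domain from each of them via the Markov property yields a contradiction unless $\eta=\eta'$ almost surely.

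I expect the main obstacle to be part (ii): the existence in (i) is a long but essentially mechanical stochastic-calculus computation, whereas showing the curve is measurable with respect to the field requires the delicate conditional-independence machinery of \cite{MS16a}, and extra care is needed to handle multiple (possibly colliding) boundary force points and the stopping at the continuation threshold.
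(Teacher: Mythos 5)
This statement is quoted in the paper directly from \cite[Theorems 1.1 and 1.2]{MS16a} and is not reproved there, so the paper's ``proof'' is simply that citation. Your outline --- the Schramm--Sheffield/Dub\'edat It\^o-martingale computation for the existence of the coupling, followed by the MS16a conditional-independence/uniqueness machinery for measurability of $\eta$ with respect to $h$ --- is exactly the route of the cited work, i.e.\ the same approach the paper relies on.
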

	 Following~\cite{MS16a}, a chordal $\SLE_\kappa(\underline\rho)$ curve $\eta$ with $\kappa\in(0,4)$ coupled with the GFF $h$ as above is called a \emph{flow line} of $h$, while a chordal $\SLE_{\kappa'}(\underline\rho)$ curve $\eta$ with $\kappa'>4$ coupled with the GFF $-h$ is called a \emph{counterflow line} of $h$.
	 One important consequence  is that, as argued in \cite[Section 6]{MS16a}, given $\eta$, the field $h$ in each component of $\bbH\backslash\eta$ is a Dirichlet GFF with the \emph{flow line boundary conditions}; see e.g.\ \cite[Figure 1.10,1.11]{MS16a}.
	
	 We can also extend the notion of GFF flow lines to other simply connected domains as explained in~\cite{MS16a}. For a conformal map $f:D\to\bbH$ and a GFF $h$ on $\bbH$, if $\eta$ is a flow line of $h$, then $f^{-1}\circ \eta$ is a flow line of $h\circ f - \chi\arg f'$. Thus we define an imaginary surface to be an equivalence class of pairs $(D,h)$ under the equivalence relation
	 \begin{equation}\label{eq:ig-change-coord}
	     (D,h)\to (f^{-1}(D), h\circ f - \chi\arg f') = (\tilde D, \tilde h)
	 \end{equation}
	where $f:\tilde D\to D$ is a conformal map.
	
	\subsection{Radial $\SLE_\kappa(\underline\rho)$ and imaginary geometry}\label{subsec:radial-ig}
	
	We begin with the radial $\SLE_\kappa$ processes. For a curve $\eta$ from 1 targeted at 0 in $\bbD$, we write $D_t$ for the connected component of $\bbD\backslash\eta([0,t])$ containing 0. 
	Let
	$$\Psi(u,z)=\frac{u+z}{u-z},\ \Phi(u,z) = z\Psi(u,z),\ \text{and}\ \hat{\Phi}(u,z) = \frac{\Phi(u,z)+\Phi(1/\bar{u},z)}{2}   .$$
	For $\kappa>0$, the radial $\SLE_\kappa$ curve $\eta$ in $\bbD$ from 1 to 0 is defined by 
	\eqb\label{eq-rad-sle}
	dg_t(z) = \Phi(U_t, g_t(z))\,dt\quad \text{ for } z\in\mathbb{D}_t,  
	\eqe
	where $U_t = \exp(i\sqrt\kappa B_t)$ and $g_t$ is the unique conformal transformation $D_t\to \mathbb{D}$ fixing 0 with $g_t'(0)>0$ and $\log g_t'(0)=t$. 
	
	For $\rho_1,...,\rho_n\in\bbR$, $x_1,..., x_n\in\partial{\mathbb{D}}$ and $\mu\in\bbR$, the radial $\SLE_\kappa^\mu(\underline{\rho})$ in $\mathbb{D}$ targeted at 0 is the curve $\eta$ in $\mathbb{D}$ characterized by a random family of conformal maps $(g_t)$ solving 
	\begin{equation}\label{eqn:def-radial-sle}
	    \begin{split}
	        &dU_t = (i\kappa\mu-\frac{\kappa}{2})U_tdt+i\sqrt{\kappa}U_tdB_t+\sum_{j=1}^n\frac{\rho_j}{2}\hat{\Phi}(V^j_t,U_t)dt;\\
            & V_t^j = x_j+\int_0^t \Phi(U_t, g_s(z))ds\ \text{  for } j=1,...,n; \\
	        &dg_t(z) = \Phi(U_t, g_t(z))dt\ \text{for}\ z\in\ol{\mathbb{D}}.
	    \end{split}
	\end{equation}
	Again $g_t$ is the unique conformal transformation $D_t\to \mathbb{D}$ fixing 0 with $g_t'(0)>0$ and $\log g_t'(0)=t$. If $n=2$, $x_1=e^{i0^-}$ and $x_2=e^{i0^+}$, then we write $\SLE_\kappa^\mu(\rho_1;\rho_2)$ for the corresponding process.  If $\mu=0$, then the process is referred to as radial $\SLE_\kappa(\underline{\rho})$.  The definition of radial $\SLE_\kappa^\mu(\underline{\rho})$ can easily be extended to other simply connected domains via conformal maps.  When $\mu = 0$, by invoking~\cite[Theorem 3]{SW05} along with the results on chordal $\SLE_\kappa(\underline\rho)$ with a single interior force point, one can see that~\eqref{eqn:def-radial-sle} admits a unique solution and the  radial $\SLE_\kappa(\underline{\rho})$ process is well-defined up until time $\tau$ when the continuation threshold is hit or the curve $\eta$ disconnects 0 from the boundary $\partial\bbD$. For the latter case, one can continue the curve as a radial $\SLE_\kappa(\rho_1+...+\rho_n)$ in $D_\tau$, with the single force point lying immediately to the left (resp.\ right) of $\eta(\tau)$ if $\eta([0,\tau])$ closes a clockwise (resp.\ counterclockwise) loop around 0. Existence and uniqueness of solutions for the $\mu\neq 0$ case follows by noting that, by the Girsanov theorem, solutions to~\eqref{eqn:def-radial-sle}  correspond to those of the $\mu=0$ case reweighted by $\exp(\mu\sqrt\kappa B_t - \mu^2\kappa t/2)$. When $\mu > 0$ (resp.\ $\mu < 0$) the curve tends to spiral in a counterclockwise (resp.\ clockwise) direction as it approaches $0$.
	
	\begin{figure}
	    \centering
	    \begin{tabular}{cc}
	      \includegraphics[scale=0.6]{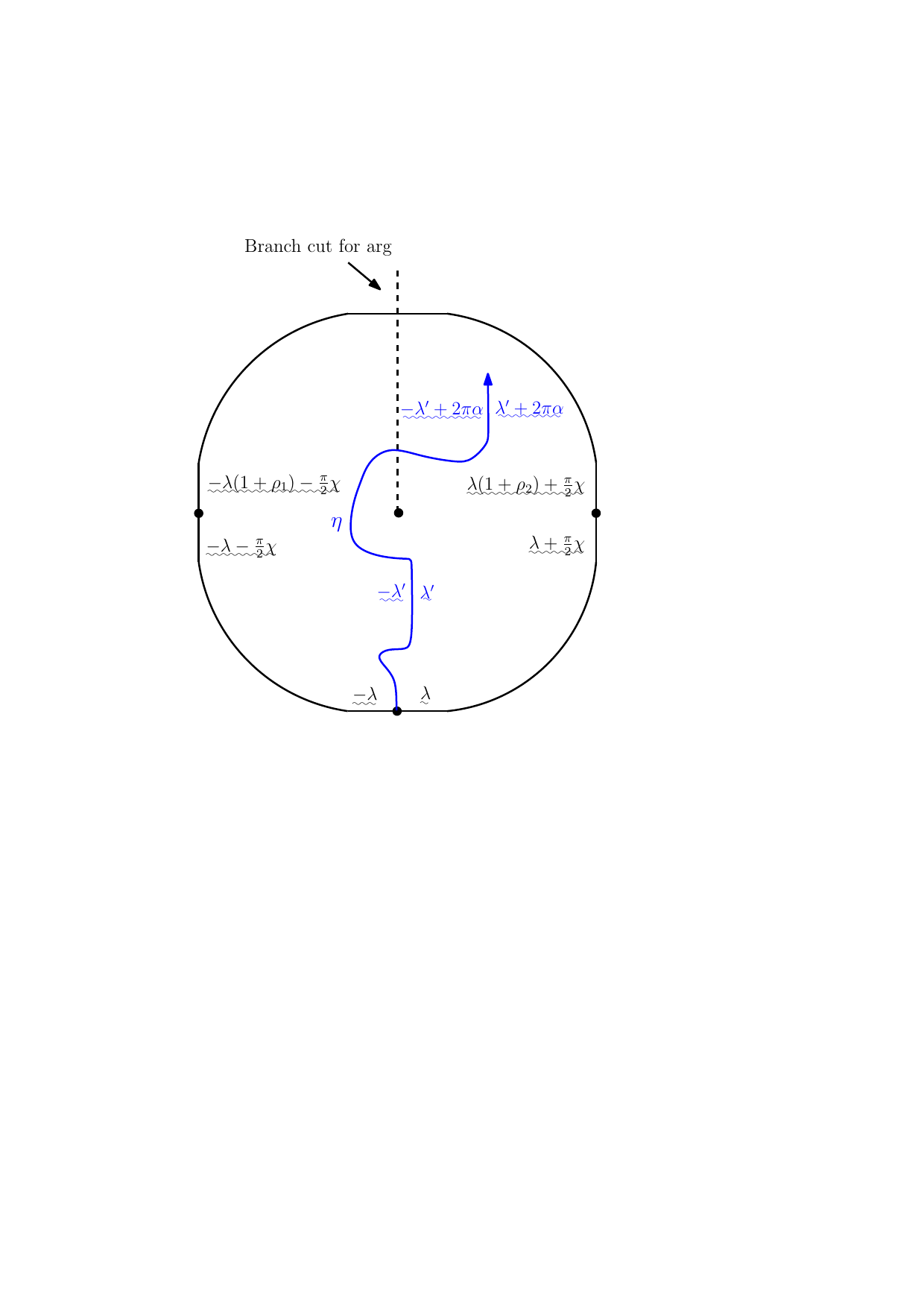}   &  \includegraphics[scale=0.6]{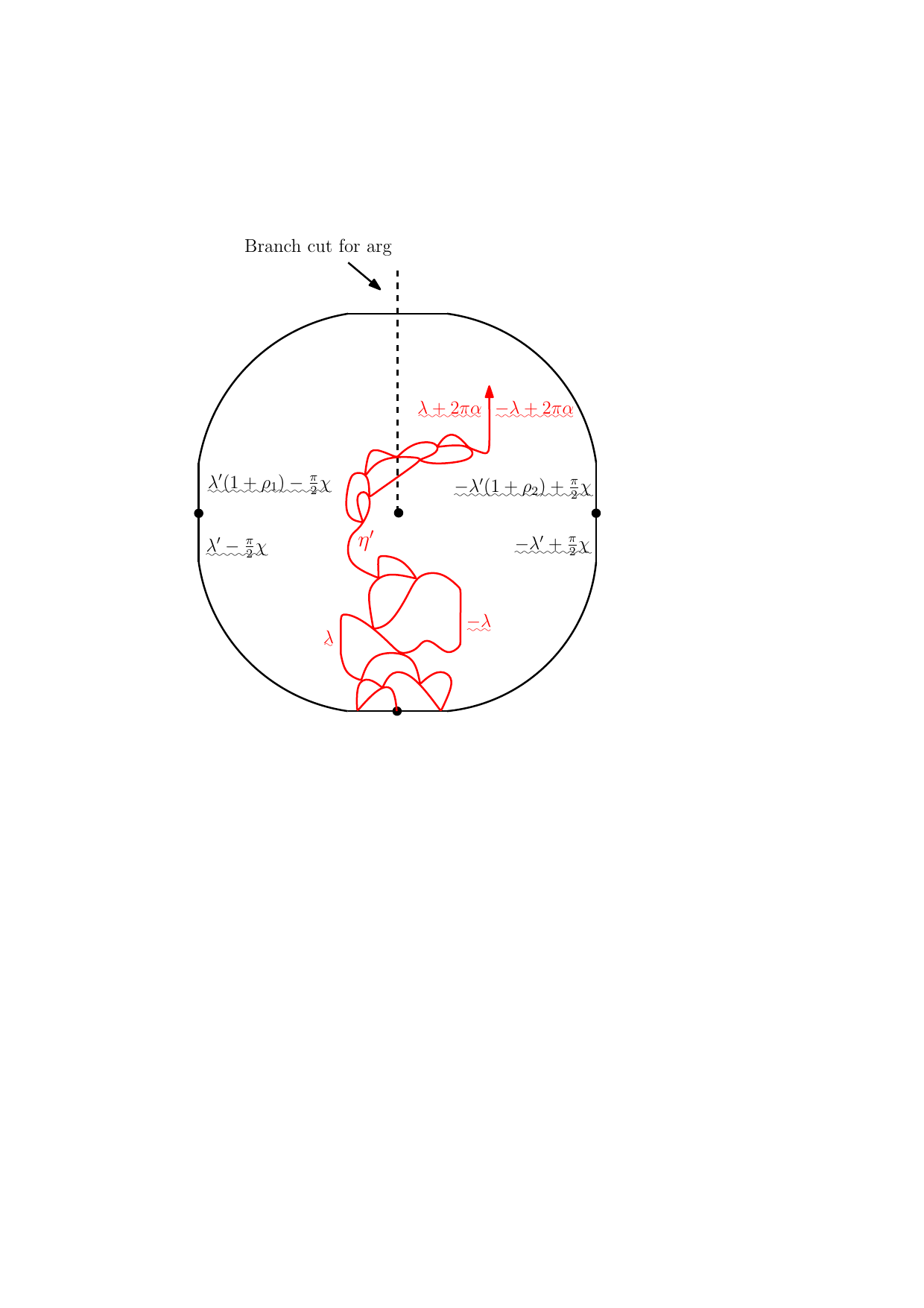}
	    \end{tabular}
	    
	    \caption{The $-\alpha$-flow line boundary conditions in Theorem~\ref{thm:radial-ig} for $n=2$.}
	    \label{fig:radialboundary}
	\end{figure}
	
	Before stating the coupling between radial $\SLE_\kappa(\underline\rho)$ with the GFF, we briefly recall the notion of $-\alpha$-flow line boundary condition as in~\cite[Figure 1.10]{ig4}. Choose some branch cut for the argument function. Given a non-crossing curve $\eta$ in $\bbD$ targeted at 0 stopped at some $\tau$, let $f$ be the harmonic function on $\bbD\backslash\eta([0,\tau])$ recording $\chi$ times the winding of $\eta$ as in the chordal case in~\cite[Figure 1.9, 1.10]{MS16a}, except that when $\eta$ crosses the branch cut clockwise (resp.\ counterclockwise) the boundary data on $\eta$ jumps $2\pi\alpha$ (resp.\ $-2\pi\alpha$). We say that a GFF $h$ on $\bbD\backslash\eta([0,\tau])$ has $-\alpha$-flow line boundary conditions along $\eta([0,\tau])$, if the boundary data of $h$ agrees with $f$ on $\eta([0,\tau])$ up to a global additive constant in $2\pi(\chi-\alpha)\mathbb{Z}$. We also say $h$ has $-\alpha$-flow line boundary conditions with angle $\theta$ along $\eta([0,\tau])$ if $h+\theta\chi$ has $-\alpha$-flow line boundary conditions   along $\eta([0,\tau])$.
	
	For a number of force points $x_1,...,x_n\in\partial\bbD$, we assume they are aligned in a clockwise way when started from $-i$, with $x_1$ (resp.\ $x_n$) possibly being $(-i)^-$ (resp.\ $(-i)^+$). If $\sum_{i=j}^n\rho_i>-2$ and $\sum_{i=1}^j\rho_i>-2$ for every $j=1,...,n$, then the continuation threshold of the radial $\SLE_\kappa^\mu(\underline\rho)$ is never hit and the corresponding curve continues all the way to the origin.  
	We set
	\begin{equation}\label{eq:radial-alpha}
	    \alpha = \frac{6-\kappa+\sum_{j=1}^n\rho_j}{2\sqrt{\kappa}}\ \  \text{for} \ \ \kappa\in(0,4)\ \ \text{and}\ \ \alpha = \frac{\kappa'-6-\sum_{j=1}^n\rho_j}{2\sqrt{\kappa'}}\ \ \text{for} \ \ \kappa'>4.
	\end{equation}
	 Fix a branch cut for $\arg(\cdot)$ (e.g., let the branch cut be the ray from 0 to $i\infty$). Let $h$ be a GFF on $\bbD$ whose boundary conditions are described as follows. For $\kappa\in(0,4)$, the boundary data of the field $h+\alpha\arg(\cdot)$ is equal to $-\lambda$ minus $\chi$ times the winding along the clockwise arc started from $-i$, except there is a jump of  $-\lambda \rho_j$   when passing the force point $x_j$ for each $j$ and a jump of $2\pi\alpha$ when passing the branch cut.  For $\kappa'>4$, the boundary data of the field $h+\alpha\arg(\cdot)$ is equal to $\lambda'$ minus $\chi$ times the winding along the clockwise arc started from $-i$, except there is a jump of  $\lambda' \rho_j$   when passing the force point $x_j$ for each $j$ and a jump of $2\pi\alpha$ when passing the branch cut.  See Figure~\ref{fig:radialboundary} for the case when $n = 2$.
	
	The aim of this section is to prove the following theorem, which is an extension of~\cite[Proposition 3.1 and 3.18]{ig4} to the multiple force point case. 
	\begin{theorem}\label{thm:radial-ig}
	    Let $\beta,\rho_1,...,\rho_n\in\bbR$, define $\alpha$ as in~\eqref{eq:radial-alpha},  and let $h$ be the GFF on $\bbD$ described  above. Then there exists a unique coupling between $h+\alpha\arg(\cdot)+\beta\log|\cdot|$ and a radial $\SLE_\kappa^\beta(\underline\rho)$ process in $\bbD$ from $-i$ targeted at 0 with force points $x_1,...,x_n$ such that the following holds. For every $\eta$-stopping time $\tau$, the conditional law of $h+\alpha\arg(\cdot)+\beta\log|\cdot|$ given $\eta|_{[0,\tau]}$ is that of $\tilde h+\alpha\arg(\cdot)+\beta\log|\cdot|$ where $\tilde h$ is a GFF on $\bbD\backslash\eta([0,\tau])$ such that $\tilde h+\alpha\arg(\cdot)+\beta\log|\cdot|$ has the same boundary conditions as $h+\alpha\arg(\cdot)+\beta\log|\cdot|$ on $\partial \mathbb D$. If $\kappa\in(0,4)$, then $\tilde h+\alpha\arg(\cdot)+\beta\log|\cdot|$ has $-\alpha$-flow line boundary conditions on $\eta([0,\tau])$. If $\kappa'>4$, then $\tilde h+\alpha\arg(\cdot)+\beta\log|\cdot|$ has $-\alpha$-flow line boundary conditions with angle $\frac{\pi}{2}$ (resp.\ $-\frac{\pi}{2}$) on the left (resp.\ right) side of $\eta([0,\tau])$. 
	
	    In this coupling, $\eta([0,\tau])$ is a local set of $h$, and $\eta$ is measurable with respect to $h$.
	\end{theorem}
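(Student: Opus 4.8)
The plan is to follow the strategy of~\cite{ig4} for the single-force-point case, the essential new input being that the drift computation underlying the coupling is additive over the force points. By the general theory of GFF/SLE couplings (see~\cite{She16a,MS16a}), it suffices to construct the coupling ``from the curve side'': sample $\eta$ together with the solution of~\eqref{eqn:def-radial-sle} with $\mu=\beta$, and declare that conditionally on $\eta|_{[0,t]}$ the field $h+\alpha\arg(\cdot)+\beta\log|\cdot|$ restricted to $D_t:=\bbD\setminus\eta([0,t])$ equals $\mathfrak h_t$ plus an independent zero-boundary GFF on $D_t$, where $\mathfrak h_t$ is the harmonic function on $D_t$ carrying the prescribed data on $\partial\bbD$ and the $-\alpha$-flow-line boundary data (with angle $\pm\tfrac\pi2$ on the two sides when $\kappa'>4$) along $\eta([0,t])$. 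For this prescription to be consistent in $t$, and hence to define a genuine coupling whose $h$-marginal is the stated GFF, it is enough to check that (i) for each fixed interior $z$ the process $t\mapsto\mathfrak h_t(z)$ is a continuous local martingale in the filtration of the driving process, and (ii) for fixed interior $z,w$ one has $d\langle\mathfrak h_\cdot(z),\mathfrak h_\cdot(w)\rangle_t=-\,dG_{D_t}(z,w)$, $G_{D_t}$ denoting the zero-boundary Green's function of $D_t$. Indeed (i) and (ii) make $\bbE\big[e^{i(h+\alpha\arg(\cdot)+\beta\log|\cdot|,\varphi)}\mid\eta|_{[0,t]}\big]$ a local martingale for every smooth $\varphi$, which with optional stopping and localization gives the coupling together with the local-set property.

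To verify (i) and (ii), write $\mathfrak h_t=\widehat{\mathfrak h}_t\circ g_t-\chi\arg g_t'$ where $g_t:D_t\to\bbD$ is the radial Loewner map of~\eqref{eqn:def-radial-sle} and $\widehat{\mathfrak h}_t$ is the explicit harmonic function on $\bbD$ obtained from the single-force-point one of~\cite{ig4} by inserting a jump $-\lambda\rho_j$ (resp.\ $\lambda'\rho_j$ when $\kappa'>4$) at the image $V_t^j:=g_t(x_j)$ of each force point, while keeping the $2\pi\alpha$ jump across the image of the branch cut and the $\alpha\arg(\cdot)+\beta\log|\cdot|$ corrections; the force-point images move by $dV_t^j=\Phi(U_t,V_t^j)\,dt$. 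Applying Itô's formula to~\eqref{eqn:def-radial-sle} (with $\mu=\beta$), the drift of $d\mathfrak h_t(z)$ collects contributions from the deterministic drift $(i\kappa\beta-\tfrac\kappa2)U_t\,dt$ of $U_t$, the Itô correction $\tfrac12(dU_t)^2=-\tfrac\kappa2 U_t^2\,dt$, the force-point drift terms $\sum_j\tfrac{\rho_j}{2}\hat\Phi(V_t^j,U_t)\,dt$, the motions $dV_t^j$, the $dg_t(z)$ term, and the $-\chi\arg g_t'$ and $\log|\cdot|$ terms. As in~\cite{ig4}, this drift vanishes identically precisely when $\alpha$ is as in~\eqref{eq:radial-alpha} and the drift coefficient of $U_t$ is $i\kappa\beta-\tfrac\kappa2$: the part of the drift produced by the boundary jump at $V_t^j$ is exactly cancelled by the $\tfrac{\rho_j}{2}\hat\Phi(V_t^j,U_t)$ term in~\eqref{eqn:def-radial-sle}, so that once the single-force-point cancellation of~\cite{ig4} is in hand the passage to $n$ force points follows by summing over $j$. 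For (ii), the martingale part of $d\mathfrak h_t(z)$ comes only from the $i\sqrt\kappa\,U_t\,dB_t$ term, and computing its bracket and matching it with $-\,dG_{D_t}(z,w)$ is a direct Green's-function identity on the disk, the radial analog of the computation in~\cite{MS16a,ig4}.

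It remains to handle the continuation threshold, the measurability of $\eta$ with respect to $h$, and uniqueness. Localization up to the continuation threshold — and, when $\eta$ disconnects $0$ from $\partial\bbD$ first, the continuation as a single-force-point radial $\SLE_\kappa(\rho_1+\dots+\rho_n)$ noted before the theorem — is carried out as in~\cite[Section 2]{MS16a} and~\cite{ig4}. For measurability of $\eta$ given $h$ I would transfer the argument of~\cite[Theorem 1.2]{MS16a}: in the coupling, conditionally on $h$ the curve is characterized as a flow line, and one shows any two flow lines in the same coupling a.s.\ coincide; alternatively one reduces to the chordal interior-force-point case via the radial/chordal correspondence recorded after~\cite[Theorem 3]{SW05}. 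Uniqueness of the coupling follows in the usual way: in any coupling with the prescribed conditional law one must have $\bbE[h+\alpha\arg(\cdot)+\beta\log|\cdot|\mid\eta|_{[0,t]}]=\mathfrak h_t$, which determines the Loewner driving function of $\eta$ and hence its law.

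I expect the only genuinely laborious step to be the Itô computation for (i): assembling the contributions of $\Phi$, $\hat\Phi$, $\Psi$, the winding term $-\chi\arg g_t'$, the branch-cut jump $2\pi\alpha$ and the $\alpha\arg(\cdot)+\beta\log|\cdot|$ insertions, and confirming that they cancel exactly for $\alpha$ and the drift coefficient $i\kappa\beta-\tfrac\kappa2$ as in~\eqref{eq:radial-alpha} and~\eqref{eqn:def-radial-sle}. Everything else is a routine adaptation of~\cite{ig4,MS16a}.
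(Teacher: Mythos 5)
Your route is the direct martingale construction in the style of \cite{ig4}: build the coupling from the curve side by checking that $\mathfrak h_t(z)$ is a continuous local martingale with cross-variation $-dG_{D_t}(z,w)$, with the multi-force-point drift cancellation obtained by summing the single-force-point computation over $j$. This is genuinely different from what the paper does: the paper starts from the chordal coupling of Theorem~\ref{thm-ig1} (boundary force points only), weights the joint law of $(h,\eta)$ by $\exp((h,\alpha\wt g)_\nabla-\frac{\alpha^2}{2}(\wt g,\wt g)_\nabla)$ for a truncation $\wt g$ of $\arg(\cdot-i)+\arg(\cdot+i)$, and shows via Lemmas~\ref{lem:Girsanov-Dirichlet} and~\ref{lem:GFF-integration} that this weighting simultaneously adds the $\alpha\arg$ singularity to the field and an interior force point of weight $\kappa-6-\sum_j\rho_j$ at $i$ to the curve; the radial statement then follows from the radial--chordal correspondence of \cite{SW05} together with an iteration of branch-cut shifts, and $\beta\neq0$ is handled by a further weighting by $\exp(\beta(h,\xi_\delta)_\nabla)$ with $\delta\to0$.

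The reason for that detour is the last sentence of the theorem, and this is where your proposal has a genuine gap: the direct construction gives existence and uniqueness of the coupling but not that $\eta$ is measurable with respect to $h$ --- exactly the point the paper says is missing from \cite[Proposition 3.1]{ig4}. Your first suggested fix, transferring the argument of \cite[Theorem 1.2]{MS16a}, is not a routine transfer; that proof is a substantial theory of flow-line interactions for boundary force points in $\bbH$ and does not apply as stated in the presence of the multivalued $\alpha\arg(\cdot)$ singularity. Your second fix, reducing via \cite{SW05} to chordal $\SLE_\kappa(\underline\rho)$ with an interior force point, hits the same wall: Theorem~\ref{thm-ig1} gives the measurability statement only for boundary force points, and producing the chordal coupling with an interior force point \emph{together with} measurability is precisely what the paper's Girsanov argument accomplishes (weighting by an a.s.\ positive density preserves a.s.\ measurability, so the paper inherits it for free). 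A secondary weakness is that the It\^o drift cancellation on which your whole existence proof rests is asserted rather than carried out; additivity over the force points is plausible, but the bookkeeping for the winding term, the $2\pi\alpha$ jump when the curve interacts with the branch cut (which must be shifted, as in the paper, to keep the boundary data consistent), and the $\beta\log|\cdot|$ insertion is the actual content of that step and is not addressed in your write-up.
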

	\begin{definition}\label{def:radial-flow-line}
	    For $\kappa\in(0,4)$, we call a radial $\SLE_\kappa^\beta(\underline\rho)$ curve $\eta$ coupled with the GFF $h_{\alpha\beta}:=h+\alpha\arg(\cdot)+\beta\log|\cdot|$ as in Theorem~\ref{thm:radial-ig} the \emph{flow line} of $h_{\alpha\beta}$. For $\theta\in\bbR$, we call $\eta$ \emph{the flow line of} $h_{\alpha\beta}$ \emph{with angle $\theta$} if $\eta$ is the flow line of  $h_{\alpha\beta}+\chi\theta$. For $\kappa'>4$, we call a  $\SLE_{\kappa'}^\beta(\underline\rho)$ curve $\eta'$ coupled with $h_{\alpha\beta}$ as in Theorem~\ref{thm:radial-ig} the \emph{counterflow line} of $h_{\alpha\beta}$. 
	\end{definition}
	
	Using~\eqref{eq:ig-change-coord}, one can also define the flow lines of $h_{\alpha\beta}$ started at other points on $\partial \bbD$. We will not need to consider flow lines started at $0$ in this paper, but these can be defined similarly as in~\cite[Theorem 1.4]{ig4}. 
	
	Theorem~\ref{thm:radial-ig} is essentially proved in~\cite[Proposition 3.1]{ig4}, except that it is not proved that the flow line $\eta$ is measurable with respect to $h$. Here we present a different proof based on coordinate change from Theorem~\ref{thm-ig1} and the Girsanov theorem. In particular, we will use the following absolute continuity for GFF, where one may see  e.g.~\cite[Proposition 3.4]{MS16a} for a proof. Recall that for a Dirichlet GFF $h$ in $D$ and a function $g\in H(D)$, one can define $(h,g)_\nabla = -\frac{1}{2\pi}(h,\Delta g)$ (see e.g.~\cite[Section 3.1]{MS16a}).
	\begin{lemma}\label{lem:Girsanov-Dirichlet}
	    Let $h$ be a Dirichlet GFF $h$ in $D$ and  $g\in H(D)$. If we weight the law of $h$ by $\exp((h,g)_{\nabla} - \frac{1}{2}(g,g)_\nabla)$, then the law of $h$ under the reweighted measure agrees with the law of $h+g$ under the unreweighted measure.
	\end{lemma}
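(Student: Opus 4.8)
This is the Cameron--Martin (quasi-invariance) theorem for the Dirichlet GFF, and the plan is to reduce it to the one-dimensional Gaussian case through the orthonormal expansion $h = \sum_n \xi_n f_n$. First I would expand the shift: since $g \in H(D)$ we may write $g = \sum_n a_n f_n$ with $a_n = (g,f_n)_\nabla$ and $\sum_n a_n^2 = (g,g)_\nabla < \infty$. Then $(h,g)_\nabla = \sum_n a_n \xi_n$ converges almost surely and in $L^2$ (its partial sums form an $L^2$-bounded martingale), and is a centered Gaussian of variance $(g,g)_\nabla$; hence $M := \exp\big((h,g)_\nabla - \tfrac12(g,g)_\nabla\big)$ is a positive random variable with $\bbE[M] = 1$, so the reweighting is well-defined.

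For the main identity I would verify that the reweighted law of $h$ and the law of $h+g$ have the same characteristic functional, i.e.\ agree on $\bbE[e^{i(\cdot,\phi)_\nabla}]$ for every $\phi \in H(D)$; since the law of the GFF as a random distribution is determined by the joint laws of its pairings $\big((h,\phi_1)_\nabla,\dots,(h,\phi_k)_\nabla\big)$ with elements of $H(D)$ (this is how the object is constructed in Section~\ref{subsec:GFF}), this suffices. Now $\big((h,\phi)_\nabla, (h,g)_\nabla\big)$ is a centered jointly Gaussian vector with covariances $(\phi,\phi)_\nabla$, $(g,g)_\nabla$ and $(\phi,g)_\nabla$, so by the moment generating function of a bivariate Gaussian,
\[
\bbE\big[e^{i(h,\phi)_\nabla}\,M\big] = e^{-\frac12(g,g)_\nabla}\,\bbE\big[e^{i(h,\phi)_\nabla + (h,g)_\nabla}\big] = e^{-\frac12(g,g)_\nabla}\, e^{\frac12\left(-(\phi,\phi)_\nabla + 2i(\phi,g)_\nabla + (g,g)_\nabla\right)} = e^{i(\phi,g)_\nabla - \frac12(\phi,\phi)_\nabla}.
\]
On the other hand $(h+g,\phi)_\nabla \sim N\big((g,\phi)_\nabla,(\phi,\phi)_\nabla\big)$, so $\bbE[e^{i(h+g,\phi)_\nabla}] = e^{i(g,\phi)_\nabla - \frac12(\phi,\phi)_\nabla}$, which matches; hence the two laws coincide.

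An equivalent and perhaps more transparent route works directly with the coordinates $(\xi_n)$: the density factors as $M = \prod_n \exp\big(a_n\xi_n - \tfrac12 a_n^2\big)$, and for each fixed $n$ the factor $\exp(a_n\xi_n - \tfrac12 a_n^2)$ is exactly the Radon--Nikodym derivative realizing the shift $\xi_n \mapsto \xi_n + a_n$ of a single standard Gaussian. The partial products $M_N = \exp\big(\sum_{n\le N} a_n\xi_n - \tfrac12\sum_{n\le N} a_n^2\big)$ form a nonnegative martingale with constant expectation $1$ converging a.s.\ to $M$; by Scheff\'e's lemma the convergence is also in $L^1$, which upgrades the finite-dimensional shifts to the statement that reweighting the law of $(\xi_n)_n$ by $M$ yields the law of $(\xi_n + a_n)_n$. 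Since $h = \sum_n \xi_n f_n$ and $h+g = \sum_n (\xi_n + a_n) f_n$, this is the claim. I would present whichever of the two arguments is shorter and defer to the literature for the remainder.

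The only genuinely delicate points are bookkeeping: the $L^2$/$L^1$ convergence of the series and products defining $(h,g)_\nabla$ and $M$, the basis-independence of $(h,g)_\nabla$ (which follows from $g$'s expansion being basis-independent), and the standard fact that the GFF law is pinned down by the finite-dimensional distributions of its $(\cdot,\phi)_\nabla$-pairings. None of these is a real obstacle; the one place to be careful is keeping the sign conventions straight in the bivariate Gaussian computation, since it is precisely the cross term $2i(\phi,g)_\nabla$ that produces the mean shift in the reweighted field.
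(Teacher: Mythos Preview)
Your proof is correct; both the characteristic-functional computation and the coordinate-wise density argument are standard routes to the Cameron--Martin theorem for the GFF. The paper does not give its own proof of this lemma but simply cites \cite[Proposition 3.4]{MS16a}, so your argument is essentially what one would find by following that reference.
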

	Now let $\eta$ be a chordal $\SLE_\kappa(\underline\rho)$ curve from 0 to $\infty$ as in Theorem~\ref{thm-ig1}, and let $f_t$ be its centered Loewner flow. For $z,w\in\ol \bbH$, we set $G(z,w) = -\log|z-w|+\log|z-\ol w|$ and $G_t(z,w) = G(f_t(z),f_t(w))$. Let $\cL_i$ be the infinite half  line $\cL_i:=\{z: \mathrm{Re} z=0,\   \mathrm{Im} z\ge 1 \}$, and write $g(z) = \arg(z-i)+\arg(z+i)$. We choose the branch cut of the function $g$ to be $\cL_i$, i.e., $g$ is harmonic on $\bbH\backslash\cL_i$, taking value $\pi$ (resp.\ $-\pi$) on the right (resp.\ left) side of $\cL_i$, and 0 on the real line. We will need the following deterministic computation.
	\begin{lemma}\label{lem:GFF-integration}
	    Let $U\subset \bbH$ be a bounded simply connected domain such that there exists $a,b>0$ with $(-a,b)\subset\partial U$. Also assume $U$ is disjoint from $\cL_i$ and $\partial U\cap\bbH$ is smooth.
	    Let $t >0$ be any  time such that $\eta([0,t]) \subset \ol U$.
	    Suppose $\tilde g\in H(\bbH)$ is a smooth function whose support is bounded and disjoint from $\cL_i$ such that $\tilde g(z) = g(z)$ in $U$. Then for $w\in U$,
	    \begin{align}
	        &\int_\bbH \Delta\tilde g(z) G_t(z,w) dz = -2\pi(\arg(f_t(w)-f_t(i))+\arg(f_t(w)+\ol{f_t(i)}));
	         \label{eq:lem-GFF-integration-A} \\& \int_\bbH \Delta\tilde g(z) \mathrm{Im}(\frac{2}{f_t(z)}) dz = -2\pi\mathrm{Re}(\frac{2}{f_t(i)}).\label{eq:lem-GFF-integration-B} 
	    \end{align}
	\end{lemma}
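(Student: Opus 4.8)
The plan is to recognize both identities as instances of the following principle: for a harmonic function $u$ on $\bbH$ that is ``nice'' near $\bbR$, integration of $\Delta \wt g \cdot (u \circ f_t)$ against $dz$ reduces, via Green's identity, to a boundary/pole computation, because $\wt g$ agrees with $g$ on $U \supset \eta([0,t])$ and the relevant quantities are harmonic away from a few singularities. Concretely, I would first fix notation: let $w \in U$, and note that $G_t(\cdot, w) = G(f_t(\cdot), f_t(w))$ is harmonic in $\bbH \setminus \{w\}$ (it is the Green's function of $\bbH$ pulled back, hence conformally invariant), with a $-\log|\cdot - w|$ singularity at $w$ and vanishing Dirichlet data on $\bbR$. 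Similarly $\mathrm{Im}(2/f_t(z))$ is harmonic on $H_t = \bbH \setminus \eta([0,t])$ and vanishes on $\partial H_t \cap \bbR$ (since $2/f_t$ maps a neighbourhood of $\bbR$ to a neighbourhood of $\bbR$), but it has a singularity where $f_t(z) = 0$, i.e.\ at the tip / along $\eta([0,t])$ — which is exactly why we need $\wt g = g$ there.

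For \eqref{eq:lem-GFF-integration-A}: apply Green's second identity to $\wt g$ and $G_t(\cdot, w)$ on $\bbH$ (or on a large ball minus small discs around $w$ and around the branch cut $\cL_i$, then take limits). Since $\wt g$ has compact support disjoint from $\cL_i$, the contribution at infinity vanishes; the boundary term on $\bbR$ vanishes because $G_t(\cdot,w)$ has zero Dirichlet data there and $\wt g|_{\bbR}$ is locally constant ($g \equiv 0$ on $\bbR$, and $\wt g = g$ near $(-a,b)$, elsewhere $\wt g$ is supported away — one must be slightly careful, but $\nabla \wt g$ tangential contributions integrate to zero against the vanishing normal data). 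What survives is: the pole of $G_t(\cdot,w)$ at $z = w$, producing (up to the $2\pi$ normalization in the Laplacian–Green's-function convention) the value $-2\pi \wt g(w) = -2\pi g(w) = -2\pi(\arg(w-i)+\arg(w+i))$; plus the jump of $\wt g$ across $\cL_i$ — but $\wt g$'s support is disjoint from $\cL_i$, so instead the $2\pi$ jump structure is carried by writing things in terms of $f_t$. The cleanest route: observe $g(z) = \arg(z-i) + \arg(z+i)$ equals $-G(z,i) $-type harmonic conjugate data; more precisely $\arg(f_t(w) - f_t(i)) + \arg(f_t(w) + \overline{f_t(i)})$ is (a branch of) the harmonic conjugate, in $w$, of $G_t(w,i) = -\log|f_t(w)-f_t(i)| + \log|f_t(w) - \overline{f_t(i)}|$, and $G_t(w,i)$ is what you'd get by the same Green-identity computation with a point mass at $i$. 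So I would set up the identity $\int_\bbH \Delta \wt g(z)\, G_t(z,w)\,dz = \int_\bbH \Delta \wt g(z)\, G_t(z,w)\, dz$ and evaluate by swapping the roles — using that $G_t(z,w)$, as a function of $z$, is (on the support of $\Delta \wt g$, where $\wt g = g$) exactly minus the harmonic-conjugate data of $g$, delivering the claimed right-hand side. The factor $2\pi$ and the sign come from the standard $\Delta \log|\cdot| = 2\pi \delta$.

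For \eqref{eq:lem-GFF-integration-B}: this is the same computation with $G_t(z,w)$ replaced by $\mathrm{Im}(2/f_t(z))$ and the ``test point'' $w$ replaced by $i$, noting $\mathrm{Im}(2/f_t(z))$ is the imaginary part of the holomorphic function $2/f_t(z)$ which has a pole exactly at the point mapping to $0$ under $f_t$; but $f_t(i)$ is the image of $i$, not $0$, so actually the relevant singularity of $2/f_t$ is at $f_t^{-1}(0)$, and since $\wt g = g$ throughout $U \supset \eta([0,t]) \ni f_t^{-1}(0)$-neighbourhood, Green's identity picks up the residue of $2/f_t$ there against $g$ — and a short computation with $g(z) = \arg(z-i)+\arg(z+i)$ near that point, combined with the vanishing of $\mathrm{Im}(2/f_t)$ on $\bbR$ and compact support of $\wt g$, yields $-2\pi \, \mathrm{Re}(2/f_t(i))$. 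The bookkeeping of which point contributes the $\mathrm{Re}$ versus $\mathrm{Im}$ is dictated by Cauchy–Riemann: integrating $\Delta \wt g$ against the imaginary part of a holomorphic function produces the real part of that function evaluated at the insertion, with the $2\pi$ from the fundamental solution.

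The main obstacle I anticipate is the careful treatment of the branch cut $\cL_i$ and of the boundary term on $\bbR$: one must verify that the hypotheses ($\wt g$ compactly supported and disjoint from $\cL_i$, $U$ disjoint from $\cL_i$, $(-a,b) \subset \partial U$ with $\partial U \cap \bbH$ smooth, $\eta([0,t]) \subset \overline U$) are exactly what is needed so that (i) no spurious boundary contribution arises from where $\wt g$ stops agreeing with $g$, (ii) the relevant singularity of $G_t(\cdot,w)$ resp.\ $2/f_t$ lies inside the region where $\wt g = g$, so that replacing $\wt g$ by $g$ in the residue computation is legitimate, and (iii) the multivaluedness of $\arg$ is pinned down by the stated branch cut so that the right-hand sides are well-defined. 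Everything else is a routine application of Green's identity and the residue theorem; I would organize the write-up so that both \eqref{eq:lem-GFF-integration-A} and \eqref{eq:lem-GFF-integration-B} are deduced from one lemma of the form ``$\int_\bbH \Delta \wt g(z)\, v(z)\, dz = -2\pi\, \wt g(z_0)\cdot(\text{residue data of } v \text{ at } z_0)$'' applied with $v = G_t(\cdot,w), z_0 = w$ and $v = \mathrm{Im}(2/f_t(\cdot)), z_0 = f_t^{-1}(0)$ respectively.
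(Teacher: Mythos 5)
There is a genuine gap: your proof locates the contribution in the wrong place. You claim the answer arises as a residue at the singular point of the integrand against $\wt g$ — at $z=w$ for \eqref{eq:lem-GFF-integration-A}, producing ``$-2\pi\wt g(w)=-2\pi(\arg(w-i)+\arg(w+i))$'', and at $f_t^{-1}(0)$ for \eqref{eq:lem-GFF-integration-B}. Both claims fail. First, $\Delta\wt g\equiv 0$ on $U$ (there $\wt g=g$, which is harmonic off $\cL_i$), and both $w$ and the hull of $\eta([0,t])$ lie in $\ol U$, so the integral $\int_\bbH\Delta\wt g\,G_t(\cdot,w)\,dz$ never ``sees'' the logarithmic singularity at $w$ nor the pole of $2/f_t$; there is no delta/residue pickup there. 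Second, the value you would get, $-2\pi g(w)$, is $t$-independent, whereas the stated right-hand side of \eqref{eq:lem-GFF-integration-A} depends on $t$ through $f_t(w)$ and $f_t(i)$; similarly $-2\pi\,\mathrm{Re}(2/f_t(i))$ in \eqref{eq:lem-GFF-integration-B} is a value attached to the branch point $i$, not to $f_t^{-1}(0)$. The subsequent attempt to repair this (``evaluate by swapping the roles, using that $G_t(z,w)$, as a function of $z$, is, on the support of $\Delta\wt g$ where $\wt g=g$, exactly minus the harmonic-conjugate data of $g$'') is not a valid step: on $\supp\Delta\wt g$, which lies outside $U$, the hypothesis gives no relation between $\wt g$ and $g$, and $G_t(\cdot,w)$ is not determined by $g$ in any such sense.

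The actual mechanism, which your write-up gestures at but never executes, is a contour deformation in which the only surviving term is the $2\pi$ jump of $g$ across the branch cut $\cL_i$. Since $\Delta\wt g$ is supported in $\bbH\setminus U$, where $G_t(\cdot,w)$ (resp.\ $\Im(2/f_t)$) is harmonic and vanishes on $\bbR$, Green's second identity converts the integral into a boundary integral over $\partial U\cap\bbH$ involving $g$ and $G_t$; harmonicity of $g$ on $\bbH\setminus\cL_i$ lets one push the contour out to a large half-disk minus an $\e$-neighborhood of $\cL_i$. The terms on $\bbR$ vanish because $g=G_t=0$ there, the terms at radius $R$ vanish by the decay estimates $|\nabla G_t(z,w)|=O(|z|^{-2})$ and $|\partial_{\mathbf n}g|=O(R^{-2})$, the $G_t\,\partial_{\mathbf n}g$ terms on the two sides of the cut cancel (the gradient of $g$ is continuous across $\cL_i$), and the $g\,\partial_{\mathbf n}G_t$ terms leave exactly $-2\pi\int_1^\infty\partial_x G_t(iy,w)\,dy$ from the $2\pi$ jump of $g$. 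Only at this final stage does Cauchy--Riemann enter, converting this into the increment of the harmonic conjugate of $G_t(\cdot,w)$ (resp.\ of $\Im(2/f_t)$) between $\infty$ and the cut endpoint $i$, which yields the stated $\arg$-expression and $\mathrm{Re}(2/f_t(i))$. Without this cut-jump computation your argument cannot produce the $f_t(i)$-dependence of the right-hand sides, so the proposal as written does not prove the lemma.
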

	
	\begin{proof}
	 We only prove~\eqref{eq:lem-GFF-integration-A}; ~\eqref{eq:lem-GFF-integration-B} follows similarly. Note that $\Delta \tilde g = 0$ within $\partial U$. For $\e,R>0$, we write $C_{\e,R} = \{z\in \bbH:|z|<R, \ \dist(z,\cL_i)>\e\}$, and we choose $\e,R$ such that $\supp(\tilde g)\subset C_{\e,R}\cup\bbR$. Since $G_t(z,w)$ is harmonic in $\bbH\backslash U$ and is zero on $\bbR$, it follows from Green's theorem that
	 \begin{equation}\label{eq:lem-GFF-integration-1}
	     \int_\bbH \Delta\tilde g(z) G_t(z,w) dz = \int_{\bbH\backslash U} \Delta\tilde g(z) G_t(z,w) dz = \int_{\partial U\cap\bbH} \big(G_t(z,w)\frac{\partial   g}{\partial \mathbf{n}}(z) -   g(z)\frac{\partial  G_t}{\partial \mathbf{n}}(z,w)\big)dz
	 \end{equation}
	    where in the last equation we used $\tilde g = g$ in $U$ and $\Delta G_t(z,w) = 0$ for $z\in\bbH\backslash U$. Then using the fact that $g$ is harmonic on $\bbH\backslash\cL_i$ and $g(z) = G_t(z,w) = 0$ for $z\in\bbR$, it follows that
	\begin{equation}\label{eq:lem-GFF-integration-2}
	    \int_{\partial U\cap\bbH} \big(G_t(z,w)\frac{\partial   g}{\partial \mathbf{n}}(z) -   g(z)\frac{\partial  G_t}{\partial \mathbf{n}}(z,w)\big)dz =  \int_{\partial C_{\e,R}\cap\bbH} \big(G_t(z,w)\frac{\partial   g}{\partial \mathbf{n}}(z) -   g(z)\frac{\partial  G_t}{\partial \mathbf{n}}(z,w)\big)dz 
	\end{equation}
	    where the normal vector $\mathbf{n}$ on the right hand side of~\eqref{eq:lem-GFF-integration-2} points inwards $C_{\e,R}$. Using the dominated convergence theorem, it is not hard to prove that 
	\begin{equation}\label{eq:lem-GFF-integration-3}
	    \lim_{\e\to0}\int_{\partial C_{\e,R}\cap\bbH} \big(G_t(z,w)\frac{\partial   g}{\partial \mathbf{n}}(z) -   g(z)\frac{\partial  G_t}{\partial \mathbf{n}}(z,w)\big)dz = \int_{\partial B_R(0)\cap\bbH} G_t(z,w)\frac{\partial   g}{\partial \mathbf{n}}(z)dz - \int_{\partial C_{0,R}\cap\bbH}   g(z)\frac{\partial  G_t}{\partial \mathbf{n}}(z,w)dz.
	\end{equation}
	Since $f_t(z)-W_t\sim z+\frac{C}{z}+o(\frac{1}{|z|^2})$ as $|z|\to\infty$, it is straightforward to check that $$|\nabla G_t(z,w)| = |\frac{2f_t'(z)\mathrm{Im}f_t(w)}{(f_t(z)-f_t(w))(f_t(z)-f_t(w))}|\le C|z|^{-2}, \ \ \text{as $|z|\to\infty$} $$  Furthermore, for sufficiently large $R$, $|\frac{\partial   g}{\partial \mathbf{n}}(z)|<2R^{-2}$ on $\{|z|=R\}$. Thus further sending $R\to\infty$, combining~\eqref{eq:lem-GFF-integration-1},\eqref{eq:lem-GFF-integration-2} and~\eqref{eq:lem-GFF-integration-3}, we see
	\begin{equation}\label{eq:lem-GFF-integration-4}
	    \int_\bbH \Delta\tilde g(z) G_t(z,w) dz = -2\pi\int_1^\infty\frac{\partial G_t}{\partial x}(iy,w)dy
	\end{equation}
	where we write $z = x+iy$. Now $G_t(z,w)$ is the real part of the analytic function $\tilde{G_t}(z,w):=\log(f_t(z)-f_t(w)) - \log(f_t(z)-\ol{f_t(w)})$ where we may choose a branch cut of the logarithm function disjoint from $\cL_i$. Thus it follows from the Cauchy-Riemann equation that $\frac{\partial G_t}{\partial x} = \frac{\partial \mathrm{Im} G_t}{\partial y}$, and therefore~\eqref{eq:lem-GFF-integration-A} follows from~\eqref{eq:lem-GFF-integration-4}.
	\end{proof}
	
	\begin{proof}[Proof of Theorem~\ref{thm:radial-ig}]
	   We only prove the $\kappa\in(0,4)$ case; the $\kappa'>4$ case follows analogously. We work on the chordal setting and first construct the coupling of the GFF with chordal $\SLE_\kappa(\rho_1,...,\rho_n;\kappa-6-\sum_{j=1}^n\rho_j)$ in $\bbH$ with a single interior force point of weight $\kappa-6-\sum_{j=1}^n\rho_j$ located at $i$. Consider the coupling between the GFF $h$ and a chordal $\SLE_\kappa(\rho_1,...,\rho_n)$ curve $\eta$, along with the harmonic function $\mathfrak{h}_t$ as described in Theorem~\ref{thm-ig1}. Let $g(z) = \arg(z-i)+\arg(z+i)$. Choose the branch cut $\cL_i$ and let the domain $U$ and $\tilde g\in H(\bbH)$ be as in Lemma~\ref{lem:GFF-integration}. Fix a stopping time $\tau$ such that $\eta([0,\tau])\subset \ol U$. Then it follows from Theorem~\ref{thm-ig1} that the conditional law of $h$ given $\cF_\tau$ is equal to  the law of an independent zero boundary GFF $h^0_\tau$ on $\bbH\backslash\eta([0,\tau])$ plus the harmonic function $\mathfrak{h}_\tau$. Moreover, since $\eta([0,\tau])$ is local, $\eta|_{[0,\tau]}$ is measurable with respect to $(h+\alpha\tilde g)|_{U} = (h+\alpha g)|_U$.
	
	   Now we weight the law of $(h,\eta)$ by $\exp((h,\alpha\tilde g)_\nabla-\frac{\alpha^2}{2}(\tilde g,\tilde g)_\nabla)$. Then by Lemma~\ref{lem:Girsanov-Dirichlet}, the law of $h$ under the weighted measure is equal to the law of $h+\alpha\tilde g$ under the unweighted measure. Furthermore, if we set $\mathcal{E}_t(\varphi) = \iint_{\bbH^2}G_t(z,w)\varphi(z)\varphi(w)$, then as in the proof of~\cite[Lemma 3.11]{MS16a}, $(h^0_t,\varphi)$ is a Gaussian with mean zero and variance $\mathcal{E}_t(\varphi)$ for $\varphi\in C_0^\infty(\bbH)$. Using $(h,\tilde g)_\nabla = -\frac{1}{2\pi}(h,\Delta\tilde g)$, it follows that
	   \begin{equation}\label{eq:pf-radial-ig-1}
	      M_t:= \bbE\bigg[\exp((h,\alpha\tilde g)_\nabla-\frac{\alpha^2}{2}(\tilde g,\tilde g)_\nabla)\bigg|\cF_t\bigg] = \exp\bigg(-\frac{\alpha}{2\pi}(\mathfrak h_t,\Delta \tilde g)+\frac{\alpha^2}{8\pi^2}\mathcal{E}_t(\Delta\tilde g)-\frac{\alpha^2}{2}(\tilde g,\tilde g)_\nabla\bigg),
	   \end{equation}
	   while the law of $\eta|_{[0,\tau]}$ is now weighted by $M_\tau$. By~\cite[Eq.\ (3.19)]{MS16a}, $M_t$ is a local martingale. Moreover, from~\cite[Eq. (3.15),(3.16)]{MS16a},
	   \begin{equation}\label{eq:pf-radial-ig-2}
	       d\mathfrak h_t(z) = \mathrm{Im}\big(\frac{2}{f_t(z)}\big)dB_t, \ \ dG_t(z,w) =  -\mathrm{Im}\big(\frac{2}{f_t(z)}\big) \mathrm{Im}\big(\frac{2}{f_t(w)}\big)dt.
	   \end{equation}
	   Thus we may compute the cross variation between $\log M_t$ and $B_t$:
	   \begin{equation}\label{eq:pf-radial-ig-3}
	       d\langle \log M, B\rangle_t = -\frac{\alpha}{2\pi}\int_\bbH\, \mathrm{Im}\big(\frac{2}{f_t(z)}\big)\Delta\tilde g(z)  \, dz\,dt = \mathrm{Re}\big(\frac{2\alpha}{f_t(i)}\big)\,dt,
	   \end{equation}
	  where in the last equation we applied~\eqref{eq:lem-GFF-integration-B}. Since $2\alpha\sqrt{\kappa} = 6-\kappa+\sum_{j=1}^n\rho_j$, by the Girsanov theorem and comparing with~\eqref{eqn-def-sle-rho}, the evolution of $\eta|_{[0,\tau]}$ under the weighted measure is described by chordal $\SLE_\kappa(\rho_1,...,\rho_n;\kappa-6-\sum_{j=1}^n\rho_j)$, while $\eta|_{[0,\tau]}$ is measurable with respect to $h|_{U}$. 
	
	  Next we compute the conditional law of $h|_U$ given $\cF_\tau$ under the weighted measure. Pick a function $\varphi\in C_0^\infty(U)$. Then we have
	  \begin{equation}\label{eq:pf-radial-ig-4}
	      \bbE\bigg[\exp((h,\varphi)+(h,\alpha\tilde g)_\nabla-\frac{\alpha^2}{2}(\tilde g,\tilde g)_\nabla)\bigg|\cF_\tau\bigg] = \exp\bigg((\mathfrak h_\tau,\varphi-\frac{\alpha}{2\pi}\Delta \tilde g)+\frac{1}{ 2}\mathcal{E}_\tau(\varphi-\frac{\alpha}{2\pi}\Delta\tilde g)-\frac{\alpha^2}{2}(\tilde g,\tilde g)_\nabla\bigg).
	  \end{equation}
	  On the other hand, 
	  $$\frac{1}{ 2}\mathcal{E}_\tau(\varphi-\frac{\alpha}{2\pi}\Delta\tilde g) = \frac{1}{2}\mathcal{E}_\tau(\varphi)+\frac{\alpha^2}{8\pi^2}\mathcal{E}_\tau(\Delta \tilde g) - \frac{\alpha}{2\pi}\iint_{\bbH^2}G_t(z,w)\Delta\tilde g(z)\varphi(w)\,dzdw$$
	  Therefore by~\eqref{eq:lem-GFF-integration-A}, the conditional law of $(h,\varphi)$ given $\cF_\tau$ under the weighted measure is the same as $\big(h+\alpha(\arg(f_t(\cdot)+f_t(i))+\arg(f_t(\cdot)-\ol{f_t(i)}), \varphi\big)$. Thus by varying the domain $U$, we have generated the coupling between $h+\alpha g$ with a chordal   $\SLE_\kappa(\rho_1,...,\rho_n;\kappa-6-\sum_{j=1}^n\rho_j)$ curve $\eta$ in $\bbH$ up until the time $\tau_0$ when $\eta$ hits the continuation threshold or the branch cut $\cL_i$. Under this coupling, for any stopping time $\tau\le\tau_0$ the conditional law of $h+\alpha g$ given $\cF_\tau$ is equal to the law of  of an independent zero boundary GFF $h^0_\tau$ on $\bbH\backslash\eta([0,\tau])$ plus the harmonic function $\mathfrak{h}_\tau+\alpha(\arg(f_t(\cdot)+f_t(i))+\arg(f_t(\cdot)-\ol{f_t(i)})$, and $\eta|_{[0,\tau]}$ is determined by $h+\alpha g$.
	  
	  If $\eta$ hits $\cL_i$ at the stopping time $\tau_0$, then we apply the conformal map $f_{\tau_0}$, and construct the flow line of $$h^0_\tau\circ f_{\tau_0}^{-1}+\mathfrak{h}_\tau\circ f_{\tau_0}^{-1}+\alpha(\arg(\cdot+f_{\tau_0}(i))+\arg(\cdot-\ol{f_{\tau_0}(i)}))-\chi\arg (f_{\tau_0}^{-1})' $$
	  as in the previous step. Here to generate the coupling we shift the branch cut such that it is the straight line $\cL_{f_{\tau_0}(i)}$ from $f_{\tau_0}(i)$ to $\infty$. This gives the flow line up until time $\tau_1$ when $\eta$ hits $f_{\tau_0}^{-1}\circ \cL_{f_{\tau_0}(i)}$. Then we shift the branch cut back to $\cL_i$, which creates a $2\pi\alpha$ difference between $\cL_i$ and $f_{\tau_0}^{-1}\circ \cL_{f_{\tau_0}(i)}$ and gives rise to the jump under the flow line boundary condition. One iterates this process, and as explained in the proof of~\cite[Proposition 3.1]{ig4}, there is some absolute constant $c\in(0,1)$ such that for each step we have $\mathrm{Im} f_{\tau_{j+1}}(i)\le c\mathrm{Im} f_{\tau_{j}}(i)$. This implies that we can generate the coupling before the continuation threshold is hit or $i$ is separated from $\infty$.
	  Then we take a conformal map $\varphi:\bbH\to\bbD$ to  map to the unit disk sending $(0,i)$ to $(-i,0)$ through~\eqref{eq:ig-change-coord} and apply~\cite[Theorem 3]{SW05}, where the image of $\eta$ is now radial $\SLE_\kappa(\rho_1,...,\rho_n)$. The flow line is grown until it separates $\varphi(\infty)$ and $0$, and we can pick a new reference point on the boundary of the connected component containing $0$ and continue this process. As explained at the end of the proof of~\cite[Proposition 3.1]{ig4}, the uniqueness of the coupling follows from the same argument as~\cite[Theorem 2.4]{MS16a}. This completes the proof when $\beta=0$.
	
	  If $\beta\neq0$, the proof follows from the same argument as in the proof of~\cite[Proposition 3.18]{ig4}. Namely, let $(\widehat{h}, \widehat{\eta})$ be the coupling for the $\beta=0$ case. We pick $\delta>0$ and set $\xi_\delta = \log\max(|z|,\delta)$, and weight the law of $(\widehat{h}, \widehat{\eta})$ by $\exp(\beta(\widehat{h},\xi_\delta)_\nabla)$. Then we apply Lemma~\ref{lem:Girsanov-Dirichlet} and adapt the previous argument for the $\beta=0$ case, and finally send $\delta\to0$. We omit the details.
	\end{proof}
	
	Finally we recall the whole plane $\SLE_\kappa^{\mu}(\rho)$ processes, which describes a random curve $\eta:\bbR\to\bbC$ as follows. Let $D_t$ be the unbounded connected component of $\bbC\backslash\eta((-\infty,t])$, and $g_t:D_t\to\bbC\backslash\bbD$ be the unique conformal map with $g_t(\infty)=\infty$ and $g_t'(\infty)>0$. As shown in~\cite[Proposition 2.1]{ig4}, for $\mu\in\bbR,\kappa>0,\rho>-2$ and a standard two-sided Brownian motion $(B_t)_{t\in\bbR}$, the following SDE
	\begin{equation}
	    \begin{split}
	        &dU_t = (i\kappa\mu-\frac{\kappa}{2})U_tdt+i\sqrt{\kappa}U_tdB_t+\frac{\rho}{2}\hat{\Phi}(O_t,U_t)dt\\
	        &dO_t = \Phi(U_t, O_t)dt
	        \end{split}
	\end{equation}
	has  a unique stationary solution. Then the whole plane $\SLE_\kappa^{\mu}(\rho)$ process is characterized by $(g_t)_{t\in\bbR}$ such that $g_t$ solves~\eqref{eq-rad-sle} for $z\in D_t$.
	The whole plane $\SLE_\kappa^{\mu}(\rho)$ processes can also be coupled with $h-\alpha\arg(\cdot)-\beta\log|\cdot|$ as flow lines. See~\cite[Section 2.1.3, Section 3.3]{ig4} for more details.
	
	\subsection{Interacting flow lines}\label{subsec:radial-ig-flow-line}
	In this section, we consider the interaction between flow lines of $h_{\alpha\beta}=h+\alpha\arg(\cdot)+\beta\log|\cdot|$. Suppose $\eta_1$ and $\eta_2$ are two flow lines of $h_{\alpha\beta}$ with angle $\theta_1$ and $\theta_2$. 
	Consider the event $E$ where $\eta_1$ hits $\eta_2$ on its right hand side; call the hitting time $\tau_1$.
	Then following~\cite[Theorem 1.7 and Figure 1.10]{ig4}, one can define a height difference $\cD_{12}$ between $\eta_1$ and $\eta_2$. Roughly speaking, using the $-\alpha$-flow line boundary conditions, $\cD_{12}$ is obtained by subtracting the boundary value on the right hand side of $\eta_2$ from  the boundary value on the right hand side of $\eta_1$ near $\eta_1(\tau_1)$. See Figure~\ref{fig:flowlineradialheight} for an illustration.
	
	\begin{figure}
	    \centering
	    \includegraphics[scale=0.75]{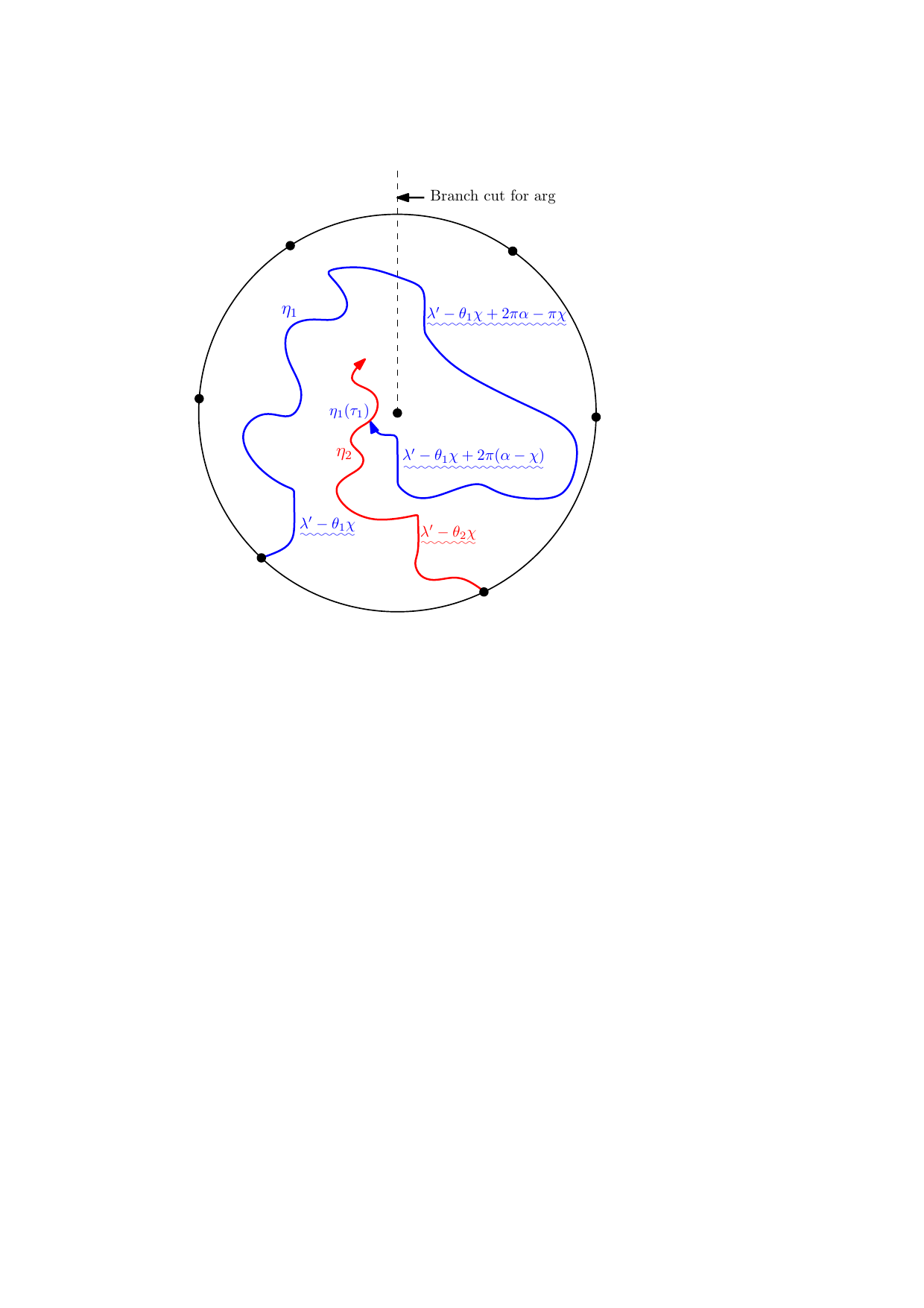}
	    \caption{An illustration of the height difference between the angle $\theta_1$ flow line $\eta_1$ and the angle $\theta_2$ flow line $\theta_2$. In this picture $\cD_{12} = 2\pi(\alpha-\chi)+(\theta_2-\theta_1)\chi$.}
	    \label{fig:flowlineradialheight}
	\end{figure}
	
	\begin{proposition}\label{prop:height-difference-radial}
	    Let $h_{\alpha\beta}$ be as in Theorem~\ref{thm:radial-ig}, and let $\eta_1$ and $\eta_2$ be two flow lines of $h_{\alpha\beta}$ with angle $\theta_1$ and $\theta_2$ started at $x_1,x_2\in\partial\bbD$. On the event $E$ where $\eta_1$ hits $\eta_2$ on its right side, 
	    we have $\cD_{12}\in(-\pi\chi,2\lambda-\pi\chi)$. Furthermore,
	    \begin{enumerate}[(i)]
	        \item If $\cD_{12}\in(-\pi\chi,0)$, then $\eta_1$ crosses $\eta_2$ upon intersecting and does not subsequently
	cross back;
	\item If $\cD_{12} = 0$, then $\eta_1$ merges with and does not subsequently separate from $\eta_2$ upon intersecting;
	\item If $\cD_{12}\in(0,2\lambda-\pi\chi)$, then $\eta_1$ bounces off but does not cross $\eta_2$.
	    \end{enumerate}
	    Finally, if $\eta_1$ hits $\eta_2$ on its left, then the same result holds with $\cD_{21} := -\cD_{12}$ replaced with $\cD_{12}$.
	\end{proposition}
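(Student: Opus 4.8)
\noindent\textbf{Proof strategy.} The plan is to deduce Proposition~\ref{prop:height-difference-radial} from its chordal counterpart \cite[Theorem 1.5]{MS16a} (see also the hitting results in \cite{MS16b}) by a local absolute continuity argument, in the spirit of how \cite[Theorem 1.7]{ig4} is reduced to the chordal interaction theorems. The key point is that the way two flow lines interact upon first meeting is a local feature of the triple $(h_{\alpha\beta},\eta_1,\eta_2)$ near the first intersection point, and near such a point the radial configuration is mutually absolutely continuous with respect to a chordal one with the same pair of angles.

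First I would set up the reduction. Let $z_0=\eta_1(\tau_1)$ be the first point at which $\eta_1$ hits $\eta_2$ on its right. One argues that, on the event $E$, almost surely $z_0\in\bbD\setminus(\{0\}\cup\cL_i)$ (the case where the two curves meet on $\partial\bbD$ is handled by the same comparison with chordal boundary-touching flow lines together with the boundary flow-line rules, and accumulation of intersection points at $0$ or on the branch cut is excluded as in the proof of \cite[Proposition 3.1]{ig4}). Fix then a ball $B=B(z_0,\e)$ with $\overline B\subset\bbD\setminus(\{0\}\cup\cL_i)$, small enough to contain appropriate pieces of $\eta_1$ and $\eta_2$ straddling $z_0$. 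On $B$ the functions $\arg(\cdot)$ and $\log|\cdot|$ are smooth and harmonic, so by Theorem~\ref{thm:radial-ig} and \cite[Proposition 3.4]{MS16a} the law of $h_{\alpha\beta}|_{B}$ is mutually absolutely continuous with respect to that of a chordal GFF on $B$ carrying the appropriate harmonic boundary data; since, again by Theorem~\ref{thm:radial-ig}, both $\eta_1$ and $\eta_2$ are determined by $h_{\alpha\beta}$, this absolute continuity passes to the joint law of $(h_{\alpha\beta},\eta_1,\eta_2)$ restricted to $B$. Moreover Theorem~\ref{thm:radial-ig} gives that the conditional law of $h_{\alpha\beta}$ given the curves has $-\alpha$-flow line boundary conditions along each, which is exactly the local picture of two chordal flow lines of angles $\theta_1,\theta_2$ in Theorem~\ref{thm-ig1}; hence the radial pair near $z_0$ is locally absolutely continuous with respect to a pair of chordal flow lines with the same two angles.

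Granting this, I would finish as follows. The height difference $\cD_{12}$ is read off from the difference of the $-\alpha$-flow line boundary values of $h_{\alpha\beta}$ on the right side of $\eta_1$ and on the right side of $\eta_2$ near $z_0$; since $\alpha\arg(\cdot)$ and $\beta\log|\cdot|$ are continuous on $B$ and enter both boundary values in the same way, they cancel in the difference, so $\cD_{12}$ is given by the same expression (the $2\pi(\alpha-\chi)$-type jump plus $(\theta_2-\theta_1)\chi$) as the chordal height difference of Figure~\ref{fig:flowlineradialheight}. Therefore the chordal trichotomy \cite[Theorem 1.5]{MS16a} and the chordal constraint on which flow lines can intersect transfer verbatim: on $E$ one must have $\cD_{12}\in(-\pi\chi,2\lambda-\pi\chi)$, and cases (i)--(iii) hold since ``crosses and does not cross back'', ``merges and does not separate'', and ``bounces without crossing'' are events preserved under the local absolute continuity. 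The final assertion for left-hitting follows from the reflection symmetry $z\mapsto\bar z$ composed with relabeling the curves, under which $\cD_{12}$ becomes $\cD_{21}=-\cD_{12}$.

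The main obstacle I expect is the bookkeeping: checking that the radial height difference defined through the $-\alpha$-flow line boundary conditions (which carry the extra $2\pi\alpha$ jumps across $\cL_i$) genuinely reduces to the chordal one on a branch-cut-free neighborhood, and handling the degenerate possibilities for $z_0$ — that it is a.s.\ not $0$, a.s.\ not on $\cL_i$, and, when relevant, the boundary-intersection case — which is precisely where one must use the absolute continuity estimates of \cite{ig4} rather than a purely soft argument. An alternative, more self-contained route would be to reprove the conditional-law computations of \cite[Section 7]{MS16a} and \cite[Section 3]{ig4} directly in the radial frame, but the local comparison above is considerably shorter.
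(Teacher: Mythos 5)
Your proposal is correct and is essentially the paper's own argument: the paper proves this proposition in one line, asserting that it is identical to \cite[Proposition 3.5]{ig4} or, alternatively, follows by local absolute continuity from the chordal interaction result \cite[Theorem 1.5]{MS16a} --- precisely the reduction you flesh out (locality of the interaction near the first hitting point, cancellation of the $\alpha\arg$ and $\beta\log$ terms in the height difference, and transfer of the chordal trichotomy). The only point to state a bit more carefully is that flow lines are not local functions of the field, so the transfer should be phrased via the conditional law of one curve given the other (stopped before exiting the ball) being locally absolutely continuous with respect to the chordal picture, exactly as in \cite{ig4}.
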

	\begin{proof}
	The proof is identical to that of~\cite[Proposition 3.5]{ig4}. Alternatively, similarly to the proof of Theorem~\ref{thm:radial-ig}, we can use absolute continuity to deduce this from the analogous result in the chordal regime~\cite[Theorem 1.5]{MS16a}. We omit the details.
	\end{proof}

	The remainder of this section aims to prove the following.
	
	\begin{figure}[ht]
		\centering
	 \begin{tabular}{cc}
	     \includegraphics[scale=0.59]{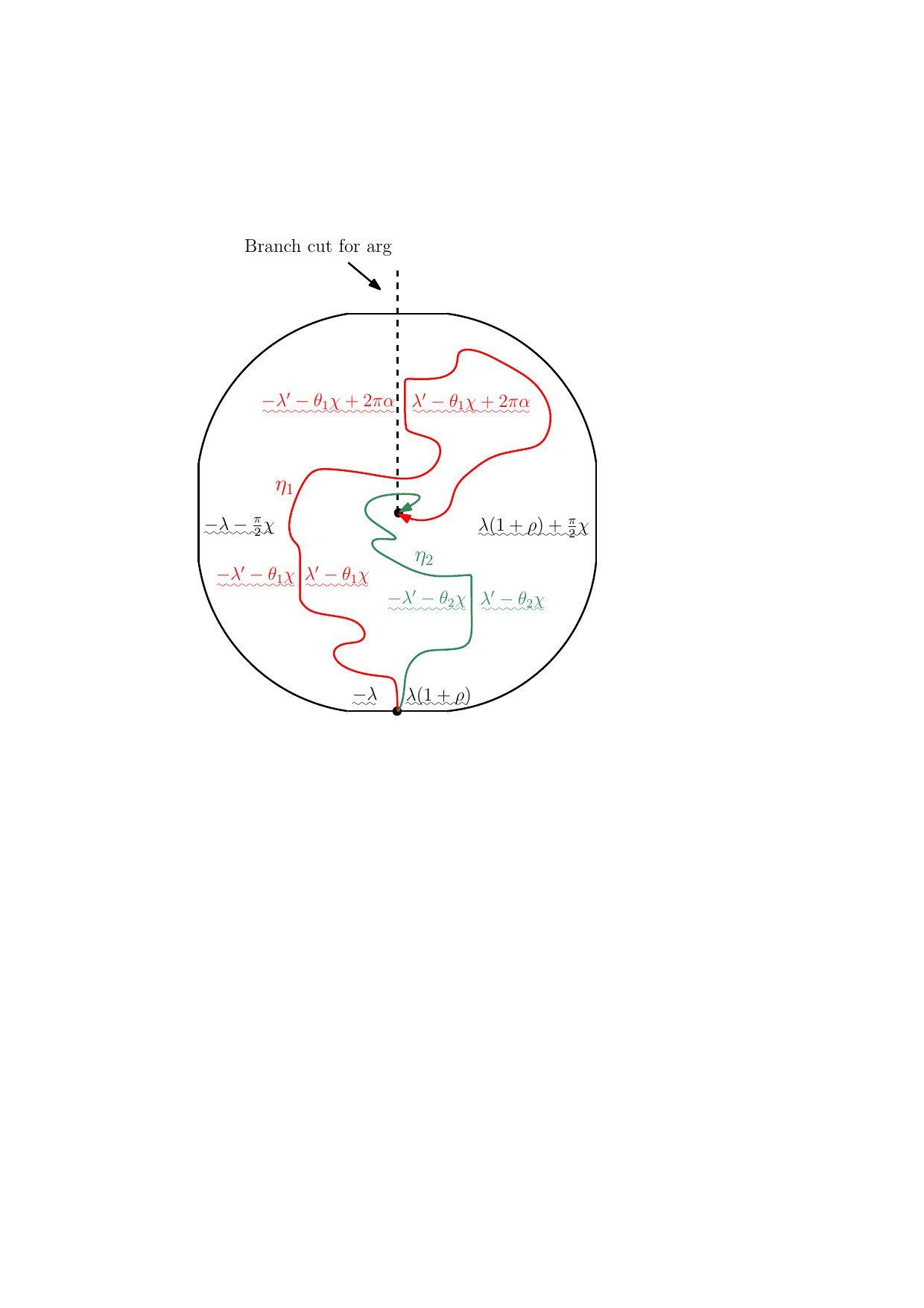}\ \ & \includegraphics[scale=0.59]{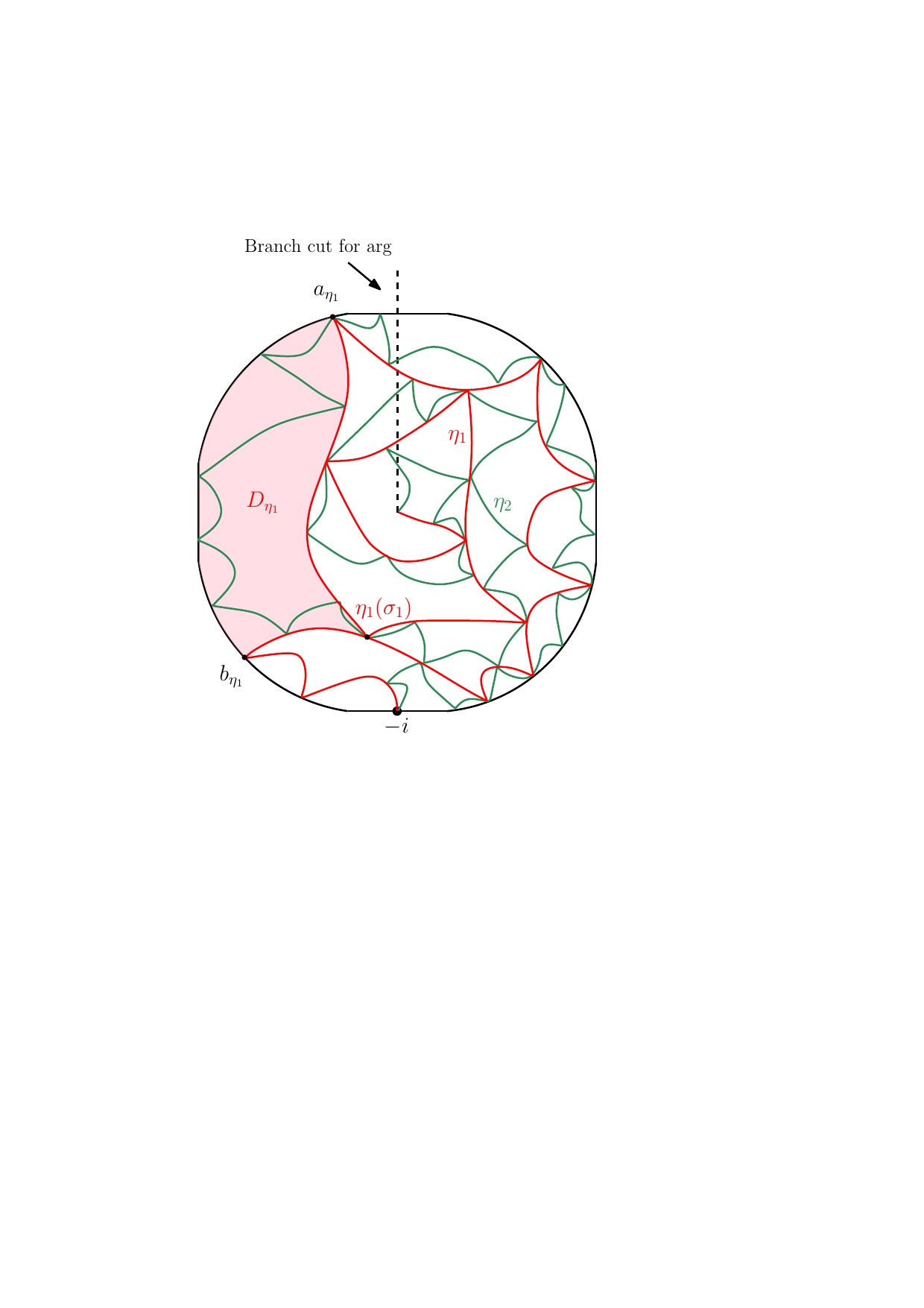}
	 \end{tabular}
		
		\caption{\textbf{Left:} Given the flow lines $\eta_1,\eta_2$ of angle $\theta_1$,$\theta_2$, $h_{\alpha\beta}$ has the illustrated  {flow line boundary conditions}, and one can read off the conditional law of $\eta_1$ given $\eta_2$ and the conditional law of $\eta_2$ given $\eta_1$ as in Proposition~\ref{prop:radial-flowline}. \textbf{Right:} The domain $D_{\eta_1}$ in the case where $\eta_1$ is self intersecting.}\label{fig:radial-gff}
	\end{figure}
	
	\begin{proposition}\label{prop:radial-flowline}
	Fix {$\kappa\in(0,4)$,}  $\alpha>\chi,\beta\in\bbR$, and suppose that $h_{\alpha\beta} = h +\alpha\arg(\cdot)+\beta\log|\cdot|$ has boundary conditions on $\partial\mathbb{D}$ as depicted in Figure~\ref{fig:radial-gff} where $h$ is a Dirichlet GFF on $\bbD$. Let 
	\begin{equation}\label{eq:flowline-angle}
	   2\pi(1-\frac{\alpha}{\chi})<\theta_2<\theta_1<\frac{2\lambda}{\chi}, \ \ \theta_1-\theta_2<2\pi(\frac{\alpha}{\chi}-1); \ \ \rho = \kappa-6+\frac{2\pi\alpha}{\lambda} . 
	\end{equation}
	Let $(\eta_1,\eta_2)$ be the flow lines of $h_{\alpha\beta}$ with angles $(\theta_1,\theta_2)$ from $-i$ to 0. Then:
	\begin{enumerate}[(i)]
	    \item For $i=1,2$, $\eta_i$ is a radial $\SLE_\kappa^\beta(-\frac{\theta_i\chi}{\lambda};\rho+\frac{\theta_i\chi}{\lambda})$ process;
	    \item $\eta_1$ and $\eta_2$ do not cross each other;
	    \item $\eta_1,\eta_2$ are continuous near 0;
	    \item  \label{item-resample}
	   {Conditioned on $\eta_1$, the curve $\eta_2$ is the union of independent curves in certain connected components of $\mathbb{D}\backslash\eta_1$, as follows. 
	   \begin{itemize}
	       \item If $\eta_1$ first touches itself at time $\sigma_1<\infty$, let $D_{\eta_1}$ be the component having $\eta_1(\sigma_1)$ on its boundary whose boundary contains a segment of $\partial \bbD$. Otherwise let $D_{\eta_1}$ be the component having $0$ on its boundary. 
	       Let $a_{\eta_1}$ and $b_{\eta_1}$ be the first and last points on $\partial D_{\eta_1}\cap\partial \bbD$ when one views $\partial \bbD$ as a counterclockwise curve from $(-i)^+$ to $(-i)^-$.  Then in $D_{\eta_1}$, $\eta_2$ is a chordal $\SLE_\kappa(\frac{(\theta_1-\theta_2)\chi}{\lambda}-2;\rho+\frac{\theta_2\chi}{\lambda},-\frac{\theta_1\chi}{\lambda})$ process from $a_{\eta_1}$ to $\eta(\sigma_1)$ with force points $a_{\eta_1}^-;a_{\eta_1}^+,b_{\eta_1}$.
	
	       \item In each remaining component whose boundary contains a segment of the counterclockwise boundary arc of $\partial\bbD$ from $-i$ to $a_{\eta_1}$, $\eta_2$ is a chordal $\SLE_\kappa(\frac{(\theta_1-\theta_2)\chi}{\lambda}-2;\rho+\frac{\theta_2\chi}{\lambda})$ process from the first to the last boundary point hit by $\eta_1$.
	
	       \item In each remaining component whose boundary does not contain a segment of $\partial \bbD$, $\eta_2$ is a chordal $\SLE_\kappa(\frac{(\theta_1-\theta_2)\chi}{\lambda}-2;\rho+\frac{(\theta_2-\theta_1)\chi}{\lambda})$ process from the first to the last boundary point hit by $\eta_1$.
	   \end{itemize}
	   Similarly, given $\eta_2$, defining $D_{\eta_2}, a_{\eta_2}, b_{\eta_2}$ as above with $\eta_1$ replaced by $\eta_2$ and counterclockwise replaced by clockwise, the curve $\eta_1$ is $\SLE_\kappa(-\frac{\theta_1\chi}{\lambda},\rho+\frac{\theta_2\chi}{\lambda};\frac{(\theta_1-\theta_2)\chi}{\lambda}-2)$ in  $D_{\eta_2}$, $\SLE_\kappa(-\frac{\theta_1\chi}{\lambda};\frac{(\theta_1-\theta_2)\chi}{\lambda}-2)$ in each other connected component of $\bbD\backslash\eta_2$ whose boundary contains a segment of the  clockwise boundary arc of $\partial\bbD$ from $(-i)^-$ to $a_{\eta_2}$, and $\SLE_\kappa(\rho+\frac{(\theta_2-\theta_1)\chi}{\lambda};\frac{(\theta_1-\theta_2)\chi}{\lambda}-2)$ in each remaining component whose boundary does not contain an interval of $\partial \bbD$. }
	\end{enumerate}
	\end{proposition}
	
	\begin{figure}[ht]
		\centering
		\includegraphics[scale=0.59]{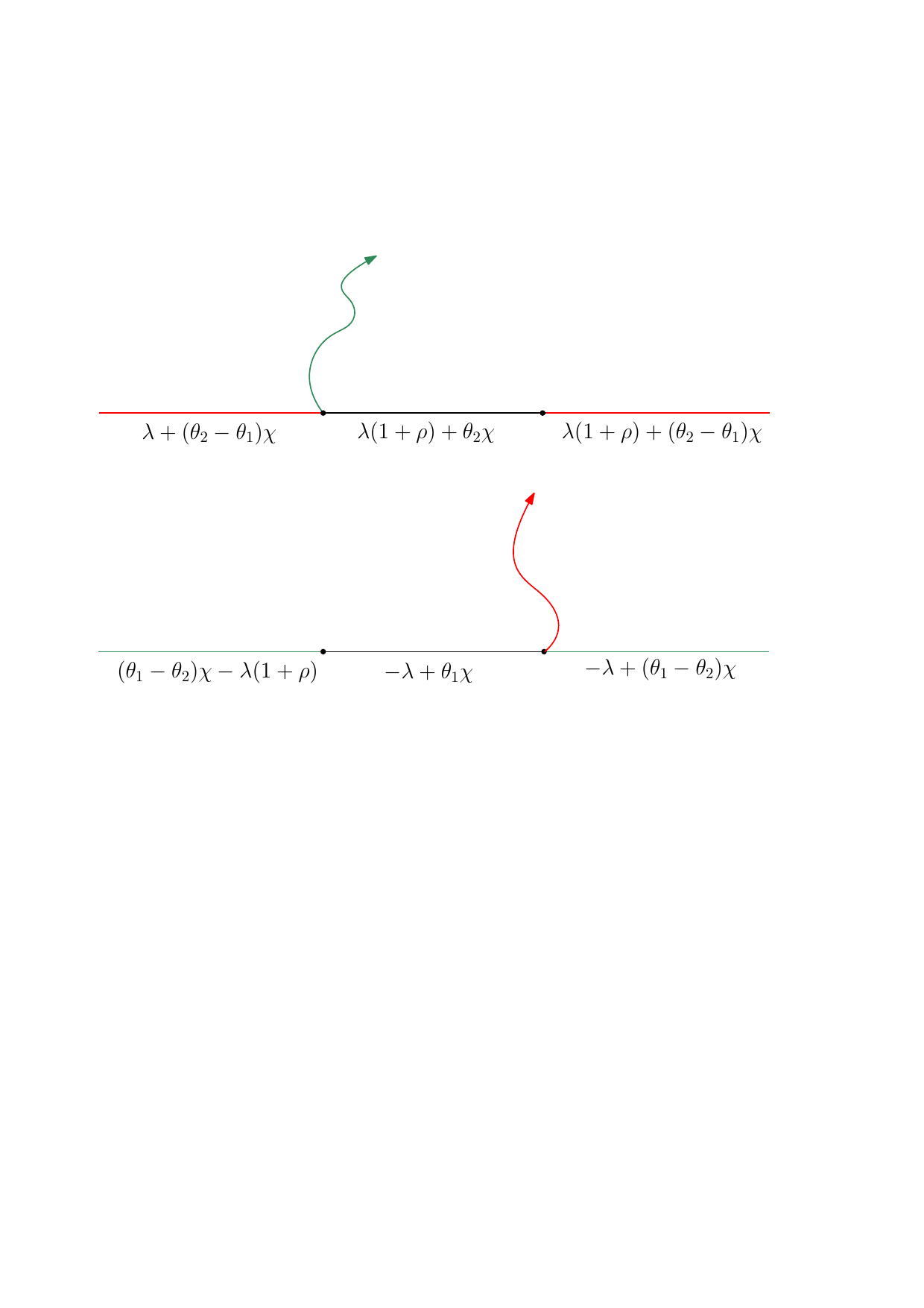}
		\caption{We can further read the conditional laws of $\eta_i$'s based on Figure~\ref{fig:radial-gff}. In the case $\eta_1$ and $\eta_2$ are non-boundary hitting simple curves, given $\eta_1$, $\eta_2$ is the flow line of the GFF with boundary conditions on the upper panel and has law $\SLE_\kappa(\frac{(\theta_1-\theta_2)\lambda}{\chi}-2;\rho+\frac{\theta_2\chi}{\lambda},-\frac{\theta_1\chi}{\lambda})$, and given $\eta_2$, $\eta_1$ is the flow line of the GFF with boundary conditions on the lower panel and has law $\SLE_\kappa(-\frac{\theta_1\chi}{\lambda},\rho+\frac{\theta_2\chi}{\lambda};\frac{(\theta_1-\theta_2)\lambda}{\chi}-2)$. If either curves is boundary or self hitting, then the conditional laws can be inferred similarly.}\label{fig:radial-gff-a}
	\end{figure}
	
	\begin{proof}
	(i) follows from Theorem~\ref{thm:radial-ig}, and (ii) follows from Proposition~\ref{prop:height-difference-radial}. To prove (iii) and (iv), we first assume $\theta_1=0$. Then it follows from~\cite[Proposition 3.30]{ig4} that $\eta_1$ is a continuous curve, and from the same argument as in the proof of ~\cite[Proposition 3.28]{ig4} we know that the conditional law of $\eta_2$ given $\eta_1$ is exactly that described in (iv). Indeed, one take conformal maps from the connected components of $\bbD\backslash\eta_1$ to $\bbH$, from which we can read off the boundary data of the GFF from the flow line boundary conditions, and further determine the conditional law via Theorem~\ref{thm-ig1}. The reason that the branch cut does not affect the conditional law is, as $\eta_2$ hits the branch cut, we would shift the branch cut as explained in the proof of Theorem~\ref{thm:radial-ig} and continue the flow line $\eta_2$. This shift of the branch cut would make the flow line boundary condition on $\eta_1$ to be continuous and compatible with the boundary data before $\eta_2$ hits the unshifted branch cut, up until $\eta_2$ hits the shifted branch cut. Then since chordal $\SLE_\kappa(\underline\rho)$ has end point continuity~\cite[Theorem 1.3]{MS16a}, it follows that $\lim_{t\to\infty}\eta_2(t) = 0$. The conditional law of $\eta_1$ given $\eta_2$ can be determined similarly. This proves the case when $\theta_1=0$, and the case when $\theta_2=0$ follows similarly. In particular, we know that $\lim_{t\to\infty}\eta_1(t) = 0$ for $\theta_1\in(2\pi(1-\frac{\alpha}{\chi}),\min\{2\pi(\frac{\alpha}{\chi}-1),\frac{2\lambda}{\chi}\})$.
	
	Now we pick $\theta_2\in(2\pi(1-\frac{\alpha}{\chi}),2\pi(\frac{\alpha}{\chi}-1))$ and $\theta_1$ such that~\eqref{eq:flowline-angle} holds. Then following the same argument in the previous paragraph, we know that both (iii) and (iv) holds in this range. In particular, it follows that $\lim_{t\to\infty}\eta_1(t) = 0$ for $\theta_1\in(2\pi(1-\frac{\alpha}{\chi}),\min\{4\pi(\frac{\alpha}{\chi}-1),\frac{2\lambda}{\chi}\})$. Thus the proposition follows by further bootstrapping over $\theta_1,\theta_2$.
	\end{proof}
	
	\subsection{Resampling uniqueness of flow lines}\label{subsec:radial-resampling}
	In~\cite[Theorem 4.1]{MS16b}, it is shown that given two boundary-to-boundary flow lines $\eta_1$ and $\eta_2$ of the GFF on the upper half plane, the conditional laws of $\eta_1$ given $\eta_2$ and $\eta_2$ given $\eta_1$ uniquely characterize the joint law of $(\eta_1,\eta_2)$. In~\cite[Theorem 5.1]{ig4}, it is shown that if $\eta_1$, $\eta_2$ are flow lines of $h+\alpha\arg(\cdot)+\beta\log|\cdot|$ where $h$ is a whole plane GFF, then the two conditional laws determine the joint law  of $(\eta_1,\eta_2)$ up to the parameter $\beta$. In this section we prove the analogous result for the flow lines in Proposition~\ref{prop:radial-flowline}.

	\begin{proposition}\label{prop:radial-resampling}
	Fix $\alpha>-\chi$, $\beta\in\bbR$, and $\theta_1,\theta_2$ in the range \eqref{eq:flowline-angle}. Let $\mu_{\alpha\beta}$ be the joint law of the curves $(\eta_1,\eta_2)$ in Proposition~\ref{prop:radial-flowline}. Also consider a Markov chain  $(X_n)_{n\ge0}$ on pair of non-crossing continuous curves $(\tilde\eta_1,\tilde\eta_2)$ from $-i$ to 0 in $\bbD$ such that in each step one uniformly randomly picks $j\in\{1,2\}$, then resamples $\tilde\eta_j$ according to the conditional law of $\eta_j$ given $\eta_{3-j}$ as described in Proposition~\ref{prop:radial-flowline}\eqref{item-resample}. Then any invariant $\sigma$-finite measure $\mu$ of $(X_n)_{n\ge0}$ can be written as $\mu = \int_\bbR \mu_{\alpha\beta}\nu(d\beta)$ where $\nu$ is a $\sigma$-finite measure on $\bbR$. 
	\end{proposition}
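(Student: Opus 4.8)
The proof strategy follows the template of \cite[Theorem 4.1]{MS16b} and \cite[Theorem 5.1]{ig4}, adapting it to the radial setting. Let $\mu$ be an invariant $\sigma$-finite measure for the Markov chain $(X_n)$. The plan is to first show that $\mu$-almost every pair $(\tilde\eta_1,\tilde\eta_2)$ arises as a pair of flow lines of \emph{some} GFF-type field, and then to identify which field. First I would argue that under $\mu$, the conditional law of $\tilde\eta_2$ given $\tilde\eta_1$ is (by invariance and the structure of the resampling step) exactly the conditional law prescribed in Proposition~\ref{prop:radial-flowline}, and symmetrically for $\tilde\eta_1$ given $\tilde\eta_2$; this is the standard consequence of stationarity under a two-step resampling. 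The key structural input is that, just as in \cite{MS16b,ig4}, the pair of conditional laws encodes enough information to reconstruct a harmonic function on $\bbD\setminus(\tilde\eta_1\cup\tilde\eta_2)$ with the flow-line boundary data of Figure~\ref{fig:radial-gff}, up to the finitely many undetermined additive constants (one per complementary component) and up to the value of $\beta$.

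The core of the argument is a \emph{harmonic extension / consistency} step. Given $(\tilde\eta_1,\tilde\eta_2)$ distributed according to $\mu$, I would use the conditional laws together with Theorem~\ref{thm:radial-ig} (applied in each complementary component via the coordinate change \eqref{eq:ig-change-coord}) to build a candidate field: sample a zero-boundary GFF in each component of $\bbD\setminus(\tilde\eta_1\cup\tilde\eta_2)$ and add the flow-line boundary data. The resampling invariance forces these local fields to be \emph{consistent} across the two decompositions $\bbD\setminus\tilde\eta_1$ and $\bbD\setminus\tilde\eta_2$: resampling $\tilde\eta_2$ from the conditional law inside $\bbD\setminus\tilde\eta_1$ must reproduce, after re-decomposing along $\tilde\eta_2$, the conditional law inside $\bbD\setminus\tilde\eta_2$. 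By the argument of \cite[Section 4]{MS16b}, this consistency pins down the joint law of the global field $h_{\alpha\beta}$ and the pair $(\eta_1,\eta_2)$, modulo the parameter $\beta$ (which governs the $\beta\log|\cdot|$ term and is not seen by the argument-based flow-line boundary conditions, since $\log|\cdot|$ is rotationally symmetric and does not affect the relative height differences $\cD_{12}$). The radial-specific complication — that the branch cut for $\arg(\cdot)$ must be shifted each time a curve crosses it — is handled exactly as in the proof of Theorem~\ref{thm:radial-ig}: the shifts make the flow-line boundary data consistent and contribute only the prescribed $2\pi\alpha$ jumps, which are already built into the conditional laws of Proposition~\ref{prop:radial-flowline}, so they do not introduce additional free parameters.

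Having shown that $\mu$-a.e.\ realization is a pair of radial flow lines of $h_{\alpha\beta}$ for some $\beta\in\bbR$, I would complete the disintegration by a measurability argument: the map $(\eta_1,\eta_2)\mapsto\beta$ is measurable (one can recover $\beta$ from, say, the behavior of the driving function of $\eta_1$ near $0$, since $\eta_1$ is a radial $\SLE_\kappa^\beta(-\tfrac{\theta_1\chi}{\lambda};\rho+\tfrac{\theta_1\chi}{\lambda})$ and $\beta$ is the drift parameter, or via a martingale observable), so disintegrating $\mu$ over the value of $\beta$ gives $\mu=\int_\bbR\mu_{\alpha\beta}\,\nu(d\beta)$ for a $\sigma$-finite measure $\nu$. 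Conversely each $\mu_{\alpha\beta}$ is genuinely invariant by Proposition~\ref{prop:radial-flowline}, so this is the full description.

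The main obstacle I anticipate is the consistency/harmonic-reconstruction step in the presence of the branch cut and of the self- and boundary-hitting behavior of the curves: one must verify that the three-case description of the conditional law in Proposition~\ref{prop:radial-flowline}(iv) — different $\SLE_\kappa(\underline\rho)$ laws depending on whether a complementary component touches $\partial\bbD$, touches the distinguished arc, or is ``interior'' — glues together into a single globally consistent boundary-data prescription, and that the argument of \cite[Theorem 4.1]{MS16b}, which was written for simple boundary-to-boundary flow lines in $\bbH$ with no branch cut, goes through verbatim once the shifts are accounted for. I expect this to be routine but notation-heavy, so I would carry it out by reducing, via \eqref{eq:ig-change-coord} and the local absolute continuity between radial and chordal $\SLE_\kappa(\underline\rho)$ (as in the proof of Proposition~\ref{prop:height-difference-radial}), to the chordal statement already established in \cite{MS16b,ig4}, rather than redoing the SDE computations.
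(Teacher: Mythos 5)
Your high-level outline (invariance gives the prescribed conditional laws, reconstruct a flow-line structure, identify the field up to the $\beta\log|\cdot|$ ambiguity, then disintegrate over $\beta$) matches the shape of the paper's argument, but the proposal leaves the central step unproved and proposes a reduction that does not work. The crux of the proposition is precisely that the two conditional laws characterize the joint law up to $\beta$, for a general $\sigma$-finite invariant measure $\mu$; writing ``by the argument of \cite[Section 4]{MS16b} this consistency pins down the joint law'' asserts the conclusion rather than proving it. Moreover, the suggested shortcut -- reducing to the chordal statement of \cite{MS16b} via the coordinate change and local absolute continuity between radial and chordal $\SLE_\kappa(\underline\rho)$ -- cannot succeed: the parameter $\beta$ (and more generally the distinction between candidate invariant measures) is only visible in the behavior of the curves near their common target point $0$, i.e.\ in their asymptotic winding, where radial $\SLE_\kappa^\beta(\underline\rho)$ laws for different $\beta$ are mutually singular and no comparison with a chordal configuration is available. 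Absolute continuity away from $0$ can transfer local statements (as in Proposition~\ref{prop:height-difference-radial}), but not a global uniqueness-of-invariant-measure statement.

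What the paper actually does, and what is missing from your plan, is the following localization-and-ergodicity argument (following \cite[Section 5.2]{ig4}): map to $\bbC\backslash\bbD$, attach to $(\eta_1,\eta_2)$ a conditionally sampled GFF with the flow-line boundary data, run two auxiliary flow lines $\gamma_1,\gamma_2$ from a distant point to cut out an annular region $A$ surrounding $\bbD$, and condition on $Y=(A,h|_{\partial A})$. Because $\mu$ is only $\sigma$-finite, even making sense of this conditioning requires the disintegration machinery of Lemma~\ref{lem:XYsample}, which your proposal does not address. Given $Y$, one then considers the induced resampling Markov chain on the curve segments in $A$, proves it is irreducible (via the untangling lemma \cite[Lemma 5.2]{ig4}, the fact that flow lines stay in prescribed tubes with positive probability, and absolute continuity of the GFF), notes that the GFF flow-line law $P_Y$ is an invariant probability measure (hence the chain is recurrent), and invokes uniqueness of invariant measures for irreducible recurrent chains \cite{Meyn-Tweedie} to conclude $\tilde m_Y = c\,P_Y$. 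Only then does the limit $\e\to0$, $R\to\infty$ identify the global field as a GFF plus $-\alpha\arg(\cdot)$ plus a possibly random multiple $\beta$ of $\log|\cdot|$, which is exactly the statement $\mu=\int_\bbR\mu_{\alpha\beta}\,\nu(d\beta)$; no separate measurability-of-$\beta$ argument of the kind you sketch is needed, and by itself such an argument would not yield the mixture decomposition without first knowing the conditional law given the annular data. To repair your proposal you would need to supply (i) the $\sigma$-finite conditioning step, (ii) the annulus construction with auxiliary flow lines, and (iii) the irreducibility/recurrence argument, rather than appealing to the chordal resampling theorem.
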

	
	To deal with non-probability measures,  we also need the following lemma, {which can be interpreted as extending the notion of conditional law to the $\sigma$-finite setting.}
	\begin{lemma}\label{lem:XYsample}
	    Let $\Omega'$, $\Omega''$ be  Polish spaces and $\mathcal{F}$ be the Borel $\sigma$-algebra on $\Omega'\times\Omega''$. On a measurable space $(\Omega,\mathcal{F})$, suppose $(X,Y)\to\Omega'\times\Omega''$ are random variables with distribution $m_x(dy)\mu_X(dx)$, where $\mu_X$ is a $\sigma$-finite measure and for each $x\in\Omega'$, $m_x$ is a probability measure on $(\Omega'',\mathcal{F})$. Further assume that $m_x$ is a kernel, in the sense that for any measurable set $A\subset \Omega''$, the function $x\mapsto m_x(A)$ is measurable on $\Omega'$. Then there exist $\sigma$-finite measures $(\tilde m_y)_{y\in\Omega''}, \nu$ (which are not necessarily finite), such that $(X,Y)$ can be sampled from the measure $\tilde m_y(dx)\nu(dy)$. 
	\end{lemma}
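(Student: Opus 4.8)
The plan is to deduce this from the classical disintegration theorem (existence of regular conditional distributions) for probability measures, which applies since in all the applications in this paper $\Omega'$ and $\Omega''$ are spaces of continuous curves and hence Polish. The only genuine obstacle is that disintegration is usually stated for finite (or probability) measures, whereas $\mu_X$ is merely $\sigma$-finite; so the first step is to replace $\mu_X$ by an equivalent probability measure, disintegrate the resulting probability measure on $\Omega'\times\Omega''$ with respect to the $\Omega''$-coordinate, and then transport the Radon--Nikodym density back.

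In detail: since $\mu_X$ is $\sigma$-finite, I would choose a probability measure $\bbP_X$ on $(\Omega',\cF')$ equivalent to $\mu_X$ and set $r:=\frac{d\mu_X}{d\bbP_X}$, taking a version that is a measurable function $\Omega'\to[0,\infty)$. Form the probability measure $\ol M(dx\,dy):=m_x(dy)\,\bbP_X(dx)$ on $\Omega'\times\Omega''$, i.e.\ the joint law of $(X,Y)$ when $X\sim\bbP_X$ and, given $X$, $Y$ has law $m_X$. Since $\Omega'$ is standard Borel, the disintegration theorem yields a probability kernel $y\mapsto K_y(dx)$ from $\Omega''$ to $\Omega'$ together with the $\Omega''$-marginal $\nu$ of $\ol M$ (a probability measure) such that $\ol M(dx\,dy)=K_y(dx)\,\nu(dy)$.

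I would then set $\tilde m_y(dx):=r(x)\,K_y(dx)$ for $y\in\Omega''$. Each $\tilde m_y$ is $\sigma$-finite because $r$ is finite-valued and $K_y$ is a probability measure, so the sets $\{r\le n\}$ exhaust $\Omega'$ with $\tilde m_y(\{r\le n\})\le n<\infty$; moreover $y\mapsto\tilde m_y(A)=\int\mathbf 1_A(x)\,r(x)\,K_y(dx)$ is measurable for each $A\in\cF'$, so $(\tilde m_y)_{y\in\Omega''}$ is a bona fide kernel, and $\nu$, being a probability measure, is $\sigma$-finite. Finally, for nonnegative measurable $F$ on $\Omega'\times\Omega''$,
\begin{align*}
\iint F(x,y)\,\tilde m_y(dx)\,\nu(dy) &= \iint F(x,y)\,r(x)\,K_y(dx)\,\nu(dy)\\
&= \iint F(x,y)\,r(x)\,\ol M(dx\,dy) = \iint F(x,y)\,m_x(dy)\,\mu_X(dx) = \bbE[F(X,Y)],
\end{align*}
so $(X,Y)$ may be sampled from $\tilde m_y(dx)\,\nu(dy)$, which is the claim.

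The one point requiring care is the invocation of the disintegration theorem, which needs $\Omega'$ to be Borel-isomorphic to a Borel subset of a Polish space; this is automatic for the curve spaces used here, so I do not expect any real difficulty. If one preferred to avoid passing through $\bbP_X$, an equivalent route would be to partition $\Omega'=\bigsqcup_n A_n$ with $\mu_X(A_n)<\infty$, disintegrate each finite measure $m_x(dy)\,\mu_X|_{A_n}(dx)$ over the $\Omega''$-coordinate as $K_y^{(n)}(dx)\,\nu_n(dy)$ with $\nu_n$ finite, put $\nu:=\sum_n\nu_n$ (a $\sigma$-finite measure) and $\tilde m_y(dx):=\sum_n\frac{d\nu_n}{d\nu}(y)\,K_y^{(n)}(dx)$, and verify the displayed identity in the same way.
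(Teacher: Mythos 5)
Your proof is correct, and it takes a genuinely different route from the paper. The paper proceeds by hand: it exhausts $\Omega'$ by an increasing sequence $A_n$ with $\mu_X(A_n)<\infty$, takes the conditional law of $X$ given $Y$ on each event $\{X\in A_n\}$, and stitches these together across $n$ by repeatedly splitting each new marginal $\ol\nu^{n+1}$ into its absolutely continuous and singular parts relative to the previously built $\nu^n$, finally passing to a limit; the consistency checks (that $\tilde m_y^{n+1}|_{A_n}=\tilde m_y^n$ on the support of $\nu^n$) are exactly what make that construction work. You instead replace $\mu_X$ by an equivalent probability measure $\bbP_X$, disintegrate the single probability measure $m_x(dy)\,\bbP_X(dx)$ over the $\Omega''$-coordinate, and transport the density $r=d\mu_X/d\bbP_X$ back into the kernel, which collapses the whole argument into one application of the disintegration theorem plus a Fubini-type computation; your identity $\iint F\,\tilde m_y(dx)\,\nu(dy)=\iint F\,m_x(dy)\,\mu_X(dx)$ is verified correctly, $\nu$ is even a probability measure, and the $\sigma$-finiteness of $\tilde m_y=r\,K_y$ via the sets $\{r\le n\}$ is fine. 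The one hypothesis you import, that $\Omega',\Omega''$ be standard Borel so that regular conditional distributions exist, is not actually weaker than what the paper uses: its own proof also invokes ``the conditional law $\ol m_y^n(dx)$ of $X$ given $Y$'' for finite measures, so both arguments silently strengthen the stated ``topological spaces'' hypothesis in the same way, and you are right that this is harmless in the paper's applications to spaces of curves. Your sketched alternative (disjoint partition, disintegrate each piece, sum with densities $d\nu_n/d\nu$) is essentially a streamlined version of the paper's construction that avoids the Lebesgue-decomposition bookkeeping, so either route you describe is a legitimate and somewhat cleaner substitute for the paper's proof.
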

	\begin{proof}
	Let  $(A_n)_{n\ge0}\subset \mathcal{F}'$ be an increasing  sequence with $\cup_{n=1}^\infty A_n=\Omega'$ such that $\mu_X(A_n)<\infty$ for each $n$.  Then the event $\{X\in A_n\}$ has finite measure, and on the event $\{X\in A_n\}$, we can define the marginal law $\ol\nu^n(dy)$ of $Y$, and the conditional law $\ol m_y^n(dx)$ of $X$ given $Y$. These satisfy $|\ol \nu^n| = \mu_X(\{ X \in A_n\}) < \infty$ and $|\ol m_y^n(dx)|=1$. By definition, for any non-negative measurable function $f$, we have  \begin{equation}\label{eq:pf-XY-sample}
	         \int \mathds{1}_{x\in A_n}f(x,y)m_x(dy)\mu_X(dx) = \int f(x,y)\ol m_y^n(dx)\ol\nu^n(dy).
	     \end{equation}
	We define a sequence of measures $\nu^n(dy)$ and $\tilde m_y^n(dx)$ as follows. For $n=1$, we set $\nu^1=\ol\nu^1$, and $\tilde m_y^1 = \ol m_y^1$. Suppose  $\nu^n$ and $\tilde m_y^n$ have been defined, and~\eqref{eq:pf-XY-sample} holds when replacing $(\ol m_y^n, \ol \nu^n)$ with $(\tilde m_y^n,\nu^n)$. For $n+1$, we decompose $\ol\nu^{n+1} = \nu^{n+1,1}+\nu^{n+1,2}$, where $\nu^{n+1,1}$ (resp.\ $\nu^{n+1,2}$)  is absolutely continuous (resp.\ singular) w.r.t.\ $\nu^n$. Then we define $\nu^{n+1} = \nu^n + \nu^{n+1,2}$.   Let $B_{n+1}$ be the support of $\nu^{n+1,2}$.  
    For $y\in B_{n+1}$, we let $\tilde m_y^{n+1} = \ol m_y^{n+1}$, while for $y\notin B_{n+1}$, we set $\tilde m_y^{n+1} = \frac{d\nu^{n+1,1}}{d\nu^{n}}\ol m_y^{n+1}$. 
	Now for any non-negative measurable function $f$, 
	{
	\begin{equation}\label{eq:pf-XY-sample-1}
	\begin{split}
	    &\int \mathds{1}_{x\in A_{n+1}} f(x,y)\ol m_y^{n+1}(dx) \ol\nu^{n+1}(dy) \\
	    =& \int \mathds{1}_{x\in A_{n+1}} f(x,y)\ol m_y^{n+1}(dx)\nu^{n+1,1}(dy) + \int \mathds{1}_{x\in A_{n+1}}f(x,y) \ol m_y^{n+1}(dx)\nu^{n+1,2}(dy)\\
	    =& \int \mathds{1}_{x\in A_{n+1}} f(x,y)\tilde m_y^{n+1}(dx)\nu^{n}(dy) + \int \mathds{1}_{x\in A_{n+1}}f(x,y) \tilde m_y^{n+1}(dx)\nu^{n+1,2}(dy)\\
	    =& \int \mathds{1}_{x\in A_{n+1}} f(x,y)\tilde m_y^{n+1}(dx) \nu^{n+1}(dy). 
	\end{split}
	\end{equation}
	}
	  This defines a family of measures $(\nu^n,\tilde m_y^n)_n$.  
	  The measure $\nu := \lim_{n\to\infty}\nu^n$ is well-defined since it is a countable sum of mutually singular finite measures. Furthermore, by construction the event $\{X\in A_n,Y\in B_{n+1}\}$ has measure 0, {so $\mathds{1}_{x\in A_{n}} \tilde m_y^{n+1}(dx) \nu^{n}(dy) = \mathds{1}_{x\in A_{n}} \tilde m_y^{n+1}(dx) \nu^{n+1}(dy)$} a.e. Setting $f(x,y) = \mathds{1}_{x\in A_n}g(x,y)$ in~\eqref{eq:pf-XY-sample} and~\eqref{eq:pf-XY-sample-1} we   deduce that for any non-negative measurable function $g$, 
	      \begin{equation}
          \begin{split}
	          \int \mathds{1}_{x\in A_{n}} &g(x,y)\tilde m_y^{n+1}(dx)\nu^n(dy) =  \int \mathds{1}_{x\in A_{n}} g(x,y)\tilde m_y^{n+1}(dx) \nu^{n+1}(dy)\\&= \int \mathds{1}_{x\in A_{n}} g(x,y)\ol m_y^{n+1}(dx) \ol\nu^{n+1}(dy) = \int \mathds{1}_{x\in A_{n}}g(x,y) m_x(dy)\mu_X(dx) \\&= \int  \mathds{1}_{x\in A_{n}} g(x,y)\ol m_y^{n}(dx)\ol\nu^n(dy)  = \int \mathds{1}_{x\in A_{n}}g(x,y)\tilde m_y^{n}(dx)\nu^n(dy)
              \end{split}
	      \end{equation}
	      and therefore $\tilde m_y^{n+1}|_{A_n} = \tilde m_y^n$ for a.e.\ $y$ in the support of $\nu^n$. Therefore the measure $\tilde m_y = \lim_{n\to\infty} \tilde m_y^n$ is also well defined a.e.\ on the support of $\nu$, and for any $n$ and nonnegative measurable function $f$, one has
	      \begin{equation}
	           \int \mathds{1}_{x\in A_n}f(x,y)m_x(dy)\mu_X(dx) = \int \mathds{1}_{x\in A_n} f(x,y)\tilde m_y(dx)\nu(dy).
	      \end{equation}
	      This completes the proof.
	      \end{proof}

	\begin{figure}
	    \centering
	    \includegraphics[scale=0.75]{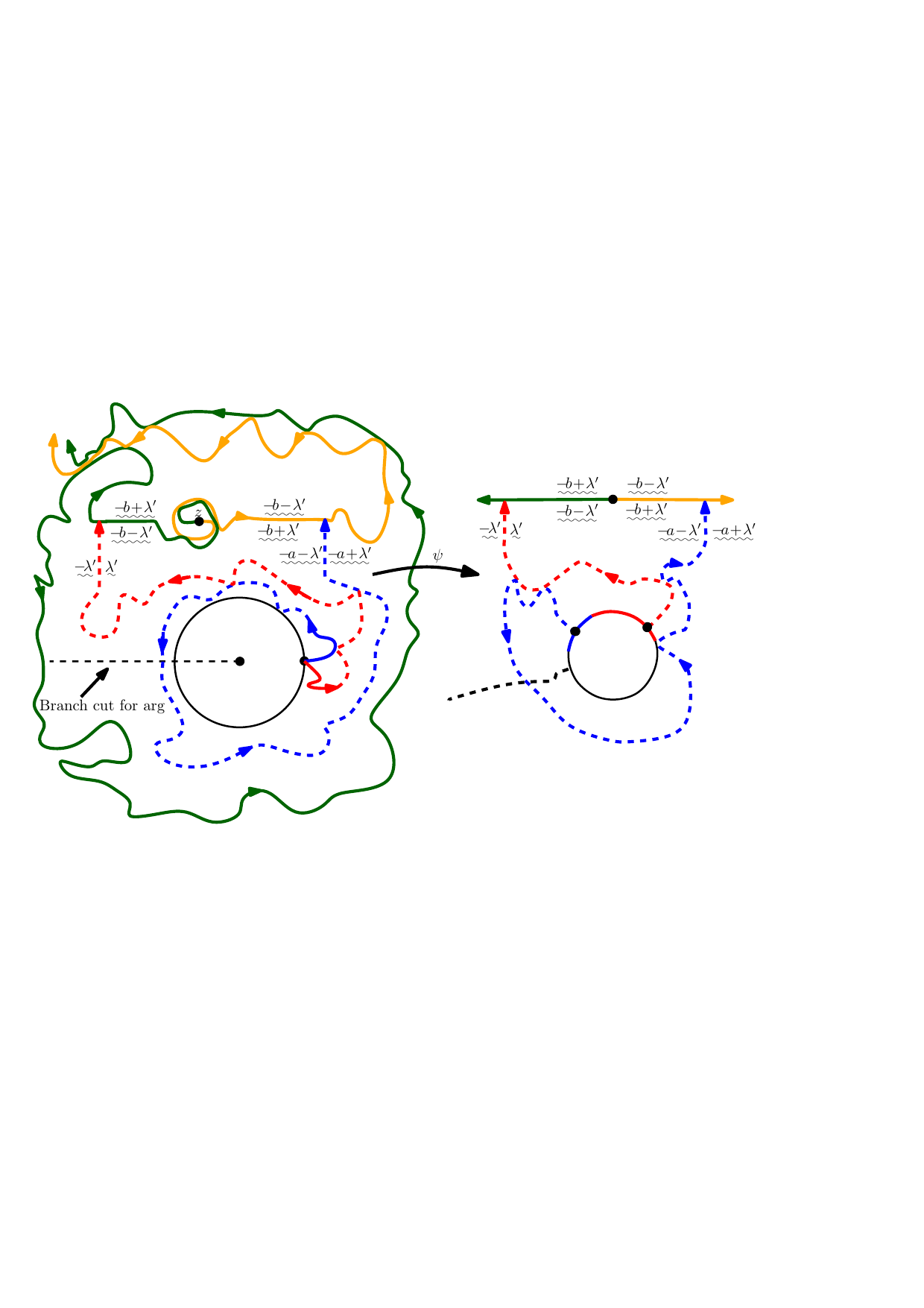}
	    \caption{The setup in Proposition~\ref{prop:radial-resampling}, which is similar to~\cite[Figure 5.7]{ig4}. The orange and the green curves are the flow lines $\gamma_1$ and $\gamma_2$, while the blue and red curves are $\eta_1$ and $\eta_2$. The annular domain $A$ is the region bounded by the unit disk, orange and green curves along with the initial segment of the red and the blue curves.}
	    \label{fig:radial-resampling}
	\end{figure}
	
	\begin{proof}[Proof of Proposition~\ref{prop:radial-resampling}]
	Since the proof is very close to that of \cite[Theorem 5.1]{ig4} discussed in \cite[Section 5.2]{ig4}, we briefly list the main ideas. Similar statements in the chordal setting are also proved in~\cite{MS16b,Yu22,Zhan23}.  We use the transform $z\mapsto\frac{1}{z}$ and work on $\mathbb{C}\backslash\mathbb{D}$.   Conditioned on the pair $(\eta_1,\eta_2)$, let $h$ be an instance of the GFF on $\mathbb{C}\backslash(\mathbb{D}\cup\eta_1\cup\eta_2)$ with boundary conditions as if $\eta_1,\eta_2$ \emph{were} flow lines of a GFF minus an $\alpha\arg$ and a $\beta\log$ singularity  as described in Proposition~\ref{prop:radial-flowline}. Then we pick some point $z$ far away from 0, and construct the flow lines $(\gamma_1,\gamma_2)$ of $h$ started from $z$ with angles uniformly chosen in $[0,2\pi]$. The existence of the flow lines $(\gamma_1,\gamma_2)$ is guaranteed by  the discussion after~\cite[Theorem 1.4]{ig4}.
	Let $\tau_i^\e$ for $i=1,2$ be the first time when $\eta_i$ hits $\partial B(0,1+\e)$, $E$ be the event that the connected component $U$ of $\bbC\backslash(\gamma_1\cup\gamma_2)$ containing 0 also contains the annulus $\{z:1<|z|<R\}$,  {which happens almost surely as $|z|\to\infty$ by~\cite[Lemma 5.6]{ig4}}. On $E$, let $A$ be the connected component of $U\backslash(\cup_{i=1}^2\eta_i([0,\tau_i^\e])\cup\partial\bbD)$ containing the point $\frac{R}{2}$. Since the flow lines are a.s.\ determined by the GFF \cite[Theorem 1.2]{ig4}, the resampling rules of $(\eta_1,\eta_2)$ imply that for $i\neq j$ the conditional law of $\eta_i$ given both $\eta_j$ and $(\gamma_1,\gamma_2)$ can be described in terms of a GFF flow line.  {As shown in~\cite[Lemma 5.7]{ig4}, given the domain $A$, the homotopy class of $(\eta_1,\eta_2)|_A$ is uniquely specified by the boundary data  $h|_{\partial A}$.} 
	
	{Let $X = (\eta_1, \eta_2)$ and $Y = (A, h_{\partial A})$. By Lemma~\ref{lem:XYsample} there exist $\sigma$-finite measures $(\tilde m_y, \mu)$ such that the law of $(X,Y)$ is $\tilde m_y(dx) \mu (dy)$. In Steps 1--3 below, we will show that for a.e.\ $Y$, the measure $\tilde m_Y$ equals some multiple of the probability measure $P_Y$ corresponding to sampling a GFF in $A$ with boundary data $h_{\partial A}$ and drawing its flow lines $(\eta_1, \eta_2)$. We then use this to conclude.
	
	\medskip 
	\noindent \textbf{Step 1: Constructing a Markov chain.}
	Given $Y = (A, h_{\partial A})$, we construct the following Markov chain $(\vartheta_1^n,\vartheta_2^n)_{n\ge0}$ on the space $\mathcal{S}$ of non-crossing continuous curves $(\vartheta_1,\vartheta_2)$ connecting $(\eta_1(\tau_1^\e),\eta_2(\tau_2^\e))$ to $\partial U$ with the homotopy class specified by $h|_{\partial A}$. We pick $i\in\{1,2\}$ uniformly and take $j\neq i$. Then we pick a GFF on $A\backslash\vartheta_j$ with the boundary condition such that the law of its flow line from $\eta_i(\tau_i^\e)$ agrees with the one specified by the conditional law of $\eta_i$ given $(A,\eta_j)$, and let $\vartheta_i$ be a sample of that flow line. 
	
	By the hypothesis of the lemma, $\tilde m_Y$ is an invariant measure with respect to this Markov chain.
	
	\medskip 
	\noindent \textbf{Step 2: Irreducibility of Markov chain.} We
	 follow the arguments in the proof of~\cite[Lemma 2.2]{Yu22} as well as~\cite[Section 4]{Zhan23}.
	
	For any pair of  simple curves $(\vartheta_1, \vartheta_2)$ in $A$ from $(\eta_1(\tau_1^\e),\eta_2(\tau_2^\e))$ to $\partial U$ with homotopy class specified by $h|_{\partial A}$, consider the measure $\nu(\cdot) = \sum_{n=1}^\infty 2^{-n}\bbP\big((\vartheta_1^n,\vartheta_2^n)\in\cdot|(\vartheta_1^0,\vartheta_2^0) = (\vartheta_1, \vartheta_2)\big)$. Let $(\Omega_1,\Omega_2)$ be some fixed pair of disjoint subdomains of $A$ which contains some pair of path  $(\vartheta_1', \vartheta_2')$ with the homotopy class given by $h|_{\partial A}$. Let $\nu_0$ be the flow lines  $(\tilde\vartheta_1, \tilde\vartheta_2)$  from $(\eta_1(\tau_1^\e),\eta_2(\tau_2^\e))$ of the GFF $\tilde{h}$ on $A$  with corresponding boundary conditions and restricted to the event $\tilde\vartheta_j\subset \ol \Omega_j$ for $j=1,2$. This event is has nonzero probability since GFF flow lines can stay arbitrarily close to given curves with positive probability~\cite[Lemma 3.9]{ig4}. Then to justify our claim, it suffices to show that $\nu_0$  is absolutely continuous w.r.t.\ $\nu$, in other words, for any set $E$ with $\nu_0(E)>0$, one has $\nu(E)>0$. Now by the untangling procedure shown in~\cite[Lemma 5.2]{ig4}, there exists a deterministic finite sequence of pairs of non-crossing simple curves $(\tilde\vartheta_1^n, \tilde\vartheta_2^n)_{n=0}^N$ such that (i) $(\tilde\vartheta_1^0, \tilde\vartheta_2^0) = (\vartheta_1,\vartheta_2)$, (ii) for every $n$, $\tilde\vartheta_1^n$ (resp.\ $\tilde\vartheta_2^n$) has the same starting and ending points (iii) for every $1\le n\le N$, there exists $j\in\{1,2\}$ such that $\tilde\vartheta_j^n = \tilde\vartheta_j^{n-1}$ and (iv) for $j=1,2$, $\tilde\vartheta_j^N\subset \ol \Omega_j$. Then combined with~\cite[Lemma 3.9]{ig4}, with positive probability  $\vartheta_j^N\subset \ol \Omega_j$ for $j=1,2$. Further applying the absolute continuity between the GFF ~\cite[Proposition 3.4]{MS16a} when restricted to the domains $\Omega_1$ and $\Omega_2$ and the fact that GFF determines flow lines~\cite[Theorem 1.2]{ig4}, one can show that if $\nu_0(E)>0$ then $\bbP((\vartheta_1^{N+2}, \vartheta_2^{N+2})\in E)>0$.  Thus the Markov chain is irreducible.
	
	\medskip 
	\noindent \textbf{Step 3: Identification of $\tilde m_Y$.} By the resampling properties of GFF flow lines, $P_Y$ is an invariant probability meassure for the Markov chain, so the Markov chain is recurrent. 
	Since it is both recurrent and irreducible, its invariant measure is unique up to a multiplicative constant~\cite[Propositions 4.2.1 and 10.1.1, Theorem 10.0.1]{Meyn-Tweedie}. Thus,  $Y$-a.s.\ the measure $\tilde m_Y$ is a multiple of $P_Y$.
    
	\medskip 
	\noindent \textbf{Conclusion.} Following the proof of Theorem 5.1 in~\cite{ig4}, consider a conformal map $\hat\psi$ sending $A$ to an annulus $\hat A$, and set $\hat h = \tilde h\circ \hat\psi^{-1}-\chi\arg(
	\hat\psi^{-1})'$. Then $\hat h$ can be written as $\hat h_0-\alpha\arg(\cdot)+\hat f_0$, where $\hat h_0$ is a zero  boundary GFF on $\hat A$, $\alpha$ is determined by the resampling property, and $\hat f_0$ is some harmonic function on $\hat A$ which is approximated by an affine transformation of the log function away from $\partial\hat A$. As we send $\e\to 0$ and $R\to\infty$, $\hat f_0$ converges to a multiple of the log function. This implies that $(\eta_1,\eta_2)$ are the angle $(\theta_1,\theta_2)$ flow lines of $h_0-\alpha\arg(\cdot)-\beta\log|\cdot|$ where $h_0$ is a zero boundary GFF on $\bbC\backslash\bbD$ and $\beta$ is  possibly random. This completes the proof.
	}
	\end{proof}

	\subsection{Counterflowline and SLE duality}
    In~\cite{MS16a,ig4}, it was shown that the left and right boundaries of counterflowlines of some GFF $h$ are flow lines of $h$. In this section, we state the results for the radial setting when the counterflowlines have law radial $\SLE_{\kappa'}(\rho_1;\rho_2)$. The proofs are mostly identical to those in~\cite[Section 4]{ig4} and will be omitted.

    \begin{theorem}\label{thm:SLE-duality}
        Let $\kappa'\in(4,8)$, $\kappa=16/\kappa'$,$\rho_1,\rho_2>\frac{\kappa'}{2}-4$, and $\rho_1+\rho_2>\frac{\kappa'}{2}-4$. Let $\alpha = \frac{\kappa'-6-\rho_1'-\rho_2'}{2\sqrt{\kappa'}}$ and $\beta\in\bbR$. Let $h$ be a GFF on $\bbD$ with the boundary conditions described in Theorem~\ref{thm:radial-ig}, such that the counterflowline $\eta'$ of $h+\alpha\arg(\cdot)+\beta\log|\cdot|$ has the law radial $\SLE_{\kappa'}^\beta(\rho_1;\rho_2)$ from $-i$ to 0 with force points $(-i)^-;(-i)^+$. Then the left and right boundaries $\eta^L$ and $\eta^R$ of $\eta'$ are the flow lines of  $h+\alpha\arg(\cdot)+\beta\log|\cdot|$ from 0 with angles $\frac{\pi}{2}$ and $-\frac{\pi}{2}$, respectively. In particular, conditioned on $\eta^L$, the curve $\eta^R$ is the union of independent curves in connected components of $\bbD\backslash\eta^L$, as follows. 
        \begin{itemize}
            \item If $\eta^L$ touches itself, let $\sigma^L$ be the last time when $\eta^L$ touches itself before hitting $\partial\bbD$, and $D_{\eta^L}$ be the connected component of $\bbD\backslash\eta^L$ whose boundary contains $\eta^L(\sigma^L)$ and a segment of $\partial\bbD$. Otherwise let $\sigma^L=0$ and $D_{\eta^L}$ be the connected component of $\bbD\backslash\eta^L$ with 0 on its boundary. Let $a_{\eta^L}$ and $b_{\eta^L}$ be the first and last points on $\partial D_{\eta^L}\cap\partial \bbD$ when one views $\partial \bbD$ as a counterclockwise curve from $(-i)^+$ to $(-i)^-$. Then in $D_{\eta^L}$, $\eta^R$ is an $\SLE_\kappa(\frac{\kappa}{4}(\rho_1+\rho_2)+\kappa-4,-\frac{\kappa}{4}\rho_1;-\frac{\kappa}{2})$ with force points $(\eta^L(\sigma^L)^-,b_{\eta^L};\eta^L(\sigma^L)^+)$ from $\eta^L(\sigma^L)$ to $a_{\eta^L}$.
            \item In each remaining component whose boundary contains a segment of the counterclockwise boundary arc of $\partial\bbD$ from $-i$ to $a_{\eta^L}$, $\eta^R$ is a chordal $\SLE_\kappa(\frac{\kappa}{4}\rho_2+\kappa-4;-\frac{\kappa}{2})$ process from the first to the last boundary point hit by $\eta^L$.
            \item In each remaining component whose boundary does not contains a segment of $\partial\bbD$, $\eta^R$ is a chordal $\SLE_\kappa(\frac{\kappa}{4}(\rho_1+\rho_2)+\kappa-4;-\frac{\kappa}{2})$  process from the first to the last boundary point hit by $\eta^L$.
        \end{itemize}
       Similarly, given $\eta^R$, defining $\sigma^R$, $D_{\eta^R}$, $a_{\eta^R}$, $b_{\eta^R}$ as above with $\eta^L$ replaced by $\eta^R$ and counterclockwise replaced by clockwise, the curve $\eta^L$ is $\SLE_\kappa(-\frac{\kappa}{2};\frac{\kappa}{4}(\rho_1+\rho_2)+\kappa-4,-\frac{\kappa}{4}\rho_1)$ in $D_{\eta^L}$, chordal $\SLE_\kappa(-\frac{\kappa}{2};\frac{\kappa}{4}\rho_1+\kappa-4)$ in each other connected component of $\bbD\backslash\eta^R$  whose boundary contains a segment of the  clockwise boundary arc of $\partial\bbD$ from $(-i)^-$ to $a_{\eta^R}$, and chordal $\SLE_\kappa(-\frac{\kappa}{2};\frac{\kappa}{4}(\rho_1+\rho_2)+\kappa-4)$  process in each other connected component of $\bbD\backslash\eta^R$  whose boundary does not contain any segment of $\partial\bbD$. Finally, given $\eta^L$ and $\eta^R$, in each connected component of $\bbD\backslash(\eta^L\cup\eta^R)$ between $\eta^L$ and $\eta^R$, the law of $\eta'$ is chordal $\SLE_{\kappa'}(\frac{\kappa'}{2}-4;\frac{\kappa'}{2}-4)$.
    \end{theorem}

    \begin{proof}
        The proof is identical to that of~\cite[Theorem 4.6]{ig4}.
    \end{proof}
	
    \begin{proposition}\label{prop:resample-nonsimple}
         Let $\kappa',\kappa,\rho_1,\rho_2,\alpha,\beta$ be as in Theorem~\ref{thm:SLE-duality}. Let $\mu_{\alpha,\beta}$ be the joint law of the curves $(\eta^L,\eta^R)$ in Theorem~\ref{thm:SLE-duality}. Also consider a Markov chain  $(X_n)_{n\ge0}$ on pair of non-crossing continuous curves $(\tilde\eta_1,\tilde\eta_2)$ from $0$ to $-i$ in $\bbD$ such that in each step one uniformly randomly picks $j\in\{1,2\}$, then resamples $\tilde\eta^j$ according to the conditional law of $\eta^j$ given $\eta^{3-j}$ as described in Proposition~\ref{prop:radial-flowline} where we identify $(1,2)=(L,R)$. Then any invariant $\sigma$-finite measure $\mu$ of $(X_n)_{n\ge0}$ can be written as $\mu = \int_\bbR \mu_{\alpha\beta}\nu(d\beta)$ where $\nu$ is a $\sigma$-finite measure on $\bbR$. 
    \end{proposition}	

    \begin{proof}
     Suppose $(\eta_1,\eta_2)$ follows the stationary distribution $\mu$.   As explained in the first step of the  proof of~\cite[Theorem 4.1]{MS16b} and~\cite[Lemma 2.2]{Yu22}, given $\eta_2$, we run some counterflowline $\eta_1'$ started from $-i$ such that $\eta_1$ a.s.\ merges into $\eta_1'$. We further run $\eta_1,\eta_2,\eta_1'$ for a small amount of time, which separates the starting and the ending points of $\eta_1$ and $\eta_2$. The rest of the proof is then identical to Proposition~\ref{prop:radial-resampling}.
    \end{proof}
	
	\section{Preliminaries on Liouville quantum gravity}\label{sec:pre-LQG}

	In Section~\ref{subsec:pre-lf}, we recall the definition of the \emph{Liouville field} which underlies Liouville conformal field theory \cite{DKRV16, HRV-disk,GRV19}. In Section~\ref{sec-prelim-resampling} we state some resampling properties of the Liouville field. In Section~\ref{subsec:pre-qs}, we go over the definitions  of quantum surfaces, and in particular the quantum disks and quantum triangles. In Section~\ref{subsec-third-point} we discuss the quantum surface arising from adding a third boundary point to a quantum disk. 
    Finally in Section~\ref{subsec:weld-qt}, we explain the quantum triangle welding result in~\cite{ASY22} and derive Theorem~\ref{thm:main} assuming Theorem~\ref{thm:w2w}.

	Throughout the paper, we let $\gamma\in(0,2)$, $\kappa=\gamma^2$, $Q=\frac{\gamma}{2}+\frac{2}{\gamma}$, and fix the notation $|z|_+ := \max\{|z|, 1\}$.
	
	\subsection{Liouville fields}\label{subsec:pre-lf}
	The Liouville field on the disk was first introduced in \cite{HRV-disk} in the context of constructing correlation functions of Liouville CFT on the disk, but we follow the presentation of \cite{AHS21} and work in the upper half-plane $\bbH$. Recall from Section~\ref{subsec:GFF} that $P_\bbH$ denotes the law of the free boundary GFF on $\bbH$ normalized to have average zero on the unit semicircle $\{ e^{i \theta} : \theta \in (0,\pi)\}$, and $|z|_+ = \max (|z|, 1)$.

	\begin{definition}
	Let $(h,\mathbf{c})$ be sampled from $P_{\bbH}\times [e^{-Qc}dc]$ and let $\phi(z) = h(z)-2Q\log|z|_++\mathbf{c}$. Let $\LF_{\bbH}$ be the law of the random field $\phi$, and we call a sample from $\LF_{\bbH}$ a Liouville field on $\bbH$.
	\end{definition}
	
	\begin{definition}\label{def:lf-insertion}
	Let $(\alpha_i,z_i)\in\bbR\times\bbH$ and $(\beta_j,s_j)\in\bbR\times\partial\bbH$ for $i=1,...,m$ and $j = 1,...,n$ and the $z_i$'s and $s_j$'s are pairwise distinct. Set 
	\begin{equation*}
	\begin{split}
	&C_{\bbH}^{(\alpha_i,z_i)_i,(\beta_j,s_j)_j} =\\ &\prod_{i=1}^m\prod_{j=1}^n(2\mathrm{Im}z_i)^{-\frac{\alpha_i^2}{2}}|z_i|_+^{-2\alpha_i(Q-\alpha_i)}|s_j|_+^{-\beta_i(Q-\frac{\beta_i}{2})}e^{\frac{\alpha_i\beta_j}{2}G_\bbH(z_i,s_j)+\sum_{i'=i+1}^{m}\alpha_i\alpha_{i'}G_\bbH(z_i,z_{i'})+\sum_{j'=j+1}^{n}\frac{\beta_j\beta_{j'}}{4}G_\bbH(s_j,s_{j'}) }.   
	\end{split}
	\end{equation*}
	Let $(h,\mathbf{c})$ be sampled from $C_{\bbH}^{(\alpha_i,z_i)_i,(\beta_j,s_j)_j}P_\bbH\times [e^{(\sum_i\alpha_i+\frac{1}{2}\sum_j\beta_j-Q)c}dc]$, and 
	\begin{equation*}
	    \phi(z) = h(z)-2Q\log|z|_++\sum_{i=1}^m\alpha_iG_\bbH(z,z_j)+\sum_{j=1}^n\frac{\beta_j}{2}G_\bbH(z,s_j)+\mathbf{c}.
	\end{equation*}
	We write $\LF_{\bbH}^{(\alpha_i,z_i)_i,(\beta_j,s_j)_j}$ for the law of $\phi$ and call a sample from $\LF_{\bbH}^{(\alpha_i,z_i)_i,(\beta_j,s_j)_j}$ the Liouville field on $\bbH$ with insertions $(\alpha_i,z_i)_{1\le i\le m}, (\beta_j,s_j)_{1\le j\le n}$.
	\end{definition}
	This definition can also be extended to the case where one of the insertions is $\infty$.
	\begin{definition}\label{def:lf-insertion-infty}
	Let $\beta\in\bbR$, $(\alpha_i,z_i)\in\bbR\times\bbH$ and $(\beta_j,s_j)\in\bbR\times\partial\bbH$ for $i=1,...,m$ and $j = 1,...,n$ and the $z_i$'s and $s_j$'s are pairwise distinct. Set 
	\begin{equation*}
	\begin{split}
	&C_{\bbH}^{(\alpha_i,z_i)_i,(\beta_j,s_j)_j,(\beta,\infty)} =\\ &\prod_{i=1}^m\prod_{j=1}^n(2\mathrm{Im}z_i)^{-\frac{\alpha_i^2}{2}}|z_i|_+^{-2\alpha_i(Q-\alpha_i-\frac{\beta}{2})}|s_j|_+^{-\beta_i(Q-\frac{\beta_i+\beta}{2})}e^{\frac{\alpha_i\beta_j}{2}G_\bbH(z_i,s_j)+\sum_{i'=i+1}^{m}\alpha_i\alpha_{i'}G_\bbH(z_i,z_{i'})+\sum_{j'=j+1}^{n}\frac{\beta_j\beta_{j'}}{4}G_\bbH(s_j,s_{j'}) }.   
	\end{split}
	\end{equation*}
	Let $(h,\mathbf{c})$ be sampled from $C_{\bbH}^{(\alpha_i,z_i)_i,(\beta_j,s_j)_j,(\beta,\infty)}P_\bbH\times [e^{(\sum_i\alpha_i+\frac{\beta}{2}+\frac{1}{2}\sum_j\beta_j-Q)c}dc]$, and 
	\begin{equation*}
	    \phi(z) = h(z)+(\beta-2Q)\log|z|_++\sum_{i=1}^m\alpha_iG_\bbH(z,z_j)+\sum_{j=1}^n\frac{\beta_j}{2}G_\bbH(z,s_j)+\mathbf{c}.
	\end{equation*}
	We write $\LF_{\bbH}^{(\alpha_i,z_i)_i,(\beta_j,s_j)_j,(\beta,\infty)}$ for the law of $\phi$ and call a sample from $\LF_{\bbH}^{(\alpha_i,z_i)_i,(\beta_j,s_j)_j,(\beta,\infty)}$ the Liouville field on $\bbH$ with insertions $(\alpha_i,z_i)_{1\le i\le m}, (\beta_j,s_j)_{1\le j\le n}, (\beta,\infty)$.
	\end{definition}

	As we state in Lemma~\ref{lem:lf-covariance}, the Liouville field satisfies \emph{conformal covariance}. For a conformal map $f:D\to\tilde{D}$ and a measure $M$ on $H^{-1}(D)$, let $f_*M$ be the pushforward of $M$ under the map $\phi\mapsto\phi\circ f^{-1}+Q\log|(f^{-1})'|$.
	\begin{lemma}\label{lem:lf-covariance}
	For $\alpha\in\bbR$, set $\Delta_\alpha:=\frac{\alpha}{2}(Q-\frac{\alpha}{2})$. Let $(\alpha_i,z_i)\in\bbR\times\bbH$ and $(\beta_j,s_j)\in\bbR\times\partial\bbH$ for $i=1,...,m$ and $j = 1,...,n$. Suppose $f:\bbH\to\bbH$ is a conformal map, such that $f(s_j)\neq\infty$ for all $j$. Then $$\LF_{\bbH}^{(\alpha_i,f(z_i))_i,(\beta_j,f(s_j))_j} = \prod_{i=1}^m\prod_{j=1}^n|f'(z_i)|^{-2\Delta_{\alpha_i}}|f'(s_j)|^{-\Delta_{\beta_j}}f_*\LF_{\bbH}^{(\alpha_i,z_i)_i,(\beta_j,s_j)_j}. $$
	\end{lemma}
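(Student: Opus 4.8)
\textbf{Proof proposal for Lemma~\ref{lem:lf-covariance} (conformal covariance of the Liouville field).}

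The plan is to reduce the statement to the corresponding covariance property of the underlying free-boundary GFF $P_\bbH$, then track how the deterministic pieces (the explicit $\log$ term, the Green's-function insertions, the constant $C_\bbH^{\cdots}$, and the $\bm c$-integral weight) transform under $f$. First I would recall the known conformal covariance of $P_\bbH$: if $h_\bbH\sim P_\bbH$, then $h_\bbH\circ f^{-1}+Q\log|(f^{-1})'|$ differs from a sample of $P_\bbH$ only by a harmonic correction coming from the change of the normalization circle (the average over the unit semicircle is not preserved by $f$), i.e.\ $f_*P_\bbH$ equals the law of $h_\bbH + \mathfrak c_f$ for an explicit deterministic constant $\mathfrak c_f$ (this is where the requirement $f(s_j)\neq\infty$, or more precisely that $f$ is a genuine automorphism of $\bbH$, enters, since Möbius maps of $\bbH$ form a $3$-parameter family). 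Actually, since both sides of the claimed identity are laws of fields defined through $P_\bbH$ plus deterministic functions plus an independent constant $\bm c$ with an exponential weight, the constant $\mathfrak c_f$ can be absorbed into a shift of $\bm c$; this is the mechanism that makes the clean covariance formula possible.

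Next I would write out, for a sample $\phi\sim\LF_\bbH^{(\alpha_i,z_i)_i,(\beta_j,s_j)_j}$, the field $\tilde\phi := \phi\circ f^{-1}+Q\log|(f^{-1})'|$ explicitly. The term $h\circ f^{-1}+Q\log|(f^{-1})'|$ becomes (in law) $h + \mathfrak c_f$ by the GFF covariance just recalled. The term $-2Q\log|z|_+$ composed with $f^{-1}$ plus the $Q\log|(f^{-1})'|$ correction should, after algebra using $\log|z|_+ = \log|z| + (\log|z|_+ - \log|z|)$ and the cocycle identity for $\log|f'|$, reproduce $-2Q\log|w|_+$ up to a harmonic function of $w$ — again absorbed into constants and into adjustments of the $|z_i|_+,|s_j|_+$ prefactors. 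The insertion terms transform via the conformal covariance of the Green's function: $G_\bbH(f^{-1}(w),f^{-1}(w')) = G_\bbH(w,w') + u_f(w) + u_f(w')$ for an explicit $u_f$ built from $\log|f'|$ and $\log|\cdot|_+$ (this is exactly the computation underlying the $|z_i|_+^{-2\alpha_i(Q-\alpha_i)}$ and $|s_j|_+^{-\beta_j(Q-\beta_j/2)}$ factors in $C_\bbH^{\cdots}$). Collecting the linear-in-$\alpha_i$, linear-in-$\beta_j$ pieces of these corrections into the insertion sum $\sum\alpha_i G_\bbH(w,f(z_i)) + \sum\frac{\beta_j}2 G_\bbH(w,f(s_j))$ and into a further shift of $\bm c$, one identifies $\tilde\phi$ as a sample of $\LF_\bbH^{(\alpha_i,f(z_i))_i,(\beta_j,f(s_j))_j}$ up to an overall deterministic multiplicative constant.

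Finally I would match that overall constant. After all the absorptions, the residual scalar factor is a product of powers of $|f'(z_i)|$ and $|f'(s_j)|$ (from the quadratic-in-$\alpha_i$ and quadratic-in-$\beta_j$ self-interaction terms $(2\Im z_i)^{-\alpha_i^2/2}$, $G_\bbH(z_i,z_i)$-type diagonal contributions, the $e^{-Qc}dc$ versus $e^{(\sum\alpha_i+\frac12\sum\beta_j-Q)c}dc$ weights, and the Jacobian $|f'|$ appearing in $(2\Im f(z_i)) = |f'(z_i)|^2 (2\Im z_i)$ for $z_i$ and analogously at the boundary). A direct bookkeeping shows the exponent of $|f'(z_i)|$ is $-\alpha_i^2/2 - \alpha_i(Q-\alpha_i) = -\alpha_i(Q-\alpha_i/2)/1$... more precisely it collapses to $-2\Delta_{\alpha_i} = -\frac{\alpha_i}{2}(2Q-\alpha_i) $, and that of $|f'(s_j)|$ to $-\Delta_{\beta_j} = -\frac{\beta_j}2(Q-\frac{\beta_j}2)$, matching the stated formula. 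The main obstacle — really the only place where care is needed — is the accounting of the normalization constant $\mathfrak c_f$ coming from the semicircle-average normalization of $P_\bbH$ and its interaction with the $\bm c$-integral: one must verify that shifting $\bm c$ by $\mathfrak c_f$ changes the exponential weight by exactly the factor needed, and that the leftover deterministic scalar is precisely the product of $|f'|^{-2\Delta}$'s with no residue. This is a finite, if somewhat tedious, algebraic identity; it can be cross-checked against the already-established weight-$W$ quantum disk covariance in \cite{AHS21} and the special cases in \cite{ASY22}, or proved directly by the change-of-variables computation in \cite[Section 2]{AHS21}.
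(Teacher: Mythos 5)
The paper does not prove this lemma by computation at all: its proof is a one-line citation, noting (as in \cite[Proposition 2.7]{AHS21}) that the statement is a restatement of \cite[Theorem 3.5]{HRV-disk}. Your proposal instead redoes the underlying change-of-variables argument, which is essentially the proof given in those references. Your exponent bookkeeping is consistent with the statement ($-2\Delta_{\alpha_i}=-\alpha_i Q+\tfrac{\alpha_i^2}{2}$ for bulk insertions, $-\Delta_{\beta_j}$ for boundary insertions, with the $(2\Im z_i)^{-\alpha_i^2/2}$, $|z|_+$ and $e^{(\sum_i\alpha_i+\frac12\sum_j\beta_j-Q)c}\,dc$ terms supplying the pieces), and the hypothesis $f(s_j)\neq\infty$ is indeed needed so that the image insertions are covered by Definition~\ref{def:lf-insertion} rather than the $\infty$-insertion convention of Definition~\ref{def:lf-insertion-infty}; it has nothing to do with the Möbius group being three-dimensional, so that aside should be dropped.

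One input, as you state it, is wrong and would derail a literal execution of the plan: $f_*P_\bbH$ is \emph{not} the law of $h_\bbH+\mathfrak c_f$ for a deterministic constant $\mathfrak c_f$. The free-boundary GFF is conformally invariant only modulo additive constants; with the unit-semicircle normalization, $h\circ f^{-1}$ equals a sample of $P_\bbH$ plus the semicircle average of $h\circ f^{-1}$, which is a centered Gaussian random variable correlated with the field. This is not cosmetic: when that random shift is absorbed into $\mathbf c$, the non-Lebesgue weight $e^{(\sum_i\alpha_i+\frac12\sum_j\beta_j-Q)c}\,dc$ picks up a factor $e^{-\theta X}$ with $X$ Gaussian, and the Cameron--Martin/Girsanov evaluation of that factor (its variance, and its covariances with the insertion terms and the $\log|\cdot|_+$ terms) is exactly where a substantial part of the $|f'(z_i)|^{-2\Delta_{\alpha_i}}$ and $|f'(s_j)|^{-\Delta_{\beta_j}}$ prefactors comes from; pretending the shift is deterministic loses those contributions and the constants will not close. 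You do flag the normalization-versus-$\mathbf c$ accounting as the main point of care and defer to the computation in \cite{AHS21}/\cite{HRV-disk}, which is precisely where this is done correctly, so the plan is salvageable as a pointer to the literature (which is all the paper itself does); but as a self-contained argument the covariance statement for $P_\bbH$ must be corrected to a random shift handled through the $dc$ integration before the rest of the bookkeeping is legitimate.
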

	\begin{proof}
	As explained in \cite[Proposition 2.7]{AHS21}, this is a restatement of \cite[Theorem 3.5]{HRV-disk}. 
	\end{proof}
	
	The conformal covariance property also extends to the setting where there is an insertion at $\infty$.
	\begin{lemma}[Lemma 2.11 of~\cite{ASY22}]\label{lem:lcft-H-conf-infty}
		Suppose $\beta_1,\beta_2,\beta_3\in\mathbb{R}$ and $f:\mathbb{H}\to\mathbb{H}$ is the conformal map with $f(0) = 1$, $f(1)=\infty$ and $f(\infty) = 0$. Then 
		\begin{equation}
		\textup{LF}_{\mathbb{H}}^{(\beta_1, 0), (\beta_2, 1), (\beta_3, \infty)} = f_*\textup{LF}_{\mathbb{H}}^{(\beta_1, \infty), (\beta_2, 0), (\beta_3, 1)}.
		\end{equation} 
	\end{lemma}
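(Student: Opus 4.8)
The difficulty is that the conformal covariance of Lemma~\ref{lem:lf-covariance} applies only when all boundary insertions are carried to \emph{finite} points of $\partial\bbH$, whereas here $f(1)=\infty$ and $f(\infty)=0$, so $f$ (and $f^{-1}$) interchanges $\infty$ with a point of $\partial\bbH$. The plan is to realize a Liouville field with a boundary insertion at $\infty$ as a renormalized limit of Liouville fields whose corresponding insertion sits at a large real point, apply Lemma~\ref{lem:lf-covariance} to a family of Möbius automorphisms of $\bbH$ converging to $f$, and then pass to the limit, checking that the product of conformal--weight Jacobians and renormalization factors tends to $1$.

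\textbf{Step 1 (renormalization at $\infty$).} First I would prove that for boundary insertions $(\beta_j,s_j)_{1\le j\le n}$ and any $\beta\in\bbR$,
\begin{equation*}
\LF_\bbH^{(\beta_j,s_j)_j,(\beta,\infty)}=\lim_{R\to+\infty}R^{2\Delta_\beta}\,\LF_\bbH^{(\beta_j,s_j)_j,(\beta,R)}
\end{equation*}
as a weak limit of ($\sigma$-finite) measures on $H^{-1}(\bbH)$, together with joint continuity of $(s_1,\dots,s_n,R)\mapsto R^{2\Delta_\beta}\LF_\bbH^{(\beta_j,s_j)_j,(\beta,R)}$ on $(\partial\bbH)^n\times(1,+\infty]$, the value at $R=+\infty$ being the left-hand side. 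This is a direct comparison of Definitions~\ref{def:lf-insertion} and~\ref{def:lf-insertion-infty}: as $R\to+\infty$ one has $G_\bbH(z,R)\to 2\log|z|_+$ locally uniformly, so the deterministic part of $\phi$ converges from the expression in Definition~\ref{def:lf-insertion} to that in Definition~\ref{def:lf-insertion-infty}; the $\mathbf c$--densities are literally equal; and in the normalizing constant the factor $R^{2\Delta_\beta}$ cancels $|R|_+^{-\beta(Q-\beta/2)}=R^{-2\Delta_\beta}$, while the cross terms $e^{\beta_j\beta\, G_\bbH(s_j,R)/4}\to|s_j|_+^{\beta_j\beta/2}$ supply exactly the extra powers of $|s_j|_+$ distinguishing $C_\bbH^{(\beta_j,s_j)_j,(\beta,\infty)}$ from $C_\bbH^{(\beta_j,s_j)_j}$. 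Since $\LF_\bbH$ is a fixed free--boundary GFF plus an independent height plus a deterministic field, upgrading pointwise convergence to weak convergence needs only a routine uniform--in--$R$ dominated--convergence bound.

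\textbf{Step 2 (regularized maps and limit).} For $R>1$ let $f_R$ be the Möbius self--map of $\bbH$ determined by $f_R(0)=1$, $f_R(1)=R$, $f_R(R)=f(R)=\tfrac{1}{1-R}$; it has real coefficients with positive determinant, so $f_R(\bbH)=\bbH$, and $f_R\to f$ locally uniformly on $\bbH$. Applying Lemma~\ref{lem:lf-covariance} to $f_R$ and the all--finite field $\LF_\bbH^{(\beta_1,R),(\beta_2,0),(\beta_3,1)}$ gives
\begin{equation*}
\LF_\bbH^{(\beta_1,\frac{1}{1-R}),(\beta_2,1),(\beta_3,R)}=|f_R'(R)|^{-\Delta_{\beta_1}}|f_R'(0)|^{-\Delta_{\beta_2}}|f_R'(1)|^{-\Delta_{\beta_3}}\,(f_R)_*\LF_\bbH^{(\beta_1,R),(\beta_2,0),(\beta_3,1)}.
\end{equation*}
I would then multiply both sides by $R^{2\Delta_{\beta_3}}$ and send $R\to+\infty$. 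By Step 1 (renormalizing the $\beta_3$--insertion while the $\beta_1$--insertion drifts continuously from $\tfrac{1}{1-R}$ to $0$) the left side tends to $\LF_\bbH^{(\beta_1,0),(\beta_2,1),(\beta_3,\infty)}$. On the right side, $R^{2\Delta_{\beta_1}}\LF_\bbH^{(\beta_1,R),(\beta_2,0),(\beta_3,1)}\to\LF_\bbH^{(\beta_1,\infty),(\beta_2,0),(\beta_3,1)}$ by Step 1, and since $f_R\to f$ locally uniformly the pushforwards converge (tested against functionals like $\exp(\int\rho\,\phi)$, using $f_R^{-1}\to f^{-1}$ away from the boundary pole), so the right side tends to $f_*\LF_\bbH^{(\beta_1,\infty),(\beta_2,0),(\beta_3,1)}$ times $\lim_{R\to\infty}R^{2\Delta_{\beta_3}-2\Delta_{\beta_1}}|f_R'(R)|^{-\Delta_{\beta_1}}|f_R'(0)|^{-\Delta_{\beta_2}}|f_R'(1)|^{-\Delta_{\beta_3}}$. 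A short computation of $f_R$ gives $f_R'(0)=\tfrac{(R-1)^2}{R^2-R+1}$, $f_R'(1)=R^2-R+1$ and $f_R'(R)=\tfrac{R^2-R+1}{(R-1)^4}$, so this scalar equals $\big(\tfrac{R^2(R^2-R+1)}{(R-1)^4}\big)^{-\Delta_{\beta_1}}\big(\tfrac{(R-1)^2}{R^2-R+1}\big)^{-\Delta_{\beta_2}}\big(\tfrac{R^2}{R^2-R+1}\big)^{\Delta_{\beta_3}}\to1$. Equating the two limits yields $\LF_\bbH^{(\beta_1,0),(\beta_2,1),(\beta_3,\infty)}=f_*\LF_\bbH^{(\beta_1,\infty),(\beta_2,0),(\beta_3,1)}$.

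\textbf{Main obstacle.} The genuinely delicate point is Step 1 together with the interchange of pushforward and renormalization limit in Step 2: one must fix a convergence--determining class of bounded functionals of $\phi$ and supply the uniform integrability that makes the limits legitimate. Conceptually this is bookkeeping rather than analysis---the ``conformal weight at infinity'' is encoded entirely in the explicit $R$--asymptotics of $f_R'(0),f_R'(1),f_R'(R)$ above---but it has to be carried out carefully, in contrast to the purely algebraic identity that results once the limits are justified.
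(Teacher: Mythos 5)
Your argument is correct: the explicit computation of $f_R$ (with $f_R'(0)=\frac{(R-1)^2}{R^2-R+1}$, $f_R'(1)=R^2-R+1$, $f_R'(R)=\frac{R^2-R+1}{(R-1)^4}$ and positive determinant, so $f_R:\bbH\to\bbH$ and $f_R\to f(z)=\frac{1}{1-z}$) checks out, the application of Lemma~\ref{lem:lf-covariance} is the right one, and the scalar prefactor indeed tends to $1$. Note that the paper itself gives no proof of this lemma but quotes \cite{ASY22}; your route --- realizing the insertion at $\infty$ as a renormalized limit of finite insertions and passing conformal covariance to the limit --- is essentially the argument underlying that citation, so there is no real divergence in approach. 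One remark that removes most of the technical burden you flag in Step 1: comparing Definitions~\ref{def:lf-insertion} and~\ref{def:lf-insertion-infty} directly, the measure $R^{2\Delta_\beta}\LF_\bbH^{(\beta_j,s_j)_j,(\beta,R)}$ is \emph{exactly} the law of $\phi_\infty-\beta\log|1-\cdot/R|$ with $\phi_\infty$ sampled from $c_R\,\LF_\bbH^{(\beta_j,s_j)_j,(\beta,\infty)}$, where $c_R$ is an explicit constant (ratio of the normalizations) tending to $1$ and the added function is deterministic and tends to $0$ locally uniformly; so the "weak limit," its joint continuity in the insertion points, and the interchange with the pushforward in Step 2 all reduce to this elementary identity rather than requiring a uniform-integrability argument.
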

	
	We define the Liouville field on the strip $\mathcal{S}$ in Definition~\ref{def:lf-strip}, and explain how to switch between $\cS$ and $\bbH$ in Lemma~\ref{lem:lcft-H-strip}.
	\begin{definition}\label{def:lf-strip}
		Let $(h, \mathbf{c})$ be sampled from $C_{\mathcal{S}}^{(\beta_1, +\infty), (\beta_2, -\infty), (\beta_3, s_3)}P_{\mathcal{S}}\times[e^{(\frac{\beta_1+\beta_2+\beta_3}{2}-Q)c}dc]$ with $\beta_1, \beta_2, \beta_3\in\mathbb{R}$,  $s_3\in\partial\mathcal{S}$ and
		{$$C_{\mathcal{S}}^{(\beta_1, +\infty), (\beta_2, -\infty), (\beta_3, s_3)} = e^{(-\Delta_{\beta_3}+\frac{(\beta_1+\beta_2)\beta_3}{4})|\textup{Re}s_3|+\frac{(\beta_1-\beta_2)\beta_3}{4}\textup{Re}s_3}.$$}
		Let $\phi(z) = h(z)+\frac{\beta_1+\beta_2-2Q}{2}|\textup{Re}z|+\frac{\beta_1-\beta_2}{2}\textup{Re}z+\frac{\beta_3}{2}G_{\mathcal{S}}(z, s_3)+\mathbf{c}$. We write $\textup{LF}_{\mathcal{S}}^{(\beta_1, +\infty), (\beta_2, -\infty), (\beta_3, s_3)}$ for the law of $\phi$.
	\end{definition}
	
	\begin{lemma}[Lemma 2.8 of~\cite{ASY22}]\label{lem:lcft-H-strip}
		For $\beta_1, \beta_2, \beta_3\in\mathbb{R}$ and $s_3\in\partial\mathcal{S}$, we have 
		\begin{equation}\label{eqn-lcft-H-strip}
		\textup{LF}_{\mathbb{H}}^{(\beta_1, \infty), (\beta_2, 0), (\beta_3, e^{s_3})}= e^{-\Delta_{\beta_3}\textup{Re}s_3}\exp_*\textup{LF}_{\mathcal{S}}^{(\beta_1, +\infty), (\beta_2, -\infty), (\beta_3, s_3)}.
		\end{equation} 
	\end{lemma}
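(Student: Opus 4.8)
The plan is to unfold both sides into their explicit definitions and match term by term under the substitution $w=e^z$.

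First I would record the behavior of the free-boundary GFF under $\exp\colon\cS\to\bbH$. Since $\exp$ maps the segment $\{0\}\times[0,i\pi]$ onto the unit semicircle $\{e^{i\theta}:\theta\in(0,\pi)\}$ in an arc-length preserving way, the two normalizations defining $P_\cS$ and $P_\bbH$ are compatible, so the conformal invariance of the free-boundary GFF modulo constants (with no $Q\log|f'|$ correction, in contrast to the Liouville pushforward) gives $h_\cS\overset{d}{=}h_\bbH\circ\exp$. Next, using Definition~\ref{def:lf-strip}, a sample $\phi$ from $\LF_\cS^{(\beta_1,+\infty),(\beta_2,-\infty),(\beta_3,s_3)}$ has the form $h_\cS+\tfrac{\beta_1+\beta_2-2Q}{2}|\mathrm{Re}\,z|+\tfrac{\beta_1-\beta_2}{2}\mathrm{Re}\,z+\tfrac{\beta_3}{2}G_\cS(z,s_3)+\mathbf c$ with $(h_\cS,\mathbf c)$ sampled from $C_\cS^{(\beta_1,+\infty),(\beta_2,-\infty),(\beta_3,s_3)}P_\cS\times[e^{(\tfrac{\beta_1+\beta_2+\beta_3}{2}-Q)c}\,dc]$. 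Applying the pushforward $\phi\mapsto\phi\circ\exp^{-1}+Q\log|(\exp^{-1})'|$, i.e.\ substituting $z=\log w$ and subtracting $Q\log|w|$, and using $\mathrm{Re}\,\log w=\log|w|$ together with $G_\cS(\log w,s_3)=G_\bbH(w,e^{s_3})$ and the identity in law above, reduces the claim to two deterministic identities.

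The first identity is that, for every $w$,
\[
\tfrac{\beta_1+\beta_2-2Q}{2}\bigl|\log|w|\bigr|+\tfrac{\beta_1-\beta_2}{2}\log|w|-Q\log|w|=(\beta_1+\beta_2-2Q)\log|w|_+-\beta_2\log|w|,
\]
which follows by checking the cases $|w|\ge1$ and $|w|<1$ separately. The $-\beta_2\log|w|$ on the right is exactly what is needed so that the right side matches $(\beta_1-2Q)\log|w|_++\tfrac{\beta_2}{2}G_\bbH(w,0)$, i.e.\ the deterministic part of $\LF_\bbH^{(\beta_1,\infty),(\beta_2,0),(\beta_3,e^{s_3})}$ coming from the insertion at $\infty$ and from $0$ via Definition~\ref{def:lf-insertion-infty}, using $G_\bbH(w,0)=2\log|w|_+-2\log|w|$. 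One also checks that the two $e^{(\cdots)c}\,dc$ exponents both equal $\tfrac{\beta_1+\beta_2+\beta_3}{2}-Q$, so the $\mathbf c$-laws agree. The second identity is the matching of the deterministic prefactors,
\[
C_\bbH^{(\beta_2,0),(\beta_3,e^{s_3}),(\beta_1,\infty)}=e^{-\Delta_{\beta_3}\mathrm{Re}\,s_3}\,C_\cS^{(\beta_1,+\infty),(\beta_2,-\infty),(\beta_3,s_3)},
\]
a short computation using $|e^{s_3}|_+=e^{(\mathrm{Re}\,s_3)_+}$, $G_\bbH(0,e^{s_3})=|\mathrm{Re}\,s_3|-\mathrm{Re}\,s_3$, $(\mathrm{Re}\,s_3)_+=\tfrac12(|\mathrm{Re}\,s_3|+\mathrm{Re}\,s_3)$, and $\beta_3(Q-\tfrac{\beta_3+\beta_1}{2})=2\Delta_{\beta_3}-\tfrac{\beta_1\beta_3}{2}$; both sides reduce to $\exp\bigl(|\mathrm{Re}\,s_3|(-\Delta_{\beta_3}+\tfrac{(\beta_1+\beta_2)\beta_3}{4})+\mathrm{Re}\,s_3(-\Delta_{\beta_3}+\tfrac{(\beta_1-\beta_2)\beta_3}{4})\bigr)$.

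The main obstacle is purely bookkeeping: correctly handling the $\log|\cdot|_+$ versus $\log|\cdot|$ contributions from the boundary insertion at $0$ and from the insertion at $\infty$ (whose conventions differ between Definitions~\ref{def:lf-insertion} and~\ref{def:lf-insertion-infty}), and keeping the free-boundary GFF normalization consistent under $\exp$. Since each ingredient is elementary, once the term-by-term comparison is set up the identity follows; alternatively one may simply invoke the corresponding statement in~\cite{ASY22}.
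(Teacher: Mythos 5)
Your proposal is correct, and every step checks out: the identification $h_\cS\overset{d}{=}h_\bbH\circ\exp$ (the paper's own relation $G_\cS(z,w)=G_\bbH(e^z,e^w)$ and the matching of the two average-zero normalizations under the arc-length-preserving map of $\{0\}\times[0,i\pi]$ onto the unit semicircle), the case check $|w|\gtrless 1$ giving $\tfrac{\beta_1+\beta_2-2Q}{2}\bigl|\log|w|\bigr|+\tfrac{\beta_1-\beta_2}{2}\log|w|-Q\log|w|=(\beta_1-2Q)\log|w|_++\tfrac{\beta_2}{2}G_\bbH(w,0)$, the equality of the $\mathbf c$-exponents, and the prefactor identity $C_\bbH^{(\beta_2,0),(\beta_3,e^{s_3}),(\beta_1,\infty)}=e^{-\Delta_{\beta_3}\mathrm{Re}\,s_3}C_\cS^{(\beta_1,+\infty),(\beta_2,-\infty),(\beta_3,s_3)}$ via $G_\bbH(0,e^{s_3})=|\mathrm{Re}\,s_3|-\mathrm{Re}\,s_3$ and $\beta_3(Q-\tfrac{\beta_3+\beta_1}{2})=2\Delta_{\beta_3}-\tfrac{\beta_1\beta_3}{2}$. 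Note that the present paper does not prove this lemma at all; it simply cites \cite[Lemma 2.8]{ASY22}, so your definition-unfolding computation is essentially a self-contained reproduction of the cited argument, and your closing remark that one may instead just invoke \cite{ASY22} matches exactly what the paper does.
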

	
	Finally, we introduce the Liouville field on $\bbC$. Let $P_\bbC$ denote the law of the GFF on $\bbC$ normalized to have average zero on the unit circle $\{|z| =1 \}$. Its Green function is given by 
	\[G_\bbC(z,w) = -\log|z-w| +\log |z|_+ + \log |w|_+, \]
	and we use the convention $G_\bbC(z,\infty) = \lim_{w \to \infty} G_\bbC (z, w) =  \log |z|_+$. 
	\begin{definition}
	Let $m \geq 0$, let $(\alpha_i, z_i) \in \bbR \times \bbC$ for $i = 1, \dots, m$ where the $z_i$'s are pairwise distinct, and let $\alpha \in \bbR$. Set 
	\[C_\bbC^{(\alpha_i, z_i)_i, (\alpha, \infty)} = \prod_{i=1}^m |z_i|_+^{-\alpha_i(2Q-\alpha_i - \alpha)} e^{\sum_{j=i+1}^{m+1} \alpha_i\alpha_j G_\bbC (z_i, z_j)}.\]
	Sample $(h, \mathbf c)$ from $C_\bbC^{(\alpha_i, z_i)_i} P_\bbC \times [e^{(\alpha + \sum_{i=1}^m \alpha_i - 2Q)c}dc]$, and set $\phi(z) = h(z) +(\alpha- 2Q) \log |z|_+ + \sum_{i=1}^m \alpha_i G_\bbC(z, z_i) + \mathbf c$. We write $\LF_\bbC^{(\alpha_i,z_i)_i, (\alpha , \infty)}$ for the law of $\phi$. 
	\end{definition}

	\subsection{Resampling properties of the Liouville field}\label{sec-prelim-resampling}
	
	If $A \subset \bbH$ and  $\phi$ is a Liouville field, one can describe the conditional law of $\phi|_A$ given $\phi|_{\bbH \backslash A}$. In order to do this, since the law of $\phi$ is an infinite measure, to make sense we need to specify the notion of conditioning:
	\begin{definition}\label{def-markov-kernel}
	    Suppose $(\Omega, \cF)$ and $(\Omega', \cF')$ are measurable spaces. A function $\Lambda: \Omega \times \cF' \to [0,1]$ is a \emph{Markov kernel} if $\Lambda(\omega, \cdot)$ is a probability measure on $(\Omega', \cF')$ for each $\omega \in \Omega$, and $\Lambda(\cdot, A)$ is $\cF$-measurable for each $A \in \cF'$. If $\mu$ is some measure on $(\Omega, \cF)$ and $(X,Y)$ is a sample from $\Lambda(x, dy) \mu(dx)$, we say the \emph{conditional law} of $Y$ given $X$ is $\Lambda(X, \cdot)$.
	\end{definition}
	
	We now state a number of resampling properties of Liouville fields with insertions. These essentially follow immediately from the Markov property of the Gaussian free field; see e.g.\ \cite[Lemma 5.5]{ASY22}.
	\begin{lemma}\label{lem-markov-LF-C}
	Let $\alpha_1, \alpha_2, \alpha_3 \in \bbR$.
	    Suppose $\psi\sim \LF_\bbC^{(\alpha_1, 0), (\alpha_2, 1), (\alpha_3, \infty)}$, and $U$ is a deterministic neighborhood of $0$ bounded away from $1$ and $\infty$. Conditioned on $\psi|_{\bbC \backslash U}$,  we have $\psi|_U \stackrel d= h + \mathfrak h + \alpha_1 G_U(\cdot, 0)$ where $h$ is a GFF on $U$ with zero boundary conditions, $\mathfrak h$ is the harmonic extension of $\psi|_{\bbC \backslash U}$ to $U$, and $G_U$ is the Green function of $h$. 
	\end{lemma}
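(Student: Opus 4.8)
\textbf{Proof proposal for Lemma~\ref{lem-markov-LF-C}.}

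The plan is to unwind the definition of $\LF_\bbC^{(\alpha_1,0),(\alpha_2,1),(\alpha_3,\infty)}$ and reduce the statement to the Markov property of the Gaussian free field on $\bbC$ (Proposition~\ref{prop:Markov}), keeping careful track of the deterministic parts of the field and of the additive constant $\mathbf c$. Recall that a sample $\psi$ can be written as $\psi(z) = h_\bbC(z) + (\alpha_3-2Q)\log|z|_+ + \alpha_1 G_\bbC(z,0) + \alpha_2 G_\bbC(z,1) + \mathbf c$, where $(h_\bbC,\mathbf c)$ is sampled from $C_\bbC^{(\alpha_i,z_i)_i,(\alpha_3,\infty)} P_\bbC \times [e^{(\alpha_3+\alpha_1+\alpha_2-2Q)c}\,dc]$. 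Write $\psi = h_\bbC + \mathfrak g + \mathbf c$ where $\mathfrak g(z) := (\alpha_3-2Q)\log|z|_+ + \alpha_1 G_\bbC(z,0) + \alpha_2 G_\bbC(z,1)$ is deterministic. Since $U$ is bounded away from $1$ and $\infty$, the function $z \mapsto \mathfrak g(z) - \alpha_1 G_\bbC(z,0)$ is harmonic on $U$; and $G_\bbC(\cdot,0)$ differs from the Green function $G_U(\cdot,0)$ of $U$ by a harmonic function on $U$ (both have the same logarithmic singularity at $0$). Thus on $U$ we may write $\mathfrak g = \alpha_1 G_U(\cdot,0) + (\text{harmonic on } U)$.

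Next I would apply the Markov decomposition. The reweighting by $C_\bbC^{(\alpha_i,z_i)_i,(\alpha_3,\infty)}$ is a deterministic constant (it does not depend on the sampled field), so it plays no role in the conditional law; likewise the law $[e^{(\cdots)c}dc]$ of $\mathbf c$ is a fixed $\sigma$-finite measure on $\bbR$ independent of $h_\bbC$, and $\mathbf c$ is measurable with respect to $\psi|_{\bbC\setminus U}$ (e.g.\ it is recovered from the circle average of $\psi$ on $\{|z|=1\}$ together with the known value of $\mathfrak g$ there, since $\{|z|=1\}$ is disjoint from $U$). Hence conditioning on $\psi|_{\bbC\setminus U}$ is the same as conditioning on $h_\bbC|_{\bbC\setminus U}$ together with fixing the (measurable) value of $\mathbf c$. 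By Proposition~\ref{prop:Markov} applied to $h_\bbC$ with $\partial^D = \emptyset$ — here one must be slightly careful because the whole-plane GFF is defined modulo a global additive constant, but the normalization $P_\bbC$ (average zero on $\{|z|=1\}$) pins it down, and since $\{|z|=1\}\cap U = \emptyset$ this normalization is a function of $h_\bbC|_{\bbC\setminus U}$ — we get $h_\bbC = h_1 + h_2$ with $h_1$ a zero-boundary GFF on $U$ independent of $h_\bbC|_{\bbC\setminus U}$, and $h_2$ the harmonic extension to $U$ of $h_\bbC|_{\bbC\setminus U}$. Therefore, conditionally on $\psi|_{\bbC\setminus U}$,
\[
\psi|_U \stackrel{d}{=} h_1 + \Big(h_2 + (\mathfrak g - \alpha_1 G_U(\cdot,0))|_U + \mathbf c\Big) + \alpha_1 G_U(\cdot,0),
\]
and the bracketed term is precisely the harmonic extension $\mathfrak h$ of $\psi|_{\bbC\setminus U}$ to $U$: indeed $\psi|_{\bbC\setminus U} = h_\bbC|_{\bbC\setminus U} + \mathfrak g|_{\bbC\setminus U} + \mathbf c$, its harmonic extension is $h_2 + (\text{harm.\ ext.\ of }\mathfrak g|_{\bbC\setminus U}) + \mathbf c$, and since $\mathfrak g - \alpha_1 G_U(\cdot,0)$ is itself harmonic on $U$ and agrees with $\mathfrak g$ on $\partial U$, its harmonic extension equals $(\mathfrak g - \alpha_1 G_U(\cdot,0))|_U$. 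Setting $h = h_1$ gives the claim.

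The only genuinely delicate point — and the one I would treat most carefully — is the bookkeeping around the global additive constant and the role of $\mathbf c$: one has to confirm that $\mathbf c$ is $\sigma(\psi|_{\bbC\setminus U})$-measurable (so that it is not "resampled"), that the $P_\bbC$-normalization circle lies outside $U$ so the Markov property applies cleanly to the genuinely-defined field $h_\bbC$, and that the deterministic reweighting constant $C_\bbC$ drops out. Everything else is the standard Markov property plus the elementary fact that two Green functions on nested domains with the same singularity differ by a harmonic function. I do not expect any analytic subtleties beyond this; the lemma is essentially immediate once the decomposition $\mathfrak g = \alpha_1 G_U(\cdot,0) + (\text{harmonic})$ on $U$ is in place, which is why the proof can be compressed to a reference to \cite[Lemma 5.5]{ASY22} as the authors do.
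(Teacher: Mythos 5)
Your overall strategy (unwind the definition, split off the deterministic part, apply the GFF Markov property, and compare $G_\bbC(\cdot,0)$ with $G_U(\cdot,0)$) is exactly the argument the paper has in mind, since the paper only cites the Markov property of the GFF. However, there is a genuine gap at precisely the point you flagged as delicate: you assert that $\{|z|=1\}$ is disjoint from $U$, and this does not follow from the hypotheses. A ``neighborhood of $0$ bounded away from $1$ and $\infty$'' may well cross the unit circle (e.g.\ $U=B_{1.2}(0)\setminus \overline{B_{0.1}(1)}$), and in the paper's application of the lemma (proof of Proposition~\ref{prop-resample-inside}, where $U=f(A)$ for a random conformal map $f$) there is no control keeping $U$ inside $\bbD$. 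When $U$ meets the circle, three of your steps fail simultaneously: (a) $\mathbf c$ is no longer recoverable from $\psi|_{\bbC\setminus U}$ via a unit-circle average; (b) $\mathfrak g-\alpha_1 G_U(\cdot,0)$ is not harmonic on $U$, because $G_\bbC(\cdot,0)$, $G_\bbC(\cdot,1)$ and $(\alpha_3-2Q)\log|\cdot|_+$ all contain $\log|\cdot|_+$ terms whose distributional Laplacian is a multiple of arc-length measure on $\{|z|=1\}$, with total coefficient $\alpha_1+\alpha_2+\alpha_3-2Q$; (c) the honest field $h_\bbC\sim P_\bbC$ does not satisfy the clean domain Markov property for such $U$: conditioning on $h_\bbC|_{\bbC\setminus U}$ determines the unit-circle average of the zero-boundary piece inside $U$ (harmonically extend the observed field into $U$ and average over the circle), so the conditional law inside is a zero-boundary GFF conditioned on a linear functional, not a zero-boundary GFF.

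The lemma is nevertheless true for general $U$, but only because these defects cancel, and that cancellation is the real content in the crossing case: writing $\mathbf c'=\mathbf c-m$, where $m$ is the unit-circle average of the zero-boundary component of $h_\bbC$ in $U$, the improper prior $e^{(\alpha_1+\alpha_2+\alpha_3-2Q)c}\,dc$ converts the lost measurability of $\mathbf c$ into a Girsanov weight $e^{(\alpha_1+\alpha_2+\alpha_3-2Q)m}$ on the zero-boundary GFF in $U$; by Lemma~\ref{lem:Girsanov-Dirichlet} this shifts that GFF by $(\alpha_1+\alpha_2+\alpha_3-2Q)\int G_U(\cdot,w)\rho(dw)$, with $\rho$ the uniform probability measure on $\{|z|=1\}$, and this shift is exactly the difference between the harmonic extension of $\log|\cdot|_+$ into $U$ and $\log|\cdot|_+$ itself, so the statement comes out as claimed. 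To close the gap you should either carry out this computation, or first prove that $\LF_\bbC^{(\alpha_1,0),(\alpha_2,1),(\alpha_3,\infty)}$ is unchanged if $P_\bbC$ is replaced by the whole-plane GFF normalized to have average zero on a large circle $\partial B_R(0)$ with $\ol U\subset B_R(0)$ (again a Girsanov/$dc$-shift argument), after which your proof goes through verbatim with that normalization. As written, the case you silently excluded is the one the paper actually needs.
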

	
	\begin{lemma}\label{lem-lf-resample-i-infty}
	    Let $\alpha, \beta \in \bbR$. Suppose $\psi \sim \LF_\bbH^{(\alpha, i), (\beta, \infty)}$. If $U$ is a deterministic bounded neighborhood of $i$ disjoint from $\bbR$, then conditioned on $\psi|_{\bbH \backslash U}$, we have $\psi|_U \stackrel d= h + \mathfrak h + \alpha G_U(\cdot , i)$ where $h$ is a GFF on $U$ with zero boundary conditions, $\mathfrak h$ is the harmonic extension of $\psi|_{\bbH \backslash U}$ to $U$, and $G_U$ is the Green function of $h$. If instead $U$ is a deterministic neighborhood of $\infty$ bounded away from $i$, then conditioned on $\psi|_{\bbH \backslash U}$, we have $\psi|_U \stackrel d= h + \mathfrak h + (\frac\beta2 - Q) G_U(\cdot, \infty)$ where $h$ is a GFF on $U$ with zero (resp.\ free) boundary conditions on $\partial U \cap \bbH$ (resp.\ $\partial U \cap \bbR$), $\mathfrak h$ is the harmonic extension of $\psi|_{\bbH \backslash U}$ to $U$ with zero normal derivative on $\partial U \cap \bbR$, and $G_U$ is the Green function for $h$. 
	\end{lemma}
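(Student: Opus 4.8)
The plan is to derive both claims from the Markov property of the free-boundary Gaussian free field (Proposition~\ref{prop:Markov}), simply by tracking the deterministic drift of the Liouville field; this is the same strategy used for Lemma~\ref{lem-markov-LF-C}. By Definition~\ref{def:lf-insertion-infty}, a sample $\psi \sim \LF_\bbH^{(\alpha, i), (\beta, \infty)}$ can be written $\psi = h + \mathbf c + f$, where $h \sim P_\bbH$ is the free-boundary GFF normalized to have average zero on the unit semicircle, $\mathbf c$ is an independent real random variable with law proportional to $e^{(\alpha + \beta/2 - Q)c}\,dc$, and the drift $f(z) = (\beta - 2Q)\log|z|_+ + \alpha G_\bbH(z, i)$ is deterministic (the prefactor $C_\bbH^{(\alpha, i),(\beta,\infty)}$ is a constant and is irrelevant for conditional laws). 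Since $f$ is deterministic, conditioning on $\psi|_{\bbH \setminus U}$ is the same as conditioning on $(h + \mathbf c)|_{\bbH \setminus U}$; the resulting conditional laws are honest probability measures, so this is legitimate in the $\sigma$-finite setting in the sense of the Markov kernel of Definition~\ref{def-markov-kernel}.

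For the first assertion, I would apply Proposition~\ref{prop:Markov} to $h$ on the open set $U$: since $U$ is bounded and disjoint from $\bbR$ this gives $h = h_1 + h_2$ with $h_1, h_2$ independent, $h_1$ a zero-boundary GFF on $U$ whose Green function is the Dirichlet Green function $G_U$, and $h_2$ harmonic on $U$ with $h_2 = h$ off $U$. Knowing $(h + \mathbf c)|_{\bbH \setminus U}$ determines $h_2 + \mathbf c$, and hence conditionally $\psi|_U \stackrel{d}{=} h_1 + \mathfrak g + f|_U$, where $\mathfrak g$ is the harmonic extension of $(h + \mathbf c)|_{\partial U}$ to $U$. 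Writing $\mathfrak h$ for the harmonic extension of $\psi|_{\partial U}$, this is $\psi|_U \stackrel{d}{=} h_1 + \mathfrak h + \big(f|_U - (\text{harmonic extension of } f|_{\partial U})\big)$, so it remains to identify the last term with $\alpha G_U(\cdot, i)$. This holds because $\alpha G_\bbH(\cdot, i) = -\alpha \log|\cdot - i| + (\text{harmonic on } U \setminus \{i\})$, while $(\beta - 2Q)\log|\cdot|_+$ is harmonic on $U$ away from the unit circle — the factor $|\cdot|_+$ rather than $|\cdot|$ being an artifact of the normalization of $P_\bbH$ — and so contributes only to the harmonic part $\mathfrak h$.

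For the second assertion the argument is parallel, but now $U$ is a neighborhood of $\infty$ meeting $\bbR$, so Proposition~\ref{prop:Markov} yields $h_1$ with Dirichlet boundary conditions on $\partial U \cap \bbH$ and free boundary conditions on $\partial U \cap \bbR$, and $h_2$ harmonic on $U$ with vanishing normal derivative on $\partial U \cap \bbR$; the Green function $G_U$ of $h_1$ is the corresponding Neumann-type Green function. Since $G_\bbH(z, i) \to 0$ as $z \to \infty$, near $\infty$ the drift is $f(z) = (\beta - 2Q)\log|z| + o(1)$; on the other hand $\infty$ is a \emph{boundary} point of $U \subset \ol\bbH$, and by reflection $G_U(\cdot, \infty)$ blows up like $-2\log(\dist(\cdot, \infty)) \sim 2\log|\cdot|$ there. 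Because $\beta - 2Q = 2(\tfrac{\beta}{2} - Q)$, subtracting $(\tfrac{\beta}{2} - Q)\,G_U(\cdot, \infty)$ from $f|_U$ leaves a function that is harmonic on $U$ with vanishing normal derivative on $\partial U \cap \bbR$, which is absorbed into the harmonic extension $\mathfrak h$ of $\psi|_{\partial U}$; this yields $\psi|_U \stackrel{d}{=} h_1 + \mathfrak h + (\tfrac{\beta}{2} - Q)\,G_U(\cdot, \infty)$.

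The decomposition via the Markov property is routine; the step that needs genuine care is the bookkeeping for the $\infty$ insertion in the second assertion — implementing the free (Neumann) boundary condition on $\bbR$ by reflection, and pinning down the asymptotics of $G_\bbH(\cdot, i)$ and of the Green function $G_U(\cdot, \infty)$ near the boundary point $\infty$ precisely enough that the weight $\tfrac{\beta}{2} - Q$, rather than $\beta - 2Q$, is the one appearing. One must also remember throughout that $P_\bbH$ is normalized on the unit semicircle (this is why $\log|\cdot|_+$ appears in the drift) and check that this is consistent with describing the conditional mean of $\psi|_U$ via a harmonic extension together with the Green-function terms.
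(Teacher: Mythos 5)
Your overall strategy (decompose $\psi$ as GFF plus deterministic drift plus the constant $\mathbf c$, apply the domain Markov property, and match the singular parts of the drift with $\alpha G_U(\cdot,i)$, resp.\ $(\frac\beta2-Q)G_U(\cdot,\infty)$) is the intended one, and your coefficient bookkeeping at $\infty$ (the reflection factor $2$, so that $\beta-2Q$ becomes $\frac\beta2-Q$ against $G_U(\cdot,\infty)\sim 2\log|\cdot|$) is correct. However, there is a genuine gap in how you dispose of the normalization of $P_\bbH$. In the first case $U$ is a neighborhood of $i$ and $|i|=1$, so $U$ \emph{always} contains an arc of the unit semicircle; hence $(\beta-2Q)\log|\cdot|_+$ and the term $2\log|\cdot|_+$ inside $G_\bbH(\cdot,i)$ are not harmonic on $U$ (their distributional Laplacian charges the semicircle), and they cannot simply be ``absorbed into the harmonic part $\mathfrak h$'' as you assert. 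The same problem occurs in the second case whenever $U$ meets the unit semicircle, which is exactly the situation in the paper's application (the sets $A_1,A_2$ in Section~\ref{sec-proof-main-thin}).

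The probabilistic side of your argument breaks at the same point. For the semicircle-normalized field $h\sim P_\bbH$, when $U$ meets the unit semicircle the Markov decomposition $h=h_1+h_2$ does \emph{not} give $h_1$ independent of $h|_{\bbH\setminus U}$, because fixing the additive constant by the semicircle average entangles the projection onto $H(U)$ with the data outside $U$ (equivalently: $z\mapsto G_\bbH(z,w)$ is not harmonic in $z\in U$ across the semicircle for $w\notin U$, so the conditional mean of $h|_U$ given $h|_{\bbH\setminus U}$ is not the harmonic extension). The lemma is nevertheless true, but only because of the integration over $\mathbf c$ with the exponential density: conditioning on $\psi|_{\bbH\setminus U}$ tilts the law of the interior zero-boundary part by $e^{\theta\,\mathrm{avg}_S(\cdot)}$ (with $\theta=\alpha+\frac\beta2-Q$), and this Girsanov shift exactly cancels the non-harmonic $\log|\cdot|_+$ pieces of the drift. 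So a complete proof must use the $\mathbf c$-integration beyond treating it as one more additive term — for instance by first showing (via a Cameron--Martin computation, or by quoting the normalization-independence of the Liouville field) that $\LF_\bbH^{(\alpha,i),(\beta,\infty)}$ can be rewritten using a free-boundary GFF normalized on a set at positive distance from $\ol U$, with the drift expressed through that normalization's Green function; after such a re-normalization the drift restricted to $U$ is harmonic off the insertion and your argument goes through verbatim. This is the content of the reference the paper cites for these lemmas, and it is the step your write-up is missing.
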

	
	\begin{lemma}\label{lem-lf-resample-3}  
	Let $\beta_1, \beta_2, \beta_3 \in \bbR$.
	     Suppose $\psi\sim \LF_\bbH^{(\beta_1, 0), (\beta_2, 1), (\beta_3, \infty)}$. Let $\mathfrak h$ be the harmonic function on $\bbH$ whose values on $(-\infty, 1)$ agree with those of $\psi$ and which has normal derivative zero on $(1,\infty)$. Conditioned on $\mathfrak h$, we have $\psi \stackrel d= h + \mathfrak h$ where $h$ is a GFF on $\bbH$ with zero boundary conditions on $(-\infty, 1)$ and free boundary conditions on $(1, \infty)$.
	\end{lemma}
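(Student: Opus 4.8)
The plan is to reduce the statement to the Markov decomposition of the free‑boundary GFF along the boundary arc $L:=(-\infty,1)$, treating the deterministic insertion terms and the random additive constant as passive additive corrections; this is exactly the mechanism behind \cite[Lemma 5.5]{ASY22}. First I would unpack Definition~\ref{def:lf-insertion-infty}: a sample from $\LF_\bbH^{(\beta_1,0),(\beta_2,1),(\beta_3,\infty)}$ can be written as $\psi=\mathring h+g+\mathbf c$, where $\mathring h\sim P_\bbH$ is a free‑boundary GFF,
\[
g(z)=(\beta_3-2Q)\log|z|_++\tfrac{\beta_1}{2}G_\bbH(z,0)+\tfrac{\beta_2}{2}G_\bbH(z,1)
\]
is a deterministic function, and $\mathbf c$ is a random constant whose law is the ($\sigma$‑finite) measure $e^{(\frac{\beta_1+\beta_2+\beta_3}{2}-Q)c}\,dc$. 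Since the constant $C_\bbH^{(\beta_1,0),(\beta_2,1),(\beta_3,\infty)}$ in Definition~\ref{def:lf-insertion-infty} does not depend on the field, $\mathbf c$ is independent of $\mathring h$.

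Next I would apply the Markov property of the GFF (Proposition~\ref{prop:Markov}) in its boundary‑arc form: the Dirichlet energy space $H(\bbH)$ splits orthogonally as $H_L\oplus H_L^{\perp}$, where $H_L$ is the closure of smooth compactly supported functions vanishing near $L$ and $H_L^\perp$ consists of functions harmonic on $\bbH$ with vanishing normal derivative on $(1,\infty)$. Correspondingly $\mathring h=h_1+\mathring{\mathfrak h}$ with $h_1,\mathring{\mathfrak h}$ independent, $h_1$ a GFF with zero boundary conditions on $L$ and free boundary conditions on $(1,\infty)$, and $\mathring{\mathfrak h}$ the (harmonic) field recording the boundary values of $\mathring h$ on $L$ extended with Neumann data zero on $(1,\infty)$. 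By linearity of this harmonic extension, the function $\mathfrak h$ of the statement equals $\mathring{\mathfrak h}+\mathfrak h_g+\mathbf c$, where $\mathfrak h_g$ is the deterministic harmonic extension of $g|_L$; moreover $\mathbf c$ (hence also $\mathring{\mathfrak h}$) is recoverable from $\mathfrak h$ by averaging $\mathfrak h-\mathfrak h_g$ over the unit semicircle. Therefore $\psi-\mathfrak h=h_1+(g-\mathfrak h_g)$, and since $g-\mathfrak h_g$ is a fixed function vanishing on $L$, the field $h:=h_1+(g-\mathfrak h_g)$ has a fixed law — a GFF with zero boundary conditions on $L$ and free boundary conditions on $(1,\infty)$, up to the deterministic correction $g-\mathfrak h_g$ — and is independent of $(\mathring{\mathfrak h},\mathbf c)$, hence of $\mathfrak h$. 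Reading "conditioned on $\mathfrak h$" in the Markov‑kernel sense of Definition~\ref{def-markov-kernel}, this exhibits the conditional law of $\psi$ given $\mathfrak h$ as that of $h+\mathfrak h$ with $h$ of the stated law, which is the claim.

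The only delicate point — and the main obstacle — is giving rigorous meaning to "the boundary values of $\psi$ on $(-\infty,1)$" and their harmonic extension, since $\mathring h$ and $\psi$ are distributions without pointwise boundary values, the arc $L$ is unbounded, and $g|_L$ itself has logarithmic singularities at $0$ and at the endpoint $1$. The standard remedy, which I would follow, is never to speak of boundary traces directly but to \emph{define} $\mathring{\mathfrak h}$ (and $\mathfrak h_g$) via orthogonal projection onto $H_L^{\perp}$ in the Dirichlet inner product, equivalently as a limit of harmonic extensions of mollified boundary data, and to verify this coincides with the object in the statement; this is routine and is carried out in the references cited (cf. \cite[Lemma 5.5]{ASY22} and the analogous Lemmas~\ref{lem-markov-LF-C} and~\ref{lem-lf-resample-i-infty} above). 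No additional integrability input is needed because the decomposition above furnishes the required Markov kernel explicitly, so the argument is insensitive to the infiniteness of the measure $\LF_\bbH^{(\beta_1,0),(\beta_2,1),(\beta_3,\infty)}$.
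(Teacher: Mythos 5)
Your high-level plan (write $\psi=\mathring h+g+\mathbf c$ and apply the Markov decomposition of the free-boundary GFF along the arc $L=(-\infty,1)$) is the route the paper intends -- the paper itself gives no argument beyond citing \cite[Lemma 5.5]{ASY22} -- but two of your intermediate claims are wrong, and together they conceal the one genuinely nontrivial point. First, because $P_\bbH$ is normalized to have average zero on the unit semicircle, you cannot have simultaneously that $h_1:=\mathring h-\mathring{\mathfrak h}$ is a mixed-boundary GFF, that $\mathring{\mathfrak h}$ is the harmonic (Neumann on $(1,\infty)$) extension of the boundary values of $\mathring h$ on $L$, \emph{and} that $h_1$ and $\mathring{\mathfrak h}$ are independent: with your definitions $h_1$ is indeed a mixed GFF, but $\mathring{\mathfrak h}$ contains the semicircle average $X:=\int h_1\,d\sigma_1$ of $h_1$ (the orthogonal splitting of $H(\bbH)$ produces independent pieces only after recentring), so independence fails, and for the same reason $\mathbf c$ is not recoverable by averaging $\mathfrak h-\mathfrak h_g$ over the semicircle (that average is $\mathbf c-X$). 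Second, the deterministic discrepancy you leave hanging does not vanish: since $g(z)=-\beta_1\log|z|-\beta_2\log|z-1|+(\beta_1+\beta_2+\beta_3-2Q)\log|z|_+$ and $\log|z|_+$ fails to be harmonic across the unit semicircle, one has $g-\mathfrak h_g=(\beta_1+\beta_2+\beta_3-2Q)\bigl(\log|z|_+-u(z)\bigr)$, where $u$ is the harmonic/Neumann extension of $\log|\cdot|_+$ from $L$; this is nonzero unless $\beta_1+\beta_2+\beta_3=2Q$. So what your argument actually yields is ``conditioned on $\mathfrak h$, $\psi\stackrel d= h+\mathfrak h+(g-\mathfrak h_g)$,'' which is not the lemma.

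The statement is nevertheless true, and the rescue comes precisely from the feature you dismiss in your last sentence. Writing $\psi-\mathfrak h=h_1+(g-\mathfrak h_g)$ and $\mathfrak h=\mathfrak H^0-X+\mathfrak h_g+\mathbf c$ (with $\mathfrak H^0$ the recentred harmonic field, independent of $h_1$), the change of variables $c'=\mathbf c-X$ in the factor $e^{(\frac{\beta_1+\beta_2+\beta_3}{2}-Q)c}\,dc$ produces a Girsanov weight $e^{(\frac{\beta_1+\beta_2+\beta_3}{2}-Q)X}$ on the law of $h_1$; conditionally on $\mathfrak h$ this shifts the mean of $h_1$ by $(\frac{\beta_1+\beta_2+\beta_3}{2}-Q)v(z)$ with $v(z)=\int G_{DN}(z,w)\,\sigma_1(dw)$, where $G_{DN}$ is the mixed Dirichlet/Neumann Green function. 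A short boundary-value and growth argument gives $v(z)=2u(z)-2\log|z|_+$, so this shift cancels $g-\mathfrak h_g$ exactly, and only after this cancellation does one obtain $\psi\stackrel d= h+\mathfrak h$ with no correction term (compare Lemmas~\ref{lem-lf-resample-i-infty} and~\ref{lem-lf-resample-4}, where an analogous bookkeeping leaves a genuine $G(\cdot,\infty)$ term). This cancellation -- driven by the specific exponential density of $\mathbf c$, i.e.\ by the very infiniteness of $\LF_\bbH^{(\beta_1,0),(\beta_2,1),(\beta_3,\infty)}$ -- is the missing idea in your proposal; the issue of making rigorous sense of boundary traces, which you single out as the main obstacle, is routine by comparison.
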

	
	\begin{lemma}\label{lem-lf-resample-4}
	Let $\beta_1, \beta_2, \beta_3, \beta_4 \in \bbR$ and $x \in (1, \infty)$.
	     Suppose $\psi\sim \LF_\bbH^{(\beta_1, 0), (\beta_2, 1), (\beta_3, x), (\beta_4, \infty)}$. Let $\mathfrak h$ be the harmonic function on $\bbH$ whose values on $(0,x)$ agree with those of $\psi$ and which has normal derivative zero on $(-\infty, 0) \cup (x, \infty)$. Conditioned on $\mathfrak h$, we have $\psi \stackrel d= h + \mathfrak h + (\frac{\beta_2}2 -Q) G(\cdot, \infty)$ where $h$ is a GFF on $\bbH$ with zero boundary conditions on $(0,x)$ and free boundary conditions on $(-\infty, 0) \cup (x, \infty)$, and $G$ is the Green function of $h$. 
	\end{lemma}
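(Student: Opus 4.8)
The plan is to obtain this, like the other statements in this subsection, directly from the Markov property of the free-boundary GFF (cf.\ \cite[Lemma 5.5]{ASY22}); the one new feature is that $\infty$ now sits in the interior of the Neumann part $(-\infty,0)\cup(x,\infty)$ of $\partial\bbH$, so the insertion at $\infty$ is not absorbed into $\mathfrak h$ and leaves behind a deterministic Green's-function term. Since $\LF_\bbH^{(\beta_1,0),(\beta_2,1),(\beta_3,x),(\beta_4,\infty)}$ is infinite, the conclusion is to be read via Definition~\ref{def-markov-kernel}: I would produce a Markov kernel $\Lambda(\mathfrak h,\cdot)$, equal to the law of $h+\mathfrak h+(\tfrac{\beta_2}2-Q)G(\cdot,\infty)$ for $h$ a GFF with the stated mixed boundary conditions, together with the (infinite) pushforward $M$ of the law of $\psi$ under $\psi\mapsto\mathfrak h$, and verify that $\Lambda(\mathfrak h,d\psi)\,M(d\mathfrak h)=\LF_\bbH^{(\beta_1,0),(\beta_2,1),(\beta_3,x),(\beta_4,\infty)}(d\psi)$.

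First I would unwind the definition and write $\psi=h_0+g+\mathbf c$, with $h_0\sim P_\bbH$, $\mathbf c$ carrying the Lebesgue-type weight of Definition~\ref{def:lf-insertion-infty}, and
\[
g(z)=(\beta_4-2Q)\log|z|_++\tfrac{\beta_1}2G_\bbH(z,0)+\tfrac{\beta_2}2G_\bbH(z,1)+\tfrac{\beta_3}2G_\bbH(z,x)
\]
deterministic. Then I would decompose the free GFF along the segment $(0,x)$ as $h_0=h_1+h_2$ with $h_1\perp h_2$, where $h_1$ is a GFF on $\bbH$ with Dirichlet conditions on $(0,x)$ and free conditions on $(-\infty,0)\cup(x,\infty)$ and $h_2$ is the complementary harmonic field (harmonic in $\bbH$, Neumann on $(-\infty,0)\cup(x,\infty)$, carrying the boundary fluctuations of $h_0$ along $(0,x)$); this is valid because the Neumann Green's function splits as $G_\bbH=G^{\mathrm{mix}}_\bbH+(G_\bbH-G^{\mathrm{mix}}_\bbH)$, the first being the covariance of $h_1$ and the second harmonic in each variable on $\bbH$.

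Writing $\widehat g$ for the harmonic function on $\bbH$ with Neumann data on $(-\infty,0)\cup(x,\infty)$ that agrees with $g$ on $(0,x)$, and using that $h_1$ vanishes on $(0,x)$ while $h_2,\mathbf c$ are already harmonic with Neumann data off $(0,x)$, one reads off $\mathfrak h=h_2+\widehat g+\mathbf c$, hence
\[
\psi=\mathfrak h+h_1+(g-\widehat g).
\]
Since $h_1$ is independent of $(h_2,\mathbf c)$ and $g,\widehat g$ are deterministic, $h_1\perp\mathfrak h$, so the conditional law of $\psi$ given $\mathfrak h$ is that of $\mathfrak h+h+(g-\widehat g)$ with $h$ a fresh copy of $h_1$ — this is the claim with the explicit harmonic function $g-\widehat g$ in place of the Green's-function term. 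It remains to identify $g-\widehat g$ with the stated multiple of $G(\cdot,\infty)$, where $G=G^{\mathrm{mix}}_\bbH$ and $G(\cdot,\infty)=\lim_{w\to\infty}G(\cdot,w)$. By construction $g-\widehat g$ is harmonic in $\bbH$, vanishes on $(0,x)$, and has zero normal derivative on $(-\infty,0)\cup(x,\infty)$; near $0,1,x$ the singular ($\log\mathrm{dist}$) part of $g$ already satisfies the Neumann condition and equals its own harmonic extension from $(0,x)$, so it cancels against $\widehat g$ and $g-\widehat g$ stays bounded there; near $\infty$, $\widehat g$ remains bounded while the logarithmic growth of $g$ is governed by the $(\beta_4-2Q)\log|\cdot|_+$ term. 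Thus $g-\widehat g$ and $G(\cdot,\infty)$ are both harmonic on $\bbH$, vanish on $(0,x)$, are Neumann elsewhere on $\bbR$, and differ from a bounded function only by logarithmic growth at $\infty$; hence they are proportional, and matching the coefficient of $\log|z|$ as $z\to\infty$ (normalizing $G(\cdot,\infty)$ through $G(z,w)\sim-2\log|z-w|$ at a Neumann boundary point) fixes the scalar.

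I expect the potential-theoretic identification of $g-\widehat g$ with the Green's-function term — together with the bookkeeping of the zero mode $\mathbf c$ and making the infinite-measure ``conditioning'' precise in the sense of Definition~\ref{def-markov-kernel} — to be the only places requiring any care; everything else is a routine application of the GFF Markov property and the explicit form of $G_\bbH$.
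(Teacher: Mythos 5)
Your route is the same one the paper (implicitly) takes: the paper gives no written proof of this lemma beyond the remark that these resampling statements ``essentially follow immediately from the Markov property of the Gaussian free field, see \cite[Lemma 5.5]{ASY22}'', and your sketch is a fleshed-out version of exactly that — decompose the free-boundary field across $(0,x)$, absorb the Neumann-harmonic part and the zero mode into $\mathfrak h$, and identify the deterministic remainder potential-theoretically. Two points need attention, though. First, your own computation pins the coefficient of $G(\cdot,\infty)$ to the insertion at $\infty$: the logarithmic growth of $g$ at $\infty$ is $(\beta_4-2Q)\log|z|$ while $G(z,\infty)\sim 2\log|z|$, so your argument yields $(\tfrac{\beta_4}{2}-Q)\,G(\cdot,\infty)$, not $(\tfrac{\beta_2}{2}-Q)$ as printed. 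This is in fact what the applications require (in Proposition~\ref{prop-4-pt} the insertion at $\infty$ is $\beta=\gamma-\tfrac{W}{\gamma}$, and Proposition~\ref{prop-resample-1} carries the coefficient $\tfrac\beta2-Q$; compare also Lemma~\ref{lem-lf-resample-i-infty}), so the $\beta_2$ in the statement is evidently a typo for $\beta_4$ — but you should say so explicitly rather than assert that matching the scalar reproduces the stated form.

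Second, the step ``hence they are proportional'' requires the uniqueness class for the mixed extension to be specified: ``harmonic, agrees with $\psi$ on $(0,x)$, zero normal derivative elsewhere'' does not determine $\mathfrak h$ (or $\widehat g$) uniquely, since one may add multiples of $G(\cdot,\infty)$ itself or harmonic functions with inverse-square-root singularities at the junction points $0$ and $x$; you need to fix (say) boundedness at $0,x$ and $o(\log)$ growth at $\infty$, which is also exactly where your claims that $\widehat g$ is bounded at $\infty$ and that $g-\widehat g$ is bounded at $0,1,x$ live. Relatedly, $g$ is not harmonic in $\bbH$: the $\log|z|_+$ terms (from the normalization of $P_\bbH$ and the $\infty$-insertion) have a kink on the unit semicircle, so the term-by-term shortcut ``each singular piece of $g$ is its own extension'' and the identity $\mathfrak h = h_2+\widehat g+\mathbf c$ are not literally exact; reconciling this is precisely the zero-mode/normalization bookkeeping you defer at the end, and it is the actual content of the \cite[Lemma 5.5]{ASY22}-type computation, which should be carried out or cited rather than only flagged.
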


	\subsection{Quantum surfaces, quantum disks and quantum triangles}\label{subsec:pre-qs}

	In this section we review the  quantum disks and quantum triangles that will be used in our paper. Let $\mathcal{D}\mathcal{H} = \{(D,h,z_1, ..., z_m):D\subset\mathbb{C}\ \text{open},\ h\text{ a distribution on }D,\ z_1,...,z_m\in\bar{D}\}$.  For two tuples $(D,h,z_1, ..., z_m)$ and $(\tilde{D}, \tilde{h}, \tilde{z}_1, ..., \tilde{z}_m)\in\mathcal{D}\mathcal{H} $, we say  
	\begin{equation}\label{eqn-qs-relation}
	(D, h, z_1, ..., z_m)\sim_\gamma (\tilde{D}, \tilde{h}, \tilde{z}_1, ..., \tilde{z}_m)
	\end{equation}
	if one can find a conformal mapping $f:\tilde{D}\to D$ such that $f(\tilde{z}_j) = z_j$ for each $j$ and $\tilde{h} = f\bullet_\gamma h$, where 
	\eqb\label{eq-bullet-gamma}
	f\bullet_\gamma h
	:= h\circ f^{-1}+Q\log|(f^{-1}) '|.
	\eqe
	We call each tuple $(D, h, z_1, ..., z_m)$ modulo the equivalence relation $\sim_\gamma$ a decorated $\gamma$-\textit{quantum surface}. An \emph{embedding} of a decorated quantum surface is a choice of representative $(D, h, z_1, ..., z_m)$. We can similarly define curve-decorated quantum surfaces by considering tuples $(D, h, z_1, \dots, z_m, \eta)$ where $\eta$ is a curve on $\ol D$ by further requiring that $\eta = f \circ \tilde \eta$.
	
	Let $\phi=h_D+g$  , where $h_D$ is the GFF on a domain $D$ and $g$ is a (possibly random) continuous function.  One can then define the quantum area measure $\cA_\phi(d^2z):=\lim_{\e\to0}\e^{\gamma^2/2}e^{\gamma \phi_\e(z)}d^2z$, where $\phi_\e(z)$ denotes the average of $\phi$ over $\partial B_\e(z)\cap D$. When $D=\bbH$, the quantum length measure is defined by  $\cL_\phi(dx):=\lim_{\e\to0}\e^{\gamma^2/4}e^{\gamma \phi_\e(x)/2}dx$   where $\phi_\e(x)$ is the average of $\phi$ over  $\partial B_\e(x)\cap\bbH$.  It has been shown in \cite{DS11} that the limits exist in probability. The quantum area and  length measures depend only on the quantum surface: in~\eqref{eqn-qs-relation}, if $D = \tilde D = \bbH$ and $\psi: \bbH \to \bbH$ is a conformal automorphism, then $\psi_* \cA_{\tilde \phi} = \cA_\phi$ and $\psi_* \cL_{\tilde \phi} = \cL_\phi$ \cite{DS11, SW16}. Thus, the notion of quantum length can be extended to free boundary GFFs on arbitrary domains with boundary via conformal maps and the $\bullet_\gamma$ relation.

	Next we present the definition of the \textit{weight $W$ (thick) quantum disk},  introduced in \cite[Section 4.5]{DMS14}.  Recall that for the free boundary GFF $h_{\mathcal{S}}$ on the strip, we have the decomposition $h_{\mathcal{S}} = h_{\mathcal{S}}^1+h_{\mathcal{S}}^2$, where $h_{\mathcal{S}}^1$ is constant on each of $u+[0,i\pi]$ for $u\in\bbR$, and  $h_{\mathcal{S}}^2$ has mean zero on all such lines \cite[Section 4.1.6]{DMS14}.
	
	\begin{definition}\label{def:thickdisk}
		Fix $W\ge\frac{\gamma^2}{2}$ and let $\beta = \gamma+ \frac{2-W}{\gamma}\leq Q$. Let $\hat\psi=\psi_1+\psi_2$ where $\psi_1$ and $\psi_2$ are independent distributions on $\mathcal{S}$ such that: 
		\begin{enumerate}
			\item $\psi_1(z) = X_{\mathrm{Re}(z)}$, with
			\begin{equation}
			X_t:=\left\{ \begin{array}{rcl} 
				B_{2t}-(Q-\beta) t & \mbox{for} & t\ge 0\\
				\tilde{B}_{-2t} +(Q-\beta) t & \mbox{for} & t<0
			\end{array} 
			\right.
			\end{equation}
	        where $(B_t)_{t\ge 0}$ and $(\tilde{B}_t)_{t\ge 0}$ are independent standard Brownian motions conditioned on  $B_{2t}-(Q-\beta)t<0$ and  $ \tilde{B}_{2t} - (Q-\beta)t<0$ for all $t>0$\footnote{This conditioning can be made sense of via Bessel processes; see e.g.~\cite[Section 4.2]{DMS14}.};  
			\item $\psi_2$ has the same law as $h_{\mathcal{S}}^2$.
		\end{enumerate}
		 Let   $\mathbf{c}$ be independently sampled from $\frac{\gamma}{2}e^{(\beta-Q)c}dc$ and set $\psi = \hat\psi+\mathbf{c}$. Let  $\mathcal{M}^{\textup{disk}}_{0,2}(W)$ be the infinite measure describing the law of $(\mathcal{S}, \psi, -\infty, +\infty)/{\sim_\gamma}$. We call a sample from $\mathcal{M}^{\textup{disk}}_{0,2}(W)$ a (two-pointed) \emph{quantum disk of weight $W$}.
	\end{definition}

	When $0<W<\frac{\gamma^2}{2}$, we can also define the weight $W$ quantum wedge and weight $W$ quantum disk as  concatenations of weight $\gamma^2-W$ (two-pointed) thick quantum disks as in \cite[Section 2]{AHS20}.

	\begin{definition}\label{def-thin-wedge}
		For $W\in(0, \frac{\gamma^2}{2})$, the probability measure $\mathcal{M}^{\textup{wed}}(W)$ is defined as follows. Sample a Poisson point process $\{(u, \mathcal{D}_u)\}$ from the intensity measure $\mathds{1}_{t>0}dt\times \mathcal{M}_{0,2}^{\textup{disk}}(\gamma^2-W)$ and concatenate the quantum disks $\{\mathcal{D}_u\}$ according to the ordering induced by $u$. The resulting beaded quantum surface is a \emph{weight $W$ quantum wedge}, and we denote its law by $\cM^\mathrm{wed}(W)$. For $x>0$, the \emph{quantum cut point measure} assigns  mass
	$x$ to the collection of cut points between the quantum disks $\{\mathcal{D}_u:u\leq x\}$.
	\end{definition}

	\begin{definition}\label{def-thin-disk}
		Let $W\in(0, \frac{\gamma^2}{2})$. 
	    Sample $(\cW, T) \sim \cM^\mathrm{wed}(W) \times (1-\frac{2}{\gamma^2}W)^{-2}\textup{Leb}_{\mathbb{R}_+}$, and write $\cW = \{(u, \cD_u)\}$. The \emph{weight $W$ quantum disk} is the concatenation of the quantum disks $\{\cD_u \: : \: u \leq T\}$ according to the ordering induced by $u$. Its left (resp.\ right) boundary length is the sum of the left (resp.\ right) boundary lengths of its constituent $\cD_u$.
	    We define the infinite measure $\Md_{0,2}(W)$ to be the law of the weight $W$ quantum disk.
	    For $x\in [0,T]$, the \emph{quantum cut point measure} assigns  mass
	    $x$ to the collection of cut points between the quantum disks $\{\mathcal{D}_u:u\leq x\}$.
	\end{definition}
	As explained in \cite[Corollary 2.14]{AHS20}, the quantum cut point measure of a quantum wedge $\cW$ is measurable with respect to $\cW$. 
    Indeed, map the left boundary of $\cW$ to $[0,\infty)$ via the quantum length measure. Then the pushforward of the quantum cut point measure agrees (up to deterministic multiplicative constant) with the $(1-\frac{2W}{\gamma^2})$-Minkowski content measure of the image of the cut points.  
    This measurability is not immediate from Definition~\ref{def-thin-wedge} since, a priori, the label $u$ for each $\cD_u$ is not part of the data of $\cW$. Similarly, the quantum cut point measure of a thin quantum disk is measurable with respect to the thin quantum disk.

	We now recall the notion of quantum triangle in Section~\ref{subsec:intro-qt}. It is a quantum surface parameterized by weights $W_1,W_2,W_3>0$, and is defined based on Liouville fields with three insertions and the thick-thin duality. See Figure~\ref{fig-qt}.
	
	\begin{definition}[Thick quantum triangles]\label{def-qt-thick}
		Fix {$W_1, W_2, W_3>\frac{\gamma^2}{2}$}. Set $\beta_i = \gamma+\frac{2-W_i}{\gamma}<Q$ for $i=1,2,3$, and {let $\phi$ be sampled from $\frac{1}{(Q-\beta_1)(Q-\beta_2)(Q-\beta_3)}\textup{LF}_{\mathbb{H}}^{(\beta_1, \infty), (\beta_2, 0), (\beta_3, 1)}$}. Then we define the infinite measure $\textup{QT}(W_1, W_2, W_3)$ to be the law of $(\bbH, \phi, \infty, 0,1)/{\sim_\gamma}$.
	\end{definition}
	
	\begin{definition}[Thin quantum triangles]\label{def-qt-thin}
		Fix {$W_1, W_2, W_3\in (0,\frac{\gamma^2}{2})\cup(\frac{\gamma^2}{2}, \infty)$}. Let $I:=\{i\in\{1,2,3\}:W_i<\frac{\gamma^2}{2}\}$ and define  $\textup{QT}(W_1, W_2, W_3)$ to be the law of the quantum surface $S$ constructed as follows. 
	    {Sample $(S_0, (S_i)_I)$ from $\textup{QT}(\tilde{W}_1, \tilde{W}_2, \tilde{W}_3) \times \prod_{i\in I} (1-\frac{2W_i}{\gamma^2})\mathcal{M}_2^{\textup{disk}}(W_i)$ where $\tilde{W}_i = \max (W_i, \gamma^2 - W_i)$. Let $(D, \phi, a_1, a_2, a_3)$ be an embedding of $S_0$. For each $i \in I$, identify the first marked point of $S_i$ with the vertex $a_i$ of $S_0$, and forget this marked point. Let $S$ be the resulting three-pointed quantum surface. }
	\end{definition}

	These definitions can be extended to the setting where one or more weights $W_i$ is equal to $\frac{\gamma^2}2$, see \cite[Section 2.5]{ASY22} for details. 
	
	We introduce the \emph{disintegration} of the measure  $\Md_{0,2}(W)$  over the boundary arc lengths as in \cite[Section 2.6]{AHS20}. Namely, we have 
	\eqb\label{eq-disintegrate-disk}
	\Md_{0,2}(W) = \iint_{\bbR_+^2}\Md_{0,2}(W;\ell_1,\ell_2)\ d\ell_1\ d\ell_2
	\eqe
	where $\Md_{0,2}(W;\ell_1,\ell_2)$ is the measure supported on doubly marked quantum surfaces with left and right boundary arcs having quantum lengths $\ell_1$ and $\ell_2$. Similarly, this notion can be extended to quantum triangles by setting
	\begin{equation}\label{eq-disintegrate-triangle}
	    \QT(W_1,W_2,W_3) = \iiint_{\bbR_+^3}\QT(W_1,W_2,W_3;\ell_1,\ell_2,\ell_3)\ d\ell_1\ d\ell_2\ d\ell_3
	\end{equation}
	where $\QT(W_1,W_2,W_3;\ell_1,\ell_2,\ell_3)$ is the measure supported on quantum surfaces $(D,\phi,a_1,a_2,a_3)/{\sim_\gamma}$ such that the boundary arcs between $a_1a_2$, $a_1a_3$ and $a_2a_3$ has quantum lengths $\ell_1,\ell_2,\ell_3$. We can also disintegrate over one or two boundary arc lengths of quantum triangles. For instance, we can define 
	$$\QT(W_1,W_2,W_3;\ell_1,\ell_2) = \int_0^\infty \QT(W_1,W_2,W_3;\ell_1,\ell_2,\ell_3)\ d\ell_3$$
	and $$\QT(W_1,W_2,W_3;\ell_1) = \iint_{\bbR^2_+}\QT(W_1,W_2,W_3;\ell_1,\ell_2,\ell_3)\ d\ell_2\ d\ell_3.$$

	Finally, we define quantum disks with one bulk and one boundary insertion using the Liouville field. 
	\begin{definition}\label{def:mdisk11}
	    Fix $W>0,W'>\frac{\gamma^2}{2}$ and let $\alpha = Q-\frac{W}{2\gamma}$, $\beta = \frac{\gamma}{2}+\frac{2-W'}{\gamma}$. Let $\phi$ be a sample from $\frac{1}{Q-\beta}\LF_{\bbH}^{(\alpha,i),(\beta,0)}$. Then we define the infinite measure $\Md_{1,1}(W,W')$ to be the law of $(\bbH,\phi,i,0)/{\sim_\gamma}$.
	\end{definition}
	
	\subsection{Adding a third marked point to a quantum disk} \label{subsec-third-point}
	
	For a sample $\cD$ from $\Md_{2}(W)$, let $\cL_\cD^\mathrm{left}$ denote the quantum length measure on the left boundary arc of $\cD$. For $(p, \cD)$ sampled from $\cL_\cD^\mathrm{left} (dp)\Md_{0,2}(W)(d\cD)$, let $\Md_{0,2, \bullet}(W)$ denote the law of the three-pointed quantum surface obtained from adding to $\cD$ the third marked point $p$.
	
	\begin{lemma}\label{lem-embed-marked-disk}
		Let $W > \frac{\gamma^2}2$ and $\beta = \frac\gamma2 + \frac{2-W}\gamma$. Embed a sample from $\Md_{0,2, 
				\bullet}(W)$
		in $(\bbH, \infty, 0, 1)$, then the law of the resulting field is $\frac\gamma{2(Q-\beta)^2}\LF_\bbH^{(\beta, 0), (\gamma, 1), (\beta, \infty)}$.
	\end{lemma}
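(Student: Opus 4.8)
The plan is to assemble the statement from two known ingredients: the identification in~\cite{AHS21} of a suitably embedded two-pointed quantum disk as a Liouville field, and a Cameron--Martin computation turning ``adding a quantum-typical boundary point'' into a $\gamma$-insertion. I would work in the strip picture of Definition~\ref{def:thickdisk}, so that a sample from $\Md_2(W)$ is $(\cS,\psi,-\infty,+\infty)$. The map $z\mapsto e^z$ carries this to $\bbH$, sending the two marked points to $0$ and $\infty$ and replacing $\psi$ by $\psi'$ via the coordinate change~\eqref{eq-bullet-gamma}; by Lemma~\ref{lem:lcft-H-strip} together with~\cite{AHS21}, the law of $\psi'$ is then that of the Liouville field on $\bbH$ with boundary insertions $\beta$ at $0$ and at $\infty$, disintegrated over the two boundary-arc lengths as recorded there. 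What remains is (a) to recognize the ``third point at $1$'' of the statement as exactly the $\Md_{2,\bullet}(W)$ construction recalled before the lemma, and (b) to pin down the constant.

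For (a): the $\Md_{2,\bullet}(W)$ construction weights the disk law by the right-boundary quantum length $\cL_\cD^{\mathrm{right}}$ and then samples a point $p$ from $\cL_\cD^{\mathrm{right}}$. Writing $\cL^{\mathrm{right}}_{\psi'}(dp)=\lim_{\e\to0}\e^{\gamma^2/4}e^{\gamma\psi'_\e(p)/2}\,dp$ and integrating over the location $p$ last (Fubini), for each fixed boundary point $p$ the Cameron--Martin theorem for the GFF turns the reweighting by $\e^{\gamma^2/4}e^{\gamma\psi'_\e(p)/2}$ into an explicit $p$-dependent constant times the shift $\psi'\mapsto\psi'+\frac{\gamma}{2}G_\bbH(\cdot,p)$; by Definition~\ref{def:lf-insertion} this shift is exactly the effect of a boundary insertion $(\gamma,p)$. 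Hence $\Md_{2,\bullet}(W)$ corresponds to $\LF_\bbH$ carrying the two $\beta$-insertions plus an extra $(\gamma,\cdot)$-insertion whose location is Lebesgue-distributed along the right arc, the $p$-dependent density being absorbed into the constant.

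For (b): apply the automorphism $z\mapsto z/p$ of $\bbH$, the unique one fixing $0,\infty$ with $p\mapsto1$, and transport the field through~\eqref{eq-bullet-gamma} using conformal covariance (Lemma~\ref{lem:lf-covariance}); the $\beta$-insertions at $0,\infty$ are unchanged, $(\gamma,p)\mapsto(\gamma,1)$, and one collects the conformal anomaly $|(z/p)'|^{-2\Delta_\gamma}$ together with the $|\cdot|_+$-factors of Definition~\ref{def:lf-insertion}. Integrating the Lebesgue location $p$ against the product of all the accumulated $p$-dependent factors collapses everything to one overall constant. Following the constant bookkeeping of~\cite{AHS21}, I would evaluate it to be $\frac{\gamma}{2(Q-\beta)^2}$: the $\frac{\gamma}{2}$ is the normalization of the $\mathbf c$-mode in Definition~\ref{def:thickdisk}, while $(Q-\beta)^{-2}$ comes from integrating $\mathbf c$ in the disk--Liouville-field dictionary (equivalently, these are the two $(Q-\beta_i)^{-1}$ factors attached to the equal-weight vertices, cf.\ Definition~\ref{def-qt-thick}).

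The step I expect to be most delicate is (b): obtaining the constant $\frac{\gamma}{2(Q-\beta)^2}$ exactly. This forces one to invoke the $\Md_2(W)$--$\LF$ correspondence of~\cite{AHS21} with the matching normalization---being careful about bookkeeping conventions such as whether the third point is rooted on a single boundary arc (as in $\Md_{2,\bullet}$) or on the whole boundary, which differ by a factor of $2$ under the surface's reflection symmetry---and to track how the conditioning-to-stay-negative in Definition~\ref{def:thickdisk} is absorbed into that correspondence; I would simply quote the precise constant from~\cite{AHS21} rather than reprove it. As a sanity check, and a possible shortcut for the constant, the stated identity is equivalent to $\Md_{2,\bullet}(W)$ being a constant multiple of the thick quantum triangle of Definition~\ref{def-qt-thick} whose third vertex carries generic weight---that is, a weight-$W$ quantum disk carrying a quantum-typical boundary point \emph{is} such a quantum triangle---after which the claimed field law is immediate from that definition.
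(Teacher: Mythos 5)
Your route is essentially the paper's: the proof in the text is a one-line citation of \cite[Proposition 2.18]{AHS21} (a weight-$W$ disk with a quantum-typical boundary point, embedded in the strip, is $\frac{\gamma}{2(Q-\beta)^2}\LF_\cS^{(\beta,\pm\infty),(\gamma,0)}$) followed by Lemma~\ref{lem:lcft-H-strip} to pass to $(\bbH,\infty,0,1)$, and your plan amounts to unpacking that citation: length-weighting plus Girsanov to create the $(\gamma,\cdot)$ insertion, conformal covariance to move the point to $1$, and the constant quoted from \cite{AHS21}. You also correctly read the statement as being about $\Md_{2,\bullet}(W)$, and your closing remark that this is the same as a quantum triangle with a weight-$2$ (i.e.\ $\beta_3=\gamma$) vertex is consistent with Definition~\ref{def-qt-thick}.

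One step is misstated, though, and would fail if taken literally: the two-pointed disk $\Md_2(W)$ embedded in $(\bbH,0,\infty)$ by pushing forward the strip embedding of Definition~\ref{def:thickdisk} is \emph{not} a sample of $\LF_\bbH^{(\beta,0),(\beta,\infty)}$, and ``disintegrating over the two boundary-arc lengths'' does not repair this: for fixed arc lengths the embedding with marked points at $0,\infty$ is still only determined up to dilation, and the Liouville field carries that extra noncompact degree of freedom. The correct input is the uniform-embedding theorem \cite[Theorem 2.22]{AHS21}, quoted in the paper's proof of Lemma~\ref{lem-marginal-disk-strip}: the law $\frac{2(Q-\beta)^2}{\gamma}M_W\times \mathrm{Leb}_\bbR$, pushed forward by horizontal translation, equals $\LF_\cS^{(\beta,\pm\infty)}$. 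In your scheme it is exactly the integration over the location $p$ of the quantum-typical third point (equivalently, the translation/dilation placing $p$ at a fixed spot) that supplies this Haar randomization; if one instead starts from the false identification of the fixed embedding with the two-insertion field, the leftover $p$-integral produces a divergent scale factor rather than collapsing to a constant. With that first step replaced by the uniform embedding (or by citing \cite[Proposition 2.18]{AHS21} directly, as the paper does), the rest of your argument, including the constant $\frac{\gamma}{2(Q-\beta)^2}=\bigl(\frac{2(Q-\beta)^2}{\gamma}\bigr)^{-1}$, is correct.
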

	\begin{proof}
	    \cite[Proposition 2.18]{AHS21} obtains the field when embedding in the strip $\cS$, and Lemma~\ref{lem:lcft-H-strip} allows us to pass to $\bbH$. 
	\end{proof}

	Lemma~\ref{lem-disintegrate-3-pt} gives a concrete description of disintegrations of $\Md_{0, 2, \bullet}(W)$ via $\Md_{0,2}(W)$; it is essentially immediate from definitions, but we include the proof for completeness. We will use it frequently to pass between $\Md_{0,2}(W)$ and $\Md_{0,2, \bullet}(W)$.
	
	\begin{lemma}\label{lem-disintegrate-3-pt}
		Let $\Md_{0, 2, \bullet}(W; \ell', \ell'', r)$ denote the law of  a sample from $\Md_{0,2}(W; \ell'+\ell'', r)$ with a third marked point added to the left boundary at quantum length $\ell'$ from the first marked point. Then 
		\[\Md_{0,2, \bullet}(W) = \iiint_{\bbR_+^3} \Md_{0,2, \bullet}(W; \ell',\ell'', r) \, d\ell' \, d\ell'' \, dr. \]
		Similarly, 
		let $\Md_{0,2, \bullet}(W; \ell',\cdot, r)$ denote the law of  a sample from $\Md_{0,2}(W; \cdot, r)$ restricted to the event that the left boundary length is greater than $\ell'$, with a third marked point added to the left boundary at quantum length $\ell'$ from the first marked point. Then
		\[\Md_{0,2, \bullet}(W) = \iint_{\bbR_+^2} \Md_{0,2, \bullet}(W; \ell', \cdot, r) \, d\ell' \, dr. \]
	\end{lemma}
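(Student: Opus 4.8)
The plan is to unwind the definition of $\Md_{2, \bullet}(W)$ and apply Fubini/disintegration. Recall $\Md_{2, \bullet}(W)$ is by definition the law of the three-pointed quantum surface obtained by sampling $(p, \cD)$ from $\cL_\cD^\mathrm{right} \Md_2(W)$ and adding $p$ as a third marked point. (In the statement of the lemma the third point is instead placed on the \emph{left} boundary; this is harmless since $\Md_2(W)$ is invariant under swapping left and right boundary arcs, so I will first reduce to whichever side is convenient, or equivalently just work with the left-boundary version throughout and note $\cL_\cD^\mathrm{left}$ plays the role of $\cL_\cD^\mathrm{right}$.) First I would disintegrate $\Md_2(W)$ over its two boundary lengths as in~\eqref{eq-disintegrate-disk}: writing $L = \ell' + \ell''$ for the total left boundary length and $r$ for the right boundary length, we have $\Md_2(W) = \iint_0^\infty \Md_2(W; L, r)\, dL\, dr$.

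Next, the key step: for a fixed quantum surface sampled from $\Md_2(W; L, r)$, the left-boundary quantum length measure $\cL_\cD^\mathrm{left}$ has total mass $L$, and writing a point $p$ on the left boundary in terms of its quantum distance $\ell'$ from the first marked point identifies $\cL_\cD^\mathrm{left}$ with Lebesgue measure $d\ell'$ on $[0, L]$. Hence, by Fubini,
\[
\cL_\cD^\mathrm{left}\,\Md_2(W) = \iint_0^\infty \left( \int_0^{L} (\text{law of } (\cD, p)\text{ with } p\text{ at left-distance }\ell') \, d\ell' \right) dL \, dr .
\]
Now substitute $L = \ell' + \ell''$ with $\ell'' \ge 0$ ranging freely, changing the inner integral over $\ell' \in [0, L]$ with the outer $\int_0^\infty dL$ into an integral over $\{\ell' > 0, \ell'' > 0\}$. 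By definition $\Md_{2, \bullet}(W; \ell', \ell'', r)$ is precisely the law of $(\cD, p)$ obtained by sampling $\cD$ from $\Md_2(W; \ell' + \ell'', r)$ and placing $p$ at left-distance $\ell'$; this yields the first claimed identity
\[
\Md_{2, \bullet}(W) = \iiint_0^\infty \Md_{2, \bullet}(W; \ell', \ell'', r) \, d\ell' \, d\ell'' \, dr.
\]
For the second identity, integrate out $\ell''$: the statement defines $\Md_{2, \bullet}(W; \ell', \cdot, r) = \int_0^\infty \Md_{2, \bullet}(W; \ell', \ell'', r)\, d\ell''$, which is the law of a sample from $\Md_2(W; \cdot, r)$ restricted to $\{$left length $> \ell'\}$ with $p$ added at left-distance $\ell'$, so $\Md_{2, \bullet}(W) = \iint_0^\infty \Md_{2, \bullet}(W; \ell', \cdot, r)\, d\ell'\, dr$ follows immediately from the first identity by Tonelli.

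The only genuine point requiring care — the ``main obstacle,'' though a mild one — is making rigorous the identification of the left-boundary quantum length measure with Lebesgue measure under the parametrization by quantum distance from the first marked point, uniformly in the disintegration over $(L, r)$. This amounts to checking that the map sending a boundary point to its quantum length coordinate is measurable in the appropriate joint sense and pushes $\cL_\cD^\mathrm{left}$ forward to $\mathrm{Leb}_{[0,L]}$; this is standard for quantum disks (the boundary length measure is atomless and the coordinate is a homeomorphism onto $[0,L]$ for thick $W$, and the disintegrations are set up in~\cite[Section 2.6]{AHS20}). Everything else is a routine application of Tonelli's theorem to $\sigma$-finite measures. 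I would keep the write-up short, citing~\eqref{eq-disintegrate-disk} and the definition of $\Md_{2, \bullet}(W)$ and simply performing the change of variables $L = \ell' + \ell''$.
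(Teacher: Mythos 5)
Your proposal is correct and follows essentially the same route as the paper: disintegrate $\Md_2(W)$ over its boundary lengths, identify the boundary length measure with Lebesgue measure on $[0,L]$ via the quantum-length coordinate (the paper phrases this as weighting by $\ell$ and placing the point at a uniform quantum length in $[0,\ell]$), change variables $\ell'' = L - \ell'$, and integrate out $\ell''$ for the second identity. The left/right bookkeeping you flag is handled the same loose way in the paper, so no further comment is needed.
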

	\begin{proof}
		A sample from $\Md_{0,2, \bullet}(W)$ with the third marked point forgotten has law \\$\iint_0^\infty \Md_{0,2}(W; \ell, r) \, \ell d\ell\,dr$. The third marked point was sampled from the probability measure proportional to the boundary length measure on the left boundary arc; this is equivalent to sampling a uniform random variable on $[0,\ell]$ and marking the point at that quantum length:
		\[ \Md_{0,2, \bullet}(W) = \int_0^\infty \int_0^\infty \int_0^{\ell} \Md_{0,2, \bullet} (W; \ell', \ell - \ell', r)\, d\ell' \, d\ell \, dr.\]
		The change of variables $\ell'' := \ell - \ell'$ yields the first claim. The second claim follows from the first by integrating over  $\ell''>0$.
	\end{proof}
	
	We now identify quantum disks with a third marked point with certain quantum triangles.
	\begin{lemma}\label{lem-disk=qt}
	For $W > \frac{\gamma^2}2$, there is a constant $c = c(W)$ such that 
	\[\Md_{0,2, \bullet}(W) =  c(W) \QT(W, W, 2).\]	
	\end{lemma}
	\begin{proof}
		This follows immediately from Lemma~\ref{lem-embed-marked-disk} and Definition~\ref{def-qt-thick}. 
	\end{proof}
	We will state and prove a $W \in (0,\frac{\gamma^2}2)$ version of this statement later in Lemma~\ref{lem-disk=qt-thin}.

	\subsection{Welding of a quantum disk with a quantum triangle}\label{subsec:weld-qt}
	In this section, we recall the main result in~\cite{ASY22} and prove Theorem~\ref{thm:main} given Theorem~\ref{thm:w2w}.
	
	Given a quantum triangle of weights $W+W_1,W+W_2,W_3$ with $W_2+W_3 = W_1+2$ embedded as $(D,\phi,a_1,a_2,a_3)$, we start by making sense of the $\SLE_\kappa(W-2;W_1-2,W_2-W_1)$ curve $\eta$ from $a_2$ to $a_1$. If the domain $D$ is simply connected (which corresponds to the case where $W+W_1,W+W_2\ge\frac{\gamma^2}{2}$), $\eta$ is just the ordinary $\SLE_\kappa(W-2;W_1-2,W_2-W_1)$  with force points at $a_2^-,a_2^+$ and $a_3$. Otherwise, let $(\tilde{D}, \phi,\tilde{a}_1,\tilde{a}_2, \tilde{a}_3)$ be the thick quantum triangle component, and sample an $\SLE_\kappa(W-2;W_1-2,W_2-W_1)$ curve $\tilde{\eta}$ in $\tilde{D}$ from $\tilde{a}_2$ to $\tilde{a}_1$. Then our curve $\eta$ is the concatenation of $\tilde{\eta}$ with $\SLE_\kappa(W-2;W_1-2)$ curves in each bead of the weight $W+W_1$ (thin) quantum disk  and $\SLE_\kappa(W-2;W_2-2)$ curves in each bead of the weight $W+W_2$ (thin) quantum disk.
	
	With this notation, we state the welding of quantum disks with quantum triangles below.
	\begin{theorem}[Theorem 1.2 of \cite{ASY22}]\label{thm:disk+QT}
	Fix $W,W_1,W_2,W_3>0$ such that $W_2+W_3=W_1+2$ or $W_1+W_3=W_2+\gamma^2-2$. There exists some constant $c:=c_{W,W_1,W_2,W_3}\in (0,
	\infty)$ such that
	\begin{equation}\label{eqn:disk+QT}
	    \QT(W+W_1,W+W_2,W_3)\otimes \SLE_\kappa(W-2;W_2-2,W_1-W_2) = c\int_0^\infty \Md_{0,2}(W;\ell)\times\QT(W_1,W_2,W_3;\ell)d\ell.
	\end{equation}
	That is, if we draw an independent $\SLE_\kappa(W-2;W_1-2,W_2-W_1)$ curve $\eta$ on a sample from $\QT(W+W_1,W+W_2,W_3)$ embedded as $(D,\phi,a_1,a_2,a_3)$, then the quantum surfaces to the left and right of $\eta$ are corresponding independent quantum disks and triangles conditioned on having the same interface quantum length.
	\end{theorem}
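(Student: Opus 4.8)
The plan is to follow the ``conformal welding equals SLE'' paradigm of \cite{She16a,AHS20,ASY22}: prove \eqref{eqn:disk+QT} by (i) showing the welding on the right-hand side is well-posed, so that both sides are well-defined $\sigma$-finite measures on curve-decorated quantum surfaces and it suffices to match them (which also pins down $c$); (ii) identifying the law of the random field produced by the welding; and (iii) identifying the conditional law of the interface $\eta$ given that field. For (i): for a.e.\ $\ell$ the two length-$\ell$ arcs being glued --- one of $\Md_{0,2}(W;\ell)$ and one of $\QT(W_1,W_2,W_3;\ell)$ --- admit a.s.\ a unique conformal welding whose interface is conformally removable, hence a measurable function of the welded curve-decorated surface; this is the removability of quantum-disk and quantum-triangle boundary arcs used throughout \cite{She16a,DMS14,AHS20,ASY22}, applied bead-by-bead when a surface is thin.

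For the thick case --- all of $W$, $W+W_1$, $W+W_2$, $W_3$ exceeding $\tfrac{\gamma^2}{2}$ --- I would first replace $\Md_{0,2}(W)$ by its uniform embedding as a Liouville field with two boundary insertions (Lemma~\ref{lem-embed-marked-disk} and \cite{AHS21}) and use the definition of $\QT(W_1,W_2,W_3)$ as a Liouville field with three boundary insertions, so that the right-hand side of \eqref{eqn:disk+QT} becomes a conformal welding of two explicit Liouville fields along a boundary arc. The quantum zipper of \cite{Ang23} (which extends the two-disk welding of \cite{AHS20}) then identifies the glued field, via the Markov property of the GFF --- the free boundary conditions along the seam recombine, and the insertion and Green's-function cross-terms combine --- as a Liouville field on $\bbH$ with three boundary insertions corresponding to the weights $W+W_1$, $W+W_2$, $W_3$, i.e.\ a sample from $\QT(W+W_1,W+W_2,W_3)$ up to a multiplicative constant; the insertion at the weight-$W_3$ vertex is a spectator simply transported by the Loewner maps. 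This gives (ii).

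For (iii): cutting the glued surface along $\eta$ returns, by construction and the uniqueness in (i), the two original surfaces, so $\eta$ given the field satisfies the \emph{local} Markov/resampling property that it splits the surface into conditionally independent quantum disks/triangles with matching interface lengths. Reading off the boundary data of the field along $\eta$ shows it has flow-line boundary conditions; by imaginary geometry (Theorem~\ref{thm-ig1}, \cite{MS16a}) together with the characterization of $\SLE_\kappa(\underline\rho)$ through its resampling laws (\cite[Theorem 4.1]{MS16b}, the chordal analogue of Proposition~\ref{prop:radial-resampling}), this forces the conditional law of $\eta$ to be that of the $\SLE_\kappa(\underline\rho)$ in \eqref{eqn:disk+QT}, independent of the field; together with (ii) this is \eqref{eqn:disk+QT} in the thick case. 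The thin cases (some $W_i<\tfrac{\gamma^2}{2}$, or $W<\tfrac{\gamma^2}{2}$) follow by decomposing the thin surfaces into their Poissonian chains of thick beads (Definitions~\ref{def-thin-disk}, \ref{def-qt-thin}) --- an $\SLE_\kappa(\underline\rho)$ curve on a thin surface is, by definition, the concatenation of the corresponding curves on the beads --- applying the thick case bead-by-bead, and using the independence in the Poisson point process and the measurability of the quantum cut-point measure (recalled after Definition~\ref{def-thin-disk}) to keep interface lengths matched. Finally $c_{W,W_1,W_2,W_3}$ is the explicit product of the normalization constants from the Liouville-field definitions and from \cite{AHS20,Ang23}, and lies in $(0,\infty)$ because both sides restrict to finite, nonzero measures on a suitable event.

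The main obstacle is part (iii): establishing that, conditioned on the glued field, the interface has exactly the local Markov/resampling property characterizing $\SLE_\kappa(\underline\rho)$ with the correct force-point weights --- in particular the force point at the weight-$W_3$ vertex, which is not adjacent to the starting point of $\eta$ --- and then invoking the corresponding uniqueness theorem. Equivalently, running the quantum zipper directly, the hard analytic input is the reverse-$\SLE_\kappa(\underline\rho)$ It\^o computation of \cite{Ang23}, tracking all the insertions and the Loewner image of the weight-$W_3$ vertex up to the continuation threshold. The rest is bookkeeping of Liouville-field insertions and of the Poissonian decomposition of thin surfaces.
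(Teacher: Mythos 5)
First, a point of orientation: the paper does not prove this statement at all. Theorem~\ref{thm:disk+QT} is imported verbatim as Theorem 1.1 of \cite{ASY22} and is used as a black box (most prominently in Section~\ref{subsec:weld-qt} and in Lemma~\ref{lem:zipuplaw-2}), so there is no internal proof to compare your attempt against; the relevant comparison is with the proof in \cite{ASY22}, whose strategy is mirrored by the radial arguments of the present paper. At the level of paradigm your outline is the right one (removability and well-posedness of the welding, identification of the welded field via uniform embeddings and the quantum zipper, identification of the interface, Poissonian bead-by-bead reduction for thin weights), and your thin-case reduction is consistent with how both \cite{ASY22} and this paper treat thin surfaces.

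The genuine gap is at the decisive step, the weight-$W_3$ vertex. Describing its insertion as ``a spectator simply transported by the Loewner maps'' is exactly backwards: under the zipper/Girsanov computation a third boundary insertion off the welding seam tilts the driving process, and it is precisely what produces the extra force point of weight $W_1-W_2=W_3-2$ in $\SLE_\kappa(W-2;W_2-2,W_1-W_2)$; you offer no computation for this, and you yourself flag it as the main obstacle. The known proof does not attack this head-on with a multi-insertion It\^o computation. Instead (as in \cite{ASY22}, and exactly parallel to how the present paper deduces Theorem~\ref{thm:main} from Theorem~\ref{thm:w2w} in Section~\ref{subsec:weld-qt}) one first proves the special case $W_3=2$ by adding a boundary marked point sampled from quantum length to the two-disk welding of \cite{AHS20} (the mechanism of Lemmas~\ref{lem-embed-marked-disk} and~\ref{lem-disintegrate-3-pt}), and then bootstraps to general $W_3$ by welding an auxiliary disk along the third edge and reading off the marginal law of one interface from the conditional laws of a pair of interacting flow lines in imaginary geometry (the chordal analogue of Proposition~\ref{prop:radial-flowline}). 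Your step (iii) also conflates two distinct strategies: if the zipper computation were carried out it would give the joint law of field and curve directly, with no need for a resampling characterization; whereas the resampling route via \cite[Theorem 4.1]{MS16b} requires setting up a two-curve Markov chain and verifying invariance, for which ``reading off the boundary data of the field along $\eta$'' presupposes the conditional law one is trying to establish. As written, neither route is actually executed, so the core of the theorem remains unproved in your sketch.
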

	Using this result, Theorem~\ref{thm:main} follows directly from Theorem~\ref{thm:w2w}.
	
	\begin{figure}[ht]
		\centering
		\includegraphics[scale=0.59]{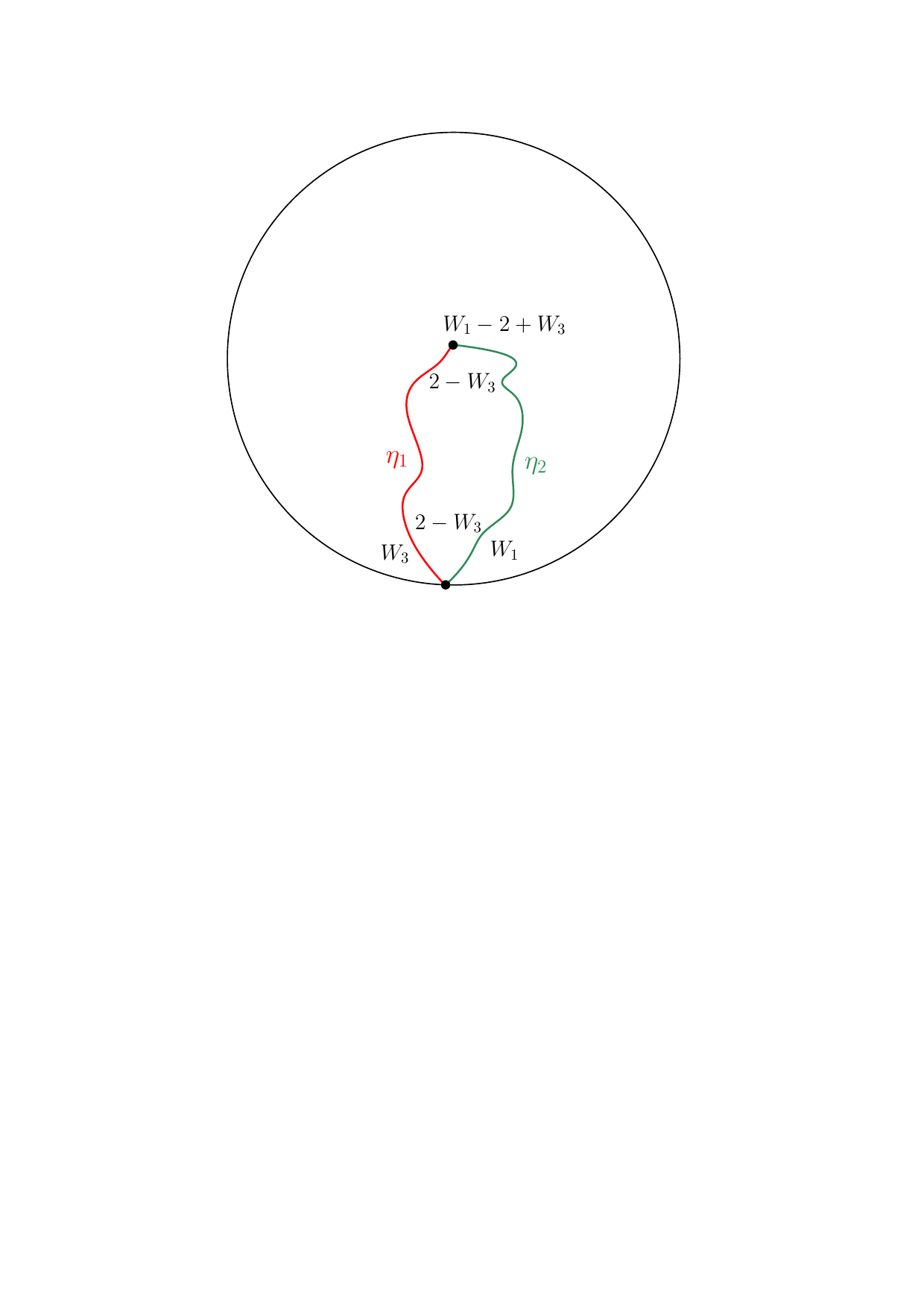}
		\caption{An illustration of the proof of Theorem~\ref{thm:main} given Theorem~\ref{thm:w2w} in the regime $2-W_1<W_3<2$.}\label{fig:radial-general}
	\end{figure}
	
	\begin{proof}[Proof of Theorem~\ref{thm:main} given Theorem~\ref{thm:w2w}]
	The $W_3=2$ case is done in Theorem~\ref{thm:w2w}. Now first suppose $2-W_1<W_3<2$.  We start with a sample from $\Md_{1,1}(W_1,W_1+2)$ embedded as $(\bbH,\phi,i,0)$ and run an independent radial $\SLE_\kappa(W-2)$ curve $\eta_2$ from 0 to $i$ with force point at $0^+$. Conditioned on $\eta_2$, we sample a chordal $\SLE_\kappa(W_3-2,W_1-W_3;-W_3)$ curve $\eta_1$ from $0$ to $i$ in $\bbH\backslash\eta_1$ with force points $0^-,0^+;a$ where $a$ is the point  other than $0^+$ which lies on $\eta_2$ and stays infinitesimally close to 0. When $\bbH\backslash\eta_2$ is not simply connected, the curve $\eta_1$ is understood as explained above Theorem~\ref{thm:disk+QT} and also in Proposition~\ref{prop:radial-flowline}.  Then by Theorem~\ref{thm:w2w} and Theorem~\ref{thm:disk+QT}, this splits $(\bbH,\phi,i,0)$ into an independent weight $2-W_3$ quantum disk and a weight $(W_1-2+W_3,W_3,W_1)$ quantum triangle conditioned on having the same interface length. That is, the curve-decorated surface  $(\bbH,\phi,i,0,\eta_1,\eta_2)/{\sim_\gamma}$ can be viewed as
	\begin{equation}
	    c\iint_{\bbR_+^2}\Md_{0,2}(2-W_3;\ell_1,\ell_2)\times\QT(W_1-2+W_3,W_1,W_3;\ell_2,\ell_1)\ d\ell_1\ d\ell_2
	\end{equation}
	where $\ell_1,\ell_2$ corresponds to the quantum lengths of $\eta_1$ and $\eta_2$. Now as we integrate over $\ell_2$ and forget about the interface $\eta_2$, by Theorem~\ref{thm:disk+QT} the surface $(\bbH\backslash\eta_1,\phi,i,0^+,0^-)$ can be seen as
	\begin{equation}\label{eqn:radial-qt}
	    c\int_0^\infty \QT(W_1,W_1-W_3+2,W_3;\ell_1,\ell_1)d\ell_1.
	\end{equation}
	On the other hand, the law of $(\eta_1,\eta_2)$ agrees with the one in Proposition~\ref{prop:radial-flowline} with  $\alpha = \frac{4-\gamma^2+W_1}{2\gamma}$, $\theta_2 = 0$, $\theta_1 = \frac{(2-W_3)\chi}{\lambda}$. Therefore by Proposition~\ref{prop:radial-flowline}, the marginal law of the interface $\eta_1$ is radial $\SLE_\kappa(W_3-2;W_1-W_3)$, which finishes this case since $W_3-2 = W_1-W_2$. 
    
    For $2-2W_1<W_3\le 2-W_1$, we can pick $\e>0$ small such that $W_3+W_1-\e>2-W_1$. Then by what we have proved, as we sample an independent radial $\SLE_\kappa(W_1+W_3-\e-2;\e-W_3)$ curve $\eta_1$ from 0 to $i$, the surface $(\bbH\backslash\eta_1,\phi,i,0^+,0^-)$ has law 
	\begin{equation*}
	    c\int_0^\infty \QT(W_1,2+\e-W_3,W_1+W_3-\e;\ell_1,\ell_1)d\ell_1.
	\end{equation*}
	Then we sample an chordal $\SLE_\kappa(W_3-2,\e-W_3;W_1-\e-2)$ curve $\eta_2$ on $\bbH\backslash\eta_1$ so that by Theorem~\ref{thm:disk+QT} the curve-decorated surface  $(\bbH,\phi,i,0,\eta_1,\eta_2)/\sim_\gamma$ can be viewed as
	\begin{equation}
	    c\iint_{\bbR_+^2}\Md_{0,2}(W_1-\e;\ell_1,\ell_2)\times\QT(\e,2+\e-W_3,W_3;\ell_2,\ell_1)\ d\ell_1\ d\ell_2.
	\end{equation}
	Again by integrating over $\ell_2$, the surface    $(\bbH\backslash\eta_1,\phi,i,0^+,0^-)$ has the law as~\eqref{eqn:radial-qt}, and the interface $\eta_1$ has marginal law radial $\SLE_\kappa(W_3-2;W_2-2)$. This proves Theorem~\ref{thm:main} for $2-2W_1<W_3\le 2-W_1$, and by an induction this extends to all $W_3<2$. The $W_3>2$ case is similarly proved; we omit the details.
	\end{proof}

	\section{Theorem~\ref{thm:w2w} for $W > \frac{\gamma^2}2$}\label{sec:pf-thick}
	In this section, we prove Theorem~\ref{thm:w2w} in the thick regime where $W>\frac{\gamma^2}{2}$. In Section~\ref{subsec:pre-zipper}, we recall the   reverse SLE processes  and the LCFT zipper. In Section~\ref{subsec:thick-field} we prove that the welding~\eqref{eqn:thm-main} holds for some random measure $\mathfrak{m}(W)$ on curves from 0 to $i$, while in Section~\ref{subsec:thick-curve} we show that the law $\mathfrak{m}(W)$ is radial $\SLE_\kappa(W-2)$.
	
	\subsection{The quantum zipper for Liouville CFT}\label{subsec:pre-zipper}
	In this section we briefly recall the reverse $\SLE_\kappa$ processes and the LCFT zipper in~\cite{Ang23}. 
	
	Recall the (forward) chordal $\SLE_{\kappa}(\underline\rho)$ processes defined in Section~\ref{subsec:pre-sle-ig}. The reverse $\SLE_\kappa$ processes is defined analogously. We say a compact set $K\subset\ol\bbH$ is a hull if $\bbH\backslash K$ is simply connected. Let $\tilde{\rho}_1,...,\tilde{\rho}_n\in\bbR$, $\tilde{x}_1,...,\tilde{x}_n\in\bar{\bbH}$. The SDE for the driving process $\tilde W_t$ of a reverse $\SLE_\kappa(\underline{\tilde{\rho}})$ process on compact hulls $(K_t)_{t\geq0}$ is given by
	\begin{equation}\label{eqn:def-reverse-sle-rho}
	\begin{split}
	&\tilde{W}_t = \sqrt{\kappa}B_t+\sum_{i=1}^n \int_0^t \mathrm{Re}\big(\frac{\tilde{\rho}_i}{\tilde{W}_s-\tilde{g}_s(\tilde{x}_i)}\big)ds; \\
	& \tilde{g}_t(z) =z-\int_0^t \frac{2}{\tilde{g}_t(z)-\tilde{W}_s}ds, \ z\in\ol\bbH.
	\end{split}
	\end{equation}
	Here $\tilde g_t$ maps $\bbH\backslash K_t$ to $\bbH$ such that $\lim_{|z|\to\infty}|\tilde g_t(z)-z|=0$. For $\kappa\in(0,4]$ and all $t>0$, $K_t$ is generated by a simple curve, i.e., there exists a curve $\eta_t$ such that $\bbH\backslash K_t$ equals the unbounded connected component of $\bbH\backslash\eta_t$. The existence and uniqueness of \eqref{eqn:def-reverse-sle-rho} has been established in \cite[Section 3.3.1]{DMS14}.

	Now, we state a special case of the quantum zipper for Liouville CFT. Let $\gamma \in (0,2)$. Let $\beta < Q$ and sample $\phi \sim \LF_\bbH^{(\beta, 0), (\beta, 1)}$ restricted to the event that $\{ \cL_{\phi}((-\infty,0)) > \cL_{\phi}((0,1))\}$. Let $s = \cL_{\phi}((0,1))$, and for each $u \in (0, s]$ let $p_u \in (-\infty, 0)$ and $q_u \in (0,1]$ satisfy $\cL_{\phi}((p_u,0)) = \cL_{\phi}((0, q_u)) = u$. Consider pairs $(\hat  \eta,  \hat  g)$ where $ \hat \eta:[0,s] \to \ol \bbH$ is a simple curve and $ \hat g: \bbH \to \bbH \backslash  \hat 
	 \eta$ is a conformal map. We call $( \hat \eta,  \hat  g)$ a \emph{conformal welding} if for each $u\in (0,s]$ we have $ \hat g(p_u) = \hat  g(q_u) = \hat \eta(u)$. Almost surely, $(\hat  \eta, \hat g)$ is unique up to conformal automorphisms of $\bbH$ \cite{She16a}, so we can fix a choice of $(\hat \eta, \hat g)$ by specifying that $\lim_{z \to \infty} \hat g(z)-z = 0$. Let $\psi := \hat g \bullet_\gamma \phi$ and  let $ \eta$ be the monotone reparametrization of $\hat \eta$ by half-plane capacity.
	
	\begin{proposition}\label{prop-quantum-zipper}
	In the above setting, the law of $(\psi, \eta)$ is 
	\[ 
	(2 \Im  \tilde g_\tau(0))^{\alpha^2/2} |\tilde g_\tau(0) - \tilde W_\tau|^{\alpha \beta'}
	\LF_\bbH^{(\alpha, \tilde g_\tau(0)),(\beta', \tilde W_\tau)}(d \psi) 1_{\tau < \infty} \mathrm{rSLE}_{\kappa,\tilde\rho+2,\tilde\rho}^\tau (d \eta),\]
	\[ \hfill \kappa = \gamma^2, \ \alpha = \frac{\beta}{2}+\frac{1}{\gamma}, \ \beta' =\beta-\frac{2}{\gamma},  \ \tilde\rho =  \gamma \beta,\]
	where $\mathrm{rSLE}_{\kappa,\tilde\rho+2,\tilde\rho}^\tau$ denotes the law of reverse $\SLE_\kappa$ with a weight $\tilde\rho+2$ force point located infinitesimally above 0 and a weight $\tilde\rho$ force point at $1$ run until the time $\tau$ that the force point hits the driving function, i.e., $\tilde g_\tau(1) = \tilde W_\tau$.
	\end{proposition}
	\begin{proof}
	This is a special case of \cite[Theorem 1.5]{Ang23}.
	\end{proof}

	\subsection{The zipping up and embedding}\label{subsec:thick-field}
	The goal of this section is to prove the following. 
	\begin{proposition}\label{prop:radialzipup}
	Fix $W>\frac{\gamma^2}{2}$. For a sample from $\Md_{1,1}(W,W+2)$ embedded as $(\bbH,\phi,i,0)$, there exists a constant $c:=c_W\in(0,\infty)$ and  $\sigma$-finite measure $\mathfrak{m}(W)$ on the space of continuous simple curves from $0$ to $i$ such that
	\begin{equation}\label{eqn:radial-zip-up}
	    \Md_{1,1}(W,W+2)\otimes\mathfrak{m}(W) = c\int_0^\infty \mathrm{Weld}(\QT(W,W,2;\ell,\ell))\, d\ell.
	\end{equation}
	\end{proposition}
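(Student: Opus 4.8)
The plan is to realize the right-hand side of~\eqref{eqn:radial-zip-up} as a conformal welding of a Liouville field to itself and then read off the law of the resulting field from the Liouville CFT quantum zipper of~\cite{Ang23}.

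First I would set up the welding concretely. Since $W>\frac{\gamma^2}2$, all three vertices of $\QT(W,W,2)$ are thick, so by Definition~\ref{def-qt-thick} a sample may be embedded as $(\bbH,\phi,\infty,0,1)$ with $\phi$ drawn from a constant multiple of $\LF_\bbH^{(\beta,\infty),(\beta,0),(\gamma,1)}$, where $\beta=\gamma+\frac{2-W}\gamma$; this is closely related, via Lemma~\ref{lem-embed-marked-disk}, to the field of a weight-$W$ two-pointed quantum disk carrying an extra boundary point sampled from its quantum length measure, with the weight-$2$ vertex playing the role of that extra point. Write $a_1=\infty$, $a_2=0$, $a_3=1$; the two boundary arcs incident to the weight-$W$ vertex $a_1$ are $(-\infty,0)$ and $(1,\infty)$, with quantum lengths $\ell_1$ and $\ell_3$. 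I would disintegrate over $(\ell_1,\ell_3)$ and restrict to the diagonal $\ell_1=\ell_3=\ell$, and conformally weld these two arcs by identifying points at equal quantum length from $a_1$ (so $a_1\leftrightarrow a_1$ and $a_2\leftrightarrow a_3$). By Sheffield's conformal welding theorem~\cite{She16a}, together with the arguments used for the disk and triangle weldings in~\cite{AHS20,ASY22}, this welding is almost surely well-defined and produces a simple interface $\eta$; and because the relevant quantum lengths are finite, the welding is a.s.\ unique up to conformal automorphism of the disk, so $\eta$ is a measurable function of the welded surface. The welded surface is topologically a disk with the image of $a_1$ as an interior marked point, the image of the identified pair $a_2=a_3$ as a boundary marked point, and $\eta$ joining them. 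Hence, once we know that the law of the field alone (forgetting $\eta$) is $c\cdot\Md_{1,1}(W,W+2)$ for some $c=c_W\in(0,\infty)$, the proposition follows with $\mathfrak m(W)$ defined as the (disintegrated) conditional law of $\eta$ given the field.

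To identify this field, I would observe that, after a M\"obius transformation of $\bbH$ (using Lemma~\ref{lem:lf-covariance} and Lemma~\ref{lem:lcft-H-conf-infty}) bringing the configuration into standard position, the welding above is exactly of the type covered by the Liouville CFT zipper. Proposition~\ref{prop-quantum-zipper} treats the self-welding of $\LF_\bbH^{(\beta,0),(\beta,1)}$ in which one boundary arc is zipped onto (a sub-arc of) another; to accommodate the extra weight-$\gamma$ insertion carried by the weight-$2$ vertex, and the fact that in our situation the two arcs have equal length so the zipped-onto sub-arc must exhaust the whole arc, one uses its general form~\cite[Theorem~1.5]{Ang23}. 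The conclusion, after forgetting the interface, is that the welded field --- viewed as a marked quantum surface with interior point the image of $a_1$ and boundary point the image of $a_2=a_3$ --- is a constant multiple of $\LF_\bbH^{(\alpha,\cdot),(\beta'',\cdot)}$ with
\[
\alpha=\tfrac\beta2+\tfrac1\gamma=Q-\tfrac{W}{2\gamma},\qquad \beta''=\tfrac\gamma2+\tfrac{2-(W+2)}\gamma .
\]
The value of $\alpha$ is precisely the interior insertion produced by gluing the two sides of the weight-$\beta$ boundary insertion at $a_1$ (as in Proposition~\ref{prop-quantum-zipper}), while $\beta''$ is the boundary insertion obtained by merging the weight-$W$ vertex $a_2$ with the weight-$2$ vertex $a_3$, consistent with the additivity of boundary weights under welding. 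Undoing the conformal maps and comparing with Definition~\ref{def:mdisk11} identifies this field, up to a constant, with $\Md_{1,1}(W,W+2)$, which is~\eqref{eqn:radial-zip-up}.

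The main obstacle is the bookkeeping rather than any conceptual difficulty: one must carefully track the conformal-covariance Jacobians of Lemma~\ref{lem:lf-covariance} and the disintegration constants when passing between $\bbH$-embeddings related by M\"obius maps that send finite points to $\infty$, reconcile the $\cL_\phi$-disintegrations on the two sides of the welding, and keep straight the two conventions $\beta=\gamma+\frac{2-W}\gamma$ (triangle vertex) and $\beta=\frac\gamma2+\frac{2-W}\gamma$ (disk marked point), which differ by $\frac\gamma2$. This is of the same character as in~\cite{AHS20,ASY22}, but complicated by the presence of an interior insertion and by the need to work at the critical slice $\ell_1=\ell_3$ of the zipper --- the limiting regime of Proposition~\ref{prop-quantum-zipper} in which the reverse Loewner evolution runs until the entire zipped arc has been absorbed --- which is what forces the use of~\cite[Theorem~1.5]{Ang23}. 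A secondary point, already noted, is the a.s.\ uniqueness of the welding, which makes $\mathfrak m(W)$ well-defined as a conditional law and which follows from finiteness of the quantum lengths together with~\cite{She16a}.
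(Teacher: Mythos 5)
Your overall route is the paper's in all essentials: realize $\int_0^\infty \QT(W,W,2;\ell,\ell)\,d\ell$ as a self-welding of the two boundary arcs adjacent to a weight-$W$ vertex and identify the welded field through the LCFT zipper of \cite{Ang23}. The paper merely runs the bookkeeping in the opposite direction: it starts from $\phi\sim\LF_\bbH^{(\beta,0),(\beta,1)}$ restricted to a length-matching event, applies Proposition~\ref{prop-quantum-zipper}, and only afterwards recognizes the pre-welding surface as the triangle via the uniform embedding input (Lemma~\ref{lem-marginal-disk}) together with Lemmas~\ref{lem-disintegrate-3-pt} and~\ref{lem-embed-marked-disk}; your direction is equivalent up to this reshuffling. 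The problem is that, as written, your argument has two genuine gaps.

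First, the product structure. The proposition asserts that the joint law of the curve-decorated surface is $\Md_{1,1}(W,W+2)\otimes\mathfrak m(W)$, i.e.\ in the embedding $(\bbH,\phi,i,0)$ the curve is independent of the field, with a law $\mathfrak m(W)$ not depending on $\phi$. You identify only the marginal law of the welded field and then declare the proposition proved "with $\mathfrak m(W)$ defined as the conditional law of $\eta$ given the field"; that does not produce a product measure unless you additionally show this conditional law is field-independent, which is precisely the nontrivial content of the statement. The paper obtains the factorization because Proposition~\ref{prop-quantum-zipper} gives the \emph{joint} law of $(\psi,\eta)$: conditionally on the curve, the field is an explicit Liouville field with insertions at curve-dependent points, multiplied by curve-dependent but field-independent prefactors; re-embedding so the insertions sit at $(i,0)$ and applying conformal covariance (Lemma~\ref{lem:lf-covariance}) turns those prefactors into constants depending only on $\eta$, which are absorbed into $\mathfrak m$ (this is Lemma~\ref{lem-quantum-zipper}). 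You cite the right theorem but use only its field marginal; you need its joint form.

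Second, the $\sigma$-finiteness of $\mathfrak m(W)$ is part of the statement and is nowhere addressed. It is not automatic from the zipper output (a priori $\mathfrak m$ is a measure on curves obtained after absorbing unbounded curve-dependent prefactors and restricting to an event), and it cannot be sidestepped by calling $\mathfrak m$ a conditional law, since the factorization with a probability kernel is exactly what has not been established. The paper proves it separately (Lemma~\ref{lem:zipuplaw-2}) by an argument of a different nature: cut the welded triangle with auxiliary chordal SLE curves via Theorem~\ref{thm:disk+QT} into pieces of weights $W_1,\dots,W_k<\frac{\gamma^2}2$, note that the event that all interface quantum lengths exceed $\e$ has finite mass (as in \cite{AHS20}), and send $\e\to0$. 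Finally, note that the a.s.\ uniqueness of the welding from \cite{She16a} gives measurability of $\eta$ with respect to the welded data, but it does not address either of these two points.
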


In Lemma~\ref{lem-marginal-disk}, we show that when we restrict to a certain event for the Liouville field $\LF_\bbH^{(\beta, 0), (\beta, 1)}$ where $\beta = Q + \frac\gamma2 - \frac W\gamma$, then $(\bbH, \phi, 0, 1)/{\sim_\gamma}$ has the law of a weight $W$ quantum disk. On the other hand, using the quantum zipper, Lemma~\ref{lem-quantum-zipper} identifies the law of the conformally welded quantum surface as the left hand side of~\eqref{eqn:radial-zip-up}. Combining these gives Proposition~\ref{prop:radialzipup}.

	\begin{lemma}\label{lem-marginal-disk}
    Let $W > \frac{\gamma^2}2$, let $\beta = Q + \frac\gamma2 - \frac W\gamma < Q$ and let $I \subset (-\infty, 0)$ be a compact interval of positive length. 
	    Sample $\phi \sim \LF_\bbH^{(\beta, 0), (\beta, 1)}$ and restrict to the event that there is a point $p \in I$ satisfying $\cL_\phi((p, 0)) = \cL_{\phi}((0,1))$. 
        There is a constant $c = c(I) \in (0, \infty)$ such that the law of $(\bbH, \phi, 0, 1)/{\sim_\gamma}$ is $c \Md_{0,2}(W)$ restricted to the event that the left boundary length is greater than the right boundary length.  
	\end{lemma}
	
	To obtain Lemma~\ref{lem-marginal-disk}, we state and prove a variant for the strip $\cS$.
	
	\begin{lemma}\label{lem-marginal-disk-strip}
	Sample $\phi \sim \LF_{\cS}^{(\beta,\pm\infty)}$ and restrict to the event that $\cL_{\phi}(\bbR) > \cL_{\phi}(\bbR + i\pi)$. Let $p' \in \bbR$ be the point such that $\cL_\phi((p', +\infty)) = \cL_\phi(\bbR + i\pi)$. Suppose $I' \subset \bbR$ is a compact  interval of positive length. Then there is a constant $c' = c'(I') \in (0,\infty)$ such that, if we further restrict to the event that $p' \in I'$, the law of $(\cS, \phi, +\infty, -\infty)/{\sim_\gamma}$ is $c' \cM_{0,2}^\mathrm{disk}(W)$ restricted to the event that the left boundary length is greater than the right boundary length. 
	\end{lemma}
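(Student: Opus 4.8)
\medskip
\noindent\textbf{Proof plan.}
I would prove this by comparing the construction of $\LF_\cS^{(\beta,\pm\infty)}$ in Definition~\ref{def:lf-strip} directly with that of $\cM_{0,2}^{\mathrm{disk}}(W)$ in Definition~\ref{def:thickdisk}, where $\beta = \gamma + \frac{2-W}{\gamma}$ (so $W > \frac{\gamma^2}{2}$ corresponds to $\beta < Q$). Both measures will be seen to have the same ``transverse'' GFF component and ``profiles'' (horizontal-average processes) that differ only by a Bessel-type conditioning, a free additive constant, and --- crucially --- the choice of horizontal coordinate; the two restrictions in the statement serve exactly to pin down this last degree of freedom, since a strip embedding of the weight $W$ quantum disk with its marked points at $\pm\infty$ is unique only up to a real translation.

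Concretely, first I would use the decomposition $h_\cS = h_\cS^1 + h_\cS^2$ and the Markov property of the GFF (Proposition~\ref{prop:Markov}) to write a sample $\phi$ from $\LF_\cS^{(\beta,\pm\infty)}$ as $\phi = \phi^{\mathrm{avg}} + \phi^\perp + \mathbf c$, where $\mathbf c$ is the free additive constant of law $e^{(\beta - Q)c}\,dc$, $\phi^\perp$ has mean zero on every vertical segment and is distributed as $h_\cS^2$ --- i.e.\ as the transverse part $\psi_2$ of Definition~\ref{def:thickdisk} --- independently of the profile $t \mapsto \phi^{\mathrm{avg}}(t)$, and the profile is a two-sided Brownian motion with drift $-(Q-\beta)|t|$ whose kink sits at $t = 0$; equivalently, it is the process $X$ of Definition~\ref{def:thickdisk} \emph{without} the conditioning that it stay negative. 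Second, I would note that on $\{\cL_\phi(\bbR) > \cL_\phi(\bbR+i\pi)\}$ the point $p'$ is well defined, since $x \mapsto \cL_\phi((x,+\infty))$ is continuous and decreases from $\cL_\phi(\bbR)$ to $0$ as $x$ runs from $-\infty$ to $+\infty$; it is a measurable function of the marked surface $(\cS,\phi,+\infty,-\infty)/{\sim_\gamma}$, and it is translation-covariant (replacing $\phi$ by $\phi(\cdot+s)$ replaces $p'$ by $p'-s$). Hence restricting to $\{p'\in I'\}$ selects, within each translation orbit, an interval of translates of length $|I'|$.

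Putting these together, pushing $\LF_\cS^{(\beta,\pm\infty)}$ restricted to $\{\cL_\phi(\bbR) > \cL_\phi(\bbR+i\pi)\}\cap\{p'\in I'\}$ forward to quantum surfaces gives $|I'|$ times a horizontally gauge-fixed version of $\LF_\cS^{(\beta,\pm\infty)}$ on $\{\cL_\phi(\bbR) > \cL_\phi(\bbR+i\pi)\}$; I would identify this with a constant multiple of $\cM_{0,2}^{\mathrm{disk}}(W)$ on $\{\text{left boundary length} > \text{right boundary length}\}$ using that the profile of the weight $W$ quantum disk is precisely the $\LF_\cS^{(\beta,\pm\infty)}$-profile conditioned, in the Bessel sense of Definition~\ref{def:thickdisk}, so that the kink of its drift is the location of the maximum of the profile --- the factor $\frac{\gamma}{2}$ in Definition~\ref{def:thickdisk}, the $|I'|$, and the deterministic cost of this conditioning all getting absorbed into $c'(I')$.

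The step I expect to be the main obstacle is precisely this matching of the horizontal degree of freedom: showing that, after the gauge is fixed by $\{p'\in I'\}$, the Radon--Nikodym derivative between the two $\sigma$-finite surface measures is a genuine constant (not a function of the surface), which requires checking that the translation-covariance of $p'$ is compatible with the translation structure of the profile description and that $\{\text{left}>\text{right}\}$ is exactly the image of the restricted event. Once the lemma is in hand, Lemma~\ref{lem-marginal-disk} follows by transporting it to $\bbH$ via the coordinate change of Lemma~\ref{lem:lcft-H-strip} --- matching $p\in(-\infty,0)\subset\partial\bbH$ with the point $p'$ on the lower boundary $\bbR$ of $\cS$ and the interval $I$ with a compatible $I'$ --- the constant $c(I)$ being produced from $c'(I')$ and the scaling factor in Lemma~\ref{lem:lcft-H-strip}.
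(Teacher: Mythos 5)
Your plan is correct in outline and is, at heart, the paper's own argument; the only real difference is that the paper quotes the key input while you propose to re-derive it. The paper's proof has exactly two steps: (a) the uniform embedding theorem \cite[Theorem 2.22]{AHS21}, which says that if $(\psi,T)\sim\frac{2(Q-\beta)^2}{\gamma}M_W\times\mathrm{Leb}_\bbR$, with $M_W$ the law of the field of Definition~\ref{def:thickdisk}, then $\psi(\cdot+T)\sim\LF_\cS^{(\beta,\pm\infty)}$; and (b) precisely your gauge-fixing observation: $p'$ is translation-covariant, so for each fixed $\psi$ the set of $T$ with $p'\in I'$ has Lebesgue measure $|I'|$, whence restricting to $\{p'\in I'\}$ leaves marginal law $\frac{2(Q-\beta)^2}{\gamma}|I'|\,M_W$ on the left-longer event, i.e.\ $c'=\frac{2(Q-\beta)^2}{\gamma}|I'|$. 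Where you diverge is step (a): instead of citing it you sketch a direct derivation via $h_\cS=h_\cS^1+h_\cS^2$ and a comparison of profile processes (unconditioned drifted Brownian motion with kink at $0$ and constant sampled from $e^{(\beta-Q)c}\,dc$, versus the Bessel-conditioned process of Definition~\ref{def:thickdisk} recentred at its maximum with constant from $\frac{\gamma}{2}e^{(\beta-Q)c}\,dc$). That is indeed how the cited theorem is proved, so your route would work, but the ``main obstacle'' you flag is exactly its content: one needs a Williams-type decomposition of the unconditioned profile at its maximum showing that the horizontal location of the maximum contributes a genuine Lebesgue factor and the remaining data reproduces the conditioned law up to the constant $\frac{2(Q-\beta)^2}{\gamma}$; the phrase ``the deterministic cost of the conditioning gets absorbed into $c'$'' does not yet contain this computation. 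One further caution on ordering: the assertion that $\{p'\in I'\}$ ``selects an interval of translates of length $|I'|$ in each orbit'' acquires measure-theoretic force only after one knows that $\LF_\cS^{(\beta,\pm\infty)}$ disintegrates over translation orbits with Lebesgue fiber measure --- which is again statement (a) --- so either establish the profile identification in full before running the $|I'|$ argument, or simply cite \cite[Theorem 2.22]{AHS21} and conclude in two lines as the paper does.
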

	\begin{proof}
	    The key input is the  \emph{uniform embedding} result from \cite[Theorem 2.13]{AHS21}: Let $M_W$ denote the law of the field $\psi$ constructed in Definition~\ref{def:thickdisk}, so for $\psi \sim M_W$ the law of $(\cS, \psi, +\infty, -\infty)/{\sim_\gamma}$ is $\cM_{0,2}^\mathrm{disk}(W)$. Suppose we sample $(\psi, T)\sim \frac{2(Q-\beta)^2}\gamma M_W \times \mathrm{Leb}_\bbR$ where $\mathrm{Leb}_\bbR$ denotes Lebesgue measure on $\bbR$, and set $\phi := \psi(\cdot + T)$. Then the law of $\phi$ is $\LF_\cS^{(\beta, \pm\infty)}$.
	
	In this setup, restrict to the event that $\cL_\phi(\bbR) > \cL_{\phi}(\bbR + i\pi)$, or equivalently $\cL_\psi(\bbR) > \cL_{\psi}(\bbR + i\pi)$. Let $q = q(\psi) \in \bbR$ be the point such that $\cL_\psi((q, +\infty)) = \cL_\psi(\bbR + i\pi)$. Then $\{p' \in I'\} = \{ q - T \in I'\}$.  For each $\psi$ the $\mathrm{Leb}_\bbR$-measure of $T$ satisfying $\{ q - T \in I'\}$ is $|I'|$. Thus, further restricting to $\{p' \in I'\}$, the marginal law of $\psi$ is $\frac{2(Q-\beta)^2}\gamma |I'| M_W$ restricted to $\{\cL_\psi(\bbR) > \cL_{\psi}(\bbR + i\pi) \}$, so the lemma holds with $c' = \frac{2(Q-\beta)^2}\gamma |I'|$.
	\end{proof}
	
	\begin{proof}[Proof of Lemma~\ref{lem-marginal-disk}]
	Lemma~\ref{lem-marginal-disk-strip} gives the analogous statement for $(\cS, +\infty, -\infty)$, and Lemmas~\ref{lem:lcft-H-conf-infty} and~\ref{lem:lcft-H-strip}  allow us to pass from $(\cS, +\infty, -\infty)$ to $(\bbH, 0, 1)$. 
	\end{proof}
	
	\begin{lemma} \label{lem-quantum-zipper}
	    Consider the setting of Lemma~\ref{lem-marginal-disk}. 
	    Conformal welding of the boundary arcs $(p,0), (0,1)$ of $(\bbH, \phi, 0, 1, p)/{\sim_\gamma}$ according to quantum length gives a curve-decorated quantum surface with a marked bulk and boundary point; embed it as $(\bbH, \psi', 
	    \eta', i, 0)$. The law of $(\psi', \eta')$ is 
	    \[\LF_\bbH^{(\alpha,i),(\beta', 0)}(d\psi')  \mathfrak m(d\eta') \qquad \text{ with } \ \ \alpha = \frac{\beta}{2}+\frac{1}{\gamma}, \ \  \beta' = \beta-\frac{2}{\gamma},\]
	    where $\mathfrak m$ is a $\sigma$-finite measure on the space of curves in $\ol\bbH$ from $0$ to $i$. 
	\end{lemma}
	\begin{proof}
        We first note that the law of $(\psi', \eta')$ is $\sigma$-finite; indeed, by Lemma~\ref{lem-marginal-disk} the law of $(\bbH \backslash \eta', \psi, i, 0^-)/{\sim_\gamma}$ is $c \Md_{0,2}(W)$, and $\Md_{0,2}(W)$ is $\sigma$-finite.
        
	    Now, define $(\psi, \eta)$ from $\phi$ via conformal welding as in Proposition~\ref{prop-quantum-zipper}. By Proposition~\ref{prop-quantum-zipper} the law of $(\psi, \eta)$ is 
	    \[ (2 \Im  \tilde g_\tau(0))^{\alpha^2/2} |\tilde g_\tau(0) - \tilde W_\tau|^{\alpha \beta'
	} \LF_\bbH^{(\alpha, \tilde g_\tau(0)),(\beta', \tilde W_\tau)}(d \psi) 1_{\tilde g_\tau^{-1}(0^-) \in I} 1_{\tau < \infty} \mathrm{rSLE}_{\kappa,\tilde\rho+2,\tilde\rho}^\tau (d \eta).\] 
	    Since $(\bbH, \psi', \eta', i, 0)/{\sim_\gamma} = (\bbH, \psi, \eta, \tilde g_\tau(0), \tilde g_\tau(1))/{\sim_\gamma}$, 
	    the claim follows from the conformal covariance of the Liouville field (Lemma~\ref{lem:lf-covariance}).
	\end{proof}
	
	\begin{proof}[Proof of Proposition~\ref{prop:radialzipup}]
	    We work in the setting of Lemma~\ref{lem-quantum-zipper}. That is, we fix a compact interval $I \subset (-\infty, 0)$ of positive length, sample $\phi\sim \LF_\bbH^{(\beta, 0), (\beta,1)}$ and restrict to the event that for some point $p \in I$ we have $\cL_\phi((p,0)) = \cL_\phi((0,1))$. By Lemma~\ref{lem-marginal-disk}, there is some constant $c$ such that the law of $(\bbH, \phi, 0, 1)/{\sim_\gamma}$ is $c \cM_{0,2}^\mathrm{disk}(W)$ restricted to the event that the left boundary length is greater than the right. Thus, by the definition of $\Md_{0,2,\bullet}(W; \ell', \ell'', r)$ in Lemma~\ref{lem-disintegrate-3-pt} the law of $(\bbH, \phi, 0, 1, p)/{\sim_\gamma}$ is $c\iint_{\bbR_+^2} \cM_{0,2, \bullet}^\mathrm{disk}(W; r, \ell'', r)\, dr \, d\ell'' = c\int_0^\infty \cM_{0,2, \bullet}^\mathrm{disk}(W; \ell, \cdot, \ell)\, d\ell$, and by Lemmas~\ref{lem-disintegrate-3-pt} and~\ref{lem-disk=qt}
	    this law agrees with $c'\int_0^\infty\QT(W,W,2;\ell,\ell)\,d\ell$ for some $c'$. On the other hand, conformally welding to obtain $(\psi', \eta')$ as in Lemma~\ref{lem-quantum-zipper}, the law of the welded surface is $\Md_{1,1}(W,W+2)\otimes\mathfrak{m}(W)$. Absorbing the constant $c'$ into $\mathfrak m (W)$ gives the claim. 
	\end{proof}

	\subsection{The interface law}\label{subsec:thick-curve}
	In this section, we conclude the proof of Theorem~\ref{thm:w2w} by proving that the probability measure $\mathfrak{m}(W)$ in~\eqref{eqn:radial-zip-up} is the same as radial $\SLE_\kappa(W-2)$. We first prove that the law of $\eta$ is $\mathrm{raSLE}_\kappa^\beta(W-2)$ (i.e.\ radial $\SLE^\beta_\kappa(W-2)$) for some (possibly random) $\beta$ via the resampling properties, and then show $\beta=0$ by comparing with the welding of a quantum disk into a quantum sphere.
	
	\begin{lemma}\label{lem:beta-law-1}
	There exists some {$\sigma$-finite} measure $\nu$ on $\bbR$, such that as equation of measures, $$\mathfrak{m}(W) = \int_\bbR \mathrm{raSLE}_\kappa^\beta(W-2)\nu(d\beta).$$ 
	\end{lemma}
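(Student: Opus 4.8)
The plan is to use the resampling characterization of radial flow lines (Proposition~\ref{prop:radial-resampling}) together with Proposition~\ref{prop:radialzipup} to pin down the law of the interface $\eta$ up to the parameter $\beta$. The key point is that conformal welding produces an interface whose conditional law, given the curve-decorated surface on one side, can be read off from Theorem~\ref{thm:disk+QT}; applying this to two or more nested curves makes $\eta$ appear as one of a pair of flow-line-like curves whose conditional laws match those in Proposition~\ref{prop:radial-flowline}, and then Proposition~\ref{prop:radial-resampling} forces the joint law to be a mixture over $\beta$ of the canonical couplings.

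More concretely, I would proceed as follows. First, fix a split $W = W' + W''$ with $W', W'' \in (0,\tfrac{\gamma^2}2)$ (or, if $W$ is small, simply use the single-curve version directly). Sample $(\bbH,\phi,i,0)$ from $\Md_{1,1}(W,W+2)$ and $\eta_0$ from $\mathfrak m(W)$, and then, conditionally on $\eta_0$, sample an independent chordal $\SLE_\kappa(W'-2;W''-2)$-type curve $\eta_1$ from $0$ to $i$ inside the connected components of $\bbH\setminus\eta_0$ lying to the ``$\eta_1$-side'' of $\eta_0$ (interpreted beadwise as in Section~\ref{subsec:weld-qt}). By Proposition~\ref{prop:radialzipup} the surface $(\bbH\setminus\eta_0,\phi,i,0^+,0^-)$ is $c\int_0^\infty \QT(W,W,2;\ell,\ell)\,d\ell$, so Theorem~\ref{thm:disk+QT} shows that $\eta_1$ splits this quantum triangle into an independent weight-$W'$ quantum disk and a weight-$(W'',W'',2)$ quantum triangle with matched interface lengths; hence by a second application of Proposition~\ref{prop:radialzipup} (now for $W''$) and of Theorem~\ref{thm:disk+QT} in reverse, the conditional law of $\eta_0$ given $\eta_1$ is again of chordal $\SLE_\kappa(\underline\rho)$ type in the components of $\bbH\setminus\eta_1$. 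Thus $(\eta_0,\eta_1)$ is a pair of curves whose two conditional laws are exactly the resampling kernels of Proposition~\ref{prop:radial-flowline} with the appropriate angles $\theta_0,\theta_1$ and the parameter $\alpha = \frac{4-\gamma^2+W}{2\gamma}$ (which lies in the admissible range because $W>\tfrac{\gamma^2}2$ forces $\alpha>\chi$). Since $\Md_{1,1}(W,W+2)$ is $\sigma$-finite and (by Lemma~\ref{lem:zipuplaw-2}) $\mathfrak m(W)$ is $\sigma$-finite, the joint law of $(\eta_0,\eta_1)$ is a $\sigma$-finite invariant measure for the resampling Markov chain, so Proposition~\ref{prop:radial-resampling} gives $\mathrm{law}(\eta_0,\eta_1) = \int_\bbR \mu_{\alpha\beta}\,\nu(d\beta)$ for some $\sigma$-finite $\nu$ on $\bbR$. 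Marginalizing out $\eta_1$ and invoking part (i) of Proposition~\ref{prop:radial-flowline}, the marginal of $\eta_0$ under $\mu_{\alpha\beta}$ is $\mathrm{raSLE}_\kappa^\beta(W-2)$ (using $W-2 = -\tfrac{\theta_0\chi}{\lambda}$ and the identity between $\rho+\tfrac{\theta_0\chi}{\lambda}$ and the complementary weight, exactly as in the computation at the end of the proof of Theorem~\ref{thm:main} given Theorem~\ref{thm:w2w}). This yields $\mathfrak m(W) = \int_\bbR \mathrm{raSLE}_\kappa^\beta(W-2)\,\nu(d\beta)$ as claimed.

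There are two technical points to handle with care. The first is the beadwise interpretation of the conditional curves when $W',W'' < \tfrac{\gamma^2}2$, so that the components of $\bbH\setminus\eta_0$ are beaded: one must check that the conditional-law statements of Theorem~\ref{thm:disk+QT} and Proposition~\ref{prop:radial-flowline} genuinely agree beadwise, including in the components not touching $\partial\bbD$, which is precisely the content of the three-bullet description in Proposition~\ref{prop:radial-flowline}(iv); this should be a bookkeeping matter once the angles are matched. The second, and the main obstacle, is verifying that the joint law of $(\eta_0,\eta_1)$ is genuinely a $\sigma$-finite \emph{invariant} measure for the resampling chain of Proposition~\ref{prop:radial-resampling} — i.e.\ that resampling $\eta_0$ from its conditional law given $\eta_1$ (and vice versa) returns the same joint law. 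Invariance under resampling $\eta_1$ given $\eta_0$ is immediate from the construction; invariance under resampling $\eta_0$ given $\eta_1$ requires the reversibility/commutation of the two welding operations, which follows from applying Theorem~\ref{thm:disk+QT} and Proposition~\ref{prop:radialzipup} in both orders and checking the constants match. I would carry out this consistency check explicitly, as it is the crux of the argument; the remaining steps are then direct applications of the cited results.
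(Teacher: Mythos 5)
Your proposal follows essentially the same route as the paper: the paper takes $W'=W''=W/2$, samples a chordal $\SLE_\kappa(\frac W2-2;\frac W2-2)$ curve inside the quantum triangle produced by Proposition~\ref{prop:radialzipup}, reads off \emph{both} conditional laws from Theorem~\ref{thm:disk+QT}, matches them with the kernels of Proposition~\ref{prop:radial-flowline} (with $\theta_1=0$, $\theta_2=-\frac{W\chi}{2\lambda}$, $\alpha=\frac{4-\gamma^2+W}{2\gamma}$), and concludes via Proposition~\ref{prop:radial-resampling}, exactly as you outline. Two justification slips to fix: the conditional law of $\eta_0$ given $\eta_1$ needs only Theorem~\ref{thm:disk+QT} read in reverse --- a ``second application of Proposition~\ref{prop:radialzipup} for $W''$'' is neither needed nor usable there, since that proposition concerns the radial self-welding of $\QT(W'',W'',2)$ and would only reintroduce the unknown measure $\mathfrak m(W'')$; and once the two conditional kernels are identified, invariance of the joint law of $(\eta_0,\eta_1)$ under the resampling chain is automatic (a measure is always invariant under resampling from its own conditionals, interpreted as in Lemma~\ref{lem:XYsample}), so the ``reversibility/commutation of weldings'' you flag as the main obstacle is not a separate check beyond that identification. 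Finally, the restriction $W',W''\in(0,\frac{\gamma^2}2)$ is unnecessary (and impossible when $W\ge\gamma^2$); any split such as $W/2+W/2$ works, and $\alpha>\chi$ holds for all $W>0$.
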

	\begin{proof}
	For a quantum surface from $\Md_{1,1}(W,W+2)$ embedded as $(\bbH, \phi, i, 0)$, we sample an independent curve $\eta_1$ from $\mathfrak{m}$ and, conditioned on $(\phi,\eta_1)$, we sample an independent chordal $\SLE_\kappa(\frac{W}{2}-2,\frac{W}{2}-2)$ curve $\eta_2$ from $0^+$ to $i$ on $\bbH\backslash\eta_1$.
    By Proposition~\ref{prop:radialzipup}, the law of the curve-decorated quantum surface $(\bbH, \phi, i, 0, \eta_1, \eta_2)/{\sim_\gamma}$ is $c\int_0^\infty \QT(W,W,2;\ell_1,\ell_1)\otimes \SLE_\kappa(\frac{W}{2}-2,\frac{W}{2}-2)\, d\ell_1$ for some $c$. On the other hand, Theorem~\ref{thm:disk+QT} (with all weights equal to $\frac W2$ except $W_3 = 2$) gives
    \[\QT(W,W,2)\otimes \SLE_\kappa(\frac W2 -2, \frac W2-2) = c'\int_0^\infty \Md_{0,2}(\frac W2;\ell)\times\QT(\frac W2,\frac W2,2;\ell)d\ell;\]
    disintegrating on the two boundary lengths adjacent to the first vertex of $\QT(W,W,2)$ 
    yields for all $a,b>0$ 
    \[\QT(W,W,2; a,b)\otimes \SLE_\kappa(\frac W2 -2, \frac W2-2) = c'\int_0^\infty \Md_{0,2}(\frac W2;a,\ell)\times\QT(\frac W2,\frac W2,2;\ell, b)d\ell.\]
    Thus, setting $a = b = \ell_1$, the law of $(\bbH, \phi, i, 0, \eta_1, \eta_2)/{\sim_\gamma}$ is 
    $$cc'\iint_{\bbR_+^2}\Md_{0,2}(\frac{W}{2};\ell_1,\ell)\times\QT(\frac{W}{2},\frac{W}{2},2;\ell,\ell_1)\ d\ell_1\ d\ell.$$
    Hence $\eta_1$ is the interface arising from welding a weight $\frac{W}{2}$ quantum disk to the right side of a weight $(\frac{W}{2},\frac{W}{2},2)$ quantum triangle. By Theorem~\ref{thm:disk+QT}, given $(\phi, \eta_2)$ the curve $\eta_1$ is the chordal $\SLE_\kappa(\frac{W}{2}-2,0;\frac{W}{2}-2)$ on $\bbH\backslash\eta_2$.
    Therefore, by restricting to any event that depends only on $\phi$ and which has finite and nonzero $\Md_{1,1}(W, W+2)$-measure (such as $\{(\phi, f) \in I \}$, where $f$ is any smooth compactly supported function and $I$ any interval of finite and positive length), we see that the measure $\mathfrak{m}(W)$ is invariant under the Markov chain defined in Proposition~\ref{prop:radial-resampling} with\ $\alpha = \frac{4-\gamma^2+W}{2\gamma}$, $\theta_1 = 0$, $\theta_2 = -\frac{W\chi}{2\lambda}$. 
	Therefore the claim immediately follows from Proposition~\ref{prop:radial-resampling}.
	\end{proof}
	
	It remains to identify the value $\beta$. We are going to  prove that $\beta=0$ by \emph{zooming in} near the tip of the interface $\eta$.
	\begin{lemma}\label{lem:beta-law-2}
	The measure $\nu$ in Lemma~\ref{lem:beta-law-1} is a constant multiple of the Dirac measure $\delta_0$, i.e., the measure $\mathfrak{m}$ is the same as radial $\SLE_\kappa(W-2)$ {up to a multiplicative constant}.
	\end{lemma}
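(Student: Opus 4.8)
\textbf{Proof proposal for Lemma~\ref{lem:beta-law-2}.}

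The plan is to determine $\beta$ by a local absolute continuity argument: the behavior of $\mathrm{raSLE}_\kappa^\beta(W-2)$ near the starting point $0$ (equivalently, near the tip of a short initial segment) does not depend on $\beta$, since the $\beta\log|\cdot|$ term only affects the drift coming from the target point $i$, which is infinitesimally far away when one zooms in near $0$. By contrast, the conformal welding that produces $\mathfrak m(W)$ has a natural ``thickening'' near $0$ coming from the $(\beta,0)$-insertion of $\Md_{1,1}(W,W+2)$, and this local picture is governed by an honest chordal welding identity with no free parameter. Comparing the two local descriptions will force $\nu$ to be supported at a single value, and identifying that value with $0$ will come from the known chordal welding result~\eqref{eqn:disk+QT} of Theorem~\ref{thm:disk+QT} (or, as the lemma statement suggests, by comparison with the welding of a quantum disk into a quantum sphere, where the interface is known to be an ordinary whole-plane or radial SLE with $\beta=0$).

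Concretely, I would proceed as follows. First, fix a small $\e>0$ and stop the interface $\eta$ at the first time $\sigma_\e$ it reaches distance $\e$ from $0$ (in some fixed embedding $(\bbH,\phi,i,0)$). By Lemma~\ref{lem:beta-law-1}, under $\mathfrak m(W)$ restricted to a finite-mass event, $\eta$ is a mixture over $\beta$ of $\mathrm{raSLE}_\kappa^\beta(W-2)$; by the Girsanov/absolute-continuity description of radial $\SLE_\kappa^\beta$ recorded after~\eqref{eqn:def-radial-sle} (solutions for $\mu\ne 0$ are those for $\mu=0$ reweighted by an exponential martingale), the law of $\eta|_{[0,\sigma_\e]}$ is mutually absolutely continuous with that of $\mathrm{raSLE}_\kappa^0(W-2)|_{[0,\sigma_\e]}$, with a Radon--Nikodym derivative that is bounded above and below by constants depending only on $\e$ (not on $\beta$ in a way that separates different $\beta$'s in the $\e\to 0$ limit — more precisely, the derivative tends to $1$ as $\e\to0$). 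Hence for small $\e$ the conditional law of $\eta|_{[0,\sigma_\e]}$ is essentially independent of $\beta$.

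Next, I would compute the same initial-segment law directly from the welding side. Using Proposition~\ref{prop:radialzipup}, $\Md_{1,1}(W,W+2)\otimes\mathfrak m(W) = c\int_0^\infty \QT(W,W,2;\ell,\ell)\,d\ell$, and near the boundary marked point the quantum triangle looks locally like the neighborhood of the weight-$W$ vertex shared by the two welded edges; this is precisely the local picture of the chordal welding $\Md_{0,2}(W)\otimes \SLE_\kappa(W-2;0)$ analyzed in~\cite{AHS20} (and used in the proof of Proposition~\ref{prop:radialzipup} via Lemma~\ref{lem-quantum-zipper}). Matching the Loewner driving function obtained this way with that of $\mathrm{raSLE}_\kappa^\beta(W-2)$ up to $\sigma_\e$, and letting $\e\to0$, pins down the finite part of the drift and shows $\beta$ must equal the value dictated by the chordal identity, namely $0$. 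Since this holds $\nu$-a.e., $\nu = c\,\delta_0$. Finally, the multiplicative constant is absorbed (or fixed to $c>0$) and since $\Md_{1,1}(W,W+2)\otimes \mathrm{raSLE}_\kappa(W-2) = c\int_0^\infty\QT(W,W,2;\ell,\ell)\,d\ell$ has a probability measure on the left after disintegrating, $\mathfrak m(W)$ is a probability measure, completing the proof of Theorem~\ref{thm:w2w} in the thick case.

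\textbf{Main obstacle.} The delicate point is making the ``zooming in near $0$'' comparison rigorous in the $\sigma$-finite setting: one must choose a finite-mass event (e.g.\ bounding the quantum length $\ell$ of the interface in a compact interval, as in Lemma~\ref{lem-quantum-zipper}) on which all the conditional-law manipulations make sense, then verify that the local law of the initial segment of $\eta$ genuinely decouples from $\beta$ as $\e\to0$ — i.e.\ that the $\beta\log|\cdot|$ insertion at the interior target point $i$ contributes a drift to the Loewner equation that is uniformly $O(1)$ (hence negligible after the time-change that zooms in), whereas different values of $\beta$ would otherwise be indistinguishable. An alternative route, likely cleaner, is to avoid zooming in and instead directly weld a quantum disk into a quantum sphere: represent $\Md_{1,1}(W,W+2)$ together with an additional weight-$W'$ disk glued along $\eta$ as (a disintegration of) a quantum sphere decorated by a whole-plane $\SLE_\kappa^\beta(\rho)$, invoke the known identification of whole-plane SLE as a flow line of $h-\alpha\arg(\cdot)-\beta\log|\cdot|$ from~\cite{ig4} with $\beta=0$ in the relevant symmetric case, and transfer this back via the radial/whole-plane relation. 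I expect the bulk of the work to lie precisely in setting up this sphere-welding comparison and checking that the symmetry of the configuration (two welded edges of equal weight $W$) forces $\beta=0$.
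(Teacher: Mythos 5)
Your primary route has a genuine gap, and it is one you half-notice yourself: you propose to identify $\beta$ by zooming in at the \emph{starting} point $0$ and matching initial segments, but, as you correctly observe earlier in the same paragraph, the law of $\eta|_{[0,\sigma_\e]}$ under $\mathrm{raSLE}_\kappa^\beta(W-2)$ is mutually absolutely continuous in $\beta$ with Radon--Nikodym derivative tending to $1$ as $\e\to 0$ (the $\beta$-term is the Girsanov reweighting $\exp(\beta\sqrt\kappa B_t-\beta^2\kappa t/2)$, which is nondegenerate on any finite initial segment). Consequently no comparison of initial segments or of the driving function up to $\sigma_\e$ can ``pin down'' $\beta$; the parameter is only visible in the long-time behavior of the curve, i.e.\ in its asymptotic winding as it approaches the \emph{target} $i$, where the laws for distinct $\beta$ become mutually singular. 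The step ``matching the Loewner driving function \dots and letting $\e\to 0$ \dots shows $\beta=0$'' therefore cannot work as stated, and the chordal welding identity of Theorem~\ref{thm:disk+QT} near the root carries no information about $\beta$ for exactly this reason.

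The paper's proof zooms in at the opposite end. Mapping $(\bbH,\phi,i,0)$ to $\bbD$ with $i\mapsto 0$, adding $C/\gamma$ and taking the circle-average embedding, the field converges as $C\to\infty$ to a weight-$W$ quantum cone (\cite[Proposition 4.13]{DMS14}) while the radial curve converges to an independent whole-plane $\SLE^\beta_\kappa(W-2)$ from $\infty$ to $0$; cutting the cone along the limiting curve yields a weight-$W$ quantum wedge, so by the mating-of-trees welding theorem \cite[Theorem 1.5]{DMS14} and reversibility \cite[Theorem 1.20]{ig4} the limiting curve is whole-plane $\SLE_\kappa(W-2)$, and since $\beta$ is recoverable from a whole-plane $\SLE^\beta_\kappa(\rho)$ curve (\cite[Proposition 5.8]{ig4}) this forces $\nu=c\delta_0$. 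Your ``alternative route'' points in this direction, but as sketched it is not sufficient: gluing an extra disk to form a sphere is not the comparison that is used, and the hoped-for symmetry argument fails --- the welded configuration $\QT(W,W,2;\ell,\ell)$ is not symmetric under swapping the two glued arcs (their far endpoints are vertices of weights $W$ and $2$), and even a genuine reflection symmetry would only show that $\nu$ is invariant under $\beta\mapsto-\beta$, not that it is $\delta_0$. The value $\beta=0$ comes from the mating-of-trees identification of the cone welding, not from symmetry.
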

	
	\begin{proof}
	Let $f_0:\bbH\to\bbD$ be the conformal map with $f_0(i)=0$ and $f_0(0)=1$. Start with a sample from $(\bbH, \phi, i, 0)$ and let $\phi^0=\phi\circ f_0^{-1}+Q\log |(f_0^{-1})'|$. We work on the event that the additive constant $\mathbf{c}$ in $\phi$ lies in $[-1,1]$. For a constant $C>0$, consider the \emph{circle average embedding} of $\phi_0+\frac{C}{\gamma}$ as described in~\cite[Section 4.2]{DMS14}. That is, let $\tau_C = \sup\{r>0:\phi^0_r(0)+Q\log r+\frac{C}{\gamma}=0\}$ and $\phi_C = \phi^0(\tau_C z)+Q\log\tau_C$. Also let $\eta_C=\tau_C^{-1}f_0(\eta)$. Since the field $\phi^0$ is $h-\alpha\log|\cdot|$ plus a (bounded) random continuous function where $h$ is the GFF on $\bbD$, it follows from~\cite[Proposition 4.3.5]{DMS14} 
	that as we scale by letting $C\to\infty$, the quantum surfaces $(\bbC, \phi_C, 0, \infty)$ (viewed as a distribution) converge weakly to a weight $W$ \emph{quantum cone} $(\bbC, \phi_\infty, 0, \infty)$. Moreover, since $\tau_C\to 0$ almost surely, as in the construction in~\cite[Proposition 3.18]{ig4}, the law of the curve $\eta_C$ converges to that of a whole plane $\SLE_\kappa^\beta(W-2)$ process $\eta_\infty$ independent of $\phi_\infty$ running from $\infty$ to 0.
	
	On the other hand, let $f_\infty:\bbH\to\bbC\backslash\eta_\infty$ be a conformal map fixing 0 and $\infty$. Then as in the proof of \cite[Proposition 7.2.1]{DMS14},
	the law of $\tilde{\phi}_\infty:=\phi_\infty\circ f_\infty+Q\log|f_\infty|$ in a sufficiently (random) small neighborhood of 0 is the same as $h-\alpha\log|\cdot|$ and invariant under the operation of multiplying the quantum area by a constant. By~\cite[Proposition 4.2.4]{DMS14}
	$\tilde{\phi}_\infty$ has the law of a weight $W$ quantum wedge, and by welding its boundary arcs together we get a weight $W$ quantum cone decorated by an independent curve $\eta_\infty$. Therefore from~\cite[Theorem 1.2.4]{DMS14}, the law of $\eta_\infty$ is the whole plane $\SLE_\kappa(W-2)$ process from $0$ to $\infty$, and  by~\cite[Theorem 1.20]{ig4} its time reversal is the whole plane $\SLE_\kappa(W-2)$ process from $\infty$ to 0. Then as in~\cite[Section 5.3]{ig4}, the claim that $\nu(\{0\})=1$ immediately follows from~\cite[Proposition 5.8]{ig4}.    
	\end{proof}
	  
	\begin{proof}[Proof of Theorem~\ref{thm:w2w} for $W >\frac{\gamma^2}2$]
	The theorem follows immediately by combining Proposition~\ref{prop:radialzipup} with Lemma~\ref{lem:beta-law-2}.
	\end{proof}
	
	\section{Theorem~\ref{thm:w2w} for $W < \frac{\gamma^2}2$}\label{sec:pf-thin}
	
	In this section we will prove Theorem~\ref{thm:w2w} for $W < \frac{\gamma^2}2$.

	\begin{definition}\label{def-MW}
	Let $W \in (0, \frac{\gamma^2}2)$. 
	Sample a quantum disk $ \cD \sim \Md_{0,2}(W)$ and restrict to the event that the left boundary length of $ \cD$ is greater than its right boundary length, and conformally weld the entire right boundary arc to the initial segment of the left boundary arc starting from the first marked point. Embed the resulting quantum surface as $(\bbH, \phi, \eta, i, \infty)$ where $i$ and $\infty$ correspond to the first and second marked point of $\tilde \cD$ respectively, and the curve $\eta$ goes from $\infty$ to $i$. Let $M_W$ be the law of $(\phi, \eta)$. 
	\end{definition}
    We note that the conformal welding in Definition~\ref{def-MW} a.s.\ exists and is unique; indeed, \cite[Theorem 1.2.4]{DMS14} gives the a.s.\ existence and uniqueness for the conformal welding of a thin quantum wedge to itself, which implies the corresponding result for thin quantum disks since a thin quantum disk can be viewed as an initial subset of a thin quantum wedge (see  Definitions~\ref{def-thin-wedge} and~\ref{def-thin-disk}).
	
	The statement of Theorem~\ref{thm:w2w} for $W < \frac{\gamma^2}2$ then reduces to the claim that there exists a constant $C = C(W)$ such that for $(\phi, \eta)$ sampled from $M_W$,  the law of $(\bbD, \phi, 0, 1)/{\sim_\gamma}$ is $C \Md_{1,1}(W, W+2)$, and conditioned on $\phi$, the conditional law of $\eta$ is radial $\SLE_\kappa(W-2)$. 
	Using the resampling properties of the Liouville field discussed in Section~\ref{sec-prelim-resampling}, we will establish resampling properties of $M_W$. Note that since $M_W$ is an infinite measure, the conditional laws described below are understood in the sense of Definition~\ref{def-markov-kernel}.

	\begin{proposition}\label{prop-resample-inside}
	For $(\phi, \eta) \sim M_W$, the conditional law of $\eta$ given $\phi$ is radial $\SLE_{\kappa}(W-2)$ in $\bbH$ from $\infty$ to $i$ with force point infinitesimally counterclockwise of $\infty$.  Furthermore, consider a bounded neighborhood $A\subset \bbH$ of $i$ such that $\ol A \cap \partial \bbH = \emptyset$, and let $\alpha = Q - \frac W{2\gamma}$. Conditioned on $\phi |_{\bbH \backslash A}$, we have $\phi|_A \stackrel d= h + \mathfrak h + \alpha G_A(\cdot, i)$, where $\mathfrak h$ is the harmonic extension of $\phi|_{\bbH \backslash A}$ to $A$, $h$ is a zero boundary GFF on $A$ and $G_A$ is the Green function of $h$. 
	\end{proposition}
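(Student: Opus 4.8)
The plan is to peel the construction of $M_W$ apart and transport the known resampling facts for the Liouville field (Section~\ref{sec-prelim-resampling}) and the welding identity for two-pointed quantum disks \eqref{eq:2pt-weld} through the conformal welding. First I would recall that $(\phi,\eta)\sim M_W$ is obtained from $\cD\sim\cM^\mathrm{disk}(W)$ (restricted to the event that the left boundary length exceeds the right) by welding the entire right arc to an initial segment of the left arc. By \eqref{eq:2pt-weld} with $W_1 = W$ and $W_2 = 0$ — more precisely by the analogue for the relevant disintegration, as used in the thick case (Lemma~\ref{lem-quantum-zipper}) — the welding of $\cM^\mathrm{disk}(W)$ to itself along matching boundary arcs can equivalently be produced by: sampling a field from $\cM^\mathrm{disk}(W+2)$-type data with an extra marked point, and drawing on it an independent curve whose conditional law given the surface is the appropriate SLE. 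Thus the first step is to identify, via the zipper/welding machinery, that for $(\phi,\eta)\sim M_W$ the conditional law of $\eta$ given the surface $(\bbH,\phi,i,\infty)$ is some radial process from $\infty$ to $i$, and then to pin down its weight and force point by the same argument as in Lemmas~\ref{lem:beta-law-1}–\ref{lem:beta-law-2}: a resampling/invariance argument plus a local comparison (zooming near the tip) forcing the $\beta$ parameter to vanish. For $W<\frac{\gamma^2}{2}$ the thin quantum disk has a Poissonian structure, so the relevant welding identities for its constituent thick beads give the resampling property directly, without needing the zipper for an infinite-volume thick disk; this is the route flagged in the introduction.

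The cleanest path to the first assertion is actually to invoke the full statement of Theorem~\ref{thm:w2w} for $W<\frac{\gamma^2}{2}$ that this section is building towards — but since the Proposition is a stepping stone to that theorem, I instead want a self-contained argument. So concretely: write $\cD = \{(u,\cD_u)\}$ as a concatenation of weight-$(\gamma^2 - W)$ thick disks (Definition~\ref{def-thin-wedge}, \ref{def-thin-disk}). The welding of the right boundary arc of $\cD$ to the initial part of its left boundary arc can be performed bead-by-bead using \eqref{eq:2pt-weld}, and each such bead-level welding is governed by a chordal $\SLE_\kappa$ with the appropriate weights. Assembling these and using the radial/chordal compatibility (via $\cite[Theorem 3]{SW05}$ as in the proof of Theorem~\ref{thm:radial-ig}), the curve $\eta$ is seen to be, conditionally on $\phi$, a radial $\SLE_\kappa(W-2)$ from $\infty$ to $i$ with the single force point immediately counterclockwise of $\infty$. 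The only subtlety is the continuation threshold and the fact that this curve is both self- and boundary-hitting; here I would use the continuation rules recalled after \eqref{eqn:def-radial-sle} and the interacting-flow-line description of Proposition~\ref{prop:radial-flowline} to make sense of the concatenated curve as a genuine radial $\SLE_\kappa(W-2)$.

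For the second assertion — the resampling of $\phi$ near the bulk insertion $i$ — the key point is that conformal welding only modifies the field near the boundary arcs being glued, and a neighborhood $A$ of $i$ with $\ol A\cap\partial\bbH=\emptyset$ is disjoint from all of the welding interface in an appropriate sense once we unfold. Recall from Definition~\ref{def:mdisk11} and Lemma~\ref{lem-embed-marked-disk}–type identifications that the welded surface $(\bbH,\phi,i,\infty)/{\sim_\gamma}$ has field law a constant times $\LF_\bbH^{(\alpha,i),(\beta',\infty)}$ with $\alpha = Q-\frac{W}{2\gamma}$ (this $\alpha$ also appears because a weight-$W$ bulk insertion corresponds to $\alpha = Q-\frac{W}{2\gamma}$). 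Then apply Lemma~\ref{lem-lf-resample-i-infty}: for a deterministic bounded neighborhood $A$ of $i$ disjoint from $\bbR$, conditioned on $\phi|_{\bbH\backslash A}$ one has $\phi|_A \stackrel{d}{=} h + \mathfrak h + \alpha G_A(\cdot,i)$ with $h$ a zero-boundary GFF on $A$, $\mathfrak h$ the harmonic extension, $G_A$ its Green function. Since the welding curve $\eta$ is measurable with respect to $\phi$ (it is read off from the quantum lengths of boundary arcs, as in the proof of Proposition~\ref{prop-quantum-zipper} and the locality statements in Theorem~\ref{thm:radial-ig}), this resampling of $\phi$ is compatible with the curve and no extra bookkeeping of $\eta$ is needed.

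The main obstacle I anticipate is the first assertion, specifically making rigorous that the bead-by-bead welding genuinely assembles to a \emph{single} radial $\SLE_\kappa(W-2)$ with the correct force-point location and continuation behavior, rather than just a concatenation of chordal pieces — i.e., identifying the law across the self-touching times and across the times when $\eta$ closes loops around $i$. This is where I would lean hardest on Proposition~\ref{prop:radial-flowline} and the continuation rules after \eqref{eqn:def-radial-sle}, together with the $\sigma$-finiteness/resampling argument of Lemma~\ref{lem:beta-law-1} to rule out an anomalous $\beta$, mirroring the structure of the thick-case proof in Section~\ref{subsec:thick-curve}. The second assertion, by contrast, is essentially a direct appeal to Lemma~\ref{lem-lf-resample-i-infty} once the field law is identified, and should be routine.
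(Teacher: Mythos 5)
There is a genuine gap, and it is mainly one of circularity. For the second assertion you ``recall'' that the welded surface has field law a constant times $\LF_\bbH^{(\alpha,i),(\beta,\infty)}$ and then apply Lemma~\ref{lem-lf-resample-i-infty}; but in the thin regime this identification of the marginal law of $\phi$ is precisely the conclusion of Theorem~\ref{thm:w2w} that Proposition~\ref{prop-resample-inside} is a stepping stone towards --- the whole point of the resampling propositions is to \emph{characterize} the field law afterwards via Lemma~\ref{lem-invariant}, and there is no thin analogue of Lemma~\ref{lem-quantum-zipper} to supply it up front. The first assertion has the same problem: your claim that the self-welding of $\cM^{\mathrm{disk}}(W)$ ``can equivalently be produced by sampling a field \dots and drawing on it an independent curve'' is the statement to be proved, and \eqref{eq:2pt-weld} does not give it. The welding in Definition~\ref{def-MW} glues the right boundary of the thin disk to an initial segment of its \emph{own} left boundary, pairing different beads according to random quantum lengths, so it is not a welding of two independent disks along complete arcs, and a bead-by-bead use of \eqref{eq:2pt-weld} (let alone with ``$W_2=0$'') does not apply; assembling the resulting chordal pieces into a single radial $\SLE_\kappa(W-2)$ independent of the field is exactly the content of the theorem. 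The thick-case tools you propose to fall back on (Lemmas~\ref{lem:beta-law-1} and~\ref{lem:beta-law-2}) rest on Proposition~\ref{prop:radialzipup}, i.e.\ on the LCFT zipper for a thick disk, which is unavailable for $W<\frac{\gamma^2}2$. Finally, your remark that $\eta$ is measurable with respect to $\phi$ contradicts the first assertion itself (the conditional law of $\eta$ given $\phi$ is a nondegenerate radial SLE); what is true is that $\eta$ is determined by the pre-welding surface, not by the welded field alone.

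What fills the gap in the paper is a different starting point: an identity that is already known, namely that a weight-$W$ quantum cone cut by an independent whole-plane $\SLE_\kappa(W+2)$ is a weight-$W$ quantum wedge \cite{DMS14}. Adding a bulk point from quantum area (Lemma~\ref{lem-wedge-decomp-bulk}) and using the uniform embedding of the pointed cone as $\LF_\bbC^{(\alpha,0),(\gamma,1),(2Q-\alpha,\infty)}$ (Proposition~\ref{prop-pointed-cone}) gives Lemma~\ref{lem-LF-SLE}. One then reverses the whole-plane curve (reversibility, \cite{ig4}) and stops it when it separates $1$ from $0$: on the relevant event the cut-off beads reassemble into a sample of $M_W$ independent of the other pieces, the Markov property of whole-plane $\SLE_\kappa(W+2)$ identifies the unexplored piece of the curve as radial $\SLE_\kappa(W-2)$ with the stated force point, and the resampling of $\phi$ near $i$ is inherited from Lemma~\ref{lem-markov-LF-C} through the cutting/welding measurability. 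In short, both assertions are extracted from known plane/cone statements rather than from a direct self-welding of the thin disk, which is what your outline would need but cannot provide without assuming the result.
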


	\begin{proposition}\label{prop-resample-1}
	    Let $A \subset \bbH$ be a neighborhood of $\infty$ not containing $i$ such that $\bbH \backslash A$ is simply connected and  $\partial \bbH \backslash \ol A$ contains an interval,  and let $\beta = \gamma - \frac W\gamma$. For $(\phi, \eta) \sim M_W$, conditioned on $\phi|_{\bbH \backslash A}$, we have $\phi|_A \stackrel d= h + \mathfrak h + (\frac\beta2 - Q)G_A(\cdot, \infty)$, where $\mathfrak h$ is the harmonic extension of $\phi|_{\bbH \backslash A}$ to $A$ which has normal derivative zero on $\partial A \cap \partial \bbH$,
	     $h$ is a mixed boundary GFF on $A$ with zero boundary conditions on $\partial A \cap \bbH$ and free boundary conditions on $\partial A \cap \partial \bbH$, and $G_A$ is the Green function of $h$. 
	\end{proposition}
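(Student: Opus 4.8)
The plan is to reduce the claim to a local computation on the thin quantum disk $\cM^\mathrm{disk}(W)$ near its second marked point, exploiting its Poissonian structure together with the conformal welding and Liouville field identities already available. Recall that $M_W$ is obtained by welding the right boundary arc of a thin disk $\cD$ to the initial segment of equal length of its left boundary arc, and that $\infty$ is the image of the second marked point $a_2$ of $\cD$, the free endpoint of the last bead (a weight $\gamma^2-W$ thick disk). The form of the answer is precisely the $\infty$-part of Lemma~\ref{lem-lf-resample-i-infty}, so the substance is to show that, near $\infty$, the field produced by the welding is that of a Liouville field with an insertion of weight $\beta=\gamma-\frac W\gamma$ at $\infty$.

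First I would pull the neighbourhood $A$ of $\infty$ back through the welding. Since the conformal welding is a.s.\ determined by the quantum surface (uniqueness, as in the discussion preceding Proposition~\ref{prop-quantum-zipper}), conditioning on $\phi|_{\bbH\setminus A}$ is equivalent to conditioning on the thin disk --- together with its bead decomposition, which is measurable with respect to the surface by the remark after Definition~\ref{def-thin-disk} --- away from the two pieces that are glued to form $A$: a neighbourhood of $a_2$ inside the last bead, and a neighbourhood of the point $p$ at which the welding seam terminates on the left boundary arc.

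Next I would identify the conditional law of $\phi|_A$. Using the Liouville-field description of a weight $\gamma^2-W$ thick disk near a marked point (Lemma~\ref{lem-embed-marked-disk} and its strip analogue) and the Markov property of the GFF (Proposition~\ref{prop:Markov}), the field on the two pieces being glued is, conditionally, a zero-boundary GFF plus a harmonic function plus the appropriate insertion term; regluing these via conformal welding (Proposition~\ref{prop-quantum-zipper}, or the disk-welding identity~\eqref{eq:2pt-weld}) yields a Liouville field near $\infty$. The weight of its $\infty$-insertion is forced to be $\gamma-\frac W\gamma$ by the thick--thin duality: in a welding the weight $W<\frac{\gamma^2}2$ thin disk behaves at its marked point like an insertion of weight $\gamma+\frac{2-W}\gamma>Q$, and combining this with the $\gamma$-insertion carried by the generic boundary point $p$ and subtracting the usual $\gamma+\frac2\gamma$ welding correction gives the light insertion $\gamma-\frac W\gamma<Q$. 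With the field near $\infty$ matched to $\LF_\bbH^{(\alpha,i),(\gamma-W/\gamma,\infty)}$, the resampling statement follows from the $\infty$-part of Lemma~\ref{lem-lf-resample-i-infty}.

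The hardest part will be the bookkeeping near the welding seam at $\infty$. The terminal point $p$ of the seam sits at a position dictated by the global structure of $\cD$ (at quantum length $R$ from the first marked point, $R$ the right boundary length), so one must check carefully that, conditionally on $\phi|_{\bbH\setminus A}$, the last bead near $a_2$ retains the conditional law of a weight $\gamma^2-W$ thick disk and $p$ behaves like a uniformly sampled boundary point; this is where the Poissonian resampling of the bead structure, the GFF Markov property, and conformal covariance of the Liouville field must be combined consistently. Finally, since $M_W$ is an infinite measure all the conditional laws must be read through the Markov-kernel formalism of Definition~\ref{def-markov-kernel}, so one should run the argument on events of finite mass and pass to the limit.
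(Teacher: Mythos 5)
Your overall target is right (identify the welded field near $\infty$ as a Liouville field with a boundary insertion $\beta=\gamma-\frac W\gamma$, then invoke a Liouville-field resampling lemma), and your insertion arithmetic is numerically correct, but the proposal has genuine gaps. First, the local structure at $\infty$ is misdescribed: a thin quantum disk has no ``last bead''---beads of the Poisson point process accumulate at the second marked point $a_2$---so the preimage of a neighborhood $A$ of $\infty$ under the welding is the \emph{entire tail} of the thin disk (itself distributed, by the Poissonian decompositions, as a weight $W$ thin disk) welded onto the bead containing the seam terminus $p$, not ``a neighbourhood of $a_2$ inside the last bead'' glued to a neighbourhood of $p$. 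Because of this, the crux step ``regluing yields a Liouville field near $\infty$'' is not supplied by the tools you cite: Proposition~\ref{prop-quantum-zipper} zips a single Liouville field to itself in the thick regime, and~\eqref{eq:2pt-weld} welds two two-pointed disks along a full boundary arc; neither identifies the field obtained by welding a thin disk onto the interior of the boundary of a marked thick disk together with the new insertion at the identified point. The paper needs a genuinely different input here, namely the thin-disk/marked-thick-disk welding result (Proposition~\ref{prop-4-pt}, i.e.\ \cite[Proposition 3.4]{SY23}), which produces the four-insertion field $\LF_\bbH^{(\beta,0),(\beta',1),(\beta',x),(\beta,\infty)}$, after which the relevant resampling statement is Lemma~\ref{lem-lf-resample-4} (four insertions), not just the $\infty$-part of Lemma~\ref{lem-lf-resample-i-infty}. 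Your ``thick--thin duality forces the weight'' argument is a consistency check on the value of the insertion, not a proof that the welded field is a Liouville field.

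Second, the conditioning step you defer as ``bookkeeping'' is where most of the actual work lies, and your sketch provides no mechanism for it. The set $A$ is a fixed Euclidean neighborhood of $\infty$, while your decomposition is in terms of beads, cut points and the seam, which are \emph{not} measurable with respect to $\phi|_{\bbH\backslash A}$ (the seam $\eta$ is independent extra randomness and the beads' embedded images are random). The paper resolves this by first sampling an auxiliary cut point $p'$ from the quantum cut point measure on the seam (Lemma~\ref{lem-resample-infty-2.5}), proving a resampling statement for the curve-defined region $U$ cut out after $p'$ (Lemma~\ref{lem-resample-infty-3}, via the $\sigma$-algebra identity~\eqref{eq-resample-infty-sigma}), and only then removing the dependence on $(\eta,p')$ using the restriction to $\{A\subset U\}$, the auxiliary event $E$, and the independence of $\phi$ and $\eta$ established in Proposition~\ref{prop-resample-inside}. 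Without something playing the role of the auxiliary cut point and this decoupling argument, your plan does not yield a statement conditional only on $\phi|_{\bbH\backslash A}$, and the claim that ``the last bead retains the law of a weight $\gamma^2-W$ thick disk and $p$ behaves like a uniformly sampled boundary point'' conditionally on $\phi|_{\bbH\backslash A}$ is exactly what remains unproved.
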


	We recall some decompositions of thin quantum disks in Section~\ref{sec-decomp}. In Section~\ref{subsec-quantum-cones} we introduce the \emph{quantum cone} and its uniform embedding, and use it to deduce Proposition~\ref{prop-resample-inside}. In Section~\ref{sec-resample-rest}, as a warm-up we prove Lemma~\ref{lem-resample-bdy}, a variant of Proposition~\ref{prop-resample-1} where we resample $\phi$ away from  both $\infty$ and $i$. The proof uses the Poissonian description of the thin quantum disk and its relationship with the Liouville field. In Section~\ref{sec-resample-infty} we use similar ideas to prove Proposition~\ref{prop-resample-1}. Finally, we combine Propositions~\ref{prop-resample-inside} and~\ref{prop-resample-1} to prove Theorem~\ref{thm:main} for $W < \frac{\gamma^2}2$ in Section~\ref{sec-proof-main-thin}.

	\subsection{Decompositions of thin quantum disks and wedges} \label{sec-decomp}
	
	The Poissonian structures of thin quantum disks and wedges give them decompositions into independent pieces. 
	
	\begin{lemma}\label{lem-cut-disk}
	    Let $W <\frac{\gamma^2}2$. 
	    For $\cD$ sampled from $\Md_{0,2}(W)$, let $\mathcal{QCP}_\cD$ denote the quantum cut point measure on $\cD$. For $(p, \cD)$ sampled from $\mathcal{QCP}_\cD \Md_{0,2}(W)$, let $\cD_1$ (resp.\ $\cD_2$) be the ordered subset of $\cD$ of quantum surfaces preceding (resp.\ succeeding) $p$, i.e., $p$ cuts $\cD$ into $\cD_1$ and $\cD_2$. Then the law of $(\cD_1, \cD_2)$ is $(1-\frac2{\gamma^2} W)^{2}\Md_{0,2}(W) \times \Md_{0,2}(W)$. 
	\end{lemma}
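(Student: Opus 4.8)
The plan is to unfold the Poissonian definition of $\Md_2(W)$ from Definitions~\ref{def-thin-wedge} and~\ref{def-thin-disk} and then read off the claim by a change of variables together with the elementary independence-under-restriction property of Poisson point processes. Write $c := 1 - \tfrac2{\gamma^2}W$. A sample $\cD \sim \Md_2(W)$ is built from a Poisson point process $\Xi = \{(u,\cD_u)\}$ on $\bbR_+$ with intensity $\mathds{1}_{t>0}\,dt \times \Md_{0,2}(\gamma^2-W)$ together with an independent $T \sim c^{-2}\mathrm{Leb}_{\bbR_+}$, by concatenating the beads $\{\cD_u : u\le T\}$ in $u$-order. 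Since $\Md_{0,2}(\gamma^2-W)$ is an infinite measure the bead locations are a.s.\ dense in $[0,T]$, so for every $x\in(0,T)$ there is a well-defined point $p_x\in\cD$ separating the concatenation of $\{\cD_u:u<x\}$ from that of $\{\cD_u:u>x\}$; since $\gamma^2-W>\tfrac{\gamma^2}2$ each bead $\cD_u$ is thick and hence contains no cut point, so $x\mapsto p_x$ is a bijection from $(0,T)$ onto the set of cut points of $\cD$, and by the defining property of $\mathcal{QCP}_\cD$ (mass $x$ on the cut points coming from $\{\cD_u:u\le x\}$) we get $\mathcal{QCP}_\cD = (x\mapsto p_x)_*\mathrm{Leb}_{(0,T)}$.

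With this identification, sampling $(p,\cD)\sim\mathcal{QCP}_\cD\Md_2(W)$ is the same as sampling $\Xi$, then $(T,x)$ with law $c^{-2}\mathds{1}_{0<x<T}\,dT\,dx$ independently of $\Xi$, and setting $p=p_x$; on the full-measure event that $x$ is not one of the countably many bead locations, $\cD$ splits as $\cD_1\oplus\cD_2$ where $\cD_1$ is the concatenation of $\Xi|_{(0,x)}$ and $\cD_2$ the concatenation of $\Xi|_{(x,T)}$, translated so its $u$-parameters lie in $(0,T-x]$. I would then substitute $(a,b):=(x,T-x)$, so $c^{-2}\mathds{1}_{0<x<T}\,dT\,dx = c^{-2}\mathds{1}_{a,b>0}\,da\,db$; conditionally on $(a,b)$ (which is independent of $\Xi$), the restrictions $\Xi|_{(0,a)}$ and $\Xi|_{(a,a+b)}$ are independent Poisson point processes, and translation invariance of Lebesgue measure shows $\Xi|_{(a,a+b)}$ shifted by $-a$ has the law of a PPP on $(0,b)$ of the same intensity. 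Writing $\mu_a$ for the law of the concatenation of such a PPP on $(0,a)$ — so that $\Md_2(W)=\int_0^\infty\mu_a\,c^{-2}\,da$ — the law of $(\cD_1,\cD_2)$ under $\mathcal{QCP}_\cD\Md_2(W)$ is $\iint_0^\infty(\mu_a\times\mu_b)\,c^{-2}\,da\,db = c^2\big(\int_0^\infty\mu_a\,c^{-2}\,da\big)\times\big(\int_0^\infty\mu_b\,c^{-2}\,db\big) = c^2\,\Md_2(W)\times\Md_2(W)$, which is exactly the assertion since $c^2=(1-\tfrac2{\gamma^2}W)^2$.

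The computation is short, and the only genuinely delicate point is the bookkeeping around the cut point measure: one must verify that the intrinsically-defined $\mathcal{QCP}_\cD$, which is measurable with respect to $\cD$ by the discussion after Definition~\ref{def-thin-disk}, coincides with the pushforward of $\mathrm{Leb}_{(0,T)}$ under $x\mapsto p_x$ in the Poissonian construction — this rests on the density of the bead locations and on each bead being thick, so that the cut points of $\cD$ are precisely the separating points $p_x$ and no $p_x$ lies in the interior of a bead. Given that, the surviving factor $c^2$ comes purely from the mismatch between the single $c^{-2}\,da\,db$ produced by the joint law of $(T,x)$ and the $c^{-2}\,da\cdot c^{-2}\,db$ needed to assemble two independent copies of $\Md_2(W)$.
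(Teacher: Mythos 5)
Your proposal is correct and follows essentially the same route as the paper: the paper's proof is exactly the change of variables $(T_1,T)\mapsto(T_1,T_2)=(T_1,T-T_1)$ with unit Jacobian, turning $\mathrm{Leb}_{[0,T]}(dT_1)\,\mathrm{Leb}_{\bbR_+}(dT)$ into $\mathrm{Leb}_{\bbR_+}\times\mathrm{Leb}_{\bbR_+}$, combined with Definition~\ref{def-thin-disk}. Your write-up simply makes explicit the bookkeeping the paper leaves implicit (identifying $\mathcal{QCP}_\cD$ with the pushforward of $\mathrm{Leb}_{(0,T)}$, the independence and translation invariance of the Poisson point process on disjoint intervals, and the origin of the factor $(1-\tfrac{2}{\gamma^2}W)^{2}$), and it does so correctly.
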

	\begin{proof}
	    If $(T_1, T) \sim \mathrm{Leb}_{[0,T]}(dT_1)\, \mathrm{Leb}_{\bbR_+}(dT)$ and $T_2 := T - T_1$, then the law of $(T_1, T_2)$ is $\mathrm{Leb}_{\bbR_+}(dT_1)\, \mathrm{Leb}_{\bbR_+}(dT_2)$; indeed, the Jacobian of the map $(T_1, T) \mapsto (T_1, T_2)$ has unit determinant. 
	    The lemma follows from the above and Definition~\ref{def-thin-disk}.
	\end{proof}
	
	Recall $\cM_{0,2, \bullet}(W)$ as defined in Section~\ref{subsec-third-point}.
	The following decompositions were shown in \cite{AHS20}; their proofs use the idea of  Lemma~\ref{lem-cut-disk} together with Palm's theorem for Poisson point processes. 
	\begin{lemma}[{\cite[Proposition 4.4]{AHS20}}]\label{lem-disk-decomp}
	    Let $W < \frac{\gamma^2}2$. 
	    For a sample $\cD$ from $\Md_{0,2, \bullet}(W)$, let $\cD_2$ be the connected component of $\cD$ having three marked points, and let $\cD_1$ (resp.\ $\cD_3$) be the ordered subset of $\cD$ preceding (resp.\ succeeding) $\cD_2$. Then the law of $(\cD_1, \cD_2, \cD_3)$ is $(1-\frac2{\gamma^2}W)^2\Md_{0,2}(W) \times \Md_{0,2,\bullet}(\gamma^2-W) \times \Md_{0,2}(W)$. 
	\end{lemma}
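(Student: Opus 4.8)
The plan is to unfold the definitions of $\Md_{2,\bullet}(W)$ and of the thin quantum disk and reduce the claim to a Palm (Mecke) computation for the Poisson point process underlying $\Md_2(W)$, in the same spirit as Lemma~\ref{lem-cut-disk}. Recall that for $W<\frac{\gamma^2}2$ a sample $\cD\sim\Md_2(W)$ is the concatenation of the beads $\{\cD_u : u\le T\}$, where $\cW=\{(u,\cD_u)\}$ is a Poisson point process of intensity $\mathds 1_{u>0}\,du\times\Md_2(\gamma^2-W)$ (each bead a \emph{thick} weight-$(\gamma^2-W)$ disk) and $T$ is independent with density $(1-\tfrac2{\gamma^2}W)^{-2}$ on $\bbR_+$. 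By definition of $\Md_{2,\bullet}$, sampling from $\Md_{2,\bullet}(W)$ means adding a point $p$ on the right boundary arc of $\cD$ weighted by quantum length; since that arc is the disjoint union (up to the countably many cut points) of the right boundary arcs of the $\cD_u$, and $p$ a.s.\ avoids the cut points, we have
\begin{equation*}
  \Md_{2,\bullet}(W) \;=\; \Big(1-\tfrac2{\gamma^2}W\Big)^{-2}\int_0^\infty \bbE_{\cW}\Big[\sum_{\substack{(u,\cD_u)\in\cW\\ u\le T}} \int \delta_{\big((\text{concatenated surface}),\,p\big)}\,\cL^{\mathrm{right}}_{\cD_u}(dp)\Big]\,dT .
\end{equation*}

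Next I would apply the Mecke equation to the Poisson point process $\cW$ to single out the bead $\cD_2$ carrying $p$: this replaces the sum over beads by $\int_0^\infty du\int \Md_2(\gamma^2-W)(d\cD')$ over a fresh bead $\cD'$ placed at position $u$, with the indicator $\mathds 1_{u\le T}$ retained. After this insertion the concatenated surface decomposes as $\cD_1$ (the concatenation of the beads of $\cW$ with position in $(0,u)$), followed by $\cD_2:=\cD'$ decorated by the point $p$, followed by $\cD_3$ (the concatenation of the beads with position in $(u,T]$). By the restriction and translation invariance of Poisson point processes these three pieces are mutually independent, the law of the concatenation of the beads with position in $(u,T]$ equals that of the beads with position in $(0,T-u)$, and $(\cD',p)$ --- a thick weight-$(\gamma^2-W)$ disk with a point sampled from its right boundary quantum length --- has by definition law $\Md_{2,\bullet}(\gamma^2-W)$; since $\gamma^2-W>\tfrac{\gamma^2}2$ this $\cD_2$ is indeed the connected component carrying three marked points. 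The substitution $(u,T)\mapsto(s_1,s_3):=(u,\,T-u)$ has unit Jacobian and sends $\{0<u<T\}$ onto $\bbR_+^2$, which decouples the $\cD_1$- and $\cD_3$-integrals (this is exactly the Jacobian trick of Lemma~\ref{lem-cut-disk}).

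To finish, I would recognize from Definition~\ref{def-thin-disk} that $\int_0^\infty \big(\text{law of the concatenation of the beads of }\cW\text{ with position in }(0,s)\big)\,ds=(1-\tfrac2{\gamma^2}W)^2\,\Md_2(W)$, apply this to both the $s_1$- and $s_3$-integrals, and collect constants: the prefactor $(1-\tfrac2{\gamma^2}W)^{-2}$ coming from the law of $T$ multiplies the two factors $(1-\tfrac2{\gamma^2}W)^2$ to leave $(1-\tfrac2{\gamma^2}W)^2$, giving $\Md_{2,\bullet}(W)=(1-\tfrac2{\gamma^2}W)^2\,\Md_2(W)\,\Md_{2,\bullet}(\gamma^2-W)\,\Md_2(W)$, as claimed.

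The step I would be most careful about is the legitimacy of the Palm manipulation, since all the measures in sight are merely $\sigma$-finite: I would first restrict to an event of finite mass on which Mecke applies in its classical form --- e.g.\ $\{T\in(a,b)\}$ together with a lower bound on the total right boundary length, or a truncation of the bead lengths --- carry out the identity there, and pass to the limit by monotone convergence. One also needs that the intrinsically defined $\cD_1,\cD_2,\cD_3$ (read off from the three-pointed surface as in the statement) agree with the pieces produced by the PPP computation; this holds because the bead decomposition of a thin quantum disk and its quantum cut point measure are measurable functions of the quantum surface, as recalled after Definition~\ref{def-thin-disk}.
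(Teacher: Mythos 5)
Your argument is correct and is exactly the route the paper indicates: the lemma is quoted from \cite{AHS20}, with the remark that its proof combines the Jacobian/decoupling idea of Lemma~\ref{lem-cut-disk} with Palm's (Mecke's) theorem for the Poisson point process of beads, which is precisely what you carry out, including the bookkeeping of the constants $(1-\frac{2}{\gamma^2}W)^{\pm 2}$. Your cautionary points about applying Mecke to $\sigma$-finite measures via truncation and about the intrinsic measurability of the bead decomposition are the right ones and are handled as you describe.
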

	
	\begin{lemma}[{\cite[Proposition 4.2]{AHS20}}] \label{lem-wedge-decomp}
	    Let $W < \frac{\gamma^2}2$. For $(p, \cW)$ sampled from $\cL_\cW^\mathrm{right} \cM^\mathrm{wedge}(W)$, let $\cW_\bullet$ be $\cW$ with $p$ added as the third marked point. 
	    Let $\cD_2$ be the connected component of $\cW_\bullet$ having three marked points, and let $\cD_1$ (resp.\ $\cW'$) be the ordered subset of $\cW_\bullet$ preceding (resp.\ succeeding) $\cD_2$. Then the law of $(\cD_1, \cD_2, \cW')$ is $(1-\frac2{\gamma^2}W)^2\Md_{0,2}(W) \times \Md_{0,2,\bullet}(\gamma^2-W) \times \cM^\mathrm{wedge}(W)$. 
	\end{lemma}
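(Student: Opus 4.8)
The plan is to carry out the same Palm-calculus computation that proves Lemma~\ref{lem-cut-disk}, now applied to the Poisson point process underlying the thin wedge. Recall from Definition~\ref{def-thin-wedge} that a sample from $\cM^\mathrm{wed}(W)$ is the concatenation of the disks of a Poisson point process $\cW=\{(u,\cD_u)\}$ with intensity $\mathds{1}_{t>0}\,dt\times\cM_{0,2}^\mathrm{disk}(\gamma^2-W)$, and that $\cL_\cW^\mathrm{right}=\sum_u\cL_{\cD_u}^\mathrm{right}$. First I would invoke Mecke's equation for this point process: sampling $(\cW,p)$ from $\cL_\cW^\mathrm{right}\cM^\mathrm{wed}(W)$ is equivalent to independently sampling a point process $\cW''$ of the same intensity, a real $u_0\sim\mathrm{Leb}_{\bbR_+}$, and a pair $(\cD_{u_0},p)$ from $\cL_{\cD_{u_0}}^\mathrm{right}\cM_{0,2}^\mathrm{disk}(\gamma^2-W)$, which by the definition in Section~\ref{subsec-third-point} is $\Md_{2,\bullet}(\gamma^2-W)$, and then inserting $(u_0,\cD_{u_0})$ into $\cW''$. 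In this description the distinguished three-marked-point component is exactly $\cD_2=\cD_{u_0}$ together with $p$, so $\cD_2\sim\Md_{2,\bullet}(\gamma^2-W)$, and it is independent of $(u_0,\cW'')$.

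Next I would split $\cW''$ at level $u_0$. The disks with $u>u_0$, reindexed by $u\mapsto u-u_0$, form a Poisson point process with the same intensity, hence a sample from $\cM^\mathrm{wed}(W)$; crucially its law does not depend on $u_0$, which both identifies $\cW'\sim\cM^\mathrm{wed}(W)$ and, together with the conditional independence given $u_0$ of the restrictions of $\cW''$ to $(0,u_0)$ and $(u_0,\infty)$, yields that $\cW'$ is independent of $\cD_1$ (and of $\cD_2$, by the previous paragraph). The disks with $u<u_0$ are, given $u_0$, a weight-$W$ wedge truncated at level $u_0$; integrating $u_0$ against $\mathrm{Leb}_{\bbR_+}$ and comparing with Definition~\ref{def-thin-disk}, which builds $\Md_2(W)$ by truncating a weight-$W$ wedge at level $T\sim(1-\tfrac{2W}{\gamma^2})^{-2}\mathrm{Leb}_{\bbR_+}$, I would read off $\cD_1\sim(1-\tfrac{2W}{\gamma^2})^2\Md_2(W)$. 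The factor $(1-\tfrac{2W}{\gamma^2})^2$ is exactly the mismatch between the bare Lebesgue measure produced by Palm calculus and the rescaled Lebesgue measure appearing in the definition of the thin disk; getting this constant right is the only place where one has to be careful, and the above comparison makes it transparent.

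I do not anticipate a genuine obstacle. The one technical point worth flagging is that $\cM_{0,2}^\mathrm{disk}(\gamma^2-W)$ is only $\sigma$-finite, so the Poisson point process has infinite intensity; I would justify Mecke's equation in this setting exactly as in \cite[Section 4]{AHS20}, for instance by first restricting to constituent disks whose boundary length exceeds $\e$, applying the finite-intensity version there, and then sending $\e\to0$ using that the constant in the resulting identity is independent of $\e$. Assembling the three mutually independent pieces identified above then gives that the law of $(\cD_1,\cD_2,\cW')$ is $(1-\tfrac2{\gamma^2}W)^2\,\Md_2(W)\,\Md_{2,\bullet}(\gamma^2-W)\,\cM^\mathrm{wed}(W)$, as claimed.
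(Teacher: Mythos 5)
Your proof is correct, and it is essentially the argument the paper intends: the paper does not reprove this statement but cites \cite[Proposition 4.2]{AHS20} and notes that the proof combines the truncation-level comparison of Lemma~\ref{lem-cut-disk} with Palm's (Mecke's) theorem for the Poisson point process of beads, which is exactly what you carry out, including the correct bookkeeping of the factor $(1-\frac{2W}{\gamma^2})^2$ coming from Definition~\ref{def-thin-disk}.
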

	
	Now we state a variant of 
	Lemma~\ref{lem-wedge-decomp} where instead of adding a boundary point we add a bulk point; the proof is identical. For a quantum surface $\cD$, let $\cA_\cD$ denote the quantum area measure on $\cD$. Let $\cM^\mathrm{disk}_{1,2}(W)$ be the law of a three-pointed quantum surface obtained from $(p, \cD) \sim  \cA_\cD(dp) \Md_{0,2}(W)(d\cD)$ by adding to $\cD$ the third marked point $p$. In other words, sample $\cD$ from the weighted measure $|\cA_\cD|\Md_{0,2}(W)(d\cD)$ (so the weighting is given by the total quantum area), and conditioned on $\cD$ sample a point $p$ from the probability measure proportional to $\cA_\cD$; let $\Md_{1,2}(W)$ be the law of the quantum surface obtained by adding $p$ to $\cD$.
	
	\begin{lemma}\label{lem-wedge-decomp-bulk}
	    Let $W < \frac{\gamma^2}2$.
	    For $(p, \cW)$ sampled from $\cA_\cW \cM^\mathrm{wedge}(W)$, let $\cW_\bullet$ be $\cW$ with $p$ added as the third marked point.  Let 
	    $\cD_2$ be the connected component of $\cW_\bullet$ having three marked points, and let $\cD_1$ (resp.\ $\cW'$) be the ordered subset of $\cW_\bullet$ preceding (resp.\ succeeding) $\cD_2$. Then the law of $(\cD_1, \cD_2, \cW')$ is $(1-\frac2{\gamma^2}W)^2\Md_{0,2}(W) \times \Md_{1,2}(\gamma^2-W) \times \cM^\mathrm{wedge}(W)$. 
	\end{lemma}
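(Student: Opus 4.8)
The final statement to prove is Lemma~\ref{lem-wedge-decomp-bulk}, which asserts a decomposition for a weight-$W$ quantum wedge ($W < \frac{\gamma^2}{2}$, so thin) with an extra marked point sampled from the quantum area measure, in analogy with Lemma~\ref{lem-wedge-decomp} (where the marked point is sampled from boundary length).

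\textbf{Plan.} The strategy is to mimic the proof of Lemma~\ref{lem-wedge-decomp} verbatim, replacing the boundary length measure $\cL^{\mathrm{right}}_\cW$ by the area measure $\cA_\cW$ throughout. Recall from Definition~\ref{def-thin-wedge} that $\cM^\mathrm{wed}(W)$ is the law of the beaded surface obtained by concatenating the atoms $\{(u, \cD_u)\}$ of a Poisson point process $\Pi$ on $\bbR_+ \times \Md_{0,2}(\gamma^2-W)$ with intensity $\mathds 1_{t>0}\, dt \times \Md_{0,2}(\gamma^2 - W)$. The total quantum area of $\cW$ is $\cA_\cW = \sum_u \cA_{\cD_u}$, so weighting $\cM^\mathrm{wed}(W)$ by $\cA_\cW$ and then sampling the marked bulk point $p$ is the same as size-biasing the Poisson point process by $\sum_u \cA_{\cD_u}$ and then picking the atom $(u_*, \cD_{u_*})$ containing $p$ with probability proportional to $\cA_{\cD_{u_*}}$ within that atom.

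\textbf{Key steps.} First I would apply Palm's theorem (the Mecke equation) for the Poisson point process $\Pi$: weighting the law of $\Pi$ by $\int f(x)\, \Pi(dx)$ for a nonnegative measurable $f$ on $\bbR_+\times \Md_{0,2}(\gamma^2-W)$ and then sampling a distinguished atom $x_* = (u_*, \cD_{u_*})$ with probability $\propto f(x_*)$ is the same as sampling $x_*$ from $f(x)\,\mathds 1_{t>0} dt\times \Md_{0,2}(\gamma^2-W)$ and independently sampling $\Pi$ from its original (unweighted) law, then adding $x_*$ as an extra atom. Here we take $f(u, \cD) = \cA_\cD$ and, conditionally on the distinguished atom, additionally sample $p$ within $\cD_{u_*}$ from the probability measure $\cA_{\cD_{u_*}}/\cA_{\cD_{u_*}}(D)$; this exactly produces $(p, \cW) \sim \cA_\cW \cM^\mathrm{wed}(W)$ with $p$ marked as the third point. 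Second, I would observe that the distinguished atom $\cD_{u_*}$, together with the marked bulk point $p$, has law $\Md_{1,2}(\gamma^2-W)$ by definition of $\Md_{1,2}$ applied to the weight-$(\gamma^2-W)$ quantum disk (note $\gamma^2 - W > \frac{\gamma^2}{2}$, so this is a thick disk). Third, I would use the splitting/Poissonization: conditionally on $u_*$, the atoms with $u < u_*$ and $u > u_*$ are independent Poisson point processes, and the concatenation of $\{\cD_u : u < u_*\}$ is a thin quantum disk of weight $W$ by Definition~\ref{def-thin-disk} (after accounting for the length variable $T$; here one uses that $u_*$ plays the role of the $T$-variable up to the $(1-\frac{2W}{\gamma^2})^{-2}$ factor, exactly as in Lemma~\ref{lem-wedge-decomp}), while the concatenation of $\{\cD_u : u > u_*\}$ is again a weight-$W$ quantum wedge by the memorylessness of the Poisson process on $\bbR_+$. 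Collecting the pieces yields that $(\cD_1, \cD_2, \cW')$ has law $(1-\frac{2}{\gamma^2}W)^2 \Md_2(W)\, \Md_{1,2}(\gamma^2-W)\, \cM^\mathrm{wed}(W)$.

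\textbf{Main obstacle.} Since the computation is essentially identical to that of Lemma~\ref{lem-wedge-decomp}, with $\cL^\mathrm{right}_\cW$ replaced by $\cA_\cW$ and $\Md_{2,\bullet}$ replaced by $\Md_{1,2}$, there is no substantive new difficulty; the only point requiring a little care is to check that the bookkeeping of the constant $(1-\frac2{\gamma^2}W)^2$ is unchanged, which it is because that constant arises purely from the change of variables relating the $T$-variable in Definition~\ref{def-thin-disk} to the Poisson point process parametrization and has nothing to do with whether the distinguished point is on the boundary or in the bulk. Hence I would simply write ``the proof is identical to that of Lemma~\ref{lem-wedge-decomp}, replacing the boundary length measure with the area measure,'' as the excerpt itself suggests.
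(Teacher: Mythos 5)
Your proposal is correct and follows essentially the same route as the paper, which simply declares the proof identical to that of Lemma~\ref{lem-wedge-decomp} (i.e.\ the Palm/Mecke decomposition of the Poissonian bead structure together with the change-of-variables idea of Lemma~\ref{lem-cut-disk}), with the boundary length measure replaced by the area measure and $\Md_{2,\bullet}$ by $\Md_{1,2}$. Your explicit verification that the distinguished bead with its bulk point has law $\Md_{1,2}(\gamma^2-W)$ and that the constant $(1-\frac{2}{\gamma^2}W)^2$ is unaffected is exactly the bookkeeping the paper leaves implicit.
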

	
	Finally, we give a thin version of Lemma~\ref{lem-disk=qt}. 
	
		\begin{lemma}\label{lem-disk=qt-thin}
			For $W \in (0, \frac{\gamma^2}2$), there is a constant $c = c(W)$ such that 
			\[\Md_{0,2, \bullet}(W) =  c(W) \QT(W, W, 2).\]	
		\end{lemma}
		\begin{proof}
			Lemma~\ref{lem-disk=qt} identifies $\Md_{0, 2,\bullet}(\gamma^2 - W)$ with $\QT(\gamma^2-W, \gamma^2-W, 2)$, so the claim follows immediately from Definition~\ref{def-qt-thin} and  Lemma~\ref{lem-disk-decomp}.
		\end{proof}

	\subsection{Resampling a neighborhood of $i$}\label{subsec-quantum-cones}
	
	In this section, we prove Proposition~\ref{prop-resample-inside}. The starting point of our proof is an infinite-volume quantum surface called the quantum cone, which we now define. 
	
	Let $\cC = \bbR \times [0,2\pi]/{\sim}$ be the horizontal cylinder, where we identify $(t, 0) \sim (t, 2\pi)$ for all $t \in \bbR$. Let $h_\cC$ be the GFF on $\cC$ normalized to have mean zero on $\{0\} \times [0,2\pi]/{\sim}$. As in the case of the horizontal strip, we can decompose $h_\cC = h^\mathds{1}_\cC + h^2_\cC$ where $h^\mathds{1}_\cC$ is constant on each line segment $\{t \} \times [0,2\pi]$ for $t \in \bbR$, while $h^2_\cC$ has mean zero on all such line segments. 
	\begin{definition}\label{def-quantum-cone}
	Fix $W > 0$ and let $\alpha = Q - \frac{W}{2\gamma}$. Sample independent standard Brownian motions $(B_t)_{t \geq 0}, (\tilde B_t)_{t \geq 0}$ conditioned on $\tilde B_{t} - (Q-\alpha) < 0$ for all $t>0$. Let
	\[Y_t := \left\{ \begin{array}{rcl} 
				B_{t}+(Q-\alpha) t & \mbox{for} & t\ge 0\\
				\tilde{B}_{-t} -(Q-\alpha) |t| & \mbox{for} & t<0
			\end{array} 
			\right. \]
	Let $\hat \psi = \psi_1 + \psi_2$ where $\psi_1$ is the distribution on $\cC$ which is constant on each line segment $\{t\} \times [0,2\pi]$ satisfying $\psi_1(z) = Y_{\mathrm{Re}(z)}$, and $\psi_2$ is an independent copy of $h_\cC^2$. The \emph{weight $W$ quantum cone} is $(\cC, \hat \psi, -\infty, +\infty)/{\sim_\gamma}$, and we define the probability measure $\cM^\mathrm{cone}(W)$ to be its law.
	\end{definition}
	
	The following variant of \cite[Proposition 5.1]{Ang23} states that the uniform embedding of a quantum cone is the Liouville field.
	\begin{proposition}\label{prop-cone-unif-embed}
	Let $W>0$ and $\alpha = Q - \frac W{2\gamma}$. Sample $(\cV, T, \theta)$ from $\cM^\mathrm{cone}(W) \times dT \times \mathds{1}_{\theta \in [0,2\pi)}\frac1{2\pi} d\theta$ and let $(\bbC, \psi_0, 0, \infty)$ be an embedding of $\cV$ chosen in a way not depending on $(T, \theta)$. Let $f_{T, \theta}(z) = e^{T+i\theta}z$. Then the law of $\psi = f_T \bullet_\gamma \psi_0$ is $(Q -\alpha)^{-1} \LF_{\bbC}^{(\alpha, 0), (2Q-\alpha, \infty)}$.
	\end{proposition}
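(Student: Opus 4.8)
The plan is to transfer the statement to the infinite cylinder $\cC$ via the exponential map, where the weight $W$ quantum cone has the explicit description of Definition~\ref{def-quantum-cone}, and then to reduce the claim to the classical ``uniform embedding'' of a quantum cone, i.e.\ to \cite[Proposition 5.1]{Ang23} and the underlying analysis in \cite[Section 4.2]{DMS14}. The only genuinely new work is bookkeeping of conformal factors and the multiplicative constant.

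First I would set up the cylinder picture. Let $F\colon\cC\to\bbC\setminus\{0\}$, $F(z)=e^z$, which sends $-\infty\mapsto 0$ and $+\infty\mapsto\infty$; note $Q\log|(F^{-1})'(w)|=-Q\log|w|$ and $Q\log|F'(z)|=Q\,\mathrm{Re}\,z$. Pulling the embedding $(\bbC,\psi_0,0,\infty)$ back through $F$ gives an embedding of $\cV$ on $\cC$, namely $\hat\psi_0=\psi_0\circ F+Q\log|F'|=F^{-1}\bullet_\gamma\psi_0$; since $(T,\theta)$ is independent of the choice of embedding, we may assume $\hat\psi_0$ is the explicit field $\psi_1+\psi_2$ of Definition~\ref{def-quantum-cone}. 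A short computation using the cocycle property of $\bullet_\gamma$ shows that $f_{T,\theta}(w)=e^{T+i\theta}w$ on $\bbC$ corresponds under $F$ exactly to the isometry $z\mapsto z-T-i\theta$ of $\cC$; concretely $F^{-1}\bullet_\gamma(f_{T,\theta}\bullet_\gamma\psi_0)=\hat\psi_0(\,\cdot\,-T-i\theta)$, the $-QT$ produced by $\bullet_\gamma$ under $f_{T,\theta}$ cancelling against the $\pm Q\,\mathrm{Re}$ terms coming from $F$. Likewise, using $G_\bbC(w,0)=-\log|w|+\log|w|_+$ and $\alpha=Q-\tfrac{W}{2\gamma}$, a field $\phi\sim\LF_\bbC^{(\alpha,0),(2Q-\alpha,\infty)}$ can be written $\phi(w)=h_\bbC(w)-\alpha\log|w|+\mathbf c$ with $h_\bbC\sim P_\bbC$ and $\mathbf c\sim\mathrm{Leb}_\bbR$ (here $C_\bbC=1$ and the $\mathbf c$-law is $e^{((2Q-\alpha)+\alpha-2Q)c}\,dc=dc$, the Lebesgue law reflecting the scale invariance of the cone), so that $F^{-1}\bullet_\gamma\phi=h_\bbC\circ\exp+(Q-\alpha)\mathrm{Re}(\cdot)+\mathbf c$ with $h_\bbC\circ\exp\stackrel d=h_\cC\sim P_\cC$. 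Thus it suffices to show that translating $\hat\psi_0$ on $\cC$ by $T+i\theta$ yields $(Q-\alpha)^{-1}$ times the law of $h_\cC+(Q-\alpha)\mathrm{Re}(\cdot)+\mathbf c$.

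Next I would split off the part having mean zero on vertical circles. Both $\hat\psi_0$ and $h_\cC+(Q-\alpha)\mathrm{Re}(\cdot)+\mathbf c$ have this part distributed as $h_\cC^2$, which is invariant in law under the circle rotations (so $\theta$ plays no role) and independent of the radial part. Hence the claim reduces to the identity of measures
\[
\int_\bbR\big(\text{law of }(Y_{t-T})_{t\in\bbR}\big)\,dT\;=\;(Q-\alpha)^{-1}\,\big(\text{law of }(B_t+(Q-\alpha)t+\mathbf c)_{t\in\bbR}\big),
\]
where $Y$ is the two-sided process of Definition~\ref{def-quantum-cone}, $B$ is a two-sided standard Brownian motion with $B_0=0$, and $\mathbf c\sim\mathrm{Leb}_\bbR$.

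Finally, this displayed identity is precisely the uniform embedding computation for quantum cones. I would prove it as in \cite[Section 4.2]{DMS14} (cf.\ \cite[Proposition 5.1]{Ang23}): since the drift $Q-\alpha>0$, the process $Z_t:=B_t+(Q-\alpha)t+\mathbf c$ tends to $\pm\infty$ as $t\to\pm\infty$, and by a Girsanov change of measure together with the description of a Brownian motion with negative drift conditioned to stay negative as a (time-reversed) Bessel-type process, the law of $Z$ ``seen from a Lebesgue-generic time,'' suitably recentered, coincides with the law of $Y$; the factor $(Q-\alpha)^{-1}$ appears as the reciprocal of the drift via a Palm/exchange argument. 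I expect this last step to be the main obstacle: carefully justifying the exchange between the $dT$-average and the cone's construction (which hard-wires a distinguished cone point through the conditioning of $\tilde B$), i.e.\ making the Palm-type argument and the Bessel identification rigorous at the level of $\sigma$-finite measures. Since this is exactly what is carried out in \cite{DMS14,Ang23}, in the write-up I would present the proof as an application of \cite[Proposition 5.1]{Ang23}, spelling out only the changes needed to include the uniform rotation $\theta$, to pass to the parametrization $\alpha=Q-\tfrac{W}{2\gamma}$, and to keep track of the constant $(Q-\alpha)^{-1}$.
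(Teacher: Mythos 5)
Your proposal is correct and follows essentially the same route as the paper: reduce to the canonical cylinder embedding (the precise justification, as in the paper, being that $dT\,\tfrac{1}{2\pi}\mathds{1}_{[0,2\pi)}d\theta$ is a Haar measure on the scaling--rotation group, not merely independence of $(T,\theta)$ from the embedding), split off the lateral field $h^2_\cC$, and reduce everything to the one-dimensional identity between the $dT$-averaged process $Y_{\cdot-T}$ and drifted Brownian motion plus a Lebesgue-distributed constant, with the factor $(Q-\alpha)^{-1}$. The paper likewise imports that last process-level identity from \cite{Ang23}, so no further comparison is needed.
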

	\begin{proof}
	We will prove the result in the case where $\psi_0 = \exp \bullet_\gamma \hat \psi$, with $\hat \psi$ as in Definition~\ref{def-quantum-cone}. The result for arbitrary $\psi_0$ then follows since the measure $dT \times \mathds{1}_{\theta \in [0,2\pi)} \frac1{2\pi} d\theta$ induces a Haar measure on the space of conformal automorphisms of the complex plane $\bbC$.

	We first claim that the random processes $X_1, X_2$ defined below have the same law:
	\begin{itemize}
	    \item Let $P$ be the law of $Y$ defined in Proposition~\ref{def-quantum-cone}. Sample $(Y,T) \sim P \times dT$ and define $X_1(t) = Y_{t-T}$ for all $t \in \bbR$.
	    \item Let $P'$ be the law of standard two-sided Brownian motion $B$. Sample $(B, \mathbf c)\sim (Q-\alpha)^{-1} P' \times dc$ and let $X_2(t) = B_t + (Q-\alpha)t + \mathbf c$ for all $t \in \bbR$. 
	\end{itemize}
	The proof is identical to that of \cite[Proposition 5.2]{Ang23}. Given this claim, the proposition follows by adding an independent copy of $h_\cC^2$ to obtain an identity of fields on $\cC$, namely, $X_1(\mathrm{Re}\, \cdot) + h^2_\cC \stackrel d= X_2(\mathrm{Re} \,\cdot) + h^2_\cC$, then parametrizing in $\bbC$ via the map $z \mapsto e^z$; the first field agrees in law with $\psi$, and the second field has law $(Q -\alpha)^{-1} \LF_{\bbC}^{(\alpha, 0), (2Q-\alpha, \infty)}$. See the proof of \cite[Proposition 5.1]{Ang23} for details.
	\end{proof}
	
	As we now see, the quantum cone with extra marked point is described by a Liouville field. For $W > 0$, let $P_W$ be the law of a field $\psi_0$ such that $(\bbC, \psi_0, 0, \infty)/{\sim_\gamma}$ is a weight $W$ quantum cone (e.g., let $P_W$ be the law of $\exp \bullet_\gamma \hat \psi$ where $\hat \psi$ is as sampled in Definition~\ref{def-quantum-cone}). The exact choice of $P_W$ does not matter in Proposition~\ref{prop-pointed-cone}, since we will re-embed in a quantum intrinsic way anyway.
	
	\begin{proposition}\label{prop-pointed-cone}
	Let $W >0$. Sample $(z, \psi_0) \sim \cA_{\psi_0}(dz) P_W(d\psi_0)$ and let $\psi$ be the field on $\bbC$ such that $(\bbC, \psi, 0, 1, \infty)/{\sim_\gamma} = (\bbC, \psi_0, 0, z, \infty)/{\sim_\gamma}$. Then the law of $\psi$ is $\frac1{2\pi(Q-\alpha)}\LF_{\bbC}^{(\alpha, 0), (\gamma, 1), (2Q-\alpha, \infty)}$, where $\alpha = Q - \frac W{2\gamma}$.
	\end{proposition}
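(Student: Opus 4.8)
The plan is to deduce Proposition~\ref{prop-pointed-cone} from Proposition~\ref{prop-cone-unif-embed} by inserting an extra bulk marked point and using the ``add a point'' trick together with the resampling structure of the Liouville field on $\bbC$ (Lemma~\ref{lem-markov-LF-C}). First I would weight the identity in Proposition~\ref{prop-cone-unif-embed} by quantum area: sampling $(\cV, T, \theta)$ from $\cM^\mathrm{cone}(W) \times dT \times \frac1{2\pi}\mathds{1}_{\theta\in[0,2\pi)}d\theta$ and then a point $z$ from the quantum area measure $\cA_\psi$, the field $\psi = f_{T,\theta}\bullet_\gamma \psi_0$ has law $\cA_\psi(\bbC)(Q-\alpha)^{-1}\LF_\bbC^{(\alpha,0),(2Q-\alpha,\infty)}(d\psi)$, i.e.\ $\LF_\bbC^{(\alpha,0),(2Q-\alpha,\infty)}$ reweighted by total quantum area (with the point $z$ chosen from the normalized area measure). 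Equivalently, sampling $(z,\psi)$ from $\cA_\psi(dz)(Q-\alpha)^{-1}\LF_\bbC^{(\alpha,0),(2Q-\alpha,\infty)}(d\psi)$ gives exactly the law of the quantum cone with a bulk point added (re-embedded so that $0,\infty$ are the cone points and $z$ is moved to $1$); this is precisely the object $(z,\psi_0)\sim \cA_{\psi_0}(dz)P_W(d\psi_0)$ of the statement, transported to the embedding with marked points $(0,1,\infty)$.

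So the content reduces to the following Liouville-field computation: if $(z,\psi) \sim \cA_\psi(dz)\LF_\bbC^{(\alpha,0),(2Q-\alpha,\infty)}(d\psi)$, and $\varphi_z:\bbC\to\bbC$ is the rotation/scaling fixing $0,\infty$ and sending $z\mapsto 1$, then the law of $\varphi_z\bullet_\gamma\psi$ is $\frac1{2\pi}\LF_\bbC^{(\alpha,0),(\gamma,1),(2Q-\alpha,\infty)}$. This is the standard ``Girsanov / Cameron--Martin shift $\leftrightarrow$ adding a $\gamma$-insertion'' identity: $\cA_\psi(dz) = \lim_{\eps\to0}\eps^{\gamma^2/2}e^{\gamma\psi_\eps(z)}d^2z$, and weighting $\LF_\bbC^{(\alpha,0),(2Q-\alpha,\infty)}$ by $e^{\gamma\psi_\eps(z)}$ (suitably renormalized) shifts the underlying GFF by $\gamma$ times the Green's function $G_\bbC(\cdot,z)$ and changes the constant, which is exactly what converts $\LF_\bbC^{(\alpha,0),(2Q-\alpha,\infty)}$ into $\LF_\bbC^{(\alpha,0),(\gamma,z),(2Q-\alpha,\infty)}$ up to an explicit deterministic factor depending on $z$ (from the $|z|_+$ terms, the cross terms with the $0$ and $\infty$ insertions, and the $\eps$-renormalization). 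One then integrates out $z$ against $d^2z$; the conformal covariance of the Liouville field (Lemma~\ref{lem:lf-covariance}, in its $\bbC$ form, applied to $\varphi_z$, noting $\Delta_\gamma = 1$ so $|\varphi_z'|^{-2\Delta_\gamma}$ produces a Jacobian that cancels the $d^2z$ appropriately) turns the $z$-dependent field law into the fixed law $\LF_\bbC^{(\alpha,0),(\gamma,1),(2Q-\alpha,\infty)}$, and the remaining finite integral over the orbit produces the constant $2\pi$ (from the rotation direction) — matching the claimed normalization $\frac1{2\pi(Q-\alpha)}$ once the $(Q-\alpha)^{-1}$ from Proposition~\ref{prop-cone-unif-embed} is carried along. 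The cleanest way to organize this is to cite the analogous already-established statements: this is exactly the $\bbC$-analogue of Proposition~\ref{prop-pointed-cone}'s half-plane cousin, and the ``$\cA_\psi$-reweighting adds a $\gamma$-insertion'' fact is the content behind Lemma~\ref{lem-wedge-decomp-bulk} and standard LCFT manipulations (cf.\ \cite{ASY22, AHS21}), so I would phrase the proof as: reduce to Proposition~\ref{prop-cone-unif-embed}, apply the area-reweighting identity for $\LF_\bbC$, then apply conformal covariance to re-embed with marked point at $1$.

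The main obstacle I expect is bookkeeping of the deterministic constants: tracking the $\eps^{\gamma^2/2}$ renormalization, the $|z|_+$ and cross-term factors $e^{\alpha\gamma G_\bbC(z,0) + \gamma(2Q-\alpha)G_\bbC(z,\infty)}$ coming from inserting $(\gamma,z)$, and the Jacobian $|\varphi_z'|^2$ from the change of variables, and checking they conspire to give precisely $\frac1{2\pi(Q-\alpha)}$ with no residual $z$-dependence after integration. A clean way to sidestep most of this is to avoid computing the constant directly and instead argue: (i) by the above reweighting-and-re-embedding argument the law of $\psi$ is \emph{some} constant multiple of $\LF_\bbC^{(\alpha,0),(\gamma,1),(2Q-\alpha,\infty)}$ — indeed the area-reweighted $\LF_\bbC^{(\alpha,0),(2Q-\alpha,\infty)}$ with the marked point re-embedded to $1$ is manifestly rotationally/scale symmetric about $0,\infty$, and its field law is absolutely continuous to that Liouville measure with an explicitly GFF-Markovian Radon--Nikodym derivative, forcing proportionality; (ii) then pin down the constant $\frac1{2\pi(Q-\alpha)}$ by a single scalar check, e.g.\ comparing the total masses restricted to a convenient event, or by matching with the known constant in the strip/half-plane analogue via \cite{Ang23}. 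I would write the proof following the structure of the proof of \cite[Proposition~5.1]{Ang23} and Proposition~\ref{prop-cone-unif-embed} above, only indicating the extra area-reweighting step in detail and referring to standard LCFT computations for the constant.
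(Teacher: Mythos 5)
Your proposal is correct and takes essentially the same route as the paper: the paper's proof is a one-line reduction, stating that the argument is identical to that of \cite[Proposition 2.18]{AHS21} with Proposition~\ref{prop-cone-unif-embed} playing the role of the uniform-embedding input, and that argument is precisely your combination of uniform embedding, the Girsanov ``area-weighting adds a $(\gamma,z)$ insertion'' identity, and conformal covariance to move the marked point to $1$. The only difference is that you leave the constant bookkeeping (the interplay of the Haar measure $dT\,\tfrac{d\theta}{2\pi}$, the $d^2z$ factor, and $\Delta_\gamma=1$) somewhat informal, but this is exactly the computation carried out in the cited proof.
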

	\begin{proof}
	The proof is identical to that of \cite[Proposition 2.18]{AHS21}, using Proposition~\ref{prop-cone-unif-embed} in place of \cite[Proposition 2.13]{AHS21}.
	\end{proof}
	
	On the other hand, the quantum cone with extra marked point cut by an SLE curve has the following useful decomposition. 
	
	\begin{lemma}\label{lem-decomp-cone}
	    Let $W \in (0, \frac{\gamma^2}2)$. Let $\SLE_\kappa^\mathrm{wp}(\rho)$ denote the law of whole-plane $\SLE_\kappa(\rho)$ from $0$ to $\infty$. Sample $(z, \psi_0, \eta_0^\mathrm{wp}) \sim \cA_{\psi_0}(dz) P_W(d\psi_0) \SLE_\kappa^\mathrm{wp}(W-2)$. For each connected component $B$ of $\bbC \backslash \eta_0^\mathrm{wp}$, let $p_B$ and $q_B$ be the first and last points of $\partial B$ hit by $\eta_0^\mathrm{wp}$. Let $\tilde \cW$ be the collection of all quantum surfaces $(B, \psi_0, p_B, q_B)/{\sim_\gamma}$, and  endow $\tilde \cW$ with the ordering where $(B_1, \psi_0, p_{B_1}, q_{B_1})/{\sim_\gamma}$ precedes $(B_2, \psi_0, p_{B_2}, q_{B_2})/{\sim_\gamma}$ if $\eta_0^\mathrm{wp}$ hits $q_{B_1}$ for the last time before it hits $q_{B_2}$ for the last time.

	    Let $B_\bullet$ be the connected component of $\bbC \backslash \eta_0^\mathrm{wp}$ containing $z$, and let $\cB_\bullet = (B_\bullet, \psi_0, z, p_{B_\bullet}, q_{B_\bullet})/{\sim_\gamma}$. Let $ \cD \subset \tilde \cW$ (resp.\ $\cW \subset \tilde \cW$) be the ordered collection of elements that come before (resp.\ after) $(B_\bullet, \psi_0, p_{B_\bullet}, q_{B_\bullet})/{\sim_\gamma}$. Then the joint law of $(\cD, \cB_\bullet, \cW)$ is $(1 - \frac{2W}{\gamma^2})^2 \Md_{0,2}(W) \times \Md_{1,2}(\gamma^2 - W)  \times \cM^\mathrm{wed}(W)$.
	\end{lemma}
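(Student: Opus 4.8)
The plan is to reduce Lemma~\ref{lem-decomp-cone} to Lemma~\ref{lem-wedge-decomp-bulk} via the conformal welding description of quantum cones. The key input is the identity that \emph{cutting a weight-$W$ quantum cone along an independent whole-plane $\SLE_\kappa^\mathrm{wp}(W+2)$ curve from the cone point $0$ to $\infty$ yields a weight-$W$ quantum wedge}: in the notation of the lemma, if $\psi_0 \sim P_W$ and $\eta_0^\mathrm{wp} \sim \SLE_\kappa^\mathrm{wp}(W+2)$ are independent, then the ordered chain $\wt\cW$ of beads $(B, \psi_0, p_B, q_B)/{\sim_\gamma}$, glued consecutively at the points where $\eta_0^\mathrm{wp}$ touches itself, has law $\cM^\mathrm{wed}(W)$, with the curve endpoint at $0$ (resp.\ at $\infty$) playing the role of the first (resp.\ second) marked point of the wedge. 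This is the whole-plane counterpart of the self-welding of a quantum wedge and can be cited from \cite{DMS14} (or obtained by rescaling / a limiting argument from the welding identities already recorded). Since $W<\frac{\gamma^2}2$, this wedge is thin, so by Definition~\ref{def-thin-wedge} it is a Poisson point process of weight-$(\gamma^2-W)$ two-pointed quantum disks concatenated at their cut points, and $\wt\cW$ is exactly this bead chain.

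Given this identity, I would then add the bulk marked point. Sampling $(z,\psi_0,\eta_0^\mathrm{wp})$ from $\cA_{\psi_0}(dz)\,P_W(d\psi_0)\,\SLE_\kappa^\mathrm{wp}(W+2)$ and recording the bead $B_\bullet$ containing $z$ amounts to size-biasing the wedge $\cM^\mathrm{wed}(W)$ by its total quantum area and marking a point from the area measure. Applying Palm's formula for the Poisson point process exactly as in the proofs of Lemmas~\ref{lem-wedge-decomp} and~\ref{lem-wedge-decomp-bulk}: the distinguished bead $\cB_\bullet=(B_\bullet,\psi_0,z,p_{B_\bullet},q_{B_\bullet})/{\sim_\gamma}$ becomes an independent sample of a weight-$(\gamma^2-W)$ two-pointed disk size-biased by area with a bulk point added from the area measure, i.e.\ $\Md_{1,2}(\gamma^2-W)$; the beads strictly before $B_\bullet$ (those accumulating toward $0$, truncated at $B_\bullet$) form a weight-$W$ wedge truncated at an independent length, which by Definition~\ref{def-thin-disk} is $\Md_{0,2}(W)$; and the beads after $B_\bullet$ form an untruncated weight-$W$ wedge $\cM^\mathrm{wed}(W)$. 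The constant $(1-\frac{2W}{\gamma^2})^2$ is produced by the two cut points at which $B_\bullet$ is detached from its neighbours, precisely as in Lemma~\ref{lem-cut-disk}. Combining these yields $(\cD,\cB_\bullet,\cW)\sim (1-\frac{2W}{\gamma^2})^2\,\Md_{0,2}(W)\,\Md_{1,2}(\gamma^2-W)\,\cM^\mathrm{wed}(W)$, as claimed.

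The main obstacle is the first step: correctly identifying the surface obtained by cutting the cone along the whole-plane SLE as a weight-$W$ wedge with the stated marked points — in particular pinning down the whole-plane SLE parameter $W+2$, matching it against the cone weight $W$ (equivalently the insertion $\alpha=Q-\frac{W}{2\gamma}$ at $0$ and $2Q-\alpha$ at $\infty$) and the thick-disk weight $\gamma^2-W$ of the wedge's beads, and checking that the two ends $0,\infty$ of the cone correspond to the two marked points of the wedge in the order used by the bead-ordering in the lemma. Once this cone welding identity is in hand, the Poissonian bookkeeping of the three pieces and of the multiplicative constant is identical to the arguments for Lemmas~\ref{lem-wedge-decomp} and~\ref{lem-wedge-decomp-bulk}.
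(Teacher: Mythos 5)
Your proposal matches the paper's proof: the paper likewise deduces the lemma from exactly two facts, namely that cutting the cone field along the independent whole-plane $\SLE_\kappa(W+2)$ yields a weight-$W$ quantum wedge (cited from \cite{DMS14}), and that adding a bulk point from the quantum area measure and decomposing at the distinguished bead is precisely Lemma~\ref{lem-wedge-decomp-bulk}, whose Palm-calculus proof you re-sketch. So the approach is essentially identical, with your extra care about matching the two ends of the cone to the wedge ordering being a detail the paper handles implicitly through the \cite{DMS14} citation.
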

	\begin{proof}
	    This follows from two facts. First, if we had instead sampled $(\psi_0, \eta_0^\mathrm{wp}) \sim P_W \SLE_\kappa^\mathrm{wp}(W-2)$ (that is, did not sample $z$), then $\tilde \cW$ would have the law of a weight $W$ quantum wedge \cite[Theorem 1.2.4]{DMS14}. Second, if we add to $\tilde \cW$ a marked point from quantum area measure, we can decompose it as in Lemma~\ref{lem-wedge-decomp-bulk}. 
	\end{proof}
	
	Combining the previous two claims gives the following description when a certain Liouville field is cut by independent whole-plane $\SLE$. See Figure~\ref{fig-sample-inside} (left).
	
	\begin{lemma}\label{lem-LF-SLE}
	Let $W \in (0, \frac{\gamma^2}2)$ and $\alpha = Q - \frac W{2\gamma}$. 
	    Sample $(\psi, \eta^\mathrm{wp})$ from $\frac1{2\pi(Q-\alpha)} \LF_{\mathbb C}^{(\alpha, 0), (\gamma, 1), (2Q-\alpha, \infty)} \times \SLE_\kappa^\mathrm{wp}(W-2)$. For each connected component $B$ of $\bbC \backslash \eta^\mathrm{wp}$, let $p_B$ and $q_B$ be the first and last points of $\partial B$ hit by $\eta^\mathrm{wp}$. Let $\tilde \cW$ be the collection of all quantum surfaces $(B, \psi, p_B, q_B)/{\sim_\gamma}$, and  endow $\tilde \cW$ with the ordering where $(B_1, \psi, p_{B_1}, q_{B_1})/{\sim_\gamma}$ precedes $(B_2, \psi, p_{B_2}, q_{B_2})/{\sim_\gamma}$ if $\eta^\mathrm{wp}$ hits $q_{B_1}$ for the last time before it hits $q_{B_2}$ for the last time.

	    Let $B_\bullet$ be the connected component of $\bbC \backslash \eta_0^\mathrm{wp}$ containing $1$, and let $\cB_\bullet = (B_\bullet, \psi, 1, p_{B_\bullet}, q_{B_\bullet})/{\sim_\gamma}$. Let $ \cD \subset \tilde \cW$ (resp.\ $\cW \subset \tilde \cW$) be the ordered collection of elements that come before (resp.\ after) $(B_\bullet, \psi, p_{B_\bullet}, q_{B_\bullet})/{\sim_\gamma}$. Then the joint law of $(\cD, \cB_\bullet, \cW)$ is $(1 - \frac{2W}{\gamma^2})^2 \Md_{0,2}(W) \times \Md_{1,2}(\gamma^2 - W) \times \cM^\mathrm{wed}(W)$.
	\end{lemma}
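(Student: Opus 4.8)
The plan is to combine Proposition~\ref{prop-pointed-cone} with Lemma~\ref{lem-decomp-cone}, since the two lemmas are designed to dovetail exactly. Indeed, Proposition~\ref{prop-pointed-cone} says that if we sample $(z, \psi_0) \sim \cA_{\psi_0}(dz) P_W(d\psi_0)$ and re-embed so that $z$ is sent to $1$, then the resulting field $\psi$ has law $\frac1{2\pi(Q-\alpha)}\LF_\bbC^{(\alpha,0),(\gamma,1),(2Q-\alpha,\infty)}$. This is precisely the field law appearing in the statement of Lemma~\ref{lem-LF-SLE}. Meanwhile, Lemma~\ref{lem-decomp-cone} describes, for a sample $(z, \psi_0, \eta_0^\mathrm{wp}) \sim \cA_{\psi_0}(dz) P_W(d\psi_0) \SLE_\kappa^\mathrm{wp}(W+2)$, the joint law of the pieces $(\cD, \cB_\bullet, \cW)$ obtained by cutting the marked quantum cone along $\eta_0^\mathrm{wp}$: it is $(1-\frac{2W}{\gamma^2})^2 \Md_{0,2}(W)\Md_{1,2}(\gamma^2-W)\cM^\mathrm{wed}(W)$, which is exactly the target joint law in Lemma~\ref{lem-LF-SLE}.

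First I would set up the coupling: sample $(z, \psi_0, \eta_0^\mathrm{wp})$ from $\cA_{\psi_0}(dz) P_W(d\psi_0) \SLE_\kappa^\mathrm{wp}(W+2)$ as in Lemma~\ref{lem-decomp-cone}. Note that the whole-plane $\SLE_\kappa(W+2)$ curve is independent of $(z, \psi_0)$ in this sampling. Now I would choose the conformal automorphism $\varphi$ of $\bbC$ that sends $(0, z, \infty)$ to $(0, 1, \infty)$ (this is $\varphi(w) = w/z$ up to rotation; using the same re-embedding convention as in Proposition~\ref{prop-pointed-cone}), and set $\psi := \varphi \bullet_\gamma \psi_0$ and $\eta^\mathrm{wp} := \varphi \circ \eta_0^\mathrm{wp}$. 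By Proposition~\ref{prop-pointed-cone}, $\psi$ has law $\frac1{2\pi(Q-\alpha)}\LF_\bbC^{(\alpha,0),(\gamma,1),(2Q-\alpha,\infty)}$. Since whole-plane $\SLE_\kappa(W+2)$ from $0$ to $\infty$ is invariant in law under scaling and rotation fixing $0$ and $\infty$ (its law is conformally invariant under such maps, by definition of whole-plane SLE), and since $\eta_0^\mathrm{wp}$ was independent of $(z, \psi_0)$, the pair $(\psi, \eta^\mathrm{wp})$ has law $\frac1{2\pi(Q-\alpha)}\LF_\bbC^{(\alpha,0),(\gamma,1),(2Q-\alpha,\infty)} \times \SLE_\kappa^\mathrm{wp}(W+2)$ — exactly the sampling in Lemma~\ref{lem-LF-SLE}.

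Finally I would check that the decomposition data is preserved by $\varphi$. The connected components of $\bbC \backslash \eta^\mathrm{wp}$ are the $\varphi$-images of the components of $\bbC \backslash \eta_0^\mathrm{wp}$; the first/last hitting points $p_B, q_B$ transform accordingly; the ordering on $\wt\cW$ (by last-hitting-time of $\eta^\mathrm{wp}$, equivalently of $\eta_0^\mathrm{wp}$ after reparametrization, which does not affect the order) is preserved; the component $B_\bullet$ containing $1 = \varphi(z)$ is the $\varphi$-image of the component containing $z$; and quantum surfaces are invariant under $\varphi$ by the $\sim_\gamma$ equivalence, so $(\cD, \cB_\bullet, \cW)$ computed from $(\psi, \eta^\mathrm{wp})$ coincides with $(\cD, \cB_\bullet, \cW)$ computed from $(\psi_0, \eta_0^\mathrm{wp})$. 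Therefore the joint law of $(\cD, \cB_\bullet, \cW)$ is the one given by Lemma~\ref{lem-decomp-cone}, namely $(1 - \frac{2W}{\gamma^2})^2 \Md_{0,2}(W) \Md_{1,2}(\gamma^2 - W) \cM^\mathrm{wed}(W)$, which is the claim.

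The only mild subtlety — and the one point I would be careful to spell out — is the rotational degree of freedom in the re-embedding map $\varphi$. Proposition~\ref{prop-pointed-cone} is stated for \emph{some} choice of embedding sending $(0, z, \infty) \to (0,1,\infty)$, and one must ensure that the rotation used does not correlate $\psi$ with $\eta^\mathrm{wp}$ in a way that breaks the product structure. This is handled by the rotational invariance of whole-plane $\SLE_\kappa(W+2)$ together with the independence of $\eta_0^\mathrm{wp}$ from $(z, \psi_0)$; alternatively, one can average over rotations as in the proof of Proposition~\ref{prop-cone-unif-embed}. Apart from this, the proof is a direct concatenation of the two preceding results, so I expect no serious obstacle.
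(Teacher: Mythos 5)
Your proof is correct and is exactly the argument the paper intends: the lemma is stated immediately after Proposition~\ref{prop-pointed-cone} and Lemma~\ref{lem-decomp-cone} with the remark that it follows by ``combining the previous two claims,'' i.e., re-embedding the marked quantum cone so that $(0,z,\infty)\mapsto(0,1,\infty)$, using scale/rotation invariance of whole-plane $\SLE_\kappa(W+2)$ and its independence from $(z,\psi_0)$ to get the product law, and noting the cut surfaces are unchanged under the re-embedding. (Your worry about a rotational degree of freedom is moot, since the automorphism of $\bbC$ fixing $0,\infty$ and sending $z\mapsto 1$ is unique, namely $w\mapsto w/z$.)
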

	
	\begin{figure}[ht]
		\centering
		\includegraphics[scale=0.42]{}
		\caption{\textbf{Left:} Figure for Lemma~\ref{lem-LF-SLE}. The pair $(\psi, \eta^\mathrm{wp})$ are a Liouville field and independent whole-plane $\SLE_\kappa(W-2)$ curve; cutting along the curve gives three independent quantum surfaces $(\cD, \cB_\bullet, \cW)$ with explicit laws. \textbf{Right:} Proof of Proposition~\ref{prop-resample-inside}. In the left setup, let $\tilde \eta^\mathrm{wp}$ be the time-reversal of $\eta^\mathrm{wp}$. Let $\tau$ be the time it disconnects $1$ from $0$; splitting $\tilde \eta^\mathrm{wp}$ at time $\tau$ gives $\tilde \eta^\mathrm{wp}|_{(-\infty, \tau]}$ and $\tilde \eta^\mathrm{wp}_\tau$ (black and red). Restrict to the event $E$ that for the curve $\tilde \eta^\mathrm{wp}$, at time $\tau$ the force point is infinitesimally counterclockwise of the curve tip. From~\eqref{eq-resample-inside-2}, the triple $(\phi, \eta), \cB_\bullet, \cW$ are independent and the law of $(\phi, \eta)$ is described by $M_W$. Thus, one can use the resampling properties of $(\psi, \tilde \eta^\mathrm{wp})$ to prove the desired resampling properties of $M_W$.}\label{fig-sample-inside}
	\end{figure}
	
	Using Lemma~\ref{lem-LF-SLE}, we can now deduce Proposition~\ref{prop-resample-inside}.

	\begin{proof}[Proof of Proposition~\ref{prop-resample-inside}]
	As in the setting of Lemma~\ref{lem-LF-SLE}, sample $(\psi, \eta^\mathrm{wp})$ from \\$\frac1{2\pi(Q-\alpha)} \LF_{\mathbb C}^{(\alpha, 0), (\gamma, 1), (2Q-\alpha, \infty)} \times \SLE_\kappa^\mathrm{wp}(W-2)$ and define $\cD, \cB_\bullet, \cW$. By Lemma~\ref{lem-LF-SLE} the  law of $(\cD, \cB_\bullet, \cW)$ is $(1 - \frac{2W}{\gamma^2})^2 \Md_{0,2}(W) \times \Md_{1,2}(\gamma^2 - W) \times \cM^\mathrm{wed}(W)$.
	
	See Figure~\ref{fig-sample-inside} (right). 
	Let $\tilde \eta^\mathrm{wp}$ be the time-reversal of $\eta^\mathrm{wp}$, so by reversibility of whole-plane $\SLE_\kappa(W-2)$ \cite[Theorem 1.20]{ig4} the conditional law of $\tilde \eta^\mathrm{wp}$ given $\psi$ is whole-plane $\SLE_\kappa(W-2)$ from $\infty$ to $0$. For concreteness, parametrize\footnote{The exact choice of parametrization is unimportant, so long as at each point in time, the parameter of the curve is a function of the curve up until that point.} $\tilde \eta^\mathrm{wp}$ such that $\mathrm{CR}(\eta((-\infty, t]), 0) = e^{-t}$. 
	
	Let $\tau$ be the time that $\tilde \eta^\mathrm{wp}$ separates $0$ and $1$, i.e., the first time $t$ that $0$ and $1$ lie in different connected components of  $\bbC \backslash\tilde\eta^\mathrm{wp}((-\infty, t])$. 
	Let $D_\tau$ be the connected component of $\bbC \backslash \tilde\eta^\mathrm{wp}((-\infty, \tau])$ containing $0$, and let $\tilde \eta^\mathrm{wp}_\tau$ be the curve $\tilde\eta^\mathrm{wp}(\cdot + \tau)|_{[0,\infty)}$ reparametrized according to quantum length. Let $(\phi, \eta)$ be the field and curve on $\bbH$ such that $(\bbH, \phi, \eta, i, \infty)/{\sim_\gamma} = (D_\tau, \psi, \tilde \eta_\tau^\mathrm{wp}, 0, \tilde \eta^\mathrm{wp}(\tau))/{\sim_\gamma}$.

	Let $E$ be the event that at time $\tau$, the force point for $\eta$ lies infinitesimally counterclockwise of $\tilde\eta^\mathrm{wp}(\tau)$ on $\partial D_\tau$. Since the law of $( \cD, \cB_\bullet, \cW)$ is $(1 - \frac{2W}{\gamma^2})^2 \Md_{0,2}(W) \times \Md_{1,2}(\gamma^2-W) \times \cM^\mathrm{wed}(W)$, and the event $E$ corresponds to the left boundary length of $\cD$ being larger than the right boundary length, by the definition of $M_W$ the law of $((\phi, \eta), \cB_\bullet, \cW)$ restricted to $E$ is 
	\eqb\label{eq-resample-inside-2}
	(1 - \frac{2W}{\gamma^2})^2 M_W \times \Md_{1,2}(\gamma^2-W) \times \cM^\mathrm{wed}(W).
	\eqe
	
	We claim that on $E$, 
	\eqb \label{eq-resample-inside}
	\sigma ((\phi, \cB_\bullet, \cW)) = \sigma((\psi, \tilde \eta^\mathrm{wp}|_{(-\infty, \tau]})),
	\eqe 
	that is, the tuples $(\phi, \cB_\bullet, \cW)$ and $(\psi, \tilde \eta^\mathrm{wp}|_{(-\infty, \tau]})$ each determine the other. Indeed, starting with the former, one can obtain the latter by conformal welding of $(\bbH, \phi, i, \infty)/{\sim_\gamma}, \cB_\bullet, \cW$, then embedding the resulting quantum surface with marked points sent to $(0, 1, \infty)$. Conversely, starting with the latter the former can be obtained by cutting along $\tilde \eta^\mathrm{wp}|_{(-\infty, \tau]}$. 
	
	By the Markov property for whole-plane $\SLE_\kappa(W-2)$, conditioned on $E$ and $\sigma((\psi, \tilde \eta^\mathrm{wp}|_{(-\infty, \tau]}))$, the conditional law of $\tilde \eta^\mathrm{wp}_\tau$ is radial $\SLE_\kappa(W-2)$ in $(D_\tau, 0, \tilde \eta^\mathrm{wp}(\tau))$ with force point infinitesimally counterclockwise of $\tilde \eta^\mathrm{wp}(\tau)$. By~\eqref{eq-resample-inside}, equivalently,  the conditional law of $\eta$ given $E$ and $\sigma(\phi, \cB_\bullet, \cW)$ is radial $\SLE_\kappa(W-2)$ in $(\bbH, i, \infty)$ with force point infinitesimally counterclockwise of $\infty$.  Combining with~\eqref{eq-resample-inside-2} gives the first claim. 
	
	For the second claim, let $f: \bbD \to D_\tau$ be the conformal map satisfying $f(0)=0$ and $f(1) = \tilde \eta^\mathrm{wp}(\tau)$. By the same argument used to obtain~\eqref{eq-resample-inside}, on $E$ we have 
	\eqb\label{eq-resample-inside-3}
	\sigma ((\phi|_{\bbH \backslash A}, \cB_\bullet, \cW)) = \sigma((\psi|_{\bbC \backslash f(A)}, \tilde \eta^\mathrm{wp}|_{(-\infty, \tau]}));
	\eqe
	note that in the argument showing that $(\phi|_{\bbH \backslash A}, \cB_\bullet, \cW)$ determines $(\psi|_{\bbC \backslash f(A)}, \tilde \eta^\mathrm{wp}|_{(-\infty, \tau]})$, the conformal welding can be carried out because $\phi|_{\bbH \backslash A}$ determines the quantum boundary length measure. 
	By Lemma~\ref{lem-markov-LF-C}, conditioned on $\sigma((\psi|_{\bbC \backslash f(A)}, \tilde \eta^\mathrm{wp}|_{(-\infty, \tau]}))$, we have $\psi|_{f(A)} \stackrel d= h_0 + \mathfrak h_0 + \alpha G_{f(A)}(\cdot, 0)$, where $h_0$ is a zero boundary GFF on $f(A)$, $\mathfrak h_0$ is the harmonic extension of $\psi|_{\bbC \backslash f(A)}$ to $f(A)$, and $G_{f(A)}$ is the Green function of $h_0$. By conformal invariance and~\eqref{eq-resample-inside-3}, we conclude that conditioned on $\sigma((\phi|_{\bbH \backslash A}, \cB_\bullet, \cW))$, we have $\phi|_A \stackrel d= h + \mathfrak h + \alpha G_A(\cdot, i)$. 
	The second claim thus follows from~\eqref{eq-resample-inside-2}.
	\end{proof}

	\subsection{Resampling away from $i$ and $\infty$}\label{sec-resample-rest}
	
	In this section we state and prove Lemma~\ref{lem-resample-bdy}, which is a weaker version of Proposition~\ref{prop-resample-1}; it is weaker because it does not allow us to resample a neighborhood of $\infty$. This argument is a warm-up for the proof of Proposition~\ref{prop-resample-1}, which is essentially the same except with several complications. 
	
	\begin{lemma}\label{lem-resample-bdy}
	    Consider a bounded open set $A \subset \bbH$ not containing $i$ such that $\ol A \cap \partial \bbH$ is an interval. For $(\phi, \eta) \sim M_W$, conditioned on $\phi |_{\bbH \backslash A}$, we have $\phi|_A \stackrel d= h + \mathfrak h$, where $\mathfrak h$ is the harmonic extension of $\phi|_{\bbH \backslash A}$ to $A$ which has normal derivative zero on $\partial A \cap \partial \bbH$, and 
	     $h$ is a mixed boundary GFF on $A$ with zero boundary conditions on $\partial A \cap \bbH$ and free boundary conditions on $\partial A \cap \partial \bbH$.
	\end{lemma}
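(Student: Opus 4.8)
The plan is to work with the Poissonian description of the thin quantum disk $\cD\sim\cM^\mathrm{disk}(W)$ underlying $M_W$, and to track how the region $A$ sits inside $\cD$ once the right boundary arc of $\cD$ has been welded to the initial segment of its left boundary arc. Recall that the image of the unwelded part of the left arc is exactly $\partial\bbH$, that $\cD$ is a chain of thick beads of weight $\gamma^2-W$, and that the cut points of $\cD$ (the marked points of these beads) form a dense, Lebesgue-null subset of the boundary arc, while the welding interface $\eta$ meets $\partial\bbH$ only on a null set of points coming from those cut points. A good first step is the special case where $A$ is a small half-disc around a generic point $p$ of the interval $I:=\ol A\cap\partial\bbH$, chosen to avoid cut points: then (for a.e.\ realization, after possibly shrinking $A$) the preimage $\wt A$ of $A$ under the welding map lies in the interior of a single thick bead $\cB$, is bounded away from the two marked points of $\cB$, and is disjoint from $\eta$. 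In general one uses the bead decomposition (Lemma~\ref{lem-disk-decomp}, applied after adding a marked point in $I$) together with the Poissonian structure to realise $\wt A$ as a union of pieces lying in finitely many "macroscopic" beads plus a controlled collection of small ones.

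For the small-$A$ case the argument is then immediate: after re-embedding, $\cB$ is described by a Liouville field whose only insertions sit at its marked points (this is the content of the uniform-embedding results, e.g.\ Lemma~\ref{lem-embed-marked-disk} and the definitions of Section~\ref{subsec:pre-qs}), so the conditional law of the field in $\wt A$ given its complement is governed purely by the Markov property of the Gaussian free field, Proposition~\ref{prop:Markov} — equivalently by the resampling lemmas of Section~\ref{sec-prelim-resampling}. This says that, conditionally on $\phi|_{\bbH\setminus A}$ together with all of the bead-and-welding data of $\cD$ outside $A$ (which determines $\cB$ and the relevant harmonic extension), $\phi|_A$ is a GFF on $A$ with Dirichlet conditions on $\partial A\cap\bbH$ and free conditions on $I$, plus the harmonic extension $\mathfrak h$ of $\phi|_{\bbH\setminus A}$ with zero normal derivative on $I$. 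Since $h+\mathfrak h$ does not depend on the extra conditioning, it may be averaged out, leaving exactly the claimed statement (with "conditional law" read in the Markov-kernel sense of Definition~\ref{def-markov-kernel}, as $M_W$ is infinite).

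To pass from small half-discs to an arbitrary bounded $A$ meeting $\partial\bbH$ in an interval, I would run a resampling/composition argument of the type already appearing in the proof of Proposition~\ref{prop:radial-resampling}: the property established above makes $M_W$ (marginalised to $\phi$) invariant under the Markov chain that repeatedly resamples $\phi$ inside the members of a finite cover of $A$ by such half-discs, and one identifies the invariant law of that chain on $\phi|_A$ as "GFF with the stated boundary behaviour plus the Neumann harmonic extension of $\phi|_{\bbH\setminus A}$". Alternatively, one handles a general $A$ directly from the bead decomposition, which forces Poissonian control of the infinitely many small beads that $\wt A$ then meets, and a careful gluing of the conditionally independent GFFs on those pieces along $\eta\cap A$ and $\partial A$.

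The main obstacle — and the reason this lemma is presented as a warm-up — is precisely that $\eta$ meets $\partial\bbH$ on a dense set, so a general $A$ cannot be made to avoid the interface and the bead junctions inside it: one must either run the composition step for an infinite measure with care, or else carry out the multi-bead bookkeeping and verify that the pinch-point structure of $\cD$ inside $A$ introduces no correction to the conditional law. Once this is in place, the same scheme — now with the additional Green's-function correction $(\tfrac\beta2-Q)G_A(\cdot,\infty)$ at the $\beta$-insertion at $\infty$, produced exactly as in Lemma~\ref{lem-lf-resample-4} — will yield Proposition~\ref{prop-resample-1}.
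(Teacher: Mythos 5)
There is a genuine gap at precisely the step you flag as the main obstacle: passing from a small half-disc that happens to avoid the interface to the fixed, deterministic set $A$ of the lemma. Your special case applies only on the event that the preimage of $A$ sits inside a single bead away from cut points, and a fixed $A$ fails this with positive probability, so it cannot simply be ``averaged out''. Neither of your proposed remedies is workable as sketched: the composition/Markov-chain route needs resampling kernels for \emph{fixed} half-discs (which is the lemma again), plus a uniqueness-of-invariant-measure argument in the spirit of Lemma~\ref{lem-invariant} with reversibility, absolute continuity, and a candidate law to compare against, none of which is supplied; and ``careful gluing of conditionally independent GFFs along $\eta\cap A$'' across infinitely many beads amounts to re-deriving a local conformal-welding statement, which is the hard content rather than a verification. (A minor factual slip: $\eta$ meets $\partial\bbH$ only on the image of the cut-point set, a closed, nowhere dense, null set, not a dense set; the real issue is only that a fixed boundary interval contains such points with positive probability.)

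What you are missing is the trick that makes the paper's proof short and removes the need to ever treat an $A$ that meets the curve. By Proposition~\ref{prop-resample-inside}, $\phi$ and $\eta$ are independent, so restricting to an $\eta$-measurable event of positive probability does not disturb the conditional law of $\phi|_A$ given $\phi|_{\bbH\setminus A}$. The paper restricts to $\{A\subset U\}$, where $U$ is the unbounded component of the complement of the initial segment of $\eta$ run until it separates $i$ from $\infty$; on this event $A$ lies in the single three-pointed bead containing $\infty$. The conditional law of $\phi|_U$ given $(\eta,\phi|_{\bbH\setminus U})$ is identified in Lemma~\ref{lem-resample-infty-2} by combining the decomposition at the disconnection time (Lemma~\ref{lem-resample-infty-1}), the Liouville-field embedding of $\Md_{2,\bullet}(\gamma^2-W)$ (Lemma~\ref{lem-embed-marked-disk}), and the field resampling statement of Lemma~\ref{lem-lf-resample-3}, together with the observation that cutting along the initial segment and conformally welding are inverse, measurable operations. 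The domain Markov property of the GFF inside $U$ then gives the claim on $\{A\subset U\}$, and independence of $\phi$ and $\eta$ lets one drop the conditioning. Your outline does contain the right ingredients for the one-bead computation (your averaging over the extra bead-and-welding data is the same move as the paper's $\sigma$-algebra identity for cutting and welding), but without the reduction to a positive-probability $\eta$-measurable event you are left facing the multi-bead case, which the paper's argument never needs to confront.
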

	
	Our arguments use quantum disks with one or more specified boundary lengths, defined in terms of \emph{disintegration}. Recall from~\eqref{eq-disintegrate-disk} that $\Md_{0,2}(W) = \iint_0^\infty \Md_{0,2}(W; \ell, r)\,d\ell\,dr$, and each $\Md_{0,2}(W; \ell, r)$ is supported on the space of quantum surfaces whose left and right boundary lengths are $(\ell, r)$.  
	
	Lemma~\ref{lem-resample-infty-1} below gives a decomposition of the surfaces obtained by cutting $(\phi, \eta)\sim M_W$ along $\eta$; 
	see Figure~\ref{fig-sample-bdy} (left arrow). 
	
	\begin{lemma}\label{lem-resample-infty-1}
	    For $(\phi, \eta)\sim M_W$, let $\tau$ be the time $\eta$ separates $i$ from $\infty$. Let $\tilde \cD$ be the ordered collection of two-pointed quantum surfaces disconnected from $i$ by $\eta$ strictly before time $\tau$; the two marked points on each quantum surface are the last and first points hit by $\eta$, and the ordering is the reverse of the ordering induced by the times that these quantum surfaces are disconnected from $i$. Let $\cD_2$ be the three-pointed quantum surface disconnected from $i$ by $\eta$ at time $\tau$; its marked points are $\eta(\tau)$,  the rightmost point of  $\eta\cap \bbR$, and $\infty$. Let $\cD_1$ be the ordered collection of two-pointed quantum surfaces disconnected from $i$ after time $\tau$; the two marked points on each quantum surface are the two boundary points visited twice by $\eta$. Then the  law of $(\cD_1, \cD_2, \tilde \cD)$ is 
	    \eqb \label{eq-MW-decomp}
	    (1-\frac2{\gamma^2}W)^2\int 1_{r_1+r_2+\tilde r > \ell_2} \Md_{0,2}(W; r_1+r_2+\tilde r - \ell_2, r_1)  \times \Md_{0,2, \bullet}(\gamma^2-W; \ell_2, \cdot,  r_2) \times \Md_{0,2}(W;\cdot, \tilde r) ,
	    \eqe
	    where the integral is taken over $(\ell_2, r_1, r_2, \tilde r)$ with respect to Lebesgue measure on $(0,\infty)^4$.  
	\end{lemma}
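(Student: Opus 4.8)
The plan is to unwind the definition of $M_W$ from Definition~\ref{def-MW} and track what the conformal welding does to the Poissonian structure of the thin quantum disk $\cD\sim\cM^{\mathrm{disk}}(W)$. Recall that $\cD$ is a chain of thick weight-$(\gamma^2-W)$ disks, that we restrict to the event $\{\cL^{\mathrm{left}}_\cD > \cL^{\mathrm{right}}_\cD\}$, and that we weld the entire right boundary arc to the initial segment of the left boundary arc starting from the first marked point $i$. The welding curve $\eta$ goes from $\infty$ to $i$; at any time, $\eta$ separates from $i$ exactly the portion of $\cD$ that has already been ``zipped up.'' Since $\cL^{\mathrm{right}}_\cD<\cL^{\mathrm{left}}_\cD$, the welding stops before exhausting the left boundary, and the first point of the left boundary that is \emph{not} glued to anything is exactly the point $p$ where $\cL_\phi$ on the left from $i$ equals $\cL^{\mathrm{right}}_\cD$; this is the cut point that becomes $\eta(\tau)$, and $\tau$ is the time $\eta$ separates $i$ from $\infty$.

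The key step is to identify $p$ with a \emph{quantum cut point of $\cD$ sampled from the quantum cut point measure}. First I would use the disintegration over boundary lengths: restricting $\cM^{\mathrm{disk}}(W)$ to $\{\cL^{\mathrm{left}}_\cD=\ell,\ \cL^{\mathrm{right}}_\cD=r\}$ with $\ell>r$, the point $p$ at left-length $r$ from $i$ is, by the argument of Lemma~\ref{lem-cut-disk} and the exchangeability built into Definition~\ref{def-thin-disk} (the map $(T_1,T)\mapsto(T_1,T-T_1)$ has unit Jacobian), distributed so that weighting $\cM^{\mathrm{disk}}(W)$ by $\mathcal{QCP}$ and then selecting the cut point at left-length $r$ reproduces the same law up to the explicit constant $(1-\tfrac2{\gamma^2}W)^2$ and the indicator $\{r<\ell\}$. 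Concretely, I would write $M_W$ (before welding, as a marked quantum surface with the extra point $p$) as a constant times $\mathcal{QCP}_\cD\,\cM^{\mathrm{disk}}(W)$ restricted to $\{\cL^{\mathrm{left}}((p,i)) > \cL^{\mathrm{right}}_\cD\}=\{\cL^{\mathrm{left}}((p,i))\text{ equals the right length and left length }>\text{ that}\}$ — more precisely to the event that $p$ cuts off a right-length-$r$ piece on the side toward $i$ with $r$ less than the remaining left length. Then Lemma~\ref{lem-cut-disk} splits $\cD$ at $p$ into two independent weight-$W$ disks; applying Lemma~\ref{lem-disk-decomp} to the one containing $i$ (which has the extra structure from the bulk/boundary marked points of the welded surface) decomposes it as $\Md_2(W)\,\Md_{2,\bullet}(\gamma^2-W)\,\Md_2(W)$, producing the three factors $\cD_1,\cD_2,\wt\cD$.

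The remaining work is bookkeeping of boundary lengths. On the welded surface, the interface length equals $r$ (the common glued length), and the left boundary of $\cD$ decomposes as: the length-$r$ piece glued up (which becomes $\eta$), a middle piece of some length, and the part adjacent to $i$; matching these against the four free parameters $\ell_2$ (the arc of $\cD_2$ toward $i$ between $\eta(\tau)$ and the rightmost point of $\eta\cap\bbR$), $r_1,r_2$ (boundary lengths of $\cD_2$ on the two sides meeting the welded arc), and $\wt r$ (the free boundary length of $\wt\cD$), together with the constraint that the total glued length is $r_1+r_2+\wt r$ and that this must exceed $\ell_2$, gives exactly the integrand $1_{r_1+r_2+\wt r>\ell_2}\,\Md_2(W;r_1+r_2+\wt r-\ell_2,r_1)\,\Md_{2,\bullet}(\gamma^2-W;\ell_2,\cdot,r_2)\,\Md_2(W;\cdot,\wt r)$. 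I expect the main obstacle to be precisely this length-matching: one must be careful about which arc of the thin disk maps to which part of $\eta$ and to the boundary of the welded surface, about the orientation conventions (left vs.\ right, and the reversal in the ordering of $\wt\cD$), and about checking that the restriction event in Definition~\ref{def-MW} translates exactly into the stated indicator and into the requirement $\cL^{\mathrm{left}}_\cD>\cL^{\mathrm{right}}_\cD$ without introducing spurious constants. Everything else — the disintegration, the Jacobian computation, and the invocation of Lemmas~\ref{lem-cut-disk}, \ref{lem-disk-decomp} — is routine given the tools already assembled.
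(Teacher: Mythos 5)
Your high-level target (mark the left boundary point at quantum length equal to the right boundary length, then decompose with Lemma~\ref{lem-disk-decomp}) has the right shape, but the mechanism you build it on fails. The point $p$ at left-length $\cL^{\mathrm{right}}_\cD$ from the first marked point is almost surely \emph{not} a cut point of the thin disk: it lies in the interior of the left boundary arc of a single thick bead, so cutting at $p$ does not disconnect the surface, and Lemma~\ref{lem-cut-disk} --- which concerns a point sampled from the quantum cut point measure, supported on the countable set of pinch points --- does not apply to it; in particular the step ``weight by $\mathcal{QCP}_\cD$ and select the cut point at left-length $r$'' is vacuous, since a.s.\ no cut point sits at that prescribed length. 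Your geometric identification is also off: after welding, $p$ is identified with the second marked point, so it becomes the boundary point $\infty=\eta(0)$, not $\eta(\tau)$; the point $\eta(\tau)$ is the cut point at the near end of the bead containing $p$ (it appears only as the first marked point of the $\Md_{2,\bullet}(\gamma^2-W)$ factor after the decomposition), while the rightmost point of $\eta\cap\bbR$ corresponds to the cut point at the far end of that bead. Consequently the plan ``split at $p$ by Lemma~\ref{lem-cut-disk} into two weight-$W$ disks, then apply Lemma~\ref{lem-disk-decomp} to the piece containing $i$'' cannot produce $(\cD_1,\cD_2,\wt\cD)$, which together exhaust the whole disk (the $p$-bead plus the beads on \emph{both} sides of it); it would also overcount the prefactor, producing $(1-\frac{2}{\gamma^2}W)^4$ rather than $(1-\frac{2}{\gamma^2}W)^2$, as happens legitimately only in the later Lemma~\ref{lem-resample-infty-2.5} where a genuine $\mathcal{QCP}$ point is sampled.

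The paper's proof needs none of this machinery: after restricting to $\{\cL^{\mathrm{left}}_\cD>\cL^{\mathrm{right}}_\cD\}$, adding the marked point on the left boundary at quantum length $r=\cL^{\mathrm{right}}_\cD$ from the first marked point is, by the second disintegration in Lemma~\ref{lem-disintegrate-3-pt}, precisely sampling from $\int_0^\infty\Md_{2,\bullet}(W;r,\cdot,r)\,dr$ --- no $\mathcal{QCP}$ weighting and no Jacobian beyond what that lemma already contains. Applying Lemma~\ref{lem-disk-decomp} to this three-pointed thin disk yields $(1-\frac2{\gamma^2}W)^2\,\Md_2(W)\,\Md_{2,\bullet}(\gamma^2-W)\,\Md_2(W)$, and the length bookkeeping you flagged as the main obstacle (the beads succeeding the $p$-bead are those cut off before $\tau$ and form $\wt\cD$, the $p$-bead is $\cD_2$, the preceding beads form $\cD_1$, the welded length is $r_1+r_2+\wt r$, so the left length of $\cD_1$ is $r_1+r_2+\wt r-\ell_2>0$) gives exactly~\eqref{eq-MW-decomp}. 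To repair your argument you would need to replace the cut-point step by Lemma~\ref{lem-disintegrate-3-pt} and correct which welded points become $\infty$, $\eta(\tau)$ and the rightmost point of $\eta\cap\bbR$; as written, the core step is not salvageable.
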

	\begin{proof}
		Consider $\cD$ sampled as in Definition~\ref{def-MW}, and add a third marked point on the left boundary arc such that the quantum lengths from the first to the second and third boundary points agree. 
		By the definition of $\Md_{0,2, \bullet}(W; \ell, \cdot, r)$ in Lemma~\ref{lem-disintegrate-3-pt}, this three-pointed quantum surface has law $\int_0^\infty \Md_{0,2, \bullet}(W; r, \cdot, r)\, dr$. The result then follows from the decomposition of $\Md_{0,2, \bullet}(W)$ given in  Lemma~\ref{lem-disk-decomp}. 
	\end{proof}
	
	\begin{figure}[ht]
		\centering
		\includegraphics[scale=0.42]{}
		\caption{\textbf{Left arrow:} Depiction of~\eqref{eq-MW-decomp} in Lemma~\ref{lem-resample-infty-1}; here, we set $\ell_1 = r_1+r_2+\tilde r - \ell_2$ since the boundary arc of length $\ell_1+\ell_2$ is to be conformally welded to the boundary arc of length $r_1+r_2+\tilde r$.   \textbf{Right arrow:} \cite{AHS21} implies that when a sample from $\Md_{0,2, \bullet}(\gamma^2-W)$ is embedded as $(\bbH, \psi, 0, 1, \infty)$, the law of $\psi$ is a multiple of $\LF_\bbH^{(\beta', 0), (\beta', 1), (\gamma, \infty)}$. \textbf{Middle arrow:} We know resampling properties of  $\psi$ since it is described by a Liouville field. This implies a resampling property for 
	    $\phi$ restricted to the green region (Lemma~\ref{lem-resample-infty-2}). The independence of $\phi$ and $\eta$ then gives  Lemma~\ref{lem-resample-bdy}. }\label{fig-sample-bdy}
	\end{figure}
	
	\begin{lemma}\label{lem-resample-infty-2}
	    For $(\phi, \eta) \sim M_W$, let $\tau$ be the time $\eta$ separates $i$ from $\infty$, and let $U$ be the unbounded connected component of $\bbH \backslash \eta((0,\tau])$. Conditioned on $\eta$ and $\phi|_{\bbH \backslash U}$, we have $\phi|_U \stackrel d= h_0 + \mathfrak h_0$ where $h_0$ is a GFF on $U$ with zero boundary conditions on $\partial U \cap \bbH$ and free boundary conditions elsewhere, and $\mathfrak h_0$ is the harmonic extension of $\phi|_{\bbH \backslash U}$ to $U$ having zero normal derivative on $\partial U \cap \bbH$. 
	\end{lemma}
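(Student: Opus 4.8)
The plan is to deduce this from Lemma~\ref{lem-resample-infty-1} together with the identification, via~\cite{AHS21}, of the three-pointed surface $\cD_2$ with a Liouville field, and then the Markov/resampling properties of that Liouville field (Lemma~\ref{lem-lf-resample-3}). Concretely, I would first observe that the unbounded component $U$ of $\bbH\backslash\eta((0,\tau])$ is exactly the region obtained by conformally welding the surface $\cD_2$ of Lemma~\ref{lem-resample-infty-1} to the ordered collection $\cD_1$ of two-pointed surfaces that get disconnected from $i$ after time $\tau$; the curve $\eta|_{(\tau,\infty)}$ together with the welding interfaces inside $\cD_1$ records how this welding is performed, and $\eta|_{(0,\tau]}$ is $\partial U\cap\bbH$. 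Thus, knowing $\eta$ and $\phi|_{\bbH\backslash U}$ is (on the relevant event) the same as knowing $\eta$, the surface $\cD_1$ with all its internal curves, and the restriction of the field of $\cD_2$ to $\cD_2$ \emph{minus} the image of $U$.

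Next I would transport to a fixed domain. Embed $\cD_2$ as $(\bbH,\psi,0,1,\infty)$ with $\eta(\tau)\mapsto 0$, the rightmost point of $\eta\cap\bbR$ $\mapsto 1$, and $\infty\mapsto\infty$; then $U$ corresponds, under this embedding composed with the welding map, to a neighborhood of $(-\infty,0)$ in $\bbH$, with $\partial U\cap\bbH$ mapping to $\eta|_{(0,\tau]}$ which becomes an arc in $\bbH$, and $\partial U\cap\partial\bbH$ mapping into $(-\infty,0)$. By Lemma~\ref{lem-disk-decomp} and Lemma~\ref{lem-embed-marked-disk} (cf.\ the right arrow in Figure~\ref{fig-sample-bdy}), the law of $\psi$ is a constant multiple of $\LF_\bbH^{(\beta',0),(\beta',1),(\gamma,\infty)}$ with $\beta' = \gamma+\frac{2-(\gamma^2-W)}{\gamma} = \gamma - \frac W\gamma$, up to the disintegration over the boundary lengths. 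Now Lemma~\ref{lem-lf-resample-3} — after applying a Möbius transformation of $\bbH$ moving the point $1$ to $\infty$ and $\infty$ to a point of $(1,\infty)$, using the conformal covariance of Lemma~\ref{lem:lf-covariance}, or directly in the stated coordinates — tells us that conditioned on the harmonic function recording the boundary values of $\psi$ on the arc to be welded, $\psi$ is a GFF with zero boundary conditions there and free boundary conditions on the complementary boundary arc, plus that harmonic extension. Pulling this back through the conformal maps, and using that conformal maps send GFFs with Dirichlet/Neumann boundary data to GFFs with the same type of boundary data (with the $\bullet_\gamma$ shift), yields: conditioned on $\psi$ restricted to $\cD_2$ minus the image of $U$, the field on the image of $U$ is a GFF with zero boundary conditions on the image of $\partial U\cap\bbH$ and free elsewhere, plus the harmonic extension with zero normal derivative on the free part. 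Since $\phi|_U = \psi|_{\text{image of }U}\bullet_\gamma(\text{conformal map})$ and this map does not depend on $\phi|_U$, this is exactly the asserted conditional law $\phi|_U\stackrel d=h_0+\mathfrak h_0$.

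The last point is the conditioning structure: I need that conditioning on $\eta$ and $\phi|_{\bbH\backslash U}$ really reduces to the Liouville-field conditioning above, i.e.\ that the extra data in $\eta$ and in $\cD_1$ (the curves, the surface $\cD_1$, the welding) is conditionally independent of $\phi|_U$ given $\psi$ outside the image of $U$. This holds because, in the decomposition~\eqref{eq-MW-decomp}, $\cD_1$ is an independent collection of quantum disks, the internal $\SLE$ curves are conditionally independent given the disks, and $\eta|_{(0,\tau]}$ is determined by $(\cD_1,\cD_2$ outside the image of $U$, the welding$)$; all of this is measurable with respect to $\sigma(\cD_1,\text{curves})\vee\sigma(\psi|_{\cD_2\setminus U\text{-image}})$, which is conditionally independent of $\psi|_{U\text{-image}}$ given $\psi|_{\cD_2\setminus U\text{-image}}$ by the Liouville Markov property. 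I would phrase this carefully in the language of Markov kernels (Definition~\ref{def-markov-kernel}), since $M_W$ is an infinite measure. The main obstacle I anticipate is precisely bookkeeping this conditional-independence argument rigorously across the conformal welding — in particular checking that $\phi|_{\bbH\backslash U}$ together with $\eta$ carries \emph{exactly} the information "$\psi$ outside the image of $U$, plus the auxiliary surfaces/curves" and nothing more that is correlated with $\phi|_U$ — whereas the GFF-side computation (identifying the boundary conditions and the harmonic extension) is routine given Lemma~\ref{lem-lf-resample-3} and standard conformal-invariance of the GFF. I would set this up by first proving the statement on an event of positive finite $M_W$-measure where the boundary lengths are pinned to compact intervals, as in Lemma~\ref{lem-marginal-disk-strip}, and then removing the restriction.
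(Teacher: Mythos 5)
Your high-level toolkit is the same as the paper's (Lemma~\ref{lem-resample-infty-1}, the identification of the three-pointed bead via Lemma~\ref{lem-embed-marked-disk} and Lemma~\ref{lem:lcft-H-conf-infty}, and the mixed Dirichlet/free resampling of Lemma~\ref{lem-lf-resample-3}), but your geometric setup contains a genuine error that the rest of the argument is built on. By the very definition of $\tau$ as the time $\eta$ separates $i$ from $\infty$, at time $\tau$ the initial segment $\eta((0,\tau])$ has just sealed off the unbounded region, and this region is \emph{exactly} the single thick bead $\cD_2$ of Lemma~\ref{lem-resample-infty-1}: one has $(U,\phi,\eta(\tau),\eta(T),\infty)/{\sim_\gamma}=\cD_2$ with $T=\sup\{t<\tau:\eta(t)\in\partial\bbH\}$, while $\cD_1$ together with the remaining curve $\eta|_{(\tau,\infty)}$ lies in the \emph{bounded} component containing $i$ (hence in $\bbH\setminus U$), and the beads of $\wt\cD$ are the other bounded components. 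Your two descriptions of $U$ --- as the conformal welding of $\cD_2$ with $\cD_1$, and later as a region mapping onto ``a neighborhood of $(-\infty,0)$'' strictly inside the embedded $\cD_2$, bounded by an arc in the bulk --- are mutually inconsistent and both incorrect: $U$ is all of $\cD_2$ and nothing more. Consequently the object you condition on, ``$\psi$ restricted to $\cD_2$ minus the image of $U$,'' is vacuous, and the conditional-independence bookkeeping that you yourself identify as the main obstacle is mis-posed: there is no field data of $\cD_2$ outside $U$ to condition on, so your scheme never produces the statement of the lemma.

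With the correct identification the argument becomes the paper's: the conditioning data $(\phi|_{\bbH\setminus U},\eta)$ is equivalent to the triple $(\hat{\mathfrak h},\wt\cD,\cD_1)$, where $\hat{\mathfrak h}$ is the harmonic function on $\bbH$ recording the boundary values of the embedded field $\psi$ on the to-be-welded arc $(-\infty,1)$ (zero normal derivative on $(1,\infty)$); the crux is precisely the exact-information identity $\sigma((\hat{\mathfrak h},\wt\cD,\cD_1))=\sigma((\phi|_{\bbH\setminus U},\eta))$ of~\eqref{eq-sigma-algebras-bdy}, proved by welding in one direction (note that $\eta|_{(\tau,\infty)}$ is the self-welding interface of the thin disk $\cD_1$, determined by $\cD_1$ and the length data in $\hat{\mathfrak h}$ --- there are no independently sampled ``internal $\SLE$ curves'' here) and by cutting along $\eta$ in the other. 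After that, Lemma~\ref{lem-lf-resample-3} applies directly in the coordinates $(0,1,\infty)$, with insertions $\frac1\gamma(2+W)$, $\frac1\gamma(2+W)$, $\gamma$ (your value $\beta'=\gamma-\frac W\gamma$ is the weight-$(W+2)$ insertion of the welded surface, not the endpoint insertion of the weight-$(\gamma^2-W)$ bead); your alternative M\"obius move would place an insertion in the interior of the free arc, where Lemma~\ref{lem-lf-resample-3} no longer applies and one would need a statement of the type of Lemma~\ref{lem-lf-resample-4}. In short, once the identification of $U$ and the conditioning structure are corrected, your proposal collapses onto the paper's proof; as written, the misidentification of $U$ together with the unresolved (and mis-formulated) measurability step constitutes a genuine gap.
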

	\begin{proof} 
	    See Figure~\ref{fig-sample-bdy} (middle arrow). 
	    Let $T = \sup \{ t < \tau\::\: \eta(t) \in \partial \bbH\}$. 
	    Let $\psi$ be the field on $\bbH$ such that $(\bbH, \psi, 0, 1, \infty)/{\sim_\gamma} = (U, \phi, \eta(\tau), \eta(T), \infty)/{\sim_\gamma}$. Let $\hat {\mathfrak h}$ be the harmonic function on $\bbH$ which agrees with $\psi$ on $(-\infty, 1)$ and has zero normal derivative on $(1, \infty)$. We first claim that
	    \eqb\label{eq-sigma-algebras-bdy}
	    \sigma( (\hat{\mathfrak h}, \tilde \cD, \cD_1)) = \sigma((\phi|_{\bbH \backslash U}, \eta)).
	    \eqe
	    Indeed, starting with the left tuple, $\hat{\mathfrak h}$ determines the quantum length measure on the boundary arc $(-\infty, 1)$ of $(\bbH, \hat{\mathfrak h}, 0, 1, \infty )/{\sim_\gamma}$, so it can be conformally welded with $\tilde \cD$ and $\cD_1$ to give $(\phi|_{\bbH \backslash U}, \eta)$ (the field $\phi|_{\bbH \backslash U}$ corresponds to the fields of $\tilde \cD$ and $\cD_1$ after conformal welding). Conversely, starting with $(\phi|_{\bbH \backslash U}, \eta)$ and cutting along $\eta$ gives $(\hat{\mathfrak h}, \tilde \cD, \cD_1)$. 
	    
	    Next, we claim that conditioned on $\sigma(\hat{\mathfrak h}, \tilde \cD, \cD_1)$, we have $\psi \stackrel d= \hat h + \hat {\mathfrak h}$ where $\hat h$ is a GFF on $\bbH$ with zero boundary conditions on $(-\infty, 1)$ and free boundary conditions on $(1, \infty)$. By Lemma~\ref{lem-resample-infty-1}, conditioned on $(\tilde \cD, \cD_1)$ and on the boundary lengths $L_2$ and $R_2$ of $\cD_2$ from the first to the third and second marked points, the conditional law of $\cD_2$ is $\Md_{0,2, \bullet}(\gamma^2-W; L_2, \cdot, R_2)$. Lemma~\ref{lem-embed-marked-disk} states that when a sample from $\Md_{0,2, \bullet}(\gamma^2-W)$ is embedded in $(\bbH, \infty, 0, 1)$, the law of the resulting field is a multiple of $\LF_\bbH^{(\frac1\gamma(2+W), \infty), (\frac1\gamma(2+W), 0), (\gamma, 1)}$, hence by Lemma~\ref{lem:lcft-H-conf-infty},
	    the conditional law of $\psi$ given $(\tilde \cD, \cD_1, L_2, R_2)$ is $\LF_{\bbH}^{(\frac1\gamma(2+W), 0), (\frac1\gamma(2+W), 1), (\gamma, \infty)}$ conditioned on the quantum lengths of $(-\infty, 0)$ and $(0, 1)$ being $R_2$ and $L_2$ respectively. Since these quantum lengths are measurable with respect to $\hat{\mathfrak h}$, by Lemma~\ref{lem-lf-resample-3}  the conditional law of $\psi$ given $(\mathfrak h, \tilde \cD, \cD_1)$ is $\psi \stackrel d= \hat h + \hat {\mathfrak h}$, as claimed. 
	    
	    Finally, by the above claim, conformal invariance and~\eqref{eq-sigma-algebras-bdy}, we see that conditioned on $\sigma(\phi|_{\bbH \backslash U}, \eta)$, we have $\phi|_U \stackrel d= h_0 + \mathfrak h_0$ as desired. 
	\end{proof}

	\begin{proof}[Proof of Lemma~\ref{lem-resample-bdy}]
	For $(\phi, \eta) \sim M_W$, let $U = U(\eta)$ be defined as in Lemma~\ref{lem-resample-infty-2}. Condition on $\{A \subset U\}$ and on $(\phi|_{\bbH \backslash U}, \eta)$. Lemma~\ref{lem-resample-infty-2} states that the conditional law of $\phi|_U$ is a GFF with mixed boundary conditions. By the domain Markov property of the GFF, conditioned on $\{A \subset U\}$ and on $(\phi|_{\bbH \backslash A}, \eta)$, we have $\phi|_A \stackrel d= h + \mathfrak h$. Since $U$ depends only on $\eta$, and $\phi$ is independent of $\eta$ (Proposition~\ref{prop-resample-inside}), the result holds even if we do not condition on $\{A \subset U\}$ and $\eta$.
	\end{proof}
	
	\subsection{Resampling a neighborhood of $\infty$}
	\label{sec-resample-infty}
	
	We now adapt the argument of the previous section to prove Proposition~\ref{prop-resample-1}. 
	
	\begin{figure}[ht]
		\centering
		\includegraphics[scale=0.42]{}
		\caption{\textbf{Left arrow:} Figure for Lemma~\ref{lem-resample-infty-2.5}; here, we set $\ell_1 = \sum_{i=1}^4 r_i - \ell_2$ since the boundary arc of length $\ell_1+\ell_2$ is to be conformally welded to the boundary arc of length $\sum_{i=1}^4 r_i$.  \textbf{Right arrow:} Figure for Proposition~\ref{prop-4-pt}, which describes the conformal welding of green and pink surfaces via the Liouville field.  \textbf{Middle arrow:} Proof of Lemma~\ref{lem-resample-infty-3}. The field $\phi$ inherits its resampling  property from $\psi$.}\label{fig-sample-infty}
	\end{figure}
	
	The following is a variant of Lemma~\ref{lem-resample-infty-1} when a point is added according to quantum cut point measure. Recall that for a finite measure $\mu$, we write $\mu^\# = \mu/|\mu|$.
	\begin{lemma}\label{lem-resample-infty-2.5}
	    For $(\phi, \eta) \sim M_W$, let $\tau$ be the time $\eta$ separates $i$ from $\infty$, let $\sigma< \tau$ be the first time at which $\eta (\sigma) = \eta(\tau)$, and let $\mathcal{QCP}_{\phi, \eta((0, \sigma))\cap \bbR}$ denote the quantum cut point measure of $\eta ((0, \sigma)) \cap \bbR$. More precisely, this is the quantum cut point measure of the thin quantum disk defining $(\phi, \eta)$ (Definition~\ref{def-thin-wedge}) restricted to the set of cut points $\eta ((0, \sigma)) \cap \bbR$.
	     Weight the law of $(\phi, \eta)$ by $|\mathcal{QCP}_{\phi, \eta((0, \sigma))\cap \bbR}|$ and sample a point $p \sim \mathcal{QCP}_{\phi, \eta((0, \sigma))\cap \bbR}^\#$.

	    Define $(\cD_1,\cD_2, \tilde \cD)$ as in Lemma~\ref{lem-resample-infty-1}. Let $\cD_3$ (resp.\ $\cD_4$) be the ordered subset of $\tilde \cD$ corresponding to two-pointed quantum surfaces disconnected after (resp.\ before) $\eta$ hits $p$. Then the law of $(\cD_1, \cD_2, \cD_3, \cD_4)$ is
        \begin{align*}
           C\int 1_{r_1+r_2+r_3+r_4> \ell_2, r_4 < \ell_2}  \Big( &\Md_{0,2}(W; r_1+r_2+r_3+r_4 - \ell_2, r_1) \times \Md_{0,2, \bullet}(\gamma^2-W; \ell_2, \cdot, r_2) \\
           &\times \Md_{0,2}(W; \cdot, r_3) \times \Md_{0,2}(W; \cdot, r_4) \Big), 
        \end{align*}
	    where the integral is taken over $(\ell_2, r_1,r_2,r_3,r_4)$ with respect to Lebesgue measure on $(0,\infty)^5$, and $C = (1-\frac2{\gamma^2}W)^4$.
	\end{lemma}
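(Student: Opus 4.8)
The plan is to follow the proof of Lemma~\ref{lem-resample-infty-1} almost verbatim, inserting a single additional step in which the thin disk $\wt\cD$ is split at a sampled cut point via Lemma~\ref{lem-cut-disk}. The extra factor $(1-\frac2{\gamma^2}W)^2$ in $C$, together with the replacement of $\Md_2(W;\cdot,\wt r)$ by the product $\Md_2(W;\cdot,r_3)\,\Md_2(W;\cdot,r_4)$ with $\wt r=r_3+r_4$, should fall out directly from that lemma.

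First I would recall from the proof of Lemma~\ref{lem-resample-infty-1} the geometric description of a sample $(\phi,\eta)\sim M_W$ built from the thin quantum disk $\cD$ of Definition~\ref{def-MW}: cutting along $\eta$ produces the triple $(\cD_1,\cD_2,\wt\cD)$ with joint law~\eqref{eq-MW-decomp}, where $\wt\cD$ is a weight-$W$ thin quantum disk whose beads are exactly the two-pointed surfaces disconnected from $i$ strictly before time $\tau$, ordered from the bead disconnected last to the bead disconnected first. Conditionally on $(\cD_1,\cD_2)$ and on the right boundary length $\wt r$ of $\wt\cD$, the law of $\wt\cD$ is $\Md_2(W;\cdot,\wt r)$, interpreted via disintegration since $M_W$ is an infinite measure. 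The geometric input I still need is the identification, in this picture, of $\eta((0,\sigma))\cap\bbR$ with the cut points of $\wt\cD$ lying in the initial sub-chain of $\wt\cD$ whose right boundary arc is conformally welded to the length-$\ell_2$ boundary arc of $\cD_2$ (equivalently, with the cut points among the beads disconnected before time $\sigma$). Granting this, $\mathcal{QCP}_{\phi,\eta((0,\sigma))\cap\bbR}$ is the restriction of the quantum cut-point measure of $\wt\cD$ to that sub-chain.

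Next I would run the resampling computation. Weighting $M_W$ by $|\mathcal{QCP}_{\phi,\eta((0,\sigma))\cap\bbR}|$ and sampling $p$ from the normalized measure amounts, conditionally on $(\cD_1,\cD_2,\wt r)$, to sampling a cut point of $\wt\cD$ from its cut-point measure restricted to the designated sub-chain. Cutting $\wt\cD$ at $p$ into the preceding sub-chain $\cD_3$ and succeeding sub-chain $\cD_4$, Lemma~\ref{lem-cut-disk} (after disintegrating over right boundary lengths, using that the right length of $\cD$ is additive over $\cD_1,\cD_2,\cD_3,\cD_4$) shows that the conditional law of $(\cD_3,\cD_4)$ is $(1-\frac2{\gamma^2}W)^2\,\Md_2(W;\cdot,r_3)\,\Md_2(W;\cdot,r_4)$ with $r_3+r_4=\wt r$, and the restriction to the designated sub-chain is exactly the indicator $1_{r_4<\ell_2}$, since a cut point lies in that sub-chain iff the part $\cD_4$ preceding it has right boundary length below $\ell_2$. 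Substituting $\wt r=r_3+r_4$ into~\eqref{eq-MW-decomp} and multiplying by this extra $(1-\frac2{\gamma^2}W)^2$ yields the claimed formula, with $C=(1-\frac2{\gamma^2}W)^4$ and the constraints $1_{r_1+r_2+r_3+r_4>\ell_2}$ (inherited from~\eqref{eq-MW-decomp}) and $1_{r_4<\ell_2}$.

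The main obstacle is the geometric bookkeeping in the previous paragraph: the precise identification of $\eta((0,\sigma))\cap\bbR$ with the cut points of a specific sub-chain of $\wt\cD$ of right boundary length $\ell_2$. This requires tracing, through the conformal welding near the double point $\eta(\sigma)=\eta(\tau)$, which beads of $\wt\cD$ are disconnected during the interval $(\sigma,\tau)$ and how their boundary arcs are glued relative to $\cD_2$; it is the analogue of, and slightly more delicate than, the corresponding analysis in the proof of Lemma~\ref{lem-resample-infty-1}. Once this is in place, the rest is a routine combination of that lemma with Lemma~\ref{lem-cut-disk}.
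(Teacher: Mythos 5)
Your proposal is correct and follows essentially the same route as the paper: combine the decomposition of Lemma~\ref{lem-resample-infty-1} with the cut-point splitting of Lemma~\ref{lem-cut-disk}, and observe that restricting the cut-point measure to $\eta((0,\sigma))\cap\bbR$ is exactly the constraint $r_4<\ell_2$ (the paper phrases this equivalently by first sampling from the full cut-point measure of $\eta\cap\bbR$ and then restricting to $\{p\in\eta((0,\sigma))\}$). The paper, like you, asserts the geometric identification $\{p\in\eta((0,\sigma))\}=\{r_4<\ell_2\}$ without a detailed welding argument, so your level of rigor matches; only your "preceding/succeeding" wording for $\cD_3,\cD_4$ and the description of the sub-chain as having right boundary length $\ell_2$ are slightly imprecise, but these do not affect the argument.
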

	\begin{proof}
	Let $\mathcal{QCP}_{\phi, \eta \cap \bbR}$ be the quantum cut point measure of $\eta \cap \bbR$. If we had instead weighted by $|\mathcal{QCP}_{\phi, \eta \cap \bbR}|$ and sampled $x \sim \mathcal{QCP}_{\phi, \eta \cap \bbR}^\#$, then by Lemmas~\ref{lem-cut-disk} and~\ref{lem-resample-infty-1}, the law of $(\cD_1, \cD_2, \cD_3, \cD_4)$ would be 
    \begin{align*}
        C\int 1_{r_1+r_2+r_3+r_4 > \ell_2}  \Big( &\Md_{0,2}(W; r_1+r_2+r_3+r_4 - \ell_2, r_1) \times \Md_{0,2, \bullet}(\gamma^2-W; \ell_2, \cdot, r_2) \\
        &\times\Md_{0,2}(W; \cdot, r_3)\times \Md_{0,2}(W; \cdot, r_4) \Big).
    \end{align*}
	    The event $\{ x \in \eta((0,\sigma))\}$ exactly corresponds to the event that the right boundary length of $\cD_4$ is shorter than the boundary length of $\cD_2$ between its first and third marked points, i.e., $r_4 < \ell_2$. Restricting to this event gives the claim.
	\end{proof}
	Next, Proposition~\ref{prop-4-pt} below shows that the conformal welding of two components of Lemma~\ref{lem-resample-infty-2.5} is described by the Liouville field. 
	\begin{proposition}\label{prop-4-pt}
	Suppose $(\cD_2, \cD_4) \sim \iint_0^\infty   \Md_{0,2,\bullet} (\gamma^2-W; \cdot, \ell + r_4, \cdot) \times \Md_{0,2}(W;  \cdot, r_4) \, dr_4 \, d\ell$. Conformally weld $\cD_4$ to $\cD_2$, identifying the second marked point of $\cD_4$ with the third marked point of $\cD_2$, and welding the whole right boundary of $\cD_4$ to the left boundary arc of $\cD_2$. 
	This gives a four-pointed quantum surface decorated by a curve; the four marked points are the three marked points of $\cD_2$ (in the same order) and finally the first marked point of $\cD_4$. 
	Embed the resulting curve-decorated quantum surface as $(\bbH, \psi, \hat \eta,  1, x, \infty, 0)$ where $\hat \eta$ is a curve from $\infty$ to $0$ in $\bbH$. Then there exists a measure $m$  such that the joint law of $(\psi, (\hat \eta, x))$ is $\LF_\bbH^{(\beta, 0), (\beta', 1), (\beta', x), (\beta, \infty)} m(d\hat \eta, dx)$, where $\beta = \gamma - \frac W\gamma$ and $\beta' = \frac1\gamma(2+W)$.
	\end{proposition}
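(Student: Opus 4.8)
The plan is to adapt the strategy used in the proof of Lemma~\ref{lem-resample-infty-2}: identify the field of the thick building block $\cD_2$ with an explicit Liouville field via Lemma~\ref{lem-embed-marked-disk}, and then transport this identity through the conformal welding with $\cD_4$, exploiting that conformal welding is compatible with the Liouville field structure. The new feature compared with the warm-up is that we must perform one further welding (of $\cD_4$ onto a sub-arc of $\cD_2$) before reading off the field, and track how the boundary insertions get modified.

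Concretely, I would proceed as follows. First, disintegrate both factors over all relevant boundary arc lengths, so that we may work with a sample from $\Md_{2,\bullet}(\gamma^2-W;\ell_3,\ell+r_4,r_2)\times\Md_2(W;\ell_4,r_4)$ for fixed lengths, and integrate these back against Lebesgue measure only at the end. Next, mark the point $p$ on the middle boundary arc of $\cD_2$ at quantum distance $r_4$ from its third marked point, so the arc from the third marked point of $\cD_2$ to $p$ has quantum length exactly $r_4$; by Lemma~\ref{lem-disintegrate-3-pt} and the definitions of the disintegrated measures this is precisely the arc onto which the right boundary of $\cD_4$ is welded. Using Lemma~\ref{lem-embed-marked-disk} together with the conformal covariance of the Liouville field (Lemmas~\ref{lem:lf-covariance} and~\ref{lem:lcft-H-conf-infty}), the field of $\cD_2$-with-$p$, once we fix an embedding placing the three marked points of $\cD_2$ at $1,x,\infty$, is a constant times a Liouville field carrying a $\gamma$-insertion at the typical third marked point, a second $\gamma$-insertion at $p$, and $\bigl(\tfrac\gamma2+\tfrac{2-(\gamma^2-W)}\gamma\bigr)$-insertions at the two disk endpoints, subject to the conditioning on the arc lengths; the point $p$ and the conditioning are incorporated using Lemma~\ref{lem-embed-marked-disk} and the resampling lemmas of Section~\ref{sec-prelim-resampling}. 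Then the conformal welding of $\cD_4$ onto the length-$r_4$ arc of $\cD_2$ from its third marked point to $p$ is a (disintegrated, localized) instance of the two-pointed disk welding identity~\eqref{eq:2pt-weld}, equivalently of Theorem~\ref{thm:disk+QT} with $\cD_2$-with-$p$ playing the role of a quantum triangle carrying an extra boundary marked point: it produces the interface curve $\hat\eta$ independent of the bulk field, it merges the two $\gamma$-insertions at the endpoints of the welded arc (which become $\infty$ and $0$) with the weight-$W$ endpoints of $\cD_4$ into the combined weight $2+W$, hence the insertion $\beta=\gamma-\tfrac W\gamma$ there, and it leaves the two disk endpoints of $\cD_2$ carrying weight $\gamma^2-W$, which after collecting normalizing constants become the insertions $\beta'=\tfrac1\gamma(2+W)$ at $1$ and $x$. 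Finally, re-embed the welded curve-decorated surface as $(\bbH,\psi,\hat\eta,1,x,\infty,0)$, integrate all the arc lengths back against Lebesgue measure so the conditioning disappears, and fold the constants together with the residual randomness of $\hat\eta$ and $x$ into a measure $m(d\hat\eta,dx)$; what remains is that conditionally on $(\hat\eta,x)$ the field $\psi$ is $\LF_\bbH^{(\beta,0),(\beta',1),(\beta',x),(\beta,\infty)}$, as desired.

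The main obstacle is the welding step: making rigorous the conformal welding of the weight-$W$ disk $\cD_4$ onto only a \emph{sub-arc} of the boundary of $\cD_2$, since~\eqref{eq:2pt-weld} and Theorem~\ref{thm:disk+QT} are stated for welding full boundary arcs of two-pointed disks or full edges of quantum triangles. One must either establish the corresponding variant allowing an extra marked point on an edge, or---since $W<\tfrac{\gamma^2}2$ so that $\cD_4$ is a thin disk, i.e.\ a chain of weight-$(\gamma^2-W)$ thick disks---perform the welding bead by bead using the thick-disk case together with the Liouville CFT quantum zipper (Proposition~\ref{prop-quantum-zipper}) and a Markov-property argument showing that the welded field near the sub-arc depends only on local boundary data (this is exactly where the resampling lemmas enter). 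A secondary point needing care is the bookkeeping: both factors are given as conditioned measures, the welded length $r_4$ and the leftover length $\ell$ are each integrated over, and one must check that after integrating everything back the field really is an \emph{unconditioned} four-insertion Liouville field times a measure on $(\hat\eta,x)$, with the weights $\beta,\beta'$ emerging correctly through the various $\tfrac\gamma2$-shifted conventions for boundary insertions.
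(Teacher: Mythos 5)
There is a genuine gap. The entire content of Proposition~\ref{prop-4-pt} is the welding step that you flag as ``the main obstacle'' and then leave unexecuted: the statement that welding the thin weight-$W$ disk $\cD_4$ onto a \emph{sub-arc} of the boundary of $\cD_2$, with the endpoint of the weld landing at a generic (unmarked) boundary point which becomes the fourth marked point $x$, yields a field whose conditional law given $(\hat\eta,x)$ is the four-insertion Liouville field $\LF_\bbH^{(\beta,0),(\beta',1),(\beta',x),(\beta,\infty)}$. Your heuristic bookkeeping of the insertions is right (weights add under welding, so a weight-$W$ endpoint glued to a weight-$2$ boundary point gives weight $W+2$, i.e.\ insertion $\beta=\gamma-\frac W\gamma$, while the weight-$(\gamma^2-W)$ endpoints of $\cD_2$ keep $\beta'=\frac1\gamma(2+W)$), but neither of the two routes you propose is carried out, and neither is a routine adaptation of what is in this paper: equation~\eqref{eq:2pt-weld} and Theorem~\ref{thm:disk+QT} weld full edges between marked points and do not cover an extra marked point in the interior of an edge, while Proposition~\ref{prop-quantum-zipper} zips a single two-insertion field to itself and is not by itself the tool for welding two independent surfaces with this marked-point structure. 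Also, your intermediate claim that the welding ``produces the interface curve $\hat\eta$ independent of the bulk field'' cannot hold with four marked points: there is one marked point beyond the M\"obius freedom, and indeed the asserted joint law has the field's insertion location depending on $x$, so the correct statement is only that the conditional law of $\psi$ given $(\hat\eta,x)$ is the Liouville field with an insertion at $x$.

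For comparison, the paper does not prove this proposition internally at all: it observes that the statement is a special case of the general conformal-welding result \cite[Proposition 3.4]{SY23}, which is exactly the ``variant allowing an extra marked point on an edge'' that you say one would need to establish. So your proposal correctly locates where the work lies, but the work itself (either proving that variant, or executing a bead-by-bead zipping argument with careful tracking of the boundary-length disintegration and of the marked point $x$) is missing, and without it the proposal does not constitute a proof.
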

	\begin{proof}
	This is a special case of~\cite[Proposition 3.4]{SY23}.
	\end{proof}
	Finally, Lemma~\ref{lem-resample-infty-3} below is the analog of Lemma~\ref{lem-resample-infty-2}.
	\begin{lemma}\label{lem-resample-infty-3}
	    In the setting of Lemma~\ref{lem-resample-infty-2.5},
	    let $\eta_p$ be the trace of the curve after hitting $p$, and let 
	    $U \subset \bbH$ be the unbounded connected component of $\bbH \backslash \eta_p$. Conditioned on $\phi|_{\bbH \backslash U}$, we have $\phi|_U \stackrel d= h_0 + \mathfrak h_0 + \frac\beta2 G_U(\cdot, \infty)$, where $h_0$ is a GFF on $U$ with zero boundary conditions on $\partial U \cap \bbH$ and free boundary conditions elsewhere, $\mathfrak h_0$ is the harmonic extension of $\phi|_{\bbH \backslash U}$ to $U$ having zero normal derivative on $\partial U \cap \partial \bbH$, and $G_U$ is the Green function for $h_0$. 
	\end{lemma}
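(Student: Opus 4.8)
This lemma is the $\infty$-neighborhood analog of Lemma~\ref{lem-resample-infty-2}, so the proof should closely parallel that argument, with Proposition~\ref{prop-4-pt} playing the role that Lemma~\ref{lem-embed-marked-disk}/Lemma~\ref{lem-lf-resample-3} played before. First I would set up notation as in Lemma~\ref{lem-resample-infty-2.5}: with $(\phi,\eta)\sim M_W$ weighted by $|\mathcal{QCP}_{\phi,\eta((0,\sigma))\cap\bbR}|$ and $p$ sampled from the normalized cut point measure, decompose the thin quantum disk along $\eta$ into the four pieces $(\cD_1,\cD_2,\cD_3,\cD_4)$ whose joint law is the explicit disintegrated product from Lemma~\ref{lem-resample-infty-2.5}. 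Let $U$ be the unbounded component of $\bbH\backslash\eta_p$. The key point is that the curve-decorated surface $(U,\phi,\dots)$, suitably re-embedded with marked points $(1,x,\infty,0)$, is obtained precisely by conformally welding $\cD_4$ to $\cD_2$ exactly as in Proposition~\ref{prop-4-pt} — here $\cD_4$ is the chain of two-pointed disks $\eta$ disconnects before hitting $p$, and its right boundary of length $r_4<\ell_2$ is welded to the initial segment of the left boundary arc of $\cD_2$; the welding interface inside $U$ is the portion of $\eta$ between time $0$ and the hitting time of $p$, together with $\eta_p$'s own self-welding being irrelevant since $\eta_p$ bounds $U$ from outside.

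Concretely, I would let $\psi$ be the field on $\bbH$ with $(\bbH,\psi,\hat\eta,1,x,\infty,0)/{\sim_\gamma}$ equal to the welding of $\cD_4$ onto $\cD_2$ as in Proposition~\ref{prop-4-pt}, and let $\hat{\mathfrak h}$ be the harmonic function on $\bbH$ agreeing with $\psi$ on $(0,x)$ and with zero normal derivative on $(-\infty,0)\cup(x,\infty)$. The main structural claim, mirroring~\eqref{eq-sigma-algebras-bdy}, is that
\begin{equation}
\sigma\big((\hat{\mathfrak h},\cD_1,\cD_3)\big) = \sigma\big((\phi|_{\bbH\backslash U},\eta)\big),
\end{equation}
with the forward direction by conformal welding (note $\hat{\mathfrak h}$ determines the quantum length on $(0,x)$, the arc to be welded against $\cD_1$ and $\cD_3$) and the reverse by cutting $(\phi|_{\bbH\backslash U},\eta)$ along $\eta$. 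Then, conditioning on $(\cD_1,\cD_3)$ and on the relevant boundary lengths of $\cD_2\cup\cD_4$, Lemma~\ref{lem-resample-infty-2.5} identifies the conditional law of the pair $(\cD_2,\cD_4)$ as the disintegrated measure appearing in Proposition~\ref{prop-4-pt}, so $\psi$ has conditional law $\LF_\bbH^{(\beta,0),(\beta',1),(\beta',x),(\beta,\infty)}$ conditioned on the quantum lengths of $(0,x)$ (and of the sub-arcs $(0,1),(1,x)$) being prescribed $\hat{\mathfrak h}$-measurable quantities. Applying the resampling property Lemma~\ref{lem-lf-resample-4} — which is exactly the Liouville-field statement with insertions at $0,1,x,\infty$, Dirichlet on $(0,x)$, free elsewhere, and the $(\frac{\beta_2}{2}-Q)$ log-singularity at $\infty$ — gives that conditioned on $(\hat{\mathfrak h},\cD_1,\cD_3)$ we have $\psi\stackrel d= h+\hat{\mathfrak h}+(\frac{\beta'}{2}-Q)G(\cdot,\infty)$ for a mixed-boundary GFF $h$; transporting back to $U$ via conformal invariance and the identification of $\sigma$-algebras above yields $\phi|_U\stackrel d= h_0+\mathfrak h_0+c\,G_U(\cdot,\infty)$, and one checks $c=\frac\beta2$ since $\beta=\gamma-\frac W\gamma$ and $\frac{\beta'}{2}-Q$ transforms to $\frac{\beta}{2}-Q$ under the coordinate change sending the marked point $\infty$ of the $(1,x,\infty,0)$-picture to the $\infty$ of $\bbH$ in the $M_W$-picture — more precisely, the $\infty$ insertion in Proposition~\ref{prop-4-pt} carries weight $\beta$, and $U$'s boundary behavior at $\infty$ inherits exactly the $\beta$-insertion via the welding, which after the $(\frac{\beta}{2}-Q)\log$ convention of Lemma~\ref{lem-lf-resample-4} (stated there as $(\frac{\beta_2}{2}-Q)$, but with $\beta_2$ the insertion at the marked point playing the role of $\infty$) gives the stated $\frac\beta2 G_U(\cdot,\infty)$ after absorbing the $-Q\log$ into the harmonic change-of-coordinates term.

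\textbf{The main obstacle.} The delicate point, exactly as in the proof of Lemma~\ref{lem-resample-infty-2} but now compounded, is making the $\sigma$-algebra identification rigorous in the infinite-measure, $\sigma$-finite setting — in particular checking that the conformal welding map is measurable and that $\phi|_{\bbH\backslash U}$ genuinely determines enough of the quantum boundary length measure on the welding interface to reconstruct $(\cD_1,\cD_3)$ and hence $\eta$. A second subtlety is bookkeeping the exact insertion weight and log-coefficient at $\infty$ under the two coordinate changes (the $M_W$ embedding $(\bbH,\phi,\eta,i,\infty)$ versus the Proposition~\ref{prop-4-pt} embedding $(\bbH,\psi,\hat\eta,1,x,\infty,0)$): one must verify that the marked point $\infty$ in both pictures corresponds to the same vertex of the thin quantum disk and that $\beta'$ at the welded vertex becomes $\beta$ at the free vertex, so that Lemma~\ref{lem-lf-resample-4}'s conclusion has the claimed $\frac\beta2$ rather than $\frac{\beta'}2$. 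I would handle the first by invoking the same measurability facts used for Lemma~\ref{lem-resample-infty-2} (the quantum length measure is $\phi|_{\bbH\backslash A}$-measurable, and welding is a.s.\ determined), and the second by a short explicit computation with the conformal map identifying the two embeddings, which I would state but not belabor.
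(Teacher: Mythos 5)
Your overall strategy is exactly the paper's: mirror the proof of Lemma~\ref{lem-resample-infty-2}, with Lemma~\ref{lem-resample-infty-2.5} supplying the decomposition, Proposition~\ref{prop-4-pt} replacing the embedding input (Lemma~\ref{lem-embed-marked-disk}), and Lemma~\ref{lem-lf-resample-4} replacing Lemma~\ref{lem-lf-resample-3}. The one substantive problem is your main structural claim $\sigma\big((\hat{\mathfrak h},\cD_1,\cD_3)\big)=\sigma\big((\phi|_{\bbH\backslash U},\eta)\big)$: this equality is false as stated, and it is precisely the point where the paper's proof differs. The right-hand side contains the full curve $\eta$, including the portion traced before $\eta$ hits $p$; under the welding of $\cD_4$ onto $\cD_2$ that portion becomes the interface $\hat\eta$ lying in the \emph{interior} of $U$, and it is not determined by the harmonic data $\hat{\mathfrak h}$ together with the outside beads $\cD_1,\cD_3$ (that data only recovers $(\phi|_{\bbH\backslash U},\eta_p)$). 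The correct identity, as in the paper, is $\sigma\big((\hat{\mathfrak h},\hat\eta,x),\cD_1,\cD_3\big)=\sigma\big(\phi|_{\bbH\backslash U},\eta\big)$. Including $(\hat\eta,x)$ is also needed for the very next step: Proposition~\ref{prop-4-pt} describes the \emph{joint} law of $(\psi,(\hat\eta,x))$, and only after conditioning on $(\hat\eta,x)$ is $\psi$ a Liouville field with an insertion at a known point $x$, so that Lemma~\ref{lem-lf-resample-4} applies; your phrase ``conditioned on the quantum lengths of $(0,x)$'' implicitly treats $x$ as known without having put it into the conditioning. The repair is exactly the paper's enlarged tuple; since the resulting conditional law of $\psi-\hat{\mathfrak h}$ does not depend on $(\hat\eta,x)$, the desired statement about $\phi|_U$ given $\phi|_{\bbH\backslash U}$ then follows by the tower property, as in Lemma~\ref{lem-resample-infty-2}.

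A smaller issue is your bookkeeping of the logarithmic coefficient at $\infty$. In both embeddings the marked point $\infty$ is the same point (the third marked point of $\cD_2$), so no insertion weight ``transforms'' under a coordinate change, and there is no legitimate passage from $\beta'$ to $\beta$ of the kind you sketch ($\beta$ and $\beta'$ are genuinely different numbers). The coefficient should simply be read off from the weight-$\beta$ insertion at $\infty$ in Proposition~\ref{prop-4-pt}, i.e.\ the index appearing in Lemma~\ref{lem-lf-resample-4} should be understood as the insertion governing the behavior at $\infty$ (which is $\beta$ in this application), after which the transfer to $U$ is immediate because the re-embedding fixes $\infty$.
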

	\begin{proof}
	The proof is identical to that of Lemma~\ref{lem-resample-infty-2}, except that~\eqref{eq-sigma-algebras-bdy} is replaced by
	\eqb\label{eq-resample-infty-sigma}
	\sigma((\hat{\mathfrak h}, \hat \eta, x),  \cD_1, \cD_3)  = \sigma(\phi|_{\bbH \backslash U}, \eta),
	\eqe
	 \cite[Proposition 2.18]{AHS21} is replaced by Proposition~\ref{prop-4-pt}, and Lemma~\ref{lem-lf-resample-3} is replaced by Lemma~\ref{lem-lf-resample-4}. 
	\end{proof}

	\begin{proof}[{Proof of Proposition~\ref{prop-resample-1}}] Let $U = U(\eta)$ be defined as in Lemma~\ref{lem-resample-infty-3}.  
	The following two procedures are easily seen to be equivalent:
	\begin{itemize}
	    \item 
	    Sample $(\phi, \eta) \sim M_W$, weight by $|\mathcal{QCP}_{\phi, \eta \cap \bbR}|$, sample a point $p \sim \mathcal{QCP}_{\phi, \eta \cap \bbR}^\#$, and restrict to the event that $p \in (\partial A)^c$. 
	    \item Sample $(\phi, \eta) \sim M_W$, weight by $|\mathcal{QCP}_{\phi, \eta \cap \bbR \cap (\partial A)^c}|$, and sample a point $p \sim \mathcal{QCP}_{\phi, \eta \cap \bbR \cap (\partial A)^c}^\#$.
	\end{itemize}
	Thus, by Lemma~\ref{lem-resample-infty-3} and the Markov property of the GFF, for $(\phi, \eta, p)$ sampled from the second procedure and restricted to $\{A \subset U\}$, conditioned on $(\phi|_{\bbH \backslash A}, \eta)$ we have $\phi|_A \stackrel d= h + \mathfrak h + (\frac\beta2 - Q)G_A(\cdot, \infty)$.
	
	Since the weighting factor $|\mathcal{QCP}_{\phi, \eta\cap \bbR \cap (\partial A)^c}|$ depends only on $(\phi|_{\bbH \backslash A}, \eta)$, we deduce the following. For $(\phi, \eta)\sim M_W$, sample a point $p \sim \mathcal{QCP}_{\phi, \eta \cap \bbR \cap (\partial A)^c}^\#$, and restrict to $\{ A \subset U\}$, then conditioned on  $(\phi|_{\bbH \backslash A}, \eta)$ we have $\phi|_A \stackrel d= h + \mathfrak h + (\frac\beta2 - Q)G_A(\cdot, \infty)$. 
	
	Let $E = E(\phi|_{\bbH \backslash A}, \eta)$ be the event that there exists a time $t$ such that $|\mathcal{QCP}_{\phi, \eta([t, \sigma]) \cap \bbR}|>0$ and $\ol A \cap \eta([t,\tau]) = \emptyset$. Using the previous claim and conditioning on the event that $p \in \eta([t,\sigma])$, we get the following. For $(\phi, \eta) \sim M_W$, conditioned on  $E$ and on  $(\phi|_{\bbH \backslash A}, \eta)$ we have $\phi|_A \stackrel d= h + \mathfrak h + (\frac\beta2 - Q)G_A(\cdot, \infty)$. 
	
	To conclude, we note that for $(\phi, \eta) \sim M_W$, $\phi$ and $\eta$ are independent (Proposition~\ref{prop-resample-inside}), and  $\phi$-a.s., conditioned on $\phi$ the conditional probability of $E$ is positive. This gives the desired resampling property. 
	\end{proof}

	\subsection{Proof of Theorem~\ref{thm:w2w} for $W < \frac{\gamma^2}2$}\label{sec-proof-main-thin}
	
	For a measurable space $(\Omega, \cF)$, consider a Markov kernel $\Lambda : \Omega \times \cF \to [0,1]$ as defined in Definition~\ref{def-markov-kernel} (with $(\Omega, \cF) = (\Omega', \cF')$). A $\sigma$-finite measure $\mu$ on $\Omega$ is called \emph{invariant} if for any nonnegative measurable function $f: \Omega \to \bbR$ we have $\iint f(y) \Lambda(x, dy) \mu (dx) = \int f(y) \mu(dy)$. 
	We will need the following criterion for uniqueness of invariant $\sigma$-finite measures. 
	
	\begin{lemma}[{\cite[Lemma 5.11]{ASY22}}]\label{lem-invariant}
	Suppose $\Lambda: \Omega \times \cF \to [0,1]$ is a Markov kernel with two $\sigma$-finite invariant measures $\mu_1, \mu_2$ such that for each $\omega \in \Omega$, the measure $\mu_1$ is absolutely continuous with respect to $\Lambda(\omega, -)$. Further assume that for $i = 1,2$ we have reversibility: $\Lambda(x, dy) \mu_i(dx) = \Lambda(y, dx) \mu_i(dy)$. Then $\mu_1 = c\mu_2$ for some $c \in (0,\infty)$. 
	\end{lemma}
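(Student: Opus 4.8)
The plan is to show that $\mu_1$ and $\mu_2$ have a constant Radon--Nikodym density. The two levers are reversibility, which converts invariance of the $\mu_i$ into a \emph{pointwise} statement about the density $f=d\mu_1/d\mu_2$, and the hypothesis $\mu_1\ll\Lambda(\omega,-)$ \emph{for every} $\omega$, which serves as the irreducibility input needed to conclude $f$ is constant.

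First I would check $\mu_1\ll\mu_2$. By invariance of $\mu_2$, if $A\in\cF$ has $\mu_2(A)=0$ then $\int\Lambda(x,A)\,\mu_2(dx)=0$, so $\Lambda(x_0,A)=0$ for some $x_0$ (using that $\mu_2\neq 0$); then $\mu_1\ll\Lambda(x_0,-)$ gives $\mu_1(A)=0$. Writing $f:=d\mu_1/d\mu_2\geq 0$ (finite $\mu_2$-a.e.), I would next combine the two reversibility relations, viewed as identities of $\sigma$-finite measures on $\Omega\times\Omega$, with $\mu_1(dx)=f(x)\,\mu_2(dx)$:
\[
f(x)\,\Lambda(y,dx)\,\mu_2(dy) = \Lambda(x,dy)\,\mu_1(dx) = \Lambda(y,dx)\,\mu_1(dy) = f(y)\,\Lambda(y,dx)\,\mu_2(dy),
\]
where the first equality uses reversibility of $\mu_2$, the middle one reversibility of $\mu_1$, and all rearrangements are legitimate since every integrand is nonnegative (Tonelli). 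Disintegrating in $y$ yields: for $\mu_2$-a.e.\ $y$, $f(x)=f(y)$ for $\Lambda(y,-)$-a.e.\ $x$.

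The absolute continuity hypothesis then finishes the argument. For $\mu_2$-a.e.\ $y$, since $\mu_1\ll\Lambda(y,-)$, the set $\{x:f(x)\neq f(y)\}$ is $\mu_1$-null, so $f$ equals the constant $f(y)$ on a set of full $\mu_1$-measure. As $\mu_1\neq 0$, any two such constants must coincide, so $y\mapsto f(y)$ is $\mu_2$-a.e.\ equal to a single value $c$; hence $f=c$ $\mu_2$-a.e., and $c>0$ because $\mu_1=c\,\mu_2\neq 0$. This gives $\mu_1=c\,\mu_2$ with $c\in(0,\infty)$, as required.

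I expect the only genuine difficulty to be the measure-theoretic bookkeeping forced by working with infinite ($\sigma$-finite) measures rather than probability measures: one cannot normalize the $\mu_i$ or invoke ergodic theorems, so each step must be phrased as an identity or inequality of measures; the disintegration in $y$ and the ensuing $\mu_2$-a.e.\ statements must be justified carefully (e.g.\ by passing through a countable separating family of test functions and, where needed, exhausting $\Omega$ by sets of finite $\mu_2$-measure); and one must keep track of the implicit nonvanishing of $\mu_1$ and $\mu_2$. Conceptually there is no obstacle — reversibility together with the strong one-step absolute continuity make the Radon--Nikodym density rigid.
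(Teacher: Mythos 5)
The paper does not prove this lemma itself; it is quoted verbatim from \cite[Lemma 5.11]{ASY22}, so there is no in-paper argument to compare against. Your proof is correct and is the natural one: establishing $\mu_1\ll\mu_2$ via invariance and the hypothesis $\mu_1\ll\Lambda(x_0,-)$, then using the two reversibility identities to show the density $f=d\mu_1/d\mu_2$ satisfies $f(x)=f(y)$ for $\Lambda(y,dx)\mu_2(dy)$-a.e.\ $(y,x)$ (the exhaustion by sets $A_k\times\Omega$ with $\mu_2(A_k)<\infty$ that you flag does make the rational-sandwich argument go through), and finally invoking $\mu_1\ll\Lambda(y,-)$ for every $y$ to force $f$ to be $\mu_2$-a.e.\ constant; the only implicit assumption, as you note, is that $\mu_1$ and $\mu_2$ are nonzero, which is also implicit in the statement itself.
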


	With this and the resampling results Propositions~\ref{prop-resample-inside} and~\ref{prop-resample-1}, we are now ready to tackle the thin case of Theorem~\ref{thm:w2w}. The argument is very similar to that of \cite[Proposition 5.12]{ASY22}.
	
	\begin{proof}[{Proof of Theorem~\ref{thm:w2w} for $W < \frac{\gamma^2}2$}]
	Recall $M_W$ from Definition~\ref{def-MW}. Let $\alpha = Q - \frac W{2\gamma}$ and $\beta = \gamma - \frac W \gamma$.  By Lemma~\ref{lem-disk=qt-thin} and Definition~\ref{def-MW}, there is a constant $c$ such that for a sample from $\int_0^\infty \Wd(\QT(W_1,W_2,W_3;\ell,\ell))\,d\ell$ embedded in $(\bbD, 0, 1)$, the law of the field and curve is $c M_W$. Thus, by Definition~\ref{def:mdisk11}, we must show that for $(\phi, \eta) \sim M_W$, the marginal law of $\phi$ is $C \LF_\bbH^{(\alpha, i), (\beta, \infty)}$ for some constant $C$, and the conditional law of $\eta$ given $\phi$ is radial $\SLE_\kappa(W-2)$ in $\bbH$ from $\infty$ to $i$ with force point infinitesimally counterclockwise of $\infty$. The claim on the conditional law of $\eta$ given $\phi$ is shown in Proposition~\ref{prop-resample-inside}. 
	
	The remainder of this proof will identify the marginal law of $\phi$; call this law $\mu_2$. 
	Let $\mu_1 = \LF_\bbH^{(\alpha, i), (\beta, \infty)}$. We will construct a Markov kernel $\Lambda$ such that $(\Lambda, \mu_1, \mu_2)$ satisfy the conditions of Lemma~\ref{lem-invariant}, and thus conclude that $\mu_1$ and $\mu_2$ agree up to multiplicative constant as desired. 
	
	Let $L_1$ (resp.\ $L_2$) be the line segment joining $i$ and $1$ (resp.\ $-1$). Let $A_1 = \bbH \backslash \ol{B_{1/10}(L_1)}$, $A_2 = \bbH \backslash \ol{B_{1/10} (L_2)}$ and $A_3 = B_{1/2}(i)$. For $i = 1,2$, let $\Lambda_i(\phi, d\psi)$ be the law of $\psi$ defined via $\psi|_{\bbH \backslash A_i} = \phi|_{\bbH \backslash A_i}$ and $\psi|_{A_i} = h + \mathfrak h + (\frac\beta2 - Q) G_{A_i}(\cdot, \infty)$ where $h$ is a GFF on $A_i$ with zero (resp.\ free) boundary conditions on $\partial A_i \cap \bbH$ (resp.\ $\partial A_i \cap \bbR$), $\mathfrak h$ is the harmonic extension of $\phi|_{\bbH \backslash A_i}$ to $A_i$ having zero normal derivative on $\partial A_i \cap \bbR$, and $G_{A_i}$ is the Green function for $h$. Similarly, let $\Lambda_3(\phi, d\psi)$ be the law of $\psi$ defined via $\psi|_{\bbH \backslash A_3} = \phi|_{\bbH \backslash A_3}$ and $\psi|_{A_3} = h + \mathfrak h + \alpha G_{A_3}(\cdot, i)$ where $h$ is a zero boundary GFF on $A_3$, $\mathfrak h$ is the harmonic extension of $\phi|_{\bbH \backslash A_3}$ to $A_3$, and $G_{A_3}$ is the Green function for $h$. 
	
	By Proposition~\ref{prop-resample-1} 
	$\mu_2$ is invariant with respect to $\Lambda_1$ and $\Lambda_2$, and by Proposition~\ref{prop-resample-inside} $\mu_2$ is invariant with respect to $\Lambda_3$. More strongly, these propositions immediately give reversibility: $\Lambda_i(x,dy) \mu_2(dx) = \Lambda_i(y, dx) \mu_2(dy)$ for $i=1,2,3$. Likewise, the resampling properties of $\mu_1$ stated in Lemma~\ref{lem-lf-resample-i-infty} imply that the same results hold when $\mu_2$ is replaced by $\mu_1$. 
	
	Let $\Lambda$ be the Markov kernel $\Lambda = \Lambda_1 \Lambda_2 \Lambda_3 \Lambda_2 \Lambda_1$ obtained by composition. The symmetric construction of $\Lambda$ and the reversibility for each $\Lambda_i$ with each $\mu_j$ imply that $\Lambda(x, dy) \mu_j(dx) = \Lambda(y, dx) \mu_j(dy)$ for $j=1,2$. 
	
	The remaining criterion of Lemma~\ref{lem-invariant} we need to check is that for each $\phi$, $\mu_1$ is absolutely continuous with respect to $\Lambda(\phi, -)$. Let $B_1 = \bbH \backslash B_{1/9}(L_1)$. It is well known that if $h$ is GFF on $A_1$ with zero (resp.\ free) boundary conditions on $\partial A_1 \cap \bbH$ (resp.\ $\partial A_1 \cap \bbR$), 
	and $g$ is a smooth function on $A_1$ with finite Dirichlet energy in $B_1$, then the laws of $h|_{B_1}$ and $(h+g)|_{B_1}$ are mutually absolutely continuous, see for instance the argument of \cite[Proposition 2.9]{ig4}. Putting $g = \mathfrak h$ from the definition of $\Lambda_1$, we see that the $\mu_1(d\psi)$-law of $\psi|_{B_1}$ is absolutely continuous with respect to the $\Lambda_1(\phi, dx)$-law of $x|_{B_1}$. Let $B_2 = \bbH \backslash B_{1/3}(i)$. By the same argument, the $\mu_1(d\psi)$-law of $\psi|_{B_2}$ is absolutely continuous with respect to the $\int_x \Lambda_1(\phi, dx)\Lambda_2(x, dy)$ law of $y|_{B_2}$. Finally, a third application of the argument shows that $\mu_1$ is absolutely continuous with respect to $\int_x \int_y \Lambda_1(\phi, dx) \Lambda_2(x, dy) \Lambda_3(y, -)$, and hence $\mu_1$ is absolutely continuous with respect to $\Lambda(\phi, -)$. Thus, by Lemma~\ref{lem-invariant}, we have $\mu_2 = c \mu_1$, so we are done. 
	\end{proof}

	\section{Radial conformal welding for $\kappa \in (4,8)$ }\label{sec-nonsimple}
	
	In this section we prove Theorem~\ref{thm:main-nonsimple}. As with Theorem~\ref{thm:main}, most of the work goes into proving the case where $W_1 = W_2$:
	
	\begin{theorem}\label{thm:w2w-ns}
		For $W \in (0,\frac{\gamma^2}2) \cup (\frac{\gamma^2}2, \infty)$, Theorem~\ref{thm:main-nonsimple} holds for $W_1 = W_2 = W$ and $W_3 = \gamma^2-2$. 
	\end{theorem}
	
	In Section~\ref{sec-forested} we recall the notion of forested quantum surfaces, and in Section~\ref{subsec-third-point-ns} we describe the surface obtained by adding a third marked point to a forested quantum disk. In Sections~\ref{sec-thick-nonsimple} and~\ref{sec-thin-nonsimple} we prove Theorem~\ref{thm:w2w-ns} for $W > \frac{\gamma^2}2$ and $W < \frac{\gamma^2}2$ respectively; the arguments are almost identical to those of Sections~\ref{sec:pf-thick} and~\ref{sec:pf-thin} respectively. Finally, in Section~\ref{sec-proof-nonsimple} we prove Theorem~\ref{thm:main-nonsimple} using Theorem~\ref{thm:w2w-ns}.

	\subsection{Forested quantum surfaces}\label{sec-forested}
	
	Forested quantum surfaces were first introduced in \cite{DMS14}.  We will follow the presentation of \cite{AHSY23}. 
	
	Let $\gamma \in (\sqrt2, 2)$. 
	There exists a $\sigma$-finite measure called $\mathrm{GQD}_1$, a sample of which is called a \emph{generalized quantum disk}. A generalized quantum disk is a rooted looptree of $\gamma$-LQG surfaces each having the disk topology. The boundary of the looptree comes equipped with a notion of \emph{generalized boundary length}, which can be defined (up to multiplicative constant) by counting the number of loops on a boundary arc having length at least $\eps$, normalizing by a power of $\eps$, and sending $\eps \to 0$. We will not need the precise definitions of $\mathrm{GQD}_1$ and generalized boundary length so we omit them, but see \cite[Sections 3.1 and 3.2]{AHSY23} for details (see also \cite{msw-nonsimple,HL22CLE} for alternative treatments). 
	
	Given a quantum surface $\cD$, one can obtain a \emph{forested quantum surface} $\cD^f$ by sampling a Poisson point process with intensity measure $c \mathrm{GQD}_1 \times \cL_\cD$ (where $\cL_\cD$ is the quantum boundary length measure of $\cD$) and rooting each generalized quantum disk at the corresponding boundary point of $\cD$.  Here, the constant $c$ is defined as in \cite[Proposition 3.11]{AHSY23}. See Figure~\ref{fig-forested} (left). 
	We say that $\cD^f$ is obtained by foresting the boundary of $\cD$. 
	
	\begin{definition} \label{def-fd}
			Let $\mathcal M_{0,2}^\mathrm{f.d.}(W)$ be the law of the forested quantum surface obtained from a sample from $\Md_{0,2}(W)$ by foresting its boundary. Similarly, define $\mathcal M_{1,1}^\mathrm{f.d.}(W_1, W_2)$  and $\QT^f(W_1, W_2, W_3)$ to be the laws of samples from $\Md_{1,1}(W_1, W_2)$ and $\QT(W_1,W_2,W_3)$ after foresting their boundaries. 
	\end{definition}

	As with~\eqref{eq-disintegrate-disk} and~\eqref{eq-disintegrate-triangle}, we define the disintegrations with respect to generalized boundary lengths of $\mathcal M^\mathrm{f.d.}_{0,2}(W)$ and $\QT^f(W_1,W_2,W_3)$:
	\begin{equation*}
	\begin{split}
		\mathcal M^\mathrm{f.d.}_{0,2}(W) &= \iint_{\bbR_2^+}\mathcal M^\mathrm{f.d.}_{0,2}(W;\ell_1,\ell_2)\, d\ell_1\, d\ell_2, \\
			\QT^f(W_1,W_2,W_3) &= \iiint_{\bbR_+^3}\QT^f(W_1,W_2,W_3;\ell_1,\ell_2,\ell_3)\, d\ell_1\, d\ell_2\, d\ell_3.
	\end{split}
	\end{equation*}
	Here, a sample from $\mathcal M^\mathrm{f.d.}_{0,2}(W;\ell_1,\ell_2)$ a.s.\ has left and right generalized boundary lengths equal to $\ell_1$ and $\ell_2$, and for a sample from  $\QT^f(W_1,W_2,W_3;\ell_1,\ell_2,\ell_3)$, calling its vertices $a_1, a_2, a_3$, the boundary arcs $a_1a_2$, $a_1a_3$ and $a_2a_3$ a.s.\ have generalized boundary lengths $\ell_1,\ell_2,\ell_3$. We similarly define $\QT^f(W_1,W_2,W_3;\ell_1,\ell_2) = \int_0^\infty \QT^f(W_1,W_2,W_3;\ell_1,\ell_2,\ell_3)\, d\ell_3$ and $\QT^f(W_1,W_2,W_3;\ell_1) = \int_0^\infty \QT^f(W_1,W_2,W_3;\ell_1,\ell_2)\, d\ell_2$. 

   Let $\kappa' = 16/\gamma^2 \in (4,8)$. 
	As explained in \cite[Theorem 1.4.8]{DMS14}, there is a way to conformally weld a pair of independent forested quantum wedges with respect to generalized boundary length to obtain a new forested quantum wedge with an independent $\SLE_{\kappa'}(\rho_-; \rho_+)$ curve on it, wherein the welded curve-decorated forested quantum surface is measurable with respect to the two original forested quantum wedges. This is the only conformal welding satisfying certain regularity conditions \cite{mmq-uniqueness}, but the remarkable recent work \cite{lz-nonremovability} proves that for $\kappa'$ close to 8 there do exist other conformal weldings. In all future instances, when we refer to the conformal welding of forested quantum surfaces, we mean the conformal welding one obtains from  \cite{DMS14}. See the discussion immediately above \cite[Theorem 1.4]{AHSY23} for further details. 
	
		\begin{figure}[ht]
		\centering
		\includegraphics[scale=0.39]{figures/zipper-looptree-marked.pdf}
		\caption{\textbf{Left:} Given a quantum surface $\cD$, attaching a Poissonian collection of generalized quantum disks to its boundary gives a forested quantum surface $\cD^f$. Since $\mathrm{GQD}_1$ is an infinite measure, there are infinitely many looptrees attached to $\cD$. \textbf{Right:} Figure for Proposition~\ref{prop-quantum-zipper-ns} and Lemma~\ref{lem-quantum-zipper-ns}. We forest the boundary of a quantum surface whose field is $\phi$, conformally weld the left and right boundary arcs according to generalized boundary length, and stop when $1$ is welded to $p$. This gives a curve-decorated forested quantum surface; $(\bbH, \psi, \eta)$ is the unique embedding of the connected component containing the curve such that if $g$ is the conformal map between the pink regions such that $g^{-1} \bullet_\gamma \psi = \phi$, then  $\lim_{z \to \infty} g(z) - z = 0$.}\label{fig-forested}
	\end{figure}

	\subsection{Adding a third marked point to a forested quantum disk}\label{subsec-third-point-ns}
	
	In this section we give analogs of results in Section~\ref{subsec-third-point} for forested quantum disks. 
	For a sample $\cD^f$ from $\mathcal{M}^\mathrm{f.d.}_{0,2}(W)$, let $\cL_{\cD^f}^\mathrm{left}$ denote the generalized boundary length measure on the left boundary arc of $\cD^f$. For $(p, \cD^f)$ sampled from $\cL_{\cD^f}^\mathrm{left} \mathcal{M}^\mathrm{f.d.}_{0,2}(W)$, let $\mathcal{M}^\mathrm{f.d.}_{0,2, \bullet}(W)$ denote the law of the three-pointed  surface obtained from adding to $\cD^f$ the third marked point $p$.

	The following is an analog of Lemma~\ref{lem-disintegrate-3-pt}; its proof is identical so we omit it. 
	
	\begin{lemma}\label{lem-disintegrate-3-pt-ns}
		Let $\mathcal{M}^\mathrm{f.d.}_{0,2, \bullet}(W; \ell', \ell'', r)$ denote the law of  a sample from $\mathcal{M}^\mathrm{f.d.}_{0,2}(W; \ell'+\ell'', r)$ with a third marked point added to the left boundary at generalized boundary length $\ell'$ from the first marked point. Then 
		\[\mathcal{M}^\mathrm{f.d.}_{0,2, \bullet}(W) = \iiint_0^\infty \mathcal{M}^\mathrm{f.d.}_{0,2, \bullet}(W; \ell',\ell'', r) \, d\ell' \, d\ell'' \, dr. \]
		Similarly, 
		let $\mathcal{M}^\mathrm{f.d.}_{0,2, \bullet}(W; \ell',\cdot, r)$ denote the law of  a sample from $\mathcal{M}^\mathrm{f.d.}_{0,2}(W; \cdot, r)$ restricted to the event that the left generalized boundary length is greater than $\ell'$, with a third marked point added to the left boundary at generalized boundary length $\ell'$ from the first marked point. Then
		\[\mathcal{M}^\mathrm{f.d.}_{0,2, \bullet}(W) = \iint_0^\infty \mathcal{M}^\mathrm{f.d.}_{0,2, \bullet}(W; \ell', \cdot, r) \, d\ell' \, dr. \]
	\end{lemma}
	
	We now give the forested analog of Lemmas~\ref{lem-disk=qt} and~\ref{lem-disk=qt-thin}.
	
	\begin{lemma}\label{lem-embed-marked-disk-ns}
		Let $W \in (0, \frac{\gamma^2}2) \cup (\frac{\gamma^2}2, \infty)$, then there is a constant $C= C(W)$ such that $\mathcal{M}^\mathrm{f.d.}_{0,2, \bullet}(W) = C \mathrm{QT}^f(W, W, \gamma^2-2)$. 
	\end{lemma}
	\begin{proof}
		By Definition~\ref{def-qt-thin}, there is a constant $c$ such that a sample from $\QT(W, W, \gamma^2 - 2)$ can be obtained from a pair of quantum surfaces $(\cD, \cD_2) \sim c\Md_{0,2, \bullet}(W) \times \Md_{0,2}(\gamma^2-2)$ by attaching $\cD_2$ to the third marked point of $\cD$. Consequently, a sample from $\QT^f(W, W, \gamma^2 - 2)$ can be obtained from the above by foresting the boundaries of $\cD$ and $\cD_2$; \cite[Lemma 3.15]{AHSY23} 
		identifies the law of this forested quantum surface as a constant times $\cM^\mathrm{f.d.}_{0,2,\bullet}(W)$, as claimed. 
	\end{proof}

	\subsection{Theorem~\ref{thm:w2w-ns} for $W > \frac{\gamma^2}2$}\label{sec-thick-nonsimple}
	In this section we prove Theorem~\ref{thm:w2w-ns} for $W > \frac{\gamma^2}2$. The argument is parallel to that of Section~\ref{sec:pf-thick}. 
	
	We begin with a forested variant of Proposition~\ref{prop-quantum-zipper}. See Figure~\ref{fig-forested} (right). Let $\gamma \in (\sqrt2, 2)$. Let $\beta < Q$, sample $\phi \sim \LF_\bbH^{(\beta, 0), (\beta, 1)}$, forest the boundary of the quantum surface $(\bbH, \phi, 0, 1)$, and restrict to the event that the generalized boundary length from $-\infty$ to $0$ is larger than the generalized boundary length from $0$ to $1$. Let $p$ be the point on the  boundary arc from $-\infty$ to $0$ such that the generalized boundary length from $p$ to $0$ agrees with the generalized boundary length from $0$ to $1$. Conformally weld the boundary arc between $0$ and $1$ to the boundary arc between $0$ and $p$ according to generalized boundary length, to obtain a curve-decorated forested quantum surface. Forget the foresting and consider the unique embedding $(\bbH, \psi, \eta)$ such that with $U$ the unbounded connected component of $\bbH \backslash \eta$ and $z_0$ the endpoint of $\eta$ lying in $\bbH$, we have $(\bbH, \phi, 0, \infty)/{\sim_\gamma} = (U, \psi, z_0, \infty)/{\sim_\gamma}$  and $\lim_{z \to \infty} g(z)-z = 0$  where $g: \bbH \to U$ is the conformal map such that $g^{-1} \bullet_\gamma \psi = \phi$.
	
	\begin{proposition}\label{prop-quantum-zipper-ns}
		In the above setting, the law of $(\psi, \eta)$ is 
		\[ 
		(2 \Im  \tilde g_\tau(0))^{\alpha^2/2} |\tilde g_\tau(0) - \tilde W_\tau|^{\alpha \beta'}
		\LF_\bbH^{(\alpha, \tilde g_\tau(0)),(\beta', \tilde W_\tau)}(d \psi) 1_{\tau < \infty} \mathrm{rSLE}_{\kappa',\tilde\rho+2,\tilde\rho}^\tau (d \eta),\]
		\[ \hfill \kappa' = \frac{16}{\gamma^2}, \ \alpha = \frac{\beta}{2}+\frac{\gamma}{4}, \ \beta' =\beta-\frac{\gamma}{2},  \ \tilde\rho =  \frac4\gamma \beta,\]
		where $\mathrm{rSLE}_{\kappa',\tilde\rho+2,\tilde\rho}^\tau$ denotes the law of reverse $\SLE_{\kappa'}$ with a weight $\tilde\rho+2$ force point located infinitesimally above 0 and a weight $\tilde\rho$ force point at $1$ run until the time $\tau$ that the force point hits the driving function, i.e., $\tilde g_\tau(1) = \tilde W_\tau$.
	\end{proposition}
	\begin{proof}
		This is a special case of \cite[Theorem 1.7]{Ang23}.
	\end{proof}

	We will use this to prove the following analog of Proposition~\ref{prop:radialzipup}:
	
	\begin{proposition}\label{prop:radialzipup-ns}
		Fix $W>\frac{\gamma^2}{2}$. There exists a constant $c:=c_W\in(0,\infty)$ and  $\sigma$-finite measure $\mathfrak{m}(W)$ on the space of continuous curves from $0$ to $i$ such that
		\begin{equation}\label{eqn:radial-zip-up-ns}
			\mathcal M^\mathrm{f.d.}_{1,1}(W,W+2)\otimes\mathfrak{m}(W) = c\int_0^\infty \mathrm{Weld}(\QT^f(W,W,2;\ell,\ell))d\ell.
		\end{equation}
	\end{proposition}

    The proof of Proposition~\ref{prop:radialzipup-ns} is essentially the same as that of Proposition~\ref{prop:radialzipup}. It depends on the following analogs of Lemmas~\ref{lem-marginal-disk} and~\ref{lem-quantum-zipper}.

    	\begin{lemma}\label{lem-marginal-disk-ns}
        Let $W > \frac{\gamma^2}2$, let $\beta = Q + \frac\gamma2 - \frac W\gamma < Q$ and let $I \subset (-\infty, 0)$ be a compact interval of positive length.  Sample $\phi \sim \LF_\bbH^{(\beta,0), (\beta, 1)}$, forest the boundary of $(\bbH, \phi, 0, 1)$, forest the boundary to get a forested quantum surface $\cD^f$, restrict to the event that there is a point $p$ on the forested boundary arc from $0$ to $-\infty$ such that the generalized boundary length from $p$ to $0$ agrees with that from $0$ to $1$, let $q \in \partial \bbH$ be the boundary point on the same generalized quantum disk as $p$, and further restrict to the event that $q \in I$. 
        There is a constant $c = c(I) \in (0, \infty)$ such that the law of $\cD^f$ is $c \mathcal M^\mathrm{f.d.}_{0,2}(W)$ restricted to the event that the generalized boundary length of the left boundary arc is greater than that of the right boundary arc.
	\end{lemma}
    \begin{proof}
        The proof is identical to that of Lemma~\ref{lem-marginal-disk}, but with $q$ replacing the marked point $p$ from Lemma~\ref{lem-marginal-disk}.
    \end{proof}

	\begin{lemma}\label{lem-quantum-zipper-ns}
        In the setting of Lemma~\ref{lem-marginal-disk-ns}, conformally weld the boundary arc of $\cD^f$ between $0$ and $1$ to the boundary arc between $0$ and $p$ according to generalized boundary length to get a forested curve-decorated quantum surface with a marked bulk and boundary point; forget the foresting and embed it as $(\bbH, \psi', \eta', i, 0)$. The law of $(\psi', \eta')$ is   \[\LF_\bbH^{(\alpha,i),(\beta', 0)}(d\psi')  \mathfrak m(d\eta') \qquad \text{ with } \ \ \alpha = \frac{\beta}{2}+\frac{1}{\gamma}, \ \  \beta' = \beta-\frac{2}{\gamma},\]
		where $\mathfrak m$ is a $\sigma$-finite measure on the space of curves in $\ol\bbH$ from $0$ to $i$. 
	\end{lemma}
	\begin{proof}
    Define $(\psi, \eta)$ as in Proposition~\ref{prop-quantum-zipper-ns}. By Proposition~\ref{prop-quantum-zipper-ns} the law of $(\psi, \eta)$ is \[ (2 \Im  \tilde g_\tau(0))^{\alpha^2/2} |\tilde g_\tau(0) - \tilde W_\tau|^{\alpha \beta'} \LF_\bbH^{(\alpha, \tilde g_\tau(0)),(\beta', \tilde W_\tau)}(d \psi) 1_{\tilde g_\tau^{-1}(x) \in I} 1_{\tau < \infty} \mathrm{rSLE}_{\kappa,\tilde \rho}^\tau (d \eta),\] where $x = x(\eta)$ is defined by letting $U$ be the unbounded connected component of $\bbH\backslash \eta$ and setting $x = \inf\{ y \in \bbR \: : \: y \not \in \ol U\}$. Here, we are using $q = \tilde g_\tau^{-1}(x)$ where $q$ is as defined in Lemma~\ref{lem-marginal-disk-ns}. 
	Since $(\bbH, \psi', \eta', i, 0)/{\sim_\gamma} = (\bbH, \psi, \eta, \tilde g_\tau(0), \tilde g_\tau(1))/{\sim_\gamma}$, the claim follows from the conformal covariance of the Liouville field (Lemma~\ref{lem:lf-covariance}).
	\end{proof}
	
	\begin{proof}[{Proof of Proposition~\ref{prop:radialzipup-ns}}]
		In the setting of Lemma~\ref{lem-marginal-disk-ns}, by Lemma~\ref{lem-marginal-disk-ns} there is a constant $c$ such that the law of $\cD^f$ is $c \mathcal M^\mathrm{f.d.}_{0,2}(W)$ restricted to the event that the generalized boundary length of the left boundary arc is greater than that of the right boundary arc. Thus, by Lemmas~\ref{lem-disintegrate-3-pt-ns} and~\ref{lem-embed-marked-disk-ns}, the conformal welding $\tilde \cD^f$ of $\cD^f$ to itself has law $c' \int_0^\infty \mathrm{Weld}(\QT^f(W, W, \gamma^2-2; \ell, \ell))\, d\ell$ for some constant $c'$. Comparing this with  Lemma~\ref{lem-quantum-zipper-ns}, we conclude that if $\tilde \cD$ is the quantum surface obtained from $\tilde \cD^f$ by forgetting its boundary foresting, the law of $\tilde \cD$ is $c''\Md_{1,1}(W, W+2) \otimes \mathfrak m(W)$ for some constant $c''$ and for some $\sigma$-finite measure $\mathfrak m(W)$. By Definition~\ref{def-fd} and the fact that the law of $\tilde \cD^f$ is $c' \int_0^\infty \mathrm{Weld}(\QT^f(W, W, \gamma^2-2; \ell, \ell))\, d\ell$, conditioned on $\tilde \cD$ one can resample $\tilde\cD^f$ by foresting the boundary of $\tilde \cD$, so the law of $\tilde \cD^f$ is $c''\mathcal M^\mathrm{f.d.}_{1,1}(W,W+2)\otimes\mathfrak{m}(W)$ as needed. 
	\end{proof}
	
	Finally, given Proposition~\ref{prop:radialzipup-ns}, to complete the proof of Theorem~\ref{thm:w2w-ns} for $W >\frac{\gamma^2}2$, what remains is to identify $\mathfrak m$ as a multiple of radial SLE$_{\kappa'}(\rho)$ with $\rho = W-(\gamma^2-2)$,  where the force point is infinitesimally to the right of the boundary point. 
	
	\begin{proof}[{Proof of Theorem~\ref{thm:w2w-ns} for $W > \frac{\gamma^2}2$}] It suffices to identify the law $\mathfrak m$ of the curve in Proposition~\ref{prop:radialzipup-ns}. 
	Without loss of generality, assume the spine of the surface from welding is embedded in $\bbD$. Let $\mu$ be the law of the left and right boundaries $(\eta^L,\eta^R)$ of the interface $\eta'$ where $\eta'\sim\mathfrak{m}$. By~\cite[Proposition 3.25]{AHSY23} and Theorem~\ref{thm:disk+QT}, $\mu$ is invariant under the Markov chain described in Proposition~\ref{prop:resample-nonsimple} for $\rho_1=0$ and $\rho_2 = \frac{4}{\gamma^2}(2+W-\gamma^2)$, and given $(\eta^L,\eta^R)$, $\eta'$ has the law $\SLE_{\kappa'}(\frac{\kappa'}{2}-4;\frac{\kappa'}{2}-4)$ in each connected component $\bbD\backslash(\eta^L\cup\eta^R)$ between $\eta^L$ and $\eta^R$. Therefore by Proposition~\ref{prop:resample-nonsimple} and Theorem~\ref{thm:SLE-duality}, we conclude that $\eta'$ must be radial $\SLE_{\kappa'}^\beta(0;\frac{4}{\gamma^2}(2+W-\gamma^2))$ for some possibly random $\beta$. Finally we identify $\beta=0$ using the same argument as in the last paragraph in the proof of Lemma~\ref{lem:beta-law-2}.
	\end{proof}

	\subsection{Theorem~\ref{thm:w2w-ns} for $W < \frac{\gamma^2}2$}\label{sec-thin-nonsimple}
	In this section we will prove   Proposition~\ref{prop-ns-thin} below, using an argument parallel to that of Section~\ref{sec:pf-thin}. As we will shortly see, Proposition~\ref{prop-ns-thin} immediately implies Theorem~\ref{thm:w2w-ns} for $W < \frac{\gamma^2}2$.
	
	Let $\gamma \in (\sqrt2,2)$, $\kappa' = 16/\gamma^2$ and $W \in (0, \frac{\gamma^2}2)$. 
	Sample a quantum disk from $\mathcal{M}^\mathrm{f.d.}_{0,2}(W)$, restrict to the event that the generalized boundary length of the left boundary arc is greater than that of its right boundary arc, and conformally weld the entire right boundary arc to the initial segment of the left boundary arc starting from the first marked point. This gives a forested quantum surface $\tilde D^f$ with a bulk marked point and a boundary marked point. Let $\tilde D$ be the connected component  containing the marked points, and embed it as $(\bbH, \phi, \eta, i, \infty)$, so the curve $\eta$ goes from $\infty$ to $i$. Let $M_W^f$ be the law of $\tilde D^f$, and  $M_W$  the law of $(\phi, \eta)$.
	\begin{proposition}\label{prop-ns-thin}
		There is a constant $c(W)\in (0,\infty)$ such that  \[M_W^f = c(W)\mathcal{M}_{1,1}^\mathrm{f.d.} (W_1, W_1 + \gamma^2-2) \otimes \mathrm{raSLE}_{\kappa'}(0; \rho), \qquad \text{where }\rho = \frac4{\gamma^2}W + \frac8{\gamma^2} - 4.\]
	\end{proposition}
	Equivalently, $M_W^f$ can be described via forested quantum triangles:
	
	\begin{lemma}\label{lem-MWf-QTf}
		There is a constant $C= C(W)$ such that 
		\[M_W^f  = C\int_0^\infty \Wd(\QT^f(W,W,\gamma^2-2;\ell,\ell))\,d\ell.\]
	\end{lemma}
	\begin{proof}
		The result is immediate from  Lemmas~\ref{lem-disintegrate-3-pt-ns} and~\ref{lem-embed-marked-disk-ns}.
	\end{proof}
	
	\begin{proof}[{Proof of  Theorem~\ref{thm:w2w-ns} for $W < \frac{\gamma^2}2$}]
		It follows from Proposition~\ref{prop-ns-thin} and Lemma~\ref{lem-MWf-QTf}. 
	\end{proof}
	
	The rest of this section is devoted to the proof of  Proposition~\ref{prop-ns-thin}. To that end, we state and prove the analogs of Propositions~\ref{prop-resample-inside} and~\ref{prop-resample-1}.
	
	\begin{lemma}\label{lem-resample-inside-ns}
		For $(\phi, \eta) \sim M_W$, the conditional law of $\eta$ given $\phi$ is radial $\SLE_{\kappa'}(\rho)$ in $\bbH$ from $\infty$ to $i$ with force point infinitesimally counterclockwise of $\infty$, where $\rho = \frac4{\gamma^2}W + \frac8{\gamma^2} - 4$.  Furthermore, consider a bounded neighborhood $A\subset \bbH$ of $i$ such that $\ol A \cap \partial \bbH = \emptyset$, and let $\alpha = Q - \frac1{2\gamma} (W + 2-\frac{\gamma^2}2)$. Conditioned on $\phi |_{\bbH \backslash A}$, we have $\phi|_A \stackrel d= h + \mathfrak h + \alpha G_A(\cdot, i)$, where $\mathfrak h$ is the harmonic extension of $\phi|_{\bbH \backslash A}$ to $A$, $h$ is a zero boundary GFF on $A$ and $G_A$ is the Green function of $h$. 
	\end{lemma}
	\begin{proof}
		The proof is identical to that of Proposition~\ref{prop-resample-inside}, except that in the argument of Lemma~\ref{lem-decomp-cone}, instead of using \cite[Theorem 1.2.4]{DMS14} which states that cutting a weight $W$ quantum cone by an independent whole-plane $\SLE_\kappa(W-2)$ gives a weight $W$ quantum wedge, we instead use \cite[Theorem 1.4.9]{DMS14} which states that cutting a weight $(W + 2-\frac{\gamma^2}2)$ quantum cone by an independent $\SLE_{\kappa'}(\rho)$ curve gives a weight $W$ forested quantum wedge. 
	\end{proof}
	
	\begin{lemma}\label{lem-resample-1-ns}
		Let $A \subset \bbH$ be a neighborhood of $\infty$ not containing $i$ such that $\bbH \backslash A$ is simply connected and  $\partial \bbH \backslash \ol A$ contains an interval,  and let $\beta = Q - \frac W\gamma$.
        For $(\phi, \eta) \sim M_W$, conditioned on $\phi|_{\bbH \backslash A}$, we have $\phi|_A \stackrel d= h + \mathfrak h + (\frac\beta2 - Q)G_A(\cdot, \infty)$, where $\mathfrak h$ is the harmonic extension of $\phi|_{\bbH \backslash A}$ to $A$ which has normal derivative zero on $\partial A \cap \partial \bbH$,
		$h$ is a mixed boundary GFF on $A$ with zero boundary conditions on $\partial A \cap \bbH$ and free boundary conditions on $\partial A \cap \partial \bbH$, and $G_A$ is the Green function of $h$. 
	\end{lemma}
	To prove Lemma~\ref{lem-resample-1-ns}, we use the following decomposition of a sample from $M_W^f$, see Figure~\ref{fig-sample-infty-nonsimple}. 
	
	\begin{lemma}\label{lem-decomp-qtf}
		There is a constant $C$ such that a sample from $M_W^f$ can be obtained by conformally welding a tuple $(\cD_1, \cD_2, \cD_3, \mathcal T)$ sampled from 
		\eqb\label{eq-decomp-qtf}
		\begin{split}
			C \int 1_{\ell_1 + \ell + \ell_3 > r + r_2} \Big(&\mathcal M^\mathrm{f.d.}_{0,2}(W; \ell_1+\ell+\ell_3 - r - r_2, \ell_1) \times \mathcal  M_{0,2}^\mathrm{f.d.}(W; \cdot, r_2) \\ &\times \mathcal M_{0,2}^\mathrm{f.d.}(\gamma^2-2; \ell_3, \cdot) \times \QT^f(\gamma^2-W,\gamma^2-W,\gamma^2-2; r, \ell)\Big),
		\end{split}
		\eqe
		where in~\eqref{eq-decomp-qtf} we integrate over $(\ell_1, \ell, \ell_3, r, r_2) \in (0,\infty)^5$ with respect to Lebesgue measure.

		Moreover, for a sample from $M_W^f$, under the embedding of $\tilde \cD$ as $(\bbH, \phi, \eta, i, \infty)$, let $\tau$ be the time that $\eta$ disconnects $i$ from $\partial \bbH$, let $G \subset \bbH$ be the connected component of $\bbH \backslash \eta$ such that $\eta(\tau) \in \ol G$ and $\ol G \cap \partial \bbH\neq \emptyset$, and let $[x_2, x_3] = \ol G \cap \partial \bbH$. Let $E$ be the event that $\eta$ hits $x_2$ before hitting $x_3$. Then a sample from $1_E M_W^f$ can be obtained from conformally welding $(\cD_1, \cD_2, \cD_3, \mathcal T)$ sampled from~\eqref{eq-decomp-qtf} but with the integrand containing the additional factor $1_{\ell_3 > r_2}$.
	\end{lemma}
	\begin{proof}
		Since the operation of foresting a quantum surface is Poissonian with respect to quantum boundary length, the first claim follows from Lemma~\ref{lem-MWf-QTf} and Definition~\ref{def-qt-thin}. For the second claim, notice that $x_2$ (resp.\ $x_3$) is the image of the first marked point of $\cD_2$ (resp.\ $\cD_3$) under the embedding, so $E$ corresponds to the entire right boundary arc of $\cD_2$ being conformally welded to the left boundary arc of $\cD_3$, i.e., $r_2 < \ell_3$. 
	\end{proof}
	
	\begin{proof}[{Proof of Lemma~\ref{lem-resample-1-ns}}]
		Recall the event $E = E(\eta)$ defined in Lemma~\ref{lem-decomp-qtf}. Let $\sigma$ be the first time after $\eta$ hits $x_2$ that $\eta$ hits $\partial \bbH$. On the event $E$, let $U$ be the complement of the connected component of $\bbH \backslash \eta([0,\sigma])$ containing $i$.
        We claim that conditioned on $\eta$, $E$ and $\phi|_{\bbH \backslash U}$, we have $\phi|_U \stackrel d= h_0 + \mathfrak h_0  + (\frac \beta2 - Q) G_U(\cdot, \infty)$ where $h_0$ is a GFF on $U$ with zero boundary conditions on $\partial U \cap \bbH$ and free boundary conditions elsewhere, $\mathfrak h_0$ is the harmonic extension of $\phi|_{\bbH \backslash U}$ to $U$ having zero normal derivative on $\partial U \cap \bbH$, and $G_U$ is the Green function for $h_0$. This claim is the analog of Lemma~\ref{lem-resample-infty-2}; using it, the proof of Lemma~\ref{lem-resample-1-ns} is identical to that of Lemma~\ref{lem-resample-bdy} using Lemma~\ref{lem-resample-infty-2}. 
		
		We now turn to the proof of the claim, see Figure~\ref{fig-sample-infty-nonsimple-2}. Let $\mathcal T'$ be $\cD_3$ after adding a third marked point on the left boundary arc at generalized boundary length $\ell_3 - r_2$ from the first vertex, and reorder the vertices of $\mathcal T'$ so that the generalized boundary length from the first to the second (resp.\ third) vertex is $\ell_3 - r_2$ (resp.\ $r_2$). By  Lemmas~\ref{lem-embed-marked-disk-ns} and~\ref{lem-disintegrate-3-pt-ns}, for $\cD_3\sim \cM^\mathrm{f.d.}_{0,2}(\gamma^2-2; \ell_3, \cdot)$ the law of $\mathcal T'$ is $C \QT^f(\gamma^2-2, \gamma^2-2, \gamma^2-2; \ell_3 - r_2, r_2)$ for some constant $C$. By Definition~\ref{def-qt-thin}, if $\cD'$ denotes the forested quantum disk corresponding to the second arm of $\mathcal T'$ and $\mathcal T''$ denotes the rest of $\mathcal T'$, for $\cD_3\sim \cM^\mathrm{f.d.}_{0,2}(\gamma^2-2; \ell_3, \cdot)$ the law of $(\mathcal T'', \cD')$ is 
		\[C\int_0^{\ell_3 - r_2} (\frac4{\gamma^2}-1) \QT^f(\gamma^2-2, 2, \gamma^2-2; a, r_2) \times \mathcal M^\mathrm{f.d.}_{0,2}(\gamma^2-2; \cdot, \ell_3 - r_2 - a) \, da.\]
		Finally, by \cite[Theorem 1.4]{AHSY23} and Lemma~\ref{lem-disk=qt-thin}, for $(\cD_2, \mathcal T'') \sim \int_0^\infty \mathcal M^\mathrm{f.d.}_{0,2}(W; \cdot, r_2)  \times \QT^f(\gamma^2 - 2, 2, \gamma^2 - 2;a, r_2) \, dr_2$, conformally welding $\cD_2$ and $\mathcal T''$ along their boundary arcs of generalized boundary length $r_2$ gives a forested quantum surface decorated by a curve, whose law is $C \QT^f(W + \frac{\gamma^2}2, 2, W+\frac{\gamma^2}2; a) \otimes \SLE_{\kappa'}(\rho; 0)$, where $\rho = \frac4{\gamma^2}(2-\gamma^2 + W)$ and $C$ is a constant. Forgetting the foresting and embedding in $(\bbH, 0, 1, \infty)$ gives a field $\psi$ whose law is a multiple of $\LF_\bbH^{(\beta, 0), (\gamma, 1), (\beta, \infty)}$ by Lemma~\ref{lem-embed-marked-disk}. Exactly as in the proof of Lemma~\ref{lem-resample-infty-2}, this implies the claim. 
	\end{proof}

	\begin{figure}[ht]
		\centering
		\includegraphics[scale=0.4]{figures/sample-infty-nonsimple-1.pdf}
		\caption{ Diagram for Lemma~\ref{lem-decomp-qtf}. 
			To reduce clutter, we draw wavy boundaries to represent the generalized quantum disks attached to the boundary of a forested quantum surface. 
			Equation~\eqref{eq-decomp-qtf} (with the additional factor $1_{\ell_3>r_2}$) gives a decomposition of $1_E M_W^f$; in the figure we set $r_1 = \ell_1+\ell+\ell_3 - r - r_2$ since the boundary arc of length $\ell_1 + \ell + \ell_3$ is to be conformally welded to the boundary arc of length $r_1+r + r_2$. 
			On the right, the half-plane $\bbH$ is drawn using the unit disk for clarity. The curve $\eta$ from $\infty$ to $i$ disconnects $i$ from $\partial \bbH$ at time $\tau$; the point $\eta(\tau)$ is shown in orange. The boundary of the green region intersects $\partial \bbH$ along an interval which we call $[x_2, x_3]$. The event $E = \{ \eta \text{ hits } x_2 \text{ before }x_3\}$ corresponds to $\{ r_2 < \ell_3\}$. 
		}\label{fig-sample-infty-nonsimple}
	\end{figure}
	
	\begin{figure}[ht]
		\centering
		\includegraphics[scale=0.39]{figures/sample-infty-nonsimple-2.pdf}
		\caption{Diagram for the proof of Lemma~\ref{lem-resample-1-ns}. \textbf{Left arrow:} We add a marked point (blue) to $\cD_3$ to obtain a quantum triangle $\mathcal T'$ by Lemmas~\ref{lem-embed-marked-disk-ns} and~\ref{lem-disintegrate-3-pt-ns}. Cutting $\mathcal T'$ gives $\mathcal T''$ and $\cD'$. \textbf{Right arrow:} \cite{AHSY23} implies that the conformal welding of $\cD_2$ and $\mathcal T''$ is described by a Liouville field. \textbf{Middle arrow:} A resampling property of the Liouville field implies a corresponding resampling property for 
			$\phi|_U$, where $U$ is the union of the blue and pink regions.  }\label{fig-sample-infty-nonsimple-2}
	\end{figure}
	
	\begin{proof}[{Proof of Proposition~\ref{prop-ns-thin}}]
		Let $\alpha, \beta, \rho \in \bbR$ take the values given in Lemmas~\ref{lem-resample-inside-ns} and~\ref{lem-resample-1-ns}.
		Arguing exactly as in the proof of Theorem~\ref{thm:w2w} stated in Section~\ref{sec-proof-main-thin}, using Lemmas~\ref{lem-resample-inside-ns} and~\ref{lem-resample-1-ns} instead of Propositions~\ref{prop-resample-inside} and~\ref{prop-resample-1}, we deduce that the $M_W$-law of $(\phi, \eta)$ is $C \LF_\bbH^{(\alpha, i), (\beta, \infty)} \times \mathrm{raSLE}_{\kappa'}(0; \rho)$ for some constant $C$, so we have identified the law of $\tilde \cD$.  Since Lemma~\ref{lem-MWf-QTf} implies that conditioned on $\tilde \cD$, we can resample $\tilde \cD^f$ by foresting the boundary of $\tilde \cD$, we conclude the law of $\tilde \cD^f$ is $\mathcal{M}_{1,1}^\mathrm{f.d.} (W_1 + 2 - \frac{\gamma^2}2, W_1 + \frac{\gamma^2}2) \otimes \mathrm{raSLE}_{\kappa'}(0; \rho)$, as claimed.
	\end{proof}

	\subsection{Proof of Theorem~\ref{thm:main-nonsimple}}\label{sec-proof-nonsimple}
    The proof of  Theorem~\ref{thm:main-nonsimple} given Theorem~\ref{thm:w2w-ns} is mostly identical to the proof of Theorem~\ref{thm:main} given Theorem~\ref{thm:w2w} as in Section~\ref{subsec:weld-qt}, but it further requires the description of counterflowlines in Theorem~\ref{thm:SLE-duality}. This argument has already been given in~\cite[Theorem 5.16]{LSYZ24} for $W_1 = 2- \frac{\gamma^2}2$, in which they bootstrap the $(W_1, W_2, W_3) = (W_1, W_1, \gamma^2-2)$ result of~\cite[Theorem 3.1]{ASYZ24} to the general case ($W_2 + W_3 = W_1 + \gamma^2-2$). The proof of \cite[Theorem 5.16]{LSYZ24} holds identically for general $W_1$, so Theorem~\ref{thm:main-nonsimple} follows from  Theorem~\ref{thm:w2w-ns}.

\bibliographystyle{alpha}
\bibliography{theta1}

\end{document}